\newcommand{\1}{\mathbbm{1}}
\newcommand{\D}{\mathcal{D}}
\newcommand{\Da}{\mathcal{D}^{\bar a}}
\newcommand{\Dd}{\mathcal{D}^{2^{-q}}}
\renewcommand{\epsilon}{\varepsilon}
\newcommand{\E}{\mathbb{E}}
\newcommand{\K}{\bar{K}}
\newcommand{\Kn}{K^{(n)}}
\newcommand{\lesssimdata}{\lesssim_{\textrm{data}}}
\newcommand{\N}{\mathbb{N}}
\newcommand{\R}{\mathbb{R}}
\newcommand{\s}{\mathfrak{s}}
\renewcommand{\S}{\mathcal{S}_\mathfrak{s}}
\newcommand{\T}{\mathbb{T}}
\newcommand{\x}{\mathbf{x}}
\newcommand{\y}{\mathbf{y}}
\newcommand{\yy}{\bar{\mathbf{y}}}
\newcommand{\z}{\mathbf{z}}
\newcommand{\Z}{\mathbb{Z}}
\theoremstyle{plain}
\newtheorem{theorem}{Theorem}[section]
\newtheorem*{theorem*}{Theorem}
\newtheorem{proposition}[theorem]{Proposition}
\newtheorem{lemma}[theorem]{Lemma}
\theoremstyle{definition}
\newtheorem{assumption}[theorem]{Assumption}
\newtheorem{remark}[theorem]{Remark}
\newtheorem{example}[theorem]{Example}
\theoremstyle{remark}
\numberwithin{equation}{section}
\newcommand{\noise}{
\hspace{-1.1ex}
\raisebox{0.25ex}{
\begin{istgame}
\istroot(0) \endist
\end{istgame}}
\hspace{-.3ex}
}
\newcommand{\anoise}{
a\hspace{-1.2ex}\raisebox{-.3ex}{\scalebox{.75}{\noise}}\hspace{-1.2ex}
}
\newcommand{\fnoise}{
f\hspace{-1.3ex}\raisebox{-.3ex}{\scalebox{.75}{\noise}}\hspace{-1.2ex}
}
\newcommand{\gnoise}{
g\hspace{-1.3ex}\raisebox{-.3ex}{\scalebox{.75}{\noise}}\hspace{-1.2ex}
}
\newcommand{\snoise}{
\sigma\hspace{-1.5ex}\raisebox{-.3ex}{\scalebox{.75}{\noise}}\hspace{-1.2ex}
}
\newcommand{\ssnoise}{
\sigma^2\hspace{-2.4ex}\raisebox{-.3ex}{\scalebox{.75}{\noise}}\hspace{-.8ex}
}
\newcommand{\lolly}{
\hspace{-1.1ex}
\raisebox{-0.5ex}{
\begin{istgame}
\xtdistance{2.mm}{1.5mm}
\setistgrowdirection{north}
\istroot(0)[null node] \istb* \endist
\end{istgame}}
\hspace{-.3ex}
}
\newcommand{\lollysmall}{
\hspace{-1.7ex}
\raisebox{-0.5ex}{
\scalebox{.75}{
\begin{istgame}
\xtdistance{2.mm}{1.5mm}
\setistgrowdirection{north}
\istroot(0)[null node] \istb* \endist
\end{istgame}}}
\hspace{-.3ex}
}
\newcommand{\alolly}{
a\hspace{-.9ex}\raisebox{-.3ex}{\scalebox{0.6}{\lolly}}\hspace{-1.ex}
}
\newcommand{\flolly}{
f\hspace{-1.2ex}\raisebox{-.3ex}{\scalebox{0.6}{\lolly}}\hspace{-1.ex}
}
\newcommand{\slolly}{
\sigma\hspace{-.9ex}\raisebox{-.3ex}{\scalebox{0.6}{\lolly}}\hspace{-1.ex}
}
\newcommand{\dumb}{
\hspace{-1.1ex}
\raisebox{-0.3ex}{
\begin{istgame}
\xtdistance{2.mm}{1.5mm}
\setistgrowdirection{north}
\istroot(0) \istb* \endist
\end{istgame}}
\hspace{-.3ex}
}
\newcommand{\dumbsmall}{
\hspace{-1.7ex}
\raisebox{-0.3ex}{
\scalebox{.75}{
\begin{istgame}
\xtdistance{2.mm}{1.5mm}
\setistgrowdirection{north}
\istroot(0) \istb* \endist
\end{istgame}}}
\hspace{-.3ex}
}
\newcommand{\xnoise}{
X\hspace{-1.4ex}
\raisebox{0.4ex}{
\begin{istgame}
\istroot(0) \endist
\end{istgame}}
\hspace{-.3ex}
}
\newcommand{\derivativecherry}{
\hspace{-1.1ex}
\raisebox{-0.5ex}{
\begin{istgame}
\xtdistance{2.mm}{1.5mm}
\setistgrowdirection{north}
\istroot(0)[null node] \istb*[double] \istb*[double] \endist
\end{istgame}}
\hspace{-.3ex}
}
\newcommand{\cherrysmall}{
\hspace{-1.7ex}
\raisebox{-0.5ex}{
\scalebox{.75}{
\begin{istgame}
\xtdistance{2.mm}{1.5mm}
\setistgrowdirection{north}
\istroot(0)[null node] \istb*[double] \istb*[double] \endist
\end{istgame}}}
\hspace{-.3ex}
}
\newcommand{\cdumb}{
C\hspace{-1ex}\raisebox{-.5ex}{\scalebox{.6}{\dumb}}\hspace{-1.3ex}
}
\newcommand{\cdumbeps}{
\cdumb_{\, ,\epsilon}
}
\newcommand{\ccherry}{
C\hspace{-1ex}\raisebox{-.5ex}{\scalebox{.6}{\derivativecherry}}\hspace{-1.5ex}
}
\newcommand{\ccherryeps}{
\ccherry_{\,,\epsilon}
}
\title{\vspace{-2ex}\bf\Large A priori bounds for stochastic porous media equations via regularity structures}
\author{Markus Tempelmayr and Hendrik Weber \\[1ex] 
{\footnotesize\textsc{Universität Münster}}%\\
%{\footnotesize\textsc{University of Münster}}%\\
%{\footnotesize\textsc{48149 Münster, Germany}}\\
%{\footnotesize\texttt{markus.tempelmayr@uni-muenster.de}}
}
\date{}
\renewenvironment{abstract}
{
\begin{center}
\begin{minipage}{.9\textwidth}\small\textbf{Abstract.}%\\[.5ex]
}
{
\end{minipage}
\end{center}
}
\newenvironment{keywords}
{
\begin{center}
\begin{minipage}{.9\textwidth}\small\textbf{Keywords:}%
}
{
\end{minipage}
\end{center}
}
\newenvironment{msc}
{
\begin{center}
\begin{minipage}{.9\textwidth}\small\textbf{MSC 2020:}%
}
{
\end{minipage}
\end{center}
}
\begin{document}

\maketitle

\begin{abstract}
We prove a priori bounds for solutions of singular stochastic porous media equations with multiplicative noise in their natural $L^1$-based regularity class. 
We consider the first singular regime, 
i.e.~noise of space-time regularity $\alpha-2$ for $\alpha\in(2/3,1)$,
and prove modelledness of the solution in the sense of regularity structures
with respect to the solution of the corresponding linear stochastic heat equation. 

The proof relies on the kinetic formulation of the equation 
and a novel renormalized energy inequality. 
A careful analysis allows to balance the degeneracy of the diffusion coefficient against sufficiently strong damping of the multiplicative noise for small values of the solution.
\end{abstract}

%%%%%%%%%%%%%%%%%%%%%%%%%%%%%%%%%%%%%%%%%%%%%%%%%%%%%%%%%%%%%%%%%%%%%%%%%%%%%%%%%%%%

\begin{keywords}
Stochastic porous medium equation, degenerate singular SPDE, regularity structures, kinetic formulation.
\end{keywords}

%%%%%%%%%%%%%%%%%%%%%%%%%%%%%%%%%%%%%%%%%%%%%%%%%%%%%%%%%%%%%%%%%%%%%%%%%%%%%%%%%%%%

\begin{msc} 
60H17, % Singular stochastic partial differential equations
35K65, % Degenerate parabolic equations
60L30, % Regularity structures
35B45. % A priori estimates in context of PDEs
\end{msc}

%%%%%%%%%%%%%%%%%%%%%%%%%%%%%%%%%%%%%%%%%%%%%%%%%%%%%%%%%%%%%%%%%%%%%%%%%%%%%%%%%%%%

\tableofcontents

%%%%%%%%%%%%%%%%%%%%%%%%%%%%%%%%%%%%%%%%%%%%%%%%%%%%%%%%%%%%%%%%%%%%%%%%%%%%%%%%%%%%

\section{Introduction}\label{sec:intro}

We are interested in the stochastic partial differential equation (SPDE)
\begin{alignat}{2}
\partial_t u - \nabla\cdot(a(u)\nabla u) 
&= \sigma(u)\xi \quad&&\textnormal{ on } (0,T)\times\T^d, \\
u(t=0,\cdot) 
&= u_0 &&\textnormal{ on } \T^d,
\end{alignat}
in the following situation: 
on the one hand we consider $a\geq0$ such that the equation is \emph{degenerate}, 
and on the other hand we consider irregular random $\xi$ such that the equation is a \emph{singular} SPDE. 
A prominent representative of this class of equations is the stochastic porous medium equation corresponding to $a(u)=M|u|^{M-1}$ for~$M>1$.

In the non-degenerate setting $a\geq\lambda>0$, the equation is singular 
if the realizations of $\xi$ take values in the parabolic H\"older space $C^{\alpha-2}$ for $\alpha<1$: 
in this case $u$ and hence $\sigma(u)$ for sufficiently smooth $\sigma$ are expected to take values in $C^\alpha$, 
and consequently $\sigma(u)\xi$ is a singular product ($\alpha+\alpha-2<0$). 
For simplicity we restrict ourselves to $\alpha>2/3$, 
including e.g.~space white noise on $\T^2$. 
In particular, $a(u)\nabla u$ is well-defined ($\alpha+\alpha-1>0$) in this range of $\alpha$. 

As is commonly done in the literature of singular SPDEs, 
we mollify $\xi$ and include a counterterm (depending on the mollification) in the equation. 
This should be done in such a way, that the sequence of resulting solutions converges to a (non-trivial) limit as the mollification is removed. 
However, in the degenerate setting, 
and since we do not necessarily want to assume differentiability of $a$ at $0$ 
(e.g.~we want to cover $a(u)=M|u|^{M-1}$ for $M>1$), 
even for smooth noise there is not necessarily a classical solution of the equation.
We therefore also regularize $a$ to be smooth and bounded away from $0$. 
Similarly, we aim for $u_0\in L^{p}(\T^d)$ for $p>1$, 
which for convenience we approximate by smooth initial conditions. 
For such $a,\xi,u_0$ we consider henceforth the regularized and renormalized version 
\begin{equation}\label{eq:spm}
\begin{alignedat}{2}
\partial_t u - \nabla\cdot(a(u)\nabla u) 
&= \sigma(u)\xi - \sigma'(u)\sigma(u) C^{a(u)} 
\quad&&\textnormal{ on } (0,T)\times\T^d, \\
u(t=0,\cdot) 
&= u_{0} && \textnormal{ on } \T^d,
\end{alignedat}
\end{equation}
with a counterterm $C^{a(u)}$ to be specified in Assumption~\ref{ass:noise} below. 
Note that this regularized version admits a unique classical local in time solution \cite[Chapter V, Theorem~8.1]{LSU68}. 
To avoid any large scale issues, 
we shall assume furthermore that $\sigma$ is compactly supported. 
As a consequence, solutions of \eqref{eq:spm} are global in time.
The main result, an a-priori regularity estimate of the solution of \eqref{eq:spm}, 
is however uniform in the qualitative smoothness of $a,\xi,u_0$.

The analysis to obtain this a-priori estimate requires a careful balancing between the two nonlinearities $a$ and $\sigma$: 
whenever $a$ vanishes and the smoothing effect of the heat operator is lost, 
the noise intensity has to be damped sufficiently by $\sigma$. 
This is made precise in the following assumption. 

\begin{assumption}[Nonlinearities]\label{ass:nonlinearities}
~
\begin{itemize}
\item[(i)](qualitative). 
Assume that $a\in C^2(\R)$, 
and that there exists $a_0>0$ such that 
\begin{equation}\label{eq:apositive}
a\geq a_0 \, .
\end{equation}
\item[(ii)](quantitative). 
Assume that there exist $C>0$ and $M>1$ such that 
\begin{equation}\label{eq:alower}
|v|^{M-1}\leq C a(v) \, , \
|a'(v)|\leq C |v|^{M-2} \, , \
|a''(v)| \leq C |v|^{M-3} \, ,
\end{equation}
the former for $v\in\R$ and the two latter for $v\in\R\setminus\{0\}$.
Furthermore, assume that $\sigma\in C^2(\R)$ is compactly supported in $[-C,C]$,
and that there exists $N\geq M+1$ such that for $v\in\R$
\begin{equation}\label{eq:supper}
|\sigma(v)|\leq C|v|^{N} \, , \quad
|\sigma'(v)|\leq C|v|^{N-1} \, , \quad
|\sigma''(v)|\leq C|v|^{N-2} \, .
\end{equation}
\end{itemize}
\end{assumption}

\noindent
The quantitative part entails in particular $a\in C(\R)\cap C^2(\R\setminus\{0\})$ and $a\geq0$.
We refer to Example~\ref{ex:pm} below to see how $a(u)=Mu^{M-1}$ fits into this assumption. 

We turn to a discussion of the noise. 
As is typical for a singular SPDE, 
the mere information $\xi\in C^{\alpha-2}$ is not enough for a well posedness theory of \eqref{eq:spm}, 
and a certain ``enhancement'' of the noise is required. 
Adopting the point of view of regularity structures\footnote{The reader not familiar with the theory of regularity structures need not be scared -- we use its terminology and ideas, however the article is self contained.}, 
we thus assume that 
a (parameter dependent) model $\Pi_\x$ is defined on the list of symbols 
$\mathcal{T}
=\{\noise,
\lolly,
\dumb,
\derivativecherry,
\xnoise
\}$ by 
\begin{align}
\Pi_\x[\noise](\y) &= \xi(\y) \, , \\
\Pi_\x[\lolly;\bar{a}](\y) &= \int_{(0,\infty)\times\R^d} d\z
\Big( \Psi\big(\bar{a},\y-\z\big) - \Psi\big(\bar{a},\x-\z\big) \Big) \xi(\z) \, , \label{eq:lolly} \\
\Pi_\x[\dumb;\bar{a}](\y) &= \Pi_\x[\lolly;\bar{a}](\y) \xi(\y) - \cdumb^{\bar{a}}(s) \, , \label{eq:dumb} \\
\Pi_\x[\derivativecherry;\bar{a}](\y) &= |\nabla_y \Pi_\x[\lolly;\bar{a}](\y)|^2 - \ccherry^{\bar a}(s) \, , \label{eq:derivativecherry} \\
\Pi_\x[\xnoise](\y) &= (y-x) \xi(\y) \, ,
\end{align}
for $\xi\colon\R\times\T^d\to\R$ from \eqref{eq:spm} and for some counterterms $\cdumb^{\bar a},\ccherry^{\bar a}$ to be specified in Assumption~\ref{ass:noise} below. 
Here and in the following we identify functions on $\T^d$ with $1$-periodic functions on $\R^d$, 
we shall write $\x=(t,x),\,\y=(s,y),\,\z=(r,z)\in\R\times\R^d$ for space-time points, 
and 
\begin{equation}\label{eq:dilatedheatkernel}
\Psi(\bar a,\x)\coloneqq \Phi(\bar at,x) \quad \textnormal{for }\bar a>0,\, \x=(t,x)\in\R\times\R^d
\end{equation}
denotes the dilation by $\bar a$ of the $d$-dimensional heat kernel 
\begin{equation}\label{eq:heatkernel}
\Phi(t,x)\coloneqq (4\pi t)^{-\frac{d}{2}} e^{-\frac{|x|^2}{4t}} 
\1_{(0,\infty)}(t) \, .
\end{equation}
We shall also write $\Pi_\x[\tau;\bar\x]$ as shorthand for $\Pi_\x[\tau;a(u(\bar\x))]$.
The reader wondering why $\Pi[\derivativecherry]$ is required to study \eqref{eq:spm} is referred to Section~\ref{sec:strategy}. 
The main assumption on the noise aside from its regularity $C^{\alpha-2}$, see \eqref{eq:modelbound} below for $\tau=\noise$,
is that the counterterms $\cdumb^{\bar a},\ccherry^{\bar a}$ are chosen such that the model can be estimated uniformly with respect to the qualitative smoothness of $\xi$, 
and that the counterterms are ``close'' to the counterterm $C^{\bar a}$ of \eqref{eq:spm}.
This is made precise in the following assumption.
\begin{assumption}[Noise]\label{ass:noise}
~
\begin{itemize}
\item[(i)] (qualitative).
Assume that $\xi\colon\R\times\T^d\to\R$ is smooth.
\item[(ii)] (quantitative).
Assume that for $\tau\in\mathcal{T}$ and compact $\mathfrak{K}\subseteq\R\times\R^d$ there exists $[\tau]_{|\tau|}<\infty$ such that
\begin{equation}\label{eq:modelbound}
|\big\langle \partial_{\bar{a}}^m \Pi_\x[\tau;\bar{a}] , \varphi_\x^\lambda \big\rangle |
\leq [\tau]_{|\tau|} \, \lambda^{|\tau|} \, \bar{a}^{-\mathfrak{e}(\tau)-m} \, ,
\end{equation}
and 
\begin{equation}\label{eq:lollybound}
| \partial_{\bar{a}}^m \Pi_\x[\lolly;\bar{a}](\y) |
\leq [\lolly]_\alpha \, \|\x-\y\|_\s^\alpha \, \bar{a}^{-\mathfrak{e}(\lollysmall)-m} \, ,
\end{equation}
uniformly for $\x,\y\in\mathfrak{K}$, $\lambda\in(0,1)$, $\bar{a}\in a(\mathrm{supp}\,\sigma)\setminus\{0\}$, $m=0,1$, and $\varphi\in\mathcal{B}$ 
the set of smooth functions supported in the (parabolic) unit ball with 
all its derivatives up to order $2$ bounded by $1$. 
Here $\|\y\|_\s$ denotes the parabolic Carnot-Carath\'eodory metric 
$(|s|+y_1^2+\dots+y_d^2)^{1/2}$, 
and $\varphi_{(t,x)}^\lambda(s,y)=\lambda^{-d-2}\varphi(\tfrac{s-t}{\lambda^2},\tfrac{y-x}{\lambda})$. 
Both the homogeneity $|\tau|$ and $\mathfrak{e}(\tau)$ 
of a symbol $\tau\in\mathcal{T}$ are given in Table~\ref{tab:hom}.

Furthermore, assume that there exists $C>0$ such that for $C^{a(v)}$ from \eqref{eq:spm}
\begin{align}
\int_0^T dt \, \sup_{v\in\R} \, 
\big| \sigma'(v)\sigma(v) \big(C^{a(v)}-\cdumb^{a(v)}(t) \big) \big| 
&\leq C \, , \label{eq:counterterm_time_independent} \\
\int_0^T dt \, \sup_{v\in\R} \, 
\big| \sigma^2(v) / v
\big(\cdumb^{a(v)}(t)-a(v) \ccherry^{a(v)}(t) \big)\big| 
&\leq C \, . \label{eq:counterterm_dumb_cherry}
\end{align}
\end{itemize}
\end{assumption}

Although the analysis carried out in this work is purely deterministic, 
probabilistic arguments are necessary to verify Assumption~\ref{ass:noise}, see Examples~\ref{ex:SWN} and \ref{ex:CN} below. 

\begin{table}[h]
\begin{center}
\begin{tabular}{c|ccccc}
$\tau$ & $\noise$ & $\dumb$ & $\derivativecherry$ & $\xnoise$ & $\lolly$ %& $X$ 
\\ 
\hline
$|\tau|$ & $\alpha-2$ & $2\alpha-2$ & $2\alpha-2$ & $\alpha-1$ & $\alpha$ %& $1$ 
\\ 
\hline 
$\mathfrak{e}(\tau)$ & $0$ & $1+\epsilon$ & $2+2\epsilon$ & $0$ & $1+\epsilon$ %& $0$
\end{tabular}
\end{center}
\caption{Homogeneity $|\tau|$ and $\mathfrak{e}(\tau)$ of the symbols $\tau\in\mathcal{T}$ sorted according to their homogeneity from small (left) to large (right), where $\epsilon>0$ is arbitrarily small.}
\label{tab:hom}
\end{table}

Before stating the main result let us finally make more precise the assumption
on the initial condition.

\begin{assumption}[Initial condition]\label{ass:initial}
~
\begin{itemize}
\item[(i)](qualitative). 
Assume that $u_0\colon\T^d\to\R$ is smooth. 
\item[(ii)](quantitative).
Assume that there exist $p>1$ and $C >0$ such that 
\begin{equation}
\|u_0\|_{L^p(\T^d)}\leq C \, .
\end{equation}
\end{itemize}
\end{assumption}

%%%%%%%%%%%%%%%%%%%%%%%%%%%%%%%%%%%%%%%%%

Under these assumptions the main result is the following a priori estimate.

\begin{theorem}[Regularity]\label{thm:main}
Assume that $\alpha\in(2/3,1)$ and 
\begin{equation}\label{eq:restrictionM}
M<1+\frac{3\alpha-2}{\alpha(2-\alpha)} \, ,
\end{equation}
and that $a$, $\sigma$, $\xi$, and $u_0$
satisfy Assumptions~\ref{ass:nonlinearities}, \ref{ass:noise}, and \ref{ass:initial} for this $M$ and $\alpha$. 
Let $u$ be the (unique, classical) solution of \eqref{eq:spm}.

Then there exist $R,T>0$ such that for $\|\y\|_\s\leq R$
\begin{equation}
\int_{D_\y} d\x\,
\big|u(\x+\y)-u(\x)\big| 
\lesssimdata \|\y\|_\s^{\alpha} \, ,
\end{equation}
where $D_\y\coloneqq\{\x\in [0,T]\times\T^d\,|\,\x+\y\in [0,T]\times\T^d\}$. 

Both $R,T$ and the implicit constant in $\lesssimdata$ depend on 
$d$, 
$\alpha$, 
$M$, 
$p$, 
$\{[\tau]_{|\tau|}\}_{\tau\in\mathcal{T}}$, 
and $C$ from Assumptions~\ref{ass:nonlinearities}, \ref{ass:noise}, and \ref{ass:initial},
but are uniform in the qualitative smoothness of $a,\xi,u_0$. 
The best\footnote{here and in the following, best is understood w.r.t.~$\|\y\|_\s\leq R$} constant (implicit in $\lesssimdata$) is denoted by $[u]_{B_{1,\infty}^{\alpha}}$.
\end{theorem}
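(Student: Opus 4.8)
The plan is to establish the $B^\alpha_{1,\infty}$-type bound through the kinetic formulation of \eqref{eq:spm}, combined with a renormalized energy inequality that accounts for the singular product $\sigma(u)\xi$. Writing $\chi(\x,v) = \1_{0<v<u(\x)} - \1_{u(\x)<v<0}$ for the kinetic function, the equation \eqref{eq:spm} translates into a linear transport-type equation for $\chi$ in the velocity variable, with a kinetic defect measure on the right-hand side encoding the (non-negative) dissipation $a(u)|\nabla u|^2 \,\delta_{v=u}$. The advantage of this formulation is that the nonlinearity $a$ enters only through $A(v) = \int_0^v a$, so the degeneracy $a(0)=0$ is handled gracefully, and the $L^1$-based norm $\int |u(\x+\y)-u(\x)|\,d\x$ is naturally controlled by $\iint |\chi(\x+\y,v)-\chi(\x,v)|\,d\x\,dv$.

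First I would subtract from $u$ the solution $v_{\mathrm{lin}}$ of the corresponding linear stochastic heat equation (dilated by a frozen coefficient $\bar a = a(u(\bar\x))$ at a base point), so that the remainder $w = u - v_{\mathrm{lin}}$ should be more regular; here the model components $\Pi_\x[\lolly]$, $\Pi_\x[\dumb]$, $\Pi_\x[\derivativecherry]$ enter precisely to make sense of, and to bound, the relevant products. Then I would test the kinetic equation against a smooth mollification at scale $\lambda$ of the doubled-variable function, obtaining a commutator identity: the time-increment of the doubled kinetic function equals the transport defect (good sign, can be dropped), plus a commutator between the degenerate diffusion and the mollification, plus the noise contribution. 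The noise term is where the renormalization matters — after inserting the counterterms $\cdumb^{\bar a}$, $\ccherry^{\bar a}$ and invoking the model bounds \eqref{eq:modelbound}–\eqref{eq:lollybound}, the quadratic-in-noise pieces become controlled, and the discrepancies between $C^{a(u)}$ and the $\cdumb^{a(u)}$, $\ccherry^{a(u)}$ are absorbed by hypotheses \eqref{eq:counterterm_time_independent}–\eqref{eq:counterterm_dumb_cherry}.

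The core estimate is then a self-improving inequality of the form
\begin{equation}
[u]_{B^\alpha_{1,\infty}} \lesssimdata 1 + \theta \,[u]_{B^\alpha_{1,\infty}}
\end{equation}
for some $\theta<1$ arising from a multiscale/iteration argument over dyadic scales $\lambda = 2^{-q}$, where the gain at each scale comes from the Hölder regularity $\alpha$ of $\Pi_\x[\lolly]$ against the loss $\alpha-2$ from each factor of $\xi$, and the constraint \eqref{eq:restrictionM} is exactly what guarantees that the degeneracy exponents (powers of $|u|$ from $a$, $a'$, $a''$, $\sigma$, $\sigma'$, $\sigma''$ in \eqref{eq:alower}–\eqref{eq:supper}) can be balanced by the available Sobolev/interpolation embeddings in the $L^1$-based class so that $\theta<1$. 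The $L^p$-bound on $u_0$ from Assumption~\ref{ass:initial}, propagated by an entropy/$L^p$-estimate for \eqref{eq:spm}, supplies the base integrability needed to start the iteration and to control the low modes; finiteness of the right side forces $[u]_{B^\alpha_{1,\infty}}<\infty$, and then choosing $R,T$ small makes the implicit constant depend only on the data listed.

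The main obstacle I anticipate is the treatment of the commutator between the degenerate diffusion $\nabla\cdot(a(u)\nabla u)$ and the mollification in the kinetic equation: one must show that this commutator is controlled by the defect measure plus a term of order $\lambda^\alpha$ times $[u]_{B^\alpha_{1,\infty}}$, and doing so requires carefully exploiting the structure $a(v)\,\delta_{v=u}$ and the inequalities \eqref{eq:alower} to trade the vanishing of $a$ near $v=0$ against the loss of parabolic smoothing there — this is the technical heart of the ``careful balancing'' advertised in the introduction, and it is where the sharp threshold \eqref{eq:restrictionM} on $M$ is produced. A secondary difficulty is keeping all estimates uniform in the qualitative smoothness of $a,\xi,u_0$, which means every bound must be phrased through the quantitative constants and the model seminorms $[\tau]_{|\tau|}$ only, never through derivatives of the regularized data.
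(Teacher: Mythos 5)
There is a genuine gap, and it sits at the heart of your plan. Your core estimate is a self-improving inequality at the level of $[u]_{B^{\alpha}_{1,\infty}}$ with $\alpha<1$, but such a bound cannot close: knowing $u$ (hence $\sigma(u)$) only to order $\alpha$ gives the product $\sigma(u)\xi$ total regularity $2\alpha-2<0$, so no reconstruction/commutator argument can control it from that information alone. The buckling must be run at the level of \emph{modelledness} of order $\beta>2-\alpha>1$, i.e.\ with the Gubinelli-type derivative $\nu$ tracked explicitly as in Theorem~\ref{thm:modelledness}; Theorem~\ref{thm:main} is then only a corollary (triangle inequality plus an $L^1$ bound on $\nu$). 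Relatedly, the threshold \eqref{eq:restrictionM} is not produced by a commutator between the degenerate diffusion and a mollification. It arises from the splitting into small and large velocities and real interpolation: the small-velocity part is controlled in $L^1$ with decay $\delta^{1/(M-1)}$ (Proposition~\ref{prop:u<}), the large-velocity part is modelled to order $2\alpha$ with blowup $\delta^{-\alpha-\epsilon}$ (Proposition~\ref{prop:u>}), and interpolation yields modelledness of order $2\alpha/(1+(M-1)(\alpha+\epsilon))$, which must exceed $2-\alpha$ for the buckling to close; \eqref{eq:restrictionM} is exactly the nonemptiness of that interval. Your proposal contains neither the velocity splitting nor the extension of real interpolation to basepoint-dependent seminorms (Lemma~\ref{lem:equivalence} and Section~5), which are the mechanisms that actually absorb the degeneracy of $a$.

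A second concrete problem is your treatment of the kinetic defect measure: you propose to drop it by its good sign. In this singular regime that is not available. The defect contains $|\nabla u|^2$, which is itself a singular product and must be renormalized by subtracting $\sigma^2(u)$ times the counterterm in \eqref{eq:derivativecherry}; after this subtraction the sign is lost, and moreover in the mild (Duhamel) formulation the kernel is differentiated in the velocity variable, so even the unrenormalized measure does not enter with a definite sign. The paper's resolution is the renormalized energy inequality (Proposition~\ref{prop:energy}): testing with a power of $u$ and exploiting that the counterterm needed for $\sigma(u)\xi$ and the one needed for $a(u)|\nabla u|^2$ cancel up to an integrable error (assumption \eqref{eq:counterterm_dumb_cherry}) restores a coercive term $a(u)|\nu|^2$, which is what ultimately controls the renormalized kinetic measure. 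Your remark on uniformity in the qualitative smoothness is fine as stated, but without these two missing mechanisms the argument does not go through.
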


Actually we shall prove the following higher order modelledness, 
of which Theorem~\ref{thm:main} is a consequence.

\begin{theorem}[Modelledness]\label{thm:modelledness}
Under the assumption of Theorem~\ref{thm:main}
and for $\beta\in(2-\alpha,\frac{2\alpha}{1+(M-1)\alpha})$, 
there exist $R,T>0$ such that for $\|\y\|_\s\leq R$
\begin{equation}
\int_{D_\y} \hspace{-1ex}d\x\, 
\big|u(\x+\y)-u(\x)-\sigma(u(\x))\Pi_\x[\lolly;a(u(\x))](\x+\y)-\nu(\x)\cdot y \big|
\lesssimdata \|\y\|_\s^\beta ,
\end{equation}
where $\Pi_\x[\lolly;\bar{a}]$ is defined in \eqref{eq:lolly} 
as the linear evolution of $\xi$ under the heat flow 
with diffusivity $\bar{a}$ and centered at $\x$, and where
$\nu\colon(0,T)\times\T^d\to\R^d$ is given by 
\begin{equation}\label{eq:gubinelliderivative}
\nu(\x) \coloneqq \nabla_z \big(u(\z) - u(\x) - \sigma(u(\x)) \Pi_\x[\lolly;\x](\z) \big)_{|\z=\x} \, .
\end{equation}
Both $R,T$ and the implicit constant in $\lesssimdata$ depend on $\beta$ and the data as in Theorem~\ref{thm:main}.
The best constant (implicit in $\lesssimdata$) is denoted by $[\mathcal{U}]_{B_{1,\infty}^\beta}$. 
\end{theorem}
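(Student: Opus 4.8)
The plan is to work with the kinetic formulation of \eqref{eq:spm} and to prove Theorem~\ref{thm:modelledness} by a continuity argument on a short time interval $[0,T]$; Theorem~\ref{thm:main} then follows by bounding $\sigma(u(\x))\Pi_\x[\lolly;\x](\x+\y)$ via \eqref{eq:lollybound} --- the prefactor $|\sigma(u(\x))|\,a(u(\x))^{-1-\epsilon}$ being uniformly bounded thanks to $N\ge M+1$ in \eqref{eq:supper}--\eqref{eq:alower} --- together with the crude estimate $\|\y\|_\s\le\|\y\|_\s^\alpha$ for the affine term when $\|\y\|_\s\le R$. Fix a base point $\x$, abbreviate $\bar a=a(u(\x))$, set $w(\x,\z)\coloneqq u(\x)+\sigma(u(\x))\Pi_\x[\lolly;\x](\z)+\nu(\x)\cdot(z-x)$ and $\mathcal{U}(\x,\z)\coloneqq u(\z)-w(\x,\z)$, which vanishes to first order at $\z=\x$. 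The observation that ties the $L^1$-based assertion to an $L^2$-energy estimate is that the kinetic function $\chi(a,v)=\1_{0<v<a}-\1_{a<v<0}$ satisfies $\int_\R|\chi(a,v)-\chi(b,v)|^2\,dv=|a-b|$; hence $\int_{D_\y}|\mathcal{U}(\x,\x+\y)|\,d\x$ is the space integral of the squared $L^2_v$-distance between $\chi(u(\z),\cdot)$ and $\chi(w(\x,\z),\cdot)$, and an $L^2$-type renormalized energy estimate for this difference will deliver the bound.

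I first record the equation solved by $\mathcal{U}$. Since $\sigma(u(\x))\Pi_\x[\lolly;\x]$ solves, in $\z$, the frozen heat equation $(\partial_r-\bar a\Delta_z)(\sigma(u(\x))\Pi_\x[\lolly;\x])(\z)=\sigma(u(\x))\xi(\z)$ (the constant part of \eqref{eq:lolly} being annihilated), and $u(\x)$ and $\nu(\x)\cdot(z-x)$ are annihilated by $\partial_r-\bar a\Delta_z$, subtracting from \eqref{eq:spm} and using $\nabla_z A(u(\z))=a(u(\z))\nabla_z u(\z)$ with $A'=a$ gives, in divergence form,
\[
(\partial_r-\bar a\Delta_z)\mathcal{U}(\x,\z)
=\nabla_z\!\cdot\!\big((a(u(\z))-\bar a)\nabla_z u(\z)\big)
+\big(\sigma(u(\z))-\sigma(u(\x))\big)\xi(\z)
-\sigma'(u(\z))\sigma(u(\z))C^{a(u(\z))}.
\]
Near $\z=\x$ both troublesome terms have homogeneity $2\alpha-2<0$: in the first, $a(u(\z))-\bar a\approx a'(u(\x))\sigma(u(\x))\Pi_\x[\lolly](\z)$ while $\nabla_z u(\z)\approx\sigma(u(\x))\nabla_z\Pi_\x[\lolly](\z)$, so one meets the square $|\nabla_z\Pi_\x[\lolly](\z)|^2=\Pi_\x[\derivativecherry](\z)+\ccherry^{\bar a}$; in the second, $\sigma(u(\z))-\sigma(u(\x))\approx\sigma'(u(\x))\sigma(u(\x))\Pi_\x[\lolly](\z)$ times $\xi(\z)$, i.e.\ $\Pi_\x[\dumb](\z)+\cdumb^{\bar a}$. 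The a priori ill-defined products $\Pi_\x[\lolly]\xi$ and $|\nabla_z\Pi_\x[\lolly]|^2$ are thereby given meaning by the postulated model components \eqref{eq:dumb}--\eqref{eq:derivativecherry}, and the accompanying counterterms $\cdumb^{\bar a}$ and $\bar a\,\ccherry^{\bar a}$ are matched against $C^{a(u(\z))}$ from \eqref{eq:spm} with residual exactly the time-integrable quantities of \eqref{eq:counterterm_time_independent}--\eqref{eq:counterterm_dumb_cherry}; this is why the symbol $\derivativecherry$ is needed.

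I then pass to the kinetic formulation of this error equation, which is the \emph{renormalized energy inequality}: writing the kinetic equations for $\chi(u(\z),v)$ and for $\chi(w(\x,\z),v)$, subtracting, testing against suitable functions of $v$ and localizing at scale $\|\y\|_\s$ yields, schematically,
\[
\int_{D_\y}|\mathcal{U}(\x,\x+\y)|\,d\x+(\text{coercive diffusion term})
\le(\text{noise and diffusion commutators})+(\text{counterterm errors}),
\]
the coercive term stemming from the nonnegative kinetic (parabolic defect) measure of $u$ and degenerating precisely where $a(u)$ vanishes, i.e.\ where $u\approx0$. The balancing that makes this close --- where Assumption~\ref{ass:nonlinearities}(ii) enters --- is that wherever the diffusion gain is weak because $|u|^{M-1}\lesssim a(u)$ is small, the noise commutator is damped by $|\sigma(u)|\lesssim|u|^{N}$ with $N\ge M+1$, which simultaneously compensates the negative powers $\bar a^{-\mathfrak{e}(\tau)}$ in \eqref{eq:modelbound}; tracking this against the homogeneity count forces $\beta<\tfrac{2\alpha}{1+(M-1)\alpha}$, while $\beta>2-\alpha$ --- compatible with the former exactly under $\alpha>2/3$ and \eqref{eq:restrictionM} --- makes $\mathcal{U}\cdot\xi$ (and the divergence-form diffusion commutator) of positive homogeneity $\beta+\alpha-2>0$ and hence classically meaningful. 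Handling the commutators through \eqref{eq:modelbound}--\eqref{eq:lollybound} and the $\alpha$-regularity of $u$ recovered by reconstruction produces a factor $T^\theta$ with $\theta>0$; combined with $\mathcal{U}(\x,\x)=0$, $\|u_0\|_{L^p}\le C$, and a dyadic reconstruction argument upgrading the $L^1$-in-$\z$ bound to a $B_{1,\infty}^\beta$ bound, one obtains a self-improving estimate $[\mathcal{U}]_{B_{1,\infty}^\beta}\lesssimdata 1+T^{\theta}P\big([u]_{B_{1,\infty}^\alpha},[\mathcal{U}]_{B_{1,\infty}^\beta}\big)$ which closes for $T$ small.

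The main obstacle is the renormalized energy inequality itself: extracting from the kinetic formulation a diffusion term that is genuinely coercive and sharp enough to dominate the singular noise commutator of homogeneity $2\alpha-2$, while keeping every estimate uniform in the mollification of $\xi$, $a$, $u_0$ --- in particular handling the quadratic gradient term $|\nabla_z\Pi_\x[\lolly]|^2$ through the renormalized symbol $\derivativecherry$ and its counterterm $\ccherry$, and controlling the loss of regularity near $u=0$ with no differentiability of $a$ there (only \eqref{eq:alower}). A secondary difficulty is the bookkeeping of the two counterterm mismatches, which must be arranged so that after localization precisely the time-integrable quantities \eqref{eq:counterterm_time_independent}--\eqref{eq:counterterm_dumb_cherry} appear.
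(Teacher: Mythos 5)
Your sketch captures the general philosophy (kinetic formulation, the need for the symbols $\dumb$ and $\derivativecherry$, the cancellation of $\cdumb^{\bar a}$ against $\bar a\,\ccherry^{\bar a}$, a buckling argument for small $T$), but it is missing the two ingredients that actually carry the paper's proof, and the step you yourself flag as ``the main obstacle'' is not a fixable technicality but the point where your route breaks down. First, there is no splitting into small and large velocities. The paper decomposes $u=u^<+u^>$ by cutting the kinetic function at $a(v)\lesssim\delta$ versus $a(v)\gtrsim\delta$: $u^<$ is estimated only in $L^1$, with decay $\delta^{1/(M-1)}$ (Proposition~\ref{prop:u<}), while $u^>$ is modelled to the higher order $2\alpha$ with a blow-up $\delta^{-\alpha-\epsilon}$ (Proposition~\ref{prop:u>}); the exponent $\beta<\tfrac{2\alpha}{1+(M-1)\alpha}$ then arises from real interpolation ($K$-method, extended to base-point dependent seminorms in Lemma~\ref{lem:equivalence}) by optimizing over $\delta$, combined with the buckling in $R,T$. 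In your proposal this exponent is asserted to come from ``tracking the homogeneity count'' in a single energy estimate, but no mechanism is given that produces it, and none is available: where $a(u)$ degenerates there is no diffusion to trade against, so a direct modelledness estimate for $u$ itself of order $\beta$ cannot be closed there; the damping by $\sigma$ controls the forcing but does not restore regularity of $u$ in the degenerate region. The whole point of the $\delta$-splitting is to concede only an $L^1$ bound there and recover $\beta$ by interpolation.

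Second, your central device --- a renormalized energy inequality for the kinetic difference between $\chi(u(\z),\cdot)$ and $\chi(w(\x,\z),\cdot)$ with a coercive defect measure dominating the commutators of homogeneity $2\alpha-2$, localized at scale $\|\y\|_\s$ --- is not established, and it is doubtful it can be: an energy/contraction estimate for differences of kinetic solutions controls a time-slice $L^1_x$ quantity \`a la Gronwall, not the two-parameter, base-point--uniform quantity $\int_{D_\y}|u(\x+\y)-w(\x,\x+\y)|\,d\x$ weighted by $\|\y\|_\s^{\beta}$; nothing in your localization produces the factor $\|\y\|_\s^{\beta}$, and the coercive term vanishes exactly where it would be needed. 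In the paper the energy argument (Proposition~\ref{prop:energy}, testing with $g(u)=\int_0^u|w|^{p-2}$) is used for a different and more modest purpose: to bound the renormalized kinetic measure, i.e.\ $\int g'(u)a(u)|\nu|^2$, exploiting that $u(\sigma(u)\xi-\sigma'\sigma C^{a(u)})$ and $a(u)\sigma(u)^2|\nabla\Pi_\x[\lolly]|^2$ need the same counterterm. The modelledness of $u^>$ itself is instead obtained from the mild formulation with the kernel $\Psi(a(u(\z)),\x-\z)$ (diffusivity at the integration point, not frozen at the base point as in your error equation), dyadically decomposed in $q$, and estimated through the tailored reconstruction/integration lemmas of Section~\ref{sec:roughkernels} which track the dependence on the rough, degenerate diffusivity --- this is where the factor $\|\y\|_\s^{2\alpha}$ and the $\delta^{-\alpha-\epsilon}$ blow-up actually come from. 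Without the velocity splitting, the interpolation step, and a proof of the reconstruction estimates for the composed kernel, your outline does not constitute a proof of Theorem~\ref{thm:modelledness}.
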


These are the first a priori estimates establishing positive space-time regularity respectively modelledness for solutions of stochastic PDEs which are both singular and degenerate.
The regularity $\alpha$ obtained in Theorem~\ref{thm:main} is optimal for noise of regularity $\alpha-2$, 
however the $L^1$-integrability and the restrictions imposed on the nonlinearities $a$ and $\sigma$ are presumably not optimal. 
Let us also mention that the order of modelledness obtained in Theorem~\ref{thm:modelledness} is larger than $2-\alpha$ and thus larger than one, 
and that the interval $(2-\alpha,2\alpha/(1+(M-1)\alpha))$ of Theorem~\ref{thm:modelledness} is not empty by \eqref{eq:restrictionM}. 

%%%%%%%%%%%%%%%%%%%%%%%%%%%%%%%%%%%%%%%%
\subsection{Strategy of the proof}\label{sec:strategy}

\noindent
\textbf{Kinetic formulation.}
To prove Theorem~\ref{thm:modelledness} we 
follow \cite{Ges21}, which established for the first time differentiability of order higher than one for solutions of the (deterministic) porous medium equation, and
appeal to the \emph{kinetic formulation} of \eqref{eq:spm}, 
i.e.~we consider the kinetic function $\chi\colon[0,T]\times\T^d\times\R\to\R$ given by 
\begin{equation}
\chi(t,x,v) = \1_{(-\infty,u(t,x))}(v) - \1_{(-\infty,0)}(v) \, ,
\end{equation}
made such that for $f\in C^1(\R)$ with $f(0)=0$
\begin{equation}
f(u(t,x)) = \int_\R dv\, f'(v) \chi(t,x,v) \, .
\end{equation}
The kinetic formulation of \eqref{eq:spm} is then
\begin{align}
&(\partial_t-a(v)\Delta)\chi(t,x,v) \\
&= \delta_{u(t,x)}(v) \big( \sigma(v) \xi(t,x)-\sigma'(v)\sigma(v)C^{a(v)}\big)
+ \partial_v \big( \delta_{u(t,x)}(v) a(v) |\nabla u(t,x)|^2 \big) \, , 
\end{align}
which has to be understood in a distributional sense, i.e.~for $\phi\in C_c^\infty((0,T)\times\T^d\times\R)$ 
%\mt{could also do support in $[0,T)$, then have to include one additional term}
%
\begin{align}
&\int_{[0,T]\times\T^d\times\R} dt\,dx\,dv\, 
\chi(t,x,v) (-\partial_t - a(v)\Delta)\phi(t,x,v) \\ 
&=\int_{[0,T]\times\T^d} dt\,dx\, \big( \sigma(u(t,x))\xi(t,x) 
- \sigma'(u(t,x))\sigma(u(t,x))C^{a(u(t,x))} \big) \phi(t,x,u(t,x)) \\
&\,- \int_{[0,T]\times\T^d} dt\,dx\, a(u(t,x)) |\nabla u(t,x)|^2 \partial_v\phi(t,x,u(t,x)) \, . \label{eq:kinetic}
\end{align}
An argument for equivalence of \eqref{eq:spm} and its kinetic formulation \eqref{eq:kinetic} is given in Appendix~\ref{app:kinetic}.

\begin{remark}[Terminology]\label{rem:language}
The kinetic formulation was introduced in \cite{LPT94} in the context of scalar conservation laws to obtain regularity results based on \emph{averaging}; 
the latter refers to quantities of the form $\int dv\,f(t,x,v)$. 
More references on subsequent works are given in the corresponding subsection below. 
The auxiliary variable $v$ was called \emph{velocity}, 
and the term $\delta_u(v)a(v)|\nabla u|^2$ is usually referred to as \emph{kinetic measure} in the literature. 
A function $u$ is called \emph{kinetic solution}, if its associated kinetic function $\chi$ satisfies \eqref{eq:kinetic} (with $=$ replaced by $\leq$). 
This notion of solution is more robust than the classical one, 
e.g.~the qualitative assumptions on $a$ and $u_0$ can be dropped. 
If such a solution is constructed by approximation with classical solutions, cf.~\cite[Theorem~3.2]{DGT20}, 
then the a priori estimate obtained in this work immediately transmits to this kinetic solution. 
\end{remark}

\medskip

\noindent
\textbf{Splitting into small and large velocities.}
The kinetic formulation allows by the representation $u=\int_\R dv\,\chi$ 
for a simple splitting of solutions into contributions from small velocities ($a(v)\lesssim\delta$), 
which can be estimated trivially in $L^1$ (cf.~Proposition~\ref{prop:u<}),
and large velocities ($a(v)\gtrsim\delta$),
the contributions of which are effectively not degenerate (cf.~Proposition~\ref{prop:u>}). 
This has been exploited in \cite{TT07} to obtain spatial regularity results for the (deterministic) porous medium equation, 
which was improved to obtain optimal spatial regularity in \cite{Ges21} 
and optimal space-time regularity in \cite{GST20}. 
The main task in these works is to obtain regularity estimates for the large velocity contributions and to trace the precise dependence on the non-degeneracy. 
As the approach is $L^1$-based, an $L^1$-forcing could be incorporated without much effort. 
This was extended to the stochastic porous medium equation for spatially coloured noise with trace-class covariance operator in \cite{BGW22} via It\^o calculus, which is naturally $L^2$-based. 

An extension to rough forcing of regularity $\alpha-2$ as in the present work comes with additional difficulties as we outline now. 
We follow \cite{BGW22} and appeal to the mild formulation of the kinetic formulation to estimate the large velocity contributions.
Neglecting the initial condition, these contributions are given by
\begin{align}
&\int_{(0,\infty)\times\R^d}\! d\z \, \Psi^>\big(a(u(\z)),\x-\z\big) 
\Big(\sigma(u(\z))\xi(\z) - (\sigma'\sigma)(u(\z))C^{a(u(\z))} \Big) \\
&\,- \int_{(0,\infty)\times\R^d}\! d\z\, 
\partial_{v} \big( \Psi^>(a(v),\x-\z) \big)_{|v=u(\z)} 
a(u(\z)) |\nabla u(\z)|^2 \, , \label{eq:ren_intro2}
\end{align}
where $\Psi^>(a,\cdot)$ vanishes for $a\leq\delta$ and coincides with $\Psi(a,\cdot)$ for $a\geq2\delta$, 
see Section~\ref{sec:mild} for details.
This expression
is potentially ill-defined for $\xi\in C^{\alpha-2}$ and $u$ with regularity $\alpha$.
Indeed, the singular forcing $\sigma(u)\xi-\sigma'(u)\sigma(u)C^{a(u)}$ has at most regularity $\alpha-2$, and is multiplied with $\Psi^>(a(u),\cdot)$ which inherits at most regularity $\alpha$ from $u$.
Furthermore, the kinetic measure contains the singular product $|\nabla u|^2$ 
(after making sense of $|\nabla u|^2$ by a renormalization procedure it is expected to have regularity $2\alpha-2$, hence the product with $\partial_v\Psi^>(a(u),\cdot)a(u)$ of regularity $\alpha$ is not problematic), 
which is the reason to include $\Pi[\derivativecherry]$ in \eqref{eq:derivativecherry}. 
However, the two singular products need the same (up to an integrable blowup) 
counterterm with opposite sign to stay under control, 
i.e.~their divergencies ``cancel'', 
as we outline after the following remark. 

\begin{remark}[Comparison to \cite{GH19}]
Of course, for non-constant $a$, convolution with $\Psi(a,\cdot)$ does not correspond to inverting the operator $\partial_t - a \Delta$. 
The mild formulation \eqref{eq:ren_intro2} via the kinetic formulation can be interpreted as considering this convolution anyway, and compensating the resulting error. 
A very similar idea was employed in \cite{GH19} to study a non-divergence form variant of \eqref{eq:spm} in a non-degenerate setting, 
with the subtle difference that integration with 
$\Psi(a(u(\x)),\x-\cdot)$ was considered instead of $\Psi(a(u(\cdot)),\x-\cdot)$. 
Their strategy has the advantage that products with the kernel are well defined (even for rough noise), whereas our strategy creates singular products 
(even for $\sigma\equiv1$ where \eqref{eq:spm} is not singular). 
However, the latter connects to the kinetic formulation and therefore allows for simple estimates of small velocity contributions.
\end{remark}

\medskip

\noindent
\textbf{Cancellations between counterterms.}
To make the necessary counterterms in \eqref{eq:ren_intro2} appear, we rewrite it as
\begin{align}
&\int d\z 
\Big[\Psi^>\big(a(v),\x-\z\big) \sigma(v) \xi(\z) - \partial_v\big( \Psi^>(a(v),\x-\z) \sigma(v) \big)
\sigma(v) \cdumb^{a(v)}(r) \Big]_{|v=u(\z)} \\
&\,- \int d\z \, 
\partial_v \big( \Psi^>(a(v),\x\!-\!\z) \big)_{|v=u(\z)}
a(u(\z)) \big( |\nabla u(\z)|^2 - \sigma^2(u(\z)) \ccherry^{a(u(\z))}(r) \big) \\
&\,+ \int d\z \, 
\partial_v \big( \Psi^>(a(v),\x - \z) \big)_{|v=u(\z)} 
\sigma^2(u(\z)) \big( \cdumb^{a(u(\z))}(r) - a(u(\z)) \ccherry^{a(u(\z))}(r)\big) \\
&\,+ \int d\z \, 
\Psi^>(a(u(\z)),\x - \z) (\sigma'\sigma)(u(\z)) \big( \cdumb^{a(u(\z))}(r)-C^{a(u(\z))} \big) \, .
\label{eq:ren_intro}
\end{align}
A heuristic argument explaining the choice of counterterm can be found in Appendix~\ref{app:heuristics_counterterm}.
The time dependence of the counterterms $\cdumb^a$ and $\ccherry^a$ in \eqref{eq:dumb} and \eqref{eq:derivativecherry} is a result of choosing $\Pi[\lolly]$ in \eqref{eq:lolly} with initial condition zero and avoiding weights in the estimates \eqref{eq:modelbound} and \eqref{eq:lollybound}. 
This is convenient in the proof later, 
but would cause a time dependent counterterm in \eqref{eq:spm} which is not desirable. 
In applications we have in mind (see Examples~\ref{ex:SWN} and \ref{ex:CN} below), 
the difference of the time dependent and the time independent counterterm has only a mild blowup near time zero,
which is integrable and thus the reason for the assumption \eqref{eq:counterterm_time_independent}, taking care of the last line of \eqref{eq:ren_intro}, 
see Proposition~\ref{prop:u3} and its proof in Section~\ref{sec:initial}. 
Similarly, if $\Pi[\lolly]$ were chosen stationary, then we could choose $\cdumb^{a(u)}=a(u)\ccherry^{a(u)}$. 
The zero initial condition causes again a mild blow up at time zero, which is however integrable and thus the reason for the assumption \eqref{eq:counterterm_dumb_cherry}, taking care of the third line of \eqref{eq:ren_intro}.
This relation between the counterterms is not surprising in view of 
\begin{align}
(\partial_t-a\Delta)\tfrac{1}{2}\Pi_\x[\lolly;a]^2
&= \Pi_\x[\lolly;a] (\partial_t-a\Delta )\Pi_\x[\lolly;a]
- a |\nabla\Pi_\x[\lolly;a]|^2 \\
&= \Pi_\x[\lolly;a] \xi
- a |\nabla\Pi_\x[\lolly;a]|^2 \, .
\end{align}
Since the left hand side is well defined as a distribution, 
a cancellation of the counterterms required to define each individual singular product on the right hand side is expected. 

\medskip

\noindent
\textbf{Reconstruction and integration with rough kernels.} 
Due to the limited regularity of the kernel $\Psi^>(a(u),\cdot)$ through composition with the solution $u$, 
a novel combination of what in regularity structures is called reconstruction and integration
is required to estimate \eqref{eq:ren_intro}. 
Section~\ref{sec:roughkernels} contains a number of abstract reconstruction and integration results which are tailored to our application to rough (through composition with the solution) kernels, 
and which trace precisely the dependence on the diffusion coefficient. 
Compensating possible blowups by $\sigma$ allows to estimate the first line of \eqref{eq:ren_intro}, see Proposition~\ref{prop:u1} and its proof in Section~\ref{sec:singularforcing}. 
This can be seen as a first main contribution of the present work. 

\begin{remark}[Restriction of $N$]\label{rem:restriction_N}
In fact, we need to assume that $\sigma,\sigma',\sigma''$ vanish sufficiently fast near zero to compensate certain blowups of $a^{-1},a',a''$ near zero. 
Since the list of necessary conditions is relatively long, 
see equations \eqref{eq:I} -- \eqref{eq:XVI}, 
the slightly stronger but much simpler Assumption~\ref{ass:nonlinearities} is made. 
In particular $N\geq M+1$ is sufficient by the upper bound on $M$ imposed in Theorem~\ref{thm:main}, which implies in particular $M<2$. 
Furthermore, the quantitative assumption on $a$ is only required on the support of $\sigma$.
The exact growth/vanishing conditions required from $a$ and $\sigma$ can be easily read off the proof. 
Let us also mention that the restriction $N\geq M+1$ might not be optimal, 
and a more careful analysis could allow to relax this assumption. 
\end{remark}

\medskip

\noindent
\textbf{Renormalized kinetic measure.}
A second main contribution is to estimate the renormalized kinetic measure appearing in the second line of \eqref{eq:ren_intro}, 
see Proposition~\ref{prop:u2} and its proof in Section~\ref{sec:energy}. 
Here we appeal to a novel energy inequality for singular equations in combination with reconstruction as we heuristically outline now; see Proposition~\ref{prop:energy} for a precise statement.
Naively testing\footnote{In Proposition~\ref{prop:energy} we rather test with $|u|^\epsilon$ for $\epsilon>0$, which only makes use of $\|u_0\|_{L^{1+\epsilon}(\T^d)}$ instead of $\|u_0\|_{L^2(\T^d)}$ as in the informal discussion here. As in \cite{GST20}, this can possibly be further optimized to $\|u_0\|_{L^1(\T^d)}$.} \eqref{eq:spm} with $u$, integrating over $[0,T]\times\T^d$, and integrating by parts, yields
\begin{align}
&\frac12 \int_{\T^d} dx\, \big( u(T,x)^2 - u(0,x)^2 \big) 
+ \int_{[0,T]\times\T^d}dtdx\, a(u)|\nabla u|^2 \\
&= \int_{[0,T]\times\T^d} dtdx\,u\big(\sigma(u)\xi-\sigma'(u)\sigma(u)C^{a(u)}\big) \, .
\end{align}
As mentioned above,
$|\nabla u|^2$ is a singular product, and including its counterterm
\begin{equation}
\int_{[0,T]\times\T^d} dtdx\, a(u)\big(|\nabla u|^2-\sigma(u)^2 \ccherry^{a(u)}\big) \, ,
\end{equation}
we lose the good sign of the term. 
However, plugging in the definition \eqref{eq:gubinelliderivative} of $\nu$ we deduce 
\begin{align}
&\frac{1}{2}\int_{\T^d}dx\, u(T,x)^2 
+ \int_{[0,T]\times\T^d}dtdx\, a(u)|\nu|^2 \\
&= \frac{1}{2}\int_{\T^d}dx\, u(0,x)^2 
+\int_{[0,T]\times\T^d} dtdx \Big( 
u\big(\sigma(u)\xi
- \sigma'(u)\sigma(u) C^{a(u)} \big) \\
&\hphantom{= \frac{1}{2}\int_{\T^d}dx\, u(0,x)^2 
+\int_{[0,T]\times\T^d} dtdx}
-2 a(u)\sigma(u)\nabla\Pi[\lolly]\cdot \nu
- a(u) \sigma(u)^2 |\nabla\Pi[\lolly]|^2 \Big) \, .
\end{align}
The left hand side has again a good sign, 
and the right hand side can be estimated by leveraging once more that $u(\sigma(u)\xi-\sigma'(u)\sigma(u)C^{a(u)})$ requires the same counterterm as $a(u)\sigma(u)^2|\nabla\Pi[\lolly]|^2$.
In conclusion $a(u)|\nu|^2$ can be estimated and consequently also 
\begin{align}
&a(u)\big(|\nabla u|^2-\sigma(u)^2\ccherry^{a(u)}\big) \\
&= a(u)|\nu|^2 + 2 a(u)\sigma(u)\nabla\Pi[\lolly]\cdot \nu + a(u)\sigma(u)^2\big(|\nabla\Pi[\lolly]|^2-\ccherry^{a(u)}\big) \, .
\end{align}

\medskip

\noindent
\textbf{Real interpolation.}
Combining via real interpolation the regularity estimates from both small and large velocity contributions obtained in Section~\ref{sec:splitting}, 
yields estimates of the solution $u$ itself, see Section~\ref{sec:interpolation}. 
However, other than in the existing literature on kinetic solutions/averaging techniques, 
for large velocity contributions we obtain modelledness instead of a plain regularity estimate. 
The method of real interpolation has therefore to be extended to basepoint dependent seminorms arising in the context of regularity structures, which is a third contribution of this work. 

\begin{remark}[Restriction of $M$]\label{rem:restrictionM}
To prove Theorem~\ref{thm:modelledness} we split $u$ into two components $u^<$ and $u^>$ depending on a parameter $\delta>0$. 
The former is estimated in $L^1$ and decays as $\delta^{1/(M-1)}$, 
whereas the latter is modelled to order $2\alpha$ and blows up as $\delta^{-\alpha-\epsilon}$ for any $\epsilon>0$. 
By real interpolation we conclude that $u$ is modelled to order $2\alpha/(1+(M-1)(\alpha+\epsilon))$, 
explaining the upper bound on $\beta$ in Theorem~\ref{thm:modelledness}. 

This is combined with a buckling argument: 
assuming modelledness of $u$, modelledness of $\sigma(u)\xi$ is established, 
which is then ``integrated'' (informally by the application of $(\partial_t-a(u)\Delta)^{-1}$) to obtain again modelledness of $u$. 
To achieve modelledness of $\sigma(u)\xi$, the order of modelledness of $u$ (and hence $\sigma(u)$) has to be sufficiently high so that the regularity $\alpha-2$ of $\xi$ adds up positive, explaining the lower bound on $\beta$ in Theorem~\ref{thm:modelledness}. 

Combining both restrictions enforces \eqref{eq:restrictionM}. 
It is conceivable that this restriction can be relaxed when considering modelledness of $A(u)=\int_0^u dv\, a(v)$ instead of $u$. 
In this case the corresponding $A(u)^<$ would be estimated in $L^1$ and decaying as $\delta^{M/(M-1)}$, and one can hope to model $A(u)^>$ to order $<2$ and blowing up as $\delta^{-\epsilon}$ for any $\epsilon>0$. 
The resulting modelledness of $A(u)$ would be just below $2$, 
which is larger than $2-\alpha$ provided $\alpha>0$.
This strategy requires significantly more effort, and we defer it to future work.
%\mt{$2-$ is not necessary, one can also just do modelledness of order $2\alpha$ with blowup of order $0-$, and the resulting modelledness after real interpolation is $2\alpha-\epsilon$, which is always larger than $2-\alpha$ provided the $-\epsilon$ loss is small enough. Description of order $2-\epsilon$ is only required for the full subcritical regime to treat $\alpha>0$.}

Let us also mention here that $u^<$ can be estimated in $L^\infty$ by $\delta^{1/(M-1)}$  as well. 
This can be used to improve the integrability in Theorem~\ref{thm:modelledness} from $L^1$ to $L^q$ for any $q<1+\alpha(M-1)$.
For simplicity we refrain from doing so, on the one hand to avoid introducing Lorentz spaces, and on the other hand since working in $L^1$ has the pleasant side effect that certain singularities at time $t=0$ are integrable so that weights can be avoided. 
\end{remark}

%%%%%%%%%%%%%%%%%%%%%%%%%%%%%%%%%%%%%%%%%%%%%
\subsection{Examples}

\begin{example}[Porous medium equation]\label{ex:pm}
The prime example we have in mind is the porous medium equation corresponding to $a(v)=M|v|^{M-1}$. 
This $a$ satisfies the quantitative Assumption~\ref{ass:nonlinearities}~(ii), 
but it does not satisfy the qualitative part (i) for $M<2$. 
This can be remedied by replacing $a$ by $a_{\epsilon}$ defined as follows.
For $|v|\geq\epsilon$ choose $a_{\epsilon}$ to coincide with $a$. 
For $|v|\leq\epsilon$ choose $a_{\epsilon}$ to be the quartic polynomial $p(v)=p_4 v^4+p_2v^2+p_0$ with 
$p_4=M(M-1)(M-3)\epsilon^{M-5}/8$, 
$p_2=M(M-1)(5-M)\epsilon^{M-3}/4$, and 
$p_0=M(1-(M-1)(7-M)/8)\epsilon^{M-1}$. 
The coefficients are chosen such that $a_{\epsilon}\in C^2(\R)$. 
Furthermore, it is possible to check that 
$a_{\epsilon}\geq p_0>0$, 
$a_{\epsilon}\geq a$, 
$|a'_{\epsilon}|\leq|a'|$, and
$|a''_{\epsilon}|\lesssim|a''|$, 
see Figure~\ref{fig:pm}. 
Thus $a_{\epsilon}$ satisfies Assumption~\ref{ass:nonlinearities}, 
and moreover it converges uniformly to $a$. 
The main result can be applied and the derived a priori estimate is uniform in $\epsilon>0$.
\hfill /\!\!/
\end{example}

\begin{figure}[h]
\centering
\includegraphics{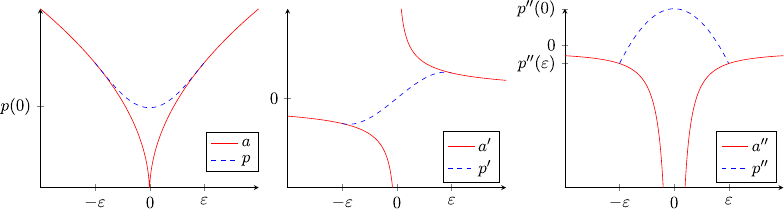}
\caption{Visualization of $a_\epsilon$ of Example~\ref{ex:pm} for $a(v)=M|v|^{M-1}$ with $M=3/2$.}
\label{fig:pm}
\end{figure}

\begin{example}[White noise on $\T^2$]\label{ex:SWN}
Consider space white noise on $\T^2$, which we realize as the random Fourier series 
\begin{equation}
\xi = \sum_{k\in(2\pi\Z)^2} e_k \hat\xi_k \, ,
\end{equation}
where $e_k(x)=e^{i k\cdot x}$ and $\hat\xi_k$ are independent (up to the usual restriction that the complex conjugate of $\hat\xi_k$ coincides with $\hat\xi_{-k}$)
complex-valued centered Gaussians with variance $1$. 
In this example $k$ is always assumed to be an element of $(2\pi\Z)^2$. 
Of course, $\xi$ is not smooth and thus does not satisfy the qualitative Assumption~\ref{ass:noise}~(i). 
This can be fixed by replacing $\xi$ by the smooth $\xi_\epsilon=\sum_{|k|\leq\epsilon^{-1}}e_k\hat\xi_k$. 
It is then easy to check that \eqref{eq:modelbound} holds for $\tau\in\{\noise,\xnoise\}$ for any $\alpha<1$ uniformly in $\epsilon>0$. 
The corresponding $\Pi[\lolly]$ is given by 
\begin{equation}
\Pi_{(t,x)}[\lolly;a](s,y)
= (s-t) \hat\xi_0
+ \sum_{0<|k|\leq\epsilon^{-1}} 
\Big( e_k(y) \frac{1-e^{-a|k|^2s}}{a|k|^2} 
- e_k(x) \frac{1-e^{-a|k|^2t}}{a|k|^2} \Big) \hat\xi_k \, .
\end{equation}
One can verify \eqref{eq:lollybound}, e.g.~for spatial increments and $\bar\epsilon>0$
\begin{align}
\E|\Pi_{(t,x)}[\lolly;a](t,y)|^2 
&= \sum_{0<|k|\leq\epsilon^{-1}} |e_k(x)-e_k(y)|^2 \Big(\frac{1-e^{-a|k|^2t}}{a|k|^2}\Big)^2 \\
&\lesssim \sum_{0<|k|\leq\epsilon^{-1}} \frac{|k|^{2-\bar\epsilon}|x-y|^{2-\bar\epsilon}}{a^2|k|^4} \, ,
\end{align}
where in the inequality we interpolated between $|e_k(x)-e_k(y)|\lesssim1$ and $|e_k(x)-e_k(y)|\lesssim|k||x-y|$. 
One can argue similarly for time increments and derivatives with respect to $a$, 
so that equivalence of moments together with Kolmogorov's continuity theorem yield for compact $\mathfrak{K}\subseteq\R\times\R^d$ and $\bar\epsilon>0$
\begin{equation}
\E\Big| \sup_{\x\neq\y\in\mathfrak{K}} \sup_{\bar a\in a(\mathrm{supp}\,\sigma)} \bar a^{1+\bar\epsilon} \frac{\Pi_{\x}[\lolly;\bar a](\y)}{\|\x-\y\|_\s^{1-\bar\epsilon}}
\Big|^p <\infty \, ,
\end{equation}
and thus almost surely \eqref{eq:lollybound}.

The counterterm $\cdumbeps^{a}(t)$ can be chosen as $t+\sum_{0<|k|\leq\epsilon^{-1}}(1-e^{-a|k|^2t})/(a|k|^2)$
such that $\Pi[\dumb]$ without recentering has vanishing expectation, 
where the time dependence of the counterterm follows from the zero initial condition of $\Pi[\dumb]$. 
With this choice one can show that $\Pi[\dumb]$ satisfies \eqref{eq:modelbound}; 
we refer the reader to \cite[Section~3.3.1]{dLF24} for further details. 
Furthermore, choosing $C_\epsilon^a=\sum_{0<|k|\leq\epsilon^{-1}}(a|k|^2)^{-1}$, 
we have for $v\in\R$ 
\begin{align}
|\sigma'(v)\sigma(v) (C_\epsilon^{a(v)} - \cdumbeps^{a(v)}(t)) |
&\leq |\sigma'(v)\sigma(v)| \Big( t + \sum_{0<|k|\leq\epsilon^{-1}}\frac{e^{-a(v)|k|^2t}}{a(v)|k|^2} \Big) \\
&\leq |\sigma'(v)\sigma(v)| \Big(t+\sum_{0<|k|\leq\epsilon^{-1}}\frac{1}{a(v)^{3/2}|k|^3\sqrt{t}} \Big) \, ,
\end{align}
where in the second inequality we used $e^{-x}\leq x^{-1/2}$.
Using Assumption~\ref{ass:nonlinearities}~(ii) and \eqref{eq:restrictionM} the above is estimated by $t + \sum_{0<|k|\leq\epsilon^{-1}}(|k|^3\sqrt{t})^{-1}$, 
which is integrable in $t$ from $0$ to $T$ and hence \eqref{eq:counterterm_time_independent} is satisfied. 

Similarly $\ccherryeps^a(t)$ can be chosen as $\sum_{0<|k|\leq\epsilon^{-1}} (1-e^{-a|k|^2t})^2/(a^2|k|^2)$ such that $\Pi[\derivativecherry]$ has vanishing expectation.
Then $\Pi[\derivativecherry]$ satisfies \eqref{eq:modelbound}, and
\begin{align}
\cdumbeps^{a(v)}(t)-a(v)\ccherryeps^{a(v)}(t) 
&= t + \sum_{0<|k|\leq\epsilon^{-1}} \Big(
\frac{1-e^{-a(v)|k|^2t}}{a(v)|k|^2}
- a(u) \frac{(1-e^{-a(v)|k|^2t})^2}{a(v)^2|k|^2} \Big) \\
&= t + \sum_{0<|k|\leq\epsilon^{-1}} 
\frac{e^{-a(v)|k|^2t}(1-e^{-a(v)|k|^2t})}{a(v)|k|^2} \\
&\leq t + \sum_{0<|k|\leq\epsilon^{-1}} 
\frac{e^{-a(v)|k|^2t}}{a(v)|k|^2} \, ,
\end{align}
and we conclude as for $C_\epsilon^a - \cdumbeps^a(t)$ that \eqref{eq:counterterm_dumb_cherry} holds. 

Altogether Assumption~\ref{ass:noise} is satisfied for $\xi_\epsilon$, and the main result can be applied to obtain a priori estimates which are uniform in $\epsilon>0$. 
\hfill /\!\!/
\end{example}

Actually, the assumed estimates \eqref{eq:modelbound} and \eqref{eq:lollybound} are a worst case scenario in terms of blowup in $\bar a$. 
This is easiest seen by considering noise which is rough in time and constant in space, e.g.~temporal white noise. In this case, $\Pi_{(t,x)}[\lolly;\bar a](s,y)=\int_t^s dr \, \xi(r)$, 
which is even independent of $\bar a$, i.e.~$\mathfrak{e}(\tau)$ could be replaced by $0$ in \eqref{eq:lollybound}. 
The following example considers a situation somewhat in between temporal noise and spatial noise as considered in the previous example, 
where $\mathfrak{e}(\tau)$ can be replaced by $\mathfrak{e}(\tau)/2$. 
Unfortunately, this improved estimate does not allow to weaken the assumption $N\geq M+1$, 
which is in particular made for equation \eqref{eq:IV} to hold, 
which in turn is independent of $\mathfrak{e(\tau)}$. 
As already mentioned in Remark~\ref{rem:restriction_N}, 
a more careful analysis might allow to take advantage of an improved \eqref{eq:modelbound} to relax the assumption on $N$. 

\begin{example}[Spatially coloured noise]\label{ex:CN}
Consider $\xi$ on $\R\times\T^d$ which is white in time and sufficiently coloured in space realized via
\begin{equation}
\xi = \sum_{k\in(2\pi\Z)^d} \sqrt{c_k} e_k dB_k \, ,
\end{equation}
where in analogy to the previous example $k$ will always be assumed to belong to $(2\pi\Z)^d$ and $e_k(x)=e^{ik\cdot x}$, and where 
\begin{equation}
c_k = \frac{1}{(1+|k|)^{d-2+2\alpha'}}
\end{equation}
for $\alpha'\in(2/3,1)$, and $B_k(t)$ are independent (again up to $\bar{B}_k=B_{-k}$)
complex-valued Brownian motions. 
Here a Fourier cutoff is not enough to obtain a smooth approximation of $\xi$ due to the roughness of $dB_k$ in time. 
Hence we also fix a compactly supported $\rho\colon\R\to[0,1]$ with $\int\rho=1$ which is even. Denoting $\rho_\epsilon=\epsilon^{-1}\rho(\cdot/\epsilon)$ and $B^{(\epsilon)} = \rho_\epsilon*B$ we approximate $\xi$ by the smooth $\xi_\epsilon=\sum_{|k|\leq\epsilon^{-1}} \sqrt{c_k} e_k dB^{(\epsilon)}_k$. 
Then \eqref{eq:modelbound} holds for $\tau\in\{\noise,\xnoise\}$ for any $\alpha<\alpha'$ uniformly in $\epsilon>0$. 
The corresponding $\Pi[\lolly]$ is given by 
\begin{equation}
\begin{split}
&\Pi_{(t,x)}[\lolly;a](s,y) \\
&= \sum_{|k|\leq\epsilon^{-1}}\sqrt{c_k} \Big( 
e_k(y) \int_0^s e^{-a|k|^2(s-r)} dB^{(\epsilon)}_k(r)
- e_k(x) \int_0^t e^{-a|k|^2(t-r)} dB^{(\epsilon)}_k(r) \Big) \, .
\end{split}
\end{equation}
For spatial increments 
\begin{align}
&\E|\Pi_{(t,x)}[\lolly;a](t,y)|^2 \\
&= \sum_{|k|\leq\epsilon^{-1}} c_k |e_k(y)-e_k(x)|^2 
\int_0^t dr \int_0^t dr'\, e^{-a|k|^2(t-r)} e^{-a|k|^2(t-r')} \rho_\epsilon*\rho_\epsilon(r-r') \\
&\leq \sum_{|k|\leq\epsilon^{-1}} c_k |e_k(y)-e_k(x)|^2 
\int_0^t dr \, e^{-2a|k|^2(t-r)}
\int_\R dr \, |\rho_\epsilon*\rho_\epsilon(r)| \\
&= \sum_{0<|k|\leq\epsilon^{-1}} c_k |e_k(y)-e_k(x)|^2 \, 
\frac{1-e^{-2a|k|^2t}}{2a|k|^2} \, ,
\end{align}
where in the inequality we have used Young's inequality for convolutions in the form of $|\int\int f(r)g(r')h(r-r')| \leq \|f\|_{L^2} \|g\|_{L^2} \|h\|_{L^1}$. 
Interpolating between $|e_k(x)-e_k(y)|\lesssim1$ and $|e_k(x)-e_k(y)|\lesssim|k||x-y|$ the above right hand side is bounded by 
\begin{equation}
\sum_{0<|k|\leq\epsilon^{-1}} |x-y|^{2\alpha} a^{-1} |k|^{-d-2\alpha'+2\alpha} \, ,
\end{equation}
which is estimated uniformly in $\epsilon>0$ by $|x-y|^{2\alpha} a^{-1}$ provided $\alpha<\alpha'$. 
One can proceed similarly with time increments and derivatives with respect to $a$, which together with equivalence of moments and Kolmogorov's continuity theorem proves \eqref{eq:lollybound} as in Example~\ref{ex:SWN}, but with $\mathfrak{e}(\lolly)$ replaced by $\mathfrak{e}(\lolly)/2$.

The counterterm $\cdumbeps^a(t)$ can be chosen as $\sum_{|k|\leq\epsilon^{-1}} c_k \int_0^t dr\,e^{-a|k|^2r}\rho_\epsilon*\rho_\epsilon(r)$ 
such that $\Pi[\dumb]$ without recentering has vanishing expectation.
If we ignored the qualitative smoothness assumption on $\xi$ and worked with $dB_k$ without mollification, 
we could construct $\Pi[\dumb]$ as It\^o integral, and the corresponding counterterm would be zero. 
Our choice of counterterm can therefore be seen to be the It\^o-Stratonovich correction.
In any case, $\Pi[\dumb]$ satisfies \eqref{eq:modelbound} for any $\alpha<\alpha'$; 
we refer again to \cite[Section~3.2.2]{dLF24} for details. 
Choosing $C_\epsilon^a= \sum_{|k|\leq\epsilon^{-1}} c_k \int_0^\infty dr\, e^{-a|k|^2r}\rho_\epsilon*\rho_\epsilon(r)$ we have for $v\in\R$
\begin{align}
|\sigma'(v)\sigma(v) (C_\epsilon^{a(v)}-\cdumbeps^{a(v)}(t))|
&= |\sigma'(v)\sigma(v)| \sum_{|k|\leq\epsilon^{-1}} c_k \int_t^\infty dr\, e^{-a(v)|k|^2r}\rho_\epsilon*\rho_\epsilon(r) \\
&\leq |\sigma'(v)\sigma(v)| \sum_{|k|\leq\epsilon^{-1}} c_k e^{-a(v)|k|^2t} \, .
\end{align}
Using $e^{-x}\leq x^{-1/2}$ and the definition of $c_k$ this is bounded by 
$|\sigma'(v)\sigma(v)|(1+\sum_{0<|k|\leq\epsilon^{-1}} |k|^{-d+1-2\alpha'} a(v)^{-1/2} t^{-1/2})$, 
which by Assumption~\ref{ass:nonlinearities}~(ii), \eqref{eq:restrictionM}, and $\alpha'>1/2$ is bounded uniformly in $\epsilon>0$ by $1+t^{-1/2}$. Since the latter is integrable in $t$ from $0$ to $T$ this verifies \eqref{eq:counterterm_time_independent}.

Similarly we choose $\ccherryeps^{a}(t)=\sum_{|k|\leq\epsilon^{-1}} c_k |k|^2 \int_0^tdr\int_0^tdr'\,e^{-a|k|^2r}e^{-a|k|^2r'}\rho_\epsilon*\rho_\epsilon(r-r')$ 
so that $\Pi[\derivativecherry]$ satisfies \eqref{eq:modelbound} for any $\alpha<\alpha'$, and 
\begin{align}
&\cdumbeps^{a(v)}(t) - a(v)\ccherryeps^{a(v)}(t) \\
&= \sum_{|k|\leq\epsilon^{-1}} \! c_k\! \int_0^t \! dr\, e^{-a(v)|k|^2r}\Big(
\rho_\epsilon\!*\!\rho_\epsilon(r)
- a(v)|k|^2\! \int_0^t \! dr'\, e^{-a(v)|k|^2r'}\rho_\epsilon\!*\!\rho_\epsilon(r-r')\Big) \, .
\end{align}
By integration by parts we observe that this equals
\begin{equation}
\sum_{|k|\leq\epsilon^{-1}} \! c_k \! \int_0^t\! dr\, e^{-a(v)|k|^2r}\Big(
e^{-a(v)|k|^2t}\rho_\epsilon*\rho_\epsilon(r-t) - \int_0^t \! dr' e^{-a(v)|k|^2r'}\tfrac{d}{dr'}\rho_\epsilon*\rho_\epsilon(r-r') \Big) .
\end{equation}
The last summand vanishes by symmetry (recall that $\rho$ and hence $\rho_\epsilon*\rho_\epsilon$ is even and its derivative thus odd), so that
\begin{equation}
|\cdumbeps^{a(v)}(t) - a(v)\ccherryeps^{a(v)}(t) |
\leq \sum_{|k|\leq\epsilon^{-1}} c_k e^{-a(v)|k|^2t} \, ,
\end{equation}
and we conclude as for $C_\epsilon^a-\cdumbeps^a(t)$ that \eqref{eq:counterterm_dumb_cherry} holds. 

Altogether Assumption~\ref{ass:noise} is satisfied for $\xi_\epsilon$ and with $\mathfrak{e}(\tau)$ replaced by $\mathfrak{e}(\tau)/2$, 
and the main result can be applied to obtain a priori estimates which are uniform in $\epsilon>0$. 
\hfill /\!\!/
\end{example}

%%%%%%%%%%%%%%%%%%%%%%%%%%%%%%%%%%%%%%%%%%%%%
\subsection{Related literature}\label{sec:literature}

On the one hand, \eqref{eq:spm} has been studied extensively in the degenerate case for noise which is white in time and coloured in space with trace-class covariance operator (such that the equation is not singular) using It\^o calculus. 
Early results are based on the monotone operator approach introduced in \cite{Par75,KR79}, 
which covers affine $\sigma$. 
However also additive space-time white noise can be dealt with \cite{BDPR16}, 
in which case the equation is again not singular; 
the monograph \cite{BDPR16} contains a modern account and further references to this approach. 
More recent results 
include \cite{DHV16,GH18} for increasingly more general $a$ and $\sigma$, 
which are based on averaging techniques and the notion of kinetic solution 
as described in Remark~\ref{rem:language}.
A larger class of $a$ and $\sigma$ is considered in \cite{DGG19,DGT20} based on the closely related notion of entropy solution. 
We refer the reader to \cite{DGG19} for a more extensive overview of the literature on this line of research. 
All of these works obtain existence and uniqueness of solutions 
(for the respective notions of solutions)
of relatively low regularity. 
Optimal regularity for kinetic solutions has been established in \cite{BGW22}.
This work and its predecessors \cite{GST20} and \cite{Ges21} on the deterministic porous medium equation, 
which are refinements of \cite{TT07}, 
are a main source of inspiration for the present work.

On the other hand, \eqref{eq:spm} and non-divergence form variants of it have been studied with pathwise techniques in the singular regime (i.e.~$\alpha<1$)
since the advent of regularity structures \cite{Hai14} and paracontrolled distributions \cite{GIP15}, 
however exclusively in the non degenerate setting where $a$ is uniformly bounded away from zero. 
First works \cite{OW19,FG19,BDH19} dealt with $\alpha>2/3$ (as the present work), 
which was then generalized to $\alpha>1/2$ in \cite{GH19,OSSW18}, 
$\alpha>2/5$ in \cite{BM23}, 
$\alpha>1/3$ in \cite{Ger20}, 
and subsequently to the full sub-critical regime $\alpha>0$ in \cite{BHK24}, 
\cite{BGN24,BD24}, and \cite{OSSW25,LOT23,LOTT24,Tem24}; 
for a more concise overview of this line of research we refer the interested reader to \cite{LOTT24}.

Let us finally mention the work \cite{DGG21} which deals with \eqref{eq:spm} in a degenerate \emph{and} singular regime. 
Existence of weak (in the probabilistic and PDE sense) solutions is proven for space-time white noise in $1+1$ dimension under fairly general assumptions on $a$ and $\sigma$.
While we impose certainly more restrictions on $a$ and $\sigma$, see however Remark~\ref{rem:restrictionM}, 
and consider more regular noise 
(space-time white noise in $1+1$ dimension corresponds to $\alpha<1/2$),
the notion of solution we work with and the obtained regularity results are stronger. 
Furthermore, our analysis does not rely on a white-in-time correlation structure of the noise. 

%%%%%%%%%%%%%%%%%%%%%%%%%%%%%%%%%%%%%%%%
%\subsection*{Structure of the paper}
%
%In Section~\ref{sec:splitting} solutions are split into contributions of small and large velocities, which are estimated in Propositions~\ref{prop:u<} and \ref{prop:u>}, respectively. 
%To estimate the contributions from large velocities several auxiliary propositions are formulated, which in turn are proven in Section~\ref{sec:averaging} via reconstruction and integration. 
%Section~\ref{sec:roughkernels} contains a number of abstract reconstruction and integration results applied in Section~\ref{sec:averaging}, 
%which are tailored to rough (through composition with the solution) kernels, 
%and which trace precisely the dependence on the diffusion coefficient. 
%In Section~\ref{sec:interpolation} the proofs of the main results are given, 
%based on real interpolation and the estimates for small and large velocities obtained in Section~\ref{sec:splitting}. 

%%%%%%%%%%%%%%%%%%%%%%%%%%%%%%%%%%%%%%%%%%%%%%
\subsection{Notation}\label{sec:notation}

$\mathbb{N}$ denotes the natural numbers including zero,
$d$ is the space dimension. 
For $k\in\N^d$ and $x\in\R^d$ we write $x^k=x_1^{k_1}\cdots x_d^{k_d}$ and $k!=k_1!\cdots k_d!$. 
For $k\in\N^d$ we write $|k|_1$ for the $\ell^1$ norm $k_1+\cdots+k_d$. 
The Euclidean norm in any dimension is denoted by $|\cdot|$. 
Space-time points are always denoted by $\x=(t,x),\,\y=(s,y),\,\z=(r,z)\in\R\times\R^d$.
Distance of space-time points is measured with respect to the parabolic Carnot-Carath\'eodory metric $\|(t,x)\|_\s\coloneqq\sqrt{|t|+x_1^2+\cdots+x_d^2}$, 
and the effective dimension of space-time equipped with this metric is $D\coloneqq d+2$. 
For $\lambda>0$ we write $\S^\lambda$ for the parabolic rescaling 
$\S^\lambda(t,x)\coloneqq(\lambda^2t, \lambda x)$, 
and note that $\|\S^\lambda(t,x)\|_\s = \lambda \|(t,x)\|_\s$. 
For $\bar{a}>0$ we write $\D^{\bar a}$ for the anisotropic dilation $\D^{\bar{a}}(t,x)\coloneqq(\bar{a}t,x)$. 
Recentered and rescaled space-time functions $\varphi$ are denoted by 
$\varphi_\x^\lambda(\y)=\lambda^{-D}\varphi(\S^\lambda(\y-\x))$.
$\mathcal{B}$ is the set of smooth functions supported in the (parabolic) unit ball with 
all its derivatives up to order $2$ bounded by $1$. 
Unless stated otherwise, $\|\cdot\|_{L^p}$ is considered on $[0,T]\times\T^d$, 
and we simply write $\|\cdot\|_\infty$ to denote $\|\cdot\|_{L^\infty(\R)}$. 
For $\alpha\in(0,1)$ 
\begin{equation}
[u]_{B_{1,\infty}^\alpha} 
= \sup_{\|\y\|_\s\leq R} \|\y\|_\s^{-\alpha} \int_{D_\y} d\x\,|u(\x+\y)-u(\x)|\, ,
\end{equation}
where $D_\y=\{\x\in [0,T]\times\T^d\,|\,\x+\y\in [0,T]\times\T^d\}$, 
and $[f(\mathcal{U})]_{B_{1,\infty}^\beta}$ is for $\beta\in(1,2)$ given by the supremum over $\|\y\|_\s\leq R$ of
\begin{equation}
%\sup_{\|\y\|_\s\leq R} 
\|\y\|_\s^{-\beta}\! \int_{D_{\y}} d\x \, 
\big| f(u(\x+\y)) - f(u(\x)) - f'(u(\x))\big(\sigma(u(\x))\Pi_{\x}[\lolly;\x](\x+\y) + \nu(\x)\cdot y\big) \big| \, ,
\end{equation}
where $\Pi_\x[\lolly;\bar \x]$ is short for $\Pi_\x[\lolly;a(u(\bar \x))]$ defined in \eqref{eq:lolly} and $\nu$ is defined in \eqref{eq:gubinelliderivative}. 
The $L^2$ inner product on $\R\times\R^d$ is denoted by $\langle\cdot,\cdot\rangle$, 
and for testing on $[0,\infty)\times\R^d$ we also write $F(\varphi)=\int_{[0,\infty)\times\R^d} d\x \,F(\x)\varphi(\x)$.
By $\lesssim$ we understand $\leq C$ for a generic constant $C$ unless stated otherwise, 
and the implicit constant in $\lesssimdata$ depends on 
$d$, 
$\alpha$, 
$\beta$, 
$M$, 
$p$, 
$\{[\tau]_{|\tau|}\}_{\tau\in\mathcal{T}}$, 
and $C$ from Assumptions~\ref{ass:nonlinearities}, \ref{ass:noise}, and \ref{ass:initial}.

%%%%%%%%%%%%%%%%%%%%%%%%%%%%%%%%%%%%%%%%%%%%%%
%%%%%%%%%%%%%%%%%%%%%%%%%%%%%%%%%%%%%%%%%%%%%%
\section{Splitting into small and large velocities}\label{sec:splitting}

To prove Theorem~\ref{thm:modelledness} we treat the contributions to $u=\int dv\, \chi$ 
from small and large velocities separately. 
In the proof it will be convenient to avoid sharp cutoffs. 
We thus fix a smooth 
$\varphi^>\colon[0,\infty)\to[0,1]$ which vanishes on $[0,1]$ and equals $1$ on $[2,\infty)$, 
and we denote $\varphi^<\coloneqq 1-\varphi^>$.
Then for $\delta>0$ we decompose
\begin{equation}\label{eq:splitu}
u(t,x)=u^<(t,x)+u^>(t,x) 
\coloneqq 
\int_\R dv\, \varphi^<(\tfrac{a(v)}{\delta}) \chi(t,x,v) 
+ \int_\R dv\, \varphi^>(\tfrac{a(v)}{\delta}) \chi(t,x,v) \, ,
\end{equation}
and estimate $u^<$ and $u^>$ separately. 
We start with the former, which is estimated only in $L^1$ but with good decay as $\delta\to0$. 

\begin{proposition}[Small velocities]\label{prop:u<}
Under the assumption of Theorem~\ref{thm:main} and for $T,\delta>0$ it holds
\begin{equation}
\int_{[0,T]\times\T^d} d\x \, |u^<(\x)| \lesssim T \, \delta^{1/(M-1)} \, ,
\end{equation}
where $\lesssim$ depends on the constant $C$ of Assumption~\ref{ass:nonlinearities}.
\end{proposition}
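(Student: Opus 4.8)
The plan is to exploit the kinetic representation $u^<(t,x)=\int_\R dv\,\varphi^<(a(v)/\delta)\chi(t,x,v)$ directly, since $u^<$ only involves velocities $v$ with $a(v)\leq 2\delta$, which by Assumption~\ref{ass:nonlinearities}~(ii) are confined to a small interval around the origin. First I would use that $|\chi(t,x,v)|\leq \1_{v\text{ between }0\text{ and }u(t,x)}$, so that pointwise
\begin{equation}
|u^<(t,x)| \leq \int_\R dv\,\varphi^<(\tfrac{a(v)}{\delta})\,\1_{\{v\text{ between }0\text{ and }u(t,x)\}} \leq \big|\{v\in\R : a(v)\leq 2\delta\}\big| \, ,
\end{equation}
where the last bound is crude but suffices; in fact one gets the slightly sharper bound $\int_\R dv\,\varphi^<(a(v)/\delta)$, independent of $(t,x)$, which is where the $L^\infty$-bound mentioned in Remark~\ref{rem:restrictionM} comes from.

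The key step is then to estimate the measure of the sublevel set $\{a(v)\leq 2\delta\}$. By the lower bound $|v|^{M-1}\leq C a(v)$ in \eqref{eq:alower}, the inclusion $a(v)\leq 2\delta$ forces $|v|^{M-1}\leq 2C\delta$, i.e. $|v|\leq (2C\delta)^{1/(M-1)}$. Hence $\big|\{v : a(v)\leq 2\delta\}\big| \leq 2(2C\delta)^{1/(M-1)} \lesssim \delta^{1/(M-1)}$, with implicit constant depending only on $C$ and $M$. Here one uses $M>1$ so that the exponent $1/(M-1)$ is positive and the bound indeed decays as $\delta\to0$.

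Finally I would integrate the pointwise bound over $[0,T]\times\T^d$: since $\T^d$ has unit volume and the pointwise bound is the constant $\lesssim\delta^{1/(M-1)}$, integrating over $t\in[0,T]$ produces the factor $T$, giving $\int_{[0,T]\times\T^d}d\x\,|u^<(\x)|\lesssim T\,\delta^{1/(M-1)}$ as claimed. There is no real obstacle here — the only point requiring a little care is to make the informal statement ``$v$ between $0$ and $u(t,x)$'' precise (the kinetic function $\chi$ is supported, in $v$, on the interval with endpoints $0$ and $u(t,x)$, with a sign depending on the sign of $u(t,x)$), and to note that this support always contains $0$, so the relevant velocities always include a full neighbourhood of the origin of size controlled by $\delta^{1/(M-1)}$ regardless of $u(t,x)$; this is exactly the content of the sublevel-set estimate above.
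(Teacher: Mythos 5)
Your proof is correct and follows essentially the same route as the paper: bound $|\chi|\leq1$ and $\varphi^<\leq\1_{[0,2]}$, then control the sublevel set $\{a(v)\leq2\delta\}$ via $|v|^{M-1}\leq Ca(v)$ from \eqref{eq:alower} to get the factor $\delta^{1/(M-1)}$, and integrate over $[0,T]\times\T^d$ for the factor $T$. The only (harmless) extra detail is your observation that the bound is already pointwise in $(t,x)$, which the paper does not need here but uses in spirit in Remark~\ref{rem:restrictionM}.
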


\begin{proof}
By definition $0\leq\varphi^< \leq\1_{[0,2]}$ and $|\chi|\leq1$, hence
\begin{equation}
\int_{[0,T]\times\T^d} d\x \, | u^<(\x) | 
= \int_{[0,T]\times\T^d} d\x \Big| \int_\R dv \, \varphi^<(\tfrac{a(v)}{\delta}) \chi(\x,v) \Big| 
\leq T \int_\R dv\, \1_{[0,2]}(\tfrac{a(v)}{\delta}) \, .
\end{equation}
We conclude by noting that \eqref{eq:alower} implies $|v|\lesssim a(v)^{1/(M-1)}$.
\end{proof}

To prove Theorem~\ref{thm:modelledness} we shall also make use of the following simple lemma. 

\begin{lemma}\label{lem:Pi<}
Let $\Pi_\x^<[\lolly;\bar a](\y) \coloneqq \varphi^<(\bar a/\delta) \Pi_\x[\lolly;\bar a](\y)$ for $\delta>0$, and let $R,T>0$. 
Then under the assumption of Theorem~\ref{thm:main} it holds for $\|\y\|_\s\leq R$
\begin{equation}
\int_{D_\y} d\x \, |\sigma(u(\x)) \Pi_\x^<[\lolly;\x](\x+\y)|
\lesssim [\lolly]_\alpha \|\y\|_\s^\alpha T\, \delta^{1/(M-1)} \, ,
\end{equation}
where $\lesssim$ depends on the constant $C$ of Assumption~\ref{ass:nonlinearities}.
\end{lemma}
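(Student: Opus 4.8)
The plan is to bound the integrand pointwise by exploiting the cutoff $\varphi^<(\bar a/\delta)$, which forces $a(u(\x))$ to be comparable to $\delta$ wherever the kernel $\Pi^<$ is nonzero, and then to combine this with the $\alpha$-regularity of $\Pi[\lolly]$ from \eqref{eq:lollybound}. Concretely, $\Pi_\x^<[\lolly;\x](\x+\y) = \varphi^<(a(u(\x))/\delta)\,\Pi_\x[\lolly;a(u(\x))](\x+\y)$, and since $\varphi^<$ is supported in $[0,2]$, the factor vanishes unless $a(u(\x))\le 2\delta$. On that set, \eqref{eq:lollybound} with $m=0$ gives $|\Pi_\x[\lolly;a(u(\x))](\x+\y)| \le [\lolly]_\alpha \|\y\|_\s^\alpha\, a(u(\x))^{-1-\epsilon}$, which is a blowup; the remedy is the damping of $\sigma$ near zero. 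Indeed, $|\sigma(u(\x))| \le C|u(\x)|^N$ by \eqref{eq:supper}, and $a(u(\x))\le 2\delta$ together with $|v|^{M-1}\le C a(v)$ from \eqref{eq:alower} forces $|u(\x)| \lesssim \delta^{1/(M-1)}$.

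The key computation is therefore a balancing of powers of $\delta$: on the support of the cutoff,
\begin{equation*}
|\sigma(u(\x))\,\Pi_\x^<[\lolly;\x](\x+\y)| \lesssim [\lolly]_\alpha \|\y\|_\s^\alpha \, |u(\x)|^N \, a(u(\x))^{-1-\epsilon} .
\end{equation*}
Using $|u(\x)|^{M-1}\lesssim a(u(\x))\lesssim\delta$ one bounds $a(u(\x))^{-1-\epsilon} \lesssim |u(\x)|^{-(M-1)(1+\epsilon)}$, so the right-hand side is $\lesssim [\lolly]_\alpha\|\y\|_\s^\alpha |u(\x)|^{N-(M-1)(1+\epsilon)}$. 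Since $N\ge M+1$ and $M<2$ (from \eqref{eq:restrictionM}), for $\epsilon>0$ small enough we have $N-(M-1)(1+\epsilon) \ge 1$, so the bound is at worst $\lesssim [\lolly]_\alpha\|\y\|_\s^\alpha |u(\x)|$; and from $|u(\x)|\lesssim a(u(\x))^{1/(M-1)}\lesssim\delta^{1/(M-1)}$ on the support of the cutoff, this is $\lesssim [\lolly]_\alpha\|\y\|_\s^\alpha\,\delta^{1/(M-1)}$ uniformly in $\x$. Integrating the pointwise bound over $\x\in D_\y\subseteq[0,T]\times\T^d$ (of measure $\le T$, using $|\T^d|=1$) yields the claimed $[\lolly]_\alpha\|\y\|_\s^\alpha\,T\,\delta^{1/(M-1)}$.

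One subtlety to be careful about: $a(u(\x))$ can equal zero (the quantitative Assumption~\ref{ass:nonlinearities}(ii) allows $a(v)=0$ at $v=0$, and \eqref{eq:lollybound} is only stated for $\bar a\in a(\mathrm{supp}\,\sigma)\setminus\{0\}$). At points where $u(\x)=0$ we simply have $\sigma(u(\x))=0$ by \eqref{eq:supper}, so the integrand vanishes there and such points contribute nothing; away from them $a(u(\x))>0$ by \eqref{eq:apositive} in the qualitative setting anyway, but the estimate is designed to be independent of that qualitative positivity. Also one should note $a(u(\x))\in a(\mathrm{supp}\,\sigma)$ precisely when $\sigma(u(\x))\neq 0$, matching the range required in \eqref{eq:lollybound}. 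I do not expect any real obstacle here — the only "hard" point is choosing $\epsilon>0$ in \eqref{eq:lollybound} small enough that $N-(M-1)(1+\epsilon)\ge 1$, which is guaranteed by $N\ge M+1$ and $M<2$; this is exactly the kind of blowup-versus-damping balancing flagged in Remark~\ref{rem:restriction_N}, appearing here in its simplest instance.
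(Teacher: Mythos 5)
Your proposal is correct and follows essentially the same route as the paper: your pointwise balancing $|\sigma(v)|\,a(v)^{-\mathfrak{e}(\lollysmall)}\lesssim |v|$ (using \eqref{eq:supper}, \eqref{eq:alower}, $N\geq M+1$, and the compact support of $\sigma$) is exactly the paper's inequality \eqref{eq:I}, after which both arguments use $0\leq\varphi^<\leq\1_{[0,2]}$ to get $|u|\lesssim a(u)^{1/(M-1)}\lesssim\delta^{1/(M-1)}$ and integrate over $D_\y$ with $|D_\y|\leq T$. The only cosmetic remark is that the step $|u|^{N-(M-1)(1+\epsilon)}\lesssim|u|$ should explicitly invoke $|u|\leq C$ on $\mathrm{supp}\,\sigma$ (not $\delta\leq1$, since $\delta>0$ is arbitrary here), which you effectively have available.
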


\begin{proof}
The model estimate \eqref{eq:lollybound} yields 
\begin{align}
\int_{D_\y} d\x \, |\sigma(u(\x)) \Pi_\x^<[\lolly;\x](\x+\y)|
&\leq [\lolly]_\alpha \|\y\|_\s^\alpha \int_{D_\y} d\x \, \big|\tfrac{\sigma(u(\x))}{a(u(\x))^{\mathfrak{e}(\lollysmall)}}\big| \, \varphi^<(\tfrac{a(u(\x))}{\delta}) \\
&\lesssim [\lolly]_\alpha \|\y\|_\s^\alpha \int_{D_\y} d\x \, |u(\x)| \, \varphi^<(\tfrac{a(u(\x))}{\delta}) \, ,
\end{align}
where in the second inequality we used that Assumption~\ref{ass:nonlinearities}~(ii) and \eqref{eq:restrictionM} imply 
% ($N-(M-1)\geq1$)
%
\begin{equation}
\Big|\frac{\sigma(v)}{a(v)^{\mathfrak{e}(\lollysmall)}}\Big|\lesssim |v| \, .
\tag{I}\label{eq:I}
\end{equation}
Using $0\leq\varphi^< \leq\1_{[0,2]}$ and noting that \eqref{eq:alower} implies $|u(\x)|\lesssim a(u(\x))^{1/(M-1)}$, we conclude together with $|D_\y|\leq T$.
\end{proof}

%%%%%%%%%%%%%%%%%%%%%%%%%%%%%%%%%%%%%%%%%%%%%%
%%%%%%%%%%%%%%%%%%%%%%%%%%%%%%%%%%%%%%%%%%%%%%

We turn to the estimate of $u^>$.
The main result of this section is modelledness of $u^>$ to order $2\alpha$ which is deteriorating as $\delta\to0$. 
To formulate this modelledness we introduce
\begin{equation}
\Pi_\x^>[\lolly;\bar{a}](\y) 
\coloneqq \varphi^>(\tfrac{\bar{a}}{\delta}) \Pi_\x[\lolly;\bar{a}](\y) \, ,
\end{equation}
and 
\begin{equation}
\nu^>(\x)\coloneqq \nabla_z\big(u^>(\z)-u^>(\x)-\sigma(u(\x))\Pi_\x^>[\lolly;\x](\z)\big)_{|\z=\x} \, .
\end{equation}
With this notation we have the following proposition. 

\begin{proposition}[Large velocities]\label{prop:u>}
Under the assumption of Theorem~\ref{thm:main} and for $\beta\in(2-\alpha,2\alpha)$, 
$\epsilon>0$, 
dyadic\footnote{i.e.~of the form $2^{-n}$ for $n\in\Z$} $\delta\in(0,1)$, and $0<R,T\leq1$ it holds for $\|\y\|_\s\leq R$
\begin{align}\label{eq:modelledness_u>}
&\int_{D_\y} d\x\,
\big|u^>(\x+\y)-u^>(\x)-\sigma(u(\x))\Pi_\x^>[\lolly;a(u(\x))](\x+\y)-\nu^>(\x)\cdot y \big| \\
&\lesssimdata \|\y\|_\s^{2\alpha} \delta^{-\alpha-\epsilon} 
\big(1+(R+T)^{1-\alpha} [\mathcal{U}]_{B_{1,\infty}^\beta} \big) \, .
\end{align}
The implicit constant in $\lesssimdata$ depends on $\beta$, $\epsilon$, and the data as in Theorem~\ref{thm:main}.
The best constant (implicit in $\lesssimdata$) is denoted by $[\mathcal{U}^>]_{B_{1,\infty}^{2\alpha}}$.
\end{proposition}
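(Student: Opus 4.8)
The strategy is to pass to the mild (kinetic) formulation of \eqref{eq:spm} restricted to large velocities, which after recentering at $\x$ produces exactly the expression \eqref{eq:ren_intro2}, and to estimate each of the four lines of its rewriting \eqref{eq:ren_intro} separately using the reconstruction/integration machinery for rough kernels of Section~\ref{sec:roughkernels}, the renormalized energy inequality of Section~\ref{sec:energy}, and the counterterm assumptions \eqref{eq:counterterm_time_independent}--\eqref{eq:counterterm_dumb_cherry}. First I would fix $\x$ and express $u^>(\x+\y)-u^>(\x)-\sigma(u(\x))\Pi^>_\x[\lolly;a(u(\x))](\x+\y)-\nu^>(\x)\cdot y$ as a sum of: (a) the contribution of the singular forcing $\sigma(u)\xi$ together with its counterterm, (b) the renormalized kinetic measure $a(u)(|\nabla u|^2-\sigma(u)^2\ccherry^{a(u)})$ paired against $\partial_v\Psi^>$, (c) a term measuring the discrepancy $\cdumb^{a(u)}-a(u)\ccherry^{a(u)}$, (d) a term measuring $\cdumb^{a(u)}-C^{a(u)}$, plus the initial-condition contribution. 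Lines (c) and (d) are controlled directly by \eqref{eq:counterterm_dumb_cherry} and \eqref{eq:counterterm_time_independent} after bounding $\Psi^>$ and $\partial_v\Psi^>$ pointwise with the degeneracy tracked by powers of $\delta$ (this is where $\delta^{-\alpha-\epsilon}$ originates, since $\mathfrak{e}(\dumbsmall)=1+\epsilon$ and $\mathfrak{e}(\derivativecherry)=2+2\epsilon$ against a kernel carrying $\bar a^{\mathfrak{e}}$, with $\bar a\gtrsim\delta$). The initial-condition term is handled as in Proposition~\ref{prop:u3}.

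For line (a) I would invoke the abstract reconstruction-and-integration result of Section~\ref{sec:roughkernels} applied to the kernel $\Psi^>(a(u(\cdot)),\x-\cdot)$, which has (through composition with $u$) regularity $\alpha$ and degeneracy controlled by $\delta$, against the germ $\sigma(v)\xi(\z)-\partial_v(\Psi^>\sigma(v))\sigma(v)\cdumb^{a(v)}$ built from the model bounds \eqref{eq:modelbound} for $\tau=\noise$ and $\tau=\dumb$. Here the hypothesized modelledness of $u$ to order $\beta\in(2-\alpha,2\alpha)$ enters: $\sigma(u)$ inherits order $\beta>2-\alpha$ so that pairing with $\xi\in C^{\alpha-2}$ adds up to a positive exponent, and the reconstruction produces an error of order $\|\y\|_\s^{2\alpha}$ with the claimed $\delta$-dependence and the factor $(R+T)^{1-\alpha}[\mathcal{U}]_{B^\beta_{1,\infty}}$ reflecting that the higher-order part of the germ is $\|\mathcal{U}\|$-controlled while the leading part is not. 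The vanishing of $\sigma$ to order $N\geq M+1$ near zero is what lets $\sigma/a^{\mathfrak{e}}$, $\sigma'\sigma/a$, etc.\ stay bounded (conditions \eqref{eq:I}--\eqref{eq:XVI}), so all the degeneracy is absorbed into powers of $\delta$ rather than blowing up pointwise.

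For line (b) I would use the renormalized energy inequality (Proposition~\ref{prop:energy}): testing \eqref{eq:spm} against $|u|^\epsilon$ and inserting the definition \eqref{eq:gubinelliderivative} of $\nu$ yields an $L^1_tL^1_x$ bound on $a(u)|\nu|^2$, hence on $a(u)(|\nabla u|^2-\sigma(u)^2\ccherry^{a(u)})$ via the algebraic identity $a(u)(|\nabla u|^2-\sigma(u)^2\ccherry^{a(u)}) = a(u)|\nu|^2 + 2a(u)\sigma(u)\nabla\Pi[\lolly]\cdot\nu + a(u)\sigma(u)^2(|\nabla\Pi[\lolly]|^2-\ccherry^{a(u)})$, with the last two pieces controlled by the model bound for $\tau=\derivativecherry$ and Cauchy--Schwarz; this makes the kinetic measure a genuine finite $L^1$ measure, and a plain reconstruction of $\partial_v\Psi^>(a(v),\x-\z)_{|v=u(\z)}$ (regularity $\alpha$ through $u$) against it gives the $\|\y\|_\s^{2\alpha}$ estimate, since $\alpha + 0 < 2\alpha$ is false — so one actually pairs the order-$\alpha$ kernel increment against the measure and uses that the measure is finite, losing no regularity and gaining $\|\y\|_\s^\alpha$ from each of two kernel increments, or more precisely uses the Section~\ref{sec:roughkernels} bound tuned to an $L^1$-germ. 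The main obstacle is precisely getting line (a) to close: one must verify that the reconstruction theorem for rough kernels applies with the degeneracy exponent $\mathfrak{e}$ exactly matching the $\delta$-powers one can afford, that the buckling constant $(R+T)^{1-\alpha}$ is genuinely small (forcing the smallness of $R,T$ in the statement), and that the cancellation between the $\cdumb$ counterterm in line (a) and the germ's own divergence is implemented correctly so that no non-integrable-in-time blowup survives — this is the delicate point flagged in the ``cancellations between counterterms'' part of Section~\ref{sec:strategy}, and it is where the bulk of the work in Section~\ref{sec:singularforcing} goes.
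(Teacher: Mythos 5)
Your overall architecture (mild/kinetic formulation, split into forcing, kinetic measure, counterterm-discrepancy and initial-condition contributions, reconstruction for the forcing, energy inequality for the kinetic measure) matches the paper, but your treatment of the renormalized kinetic measure contains a genuine gap. After inserting \eqref{eq:gubinelliderivative} and \eqref{eq:derivativecherry}, the measure splits into $a(u)|\nu|^2 + 2a(u)\sigma(u)\nabla\Pi_\z[\lolly;\z](\z)\cdot\nu(\z) + a(u)\sigma^2(u)\,\Pi_\z[\derivativecherry;\z](\z)$, and only the \emph{first} piece becomes an honest $L^1$ function via the energy inequality (Proposition~\ref{prop:energy}); it is then fed into the elementary kernel bound of Lemma~\ref{lem:integration1}, whose second-order Taylor remainder is what produces $\|\y\|_\s^{2\alpha}$ (and, incidentally, this is where the extra $\delta^{-\epsilon}$ comes from, not from the counterterm-discrepancy lines). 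The other two pieces are \emph{not} controlled in $L^1$ and cannot be, because they are diagonal evaluations of distributions of negative order: the model bound \eqref{eq:modelbound} controls $\Pi_\x[\derivativecherry;\bar a]$ and $\nabla\Pi_\x[\lolly;\bar a]$ only when tested at scale $\lambda$, not pointwise at $\z=\x$, so your step ``this makes the kinetic measure a genuine finite $L^1$ measure'' fails, and Cauchy--Schwarz against $\nu\in L^2$ is unavailable for the cross term since $\nabla\Pi[\lolly]\notin L^2$. The paper handles exactly this by freezing the basepoint in a germ $F_\x(\z;\cdot)$, proving continuity of the germ in the basepoint (Lemmas~\ref{lem:reconstruction2} and \ref{lem:reconstruction3}, which consume $[\Theta(u)]_{B^\alpha_{1,\infty}}$, $[\Theta'(u)\nu]_{B^{\beta-1}_{1,\infty}}$ and $\|\Theta'(u)\nu\|_{L^2}$), and then invoking the combined reconstruction--integration Lemma~\ref{lem:reconstruction0} to estimate the diagonal $F_\z(\z;\cdot)$; the exponent $2\alpha$ arises from the interplay of the kernel's Taylor remainder with the negative homogeneities $2\alpha-2$ and $\alpha-1$, which is precisely the point your hedged ``$\alpha+0<2\alpha$ is false'' remark leaves unresolved.

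A second, smaller but still necessary, missing step: even granting Propositions~\ref{prop:u0}--\ref{prop:u3}, their constants involve $[\Theta(\mathcal{U})]_{B^\beta_{1,\infty}}$, $[(g\sigma)(\mathcal{U})]_{B^\beta_{1,\infty}}$, $[\Theta(u)]_{B^\alpha_{1,\infty}}$, $[\Theta'(u)\nu]_{B^{\beta-1}_{1,\infty}}$ and $\|\Theta'(u)\nu\|_{L^2}^2$, and the claimed right-hand side $1+(R+T)^{1-\alpha}[\mathcal{U}]_{B^\beta_{1,\infty}}$ only follows after the ``back to $\mathcal{U}$'' reduction of Lemma~\ref{lem:absorb}. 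This is not bookkeeping: the bound on $\|\Theta'(u)\nu\|_{L^2}^2$ itself requires the energy inequality again together with a sublinear absorption argument, and without it your proposal does not close to the stated form of the estimate.
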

\noindent
The remainder of this section is dedicated to the proof of Proposition~\ref{prop:u>}, 
which is finally given in Section~\ref{sec:proof_prop:u>}.

%%%%%%%%%%%%%%%%%%%%%%%%%%%%%%%%%%%%%%%%
\subsection{Mild formulation for large velocities}
\label{sec:mild}

To prove Proposition~\ref{prop:u>} we appeal to the mild formulation of \eqref{eq:kinetic}
to rewrite 
\begin{align}
u^>(t,x) 
&= \int_\R dv\, \varphi^>(\tfrac{a(v)}{\delta}) \chi(t,x,v) \\
&= \int_\R dv\, \varphi^>(\tfrac{a(v)}{\delta}) e^{t a(v)\Delta} \chi(0,x,v) \\
&\,+ \int_0^t ds\, \Big(e^{(t-s)a(v)\Delta} \varphi^>(\tfrac{a(v)}{\delta})
\big(\sigma(v)\xi(s,x) - \sigma'(v)\sigma(v)C^{a(v)}\big) \Big)_{|v=u(s,x)}\\
&\,+ \int_\R dv\int_0^t ds \, e^{(t-s)a(v)\Delta} \varphi^>(\tfrac{a(v)}{\delta}) \, \partial_v \big(\delta_{u(s,x)}(v) a(u(s,x))|\nabla u(s,x)|^2 \big) \, ,
\end{align}
where 
\begin{equation}
\chi(0,x,v) 
= \1_{(-\infty,u_0(x))}(v) - \1_{(-\infty,0)}(v) \, .
\end{equation}
We rewrite this mild formulation in terms of the dilated heat kernel $\Psi$ defined in \eqref{eq:dilatedheatkernel} 
\begin{align}
&u^>(\x) \\
&=\int_{\R} dv \, \varphi^>(\tfrac{a(v)}{\delta}) 
\int_{\R^d} dz\, \Psi\big(a(v),t,x-z\big) \chi(0,z,v) \\
&\,+\int_{(0,\infty)\times\R^d}\! d\z \, \varphi^>(\tfrac{a(u(\z))}{\delta}) \Psi\big(a(u(\z)),\x-\z\big) 
\Big(\sigma(u(\z))\xi(\z) - (\sigma'\sigma)(u(\z))C^{a(u(\z))} \Big) \\
&\,- \int_{(0,\infty)\times\R^d}\! d\z\, 
\partial_{v} \big( \varphi^>(\tfrac{a(v)}{\delta}) \Psi(a(v),\x-\z) \big)_{|v=u(\z)} 
a(u(\z)) |\nabla u(\z)|^2 \, . 
\end{align}
As discussed in Section~\ref{sec:strategy}, this right hand side contains additional singular products. 
Crucially, both singular terms need the same (up to an integrable blowup) counterterm 
with opposite signs which we include now: 
\begin{align}
&u^>(\x) \\
&=\int_\R dv\, \varphi^>(\tfrac{a(v)}{\delta}) 
\int_{\R^d} dz \, \Psi(a(v),t,x-z\big) \chi(0,z,v) \\
&\,+\int_{(0,\infty)\times\R^d} d\z 
\Big[\varphi^>(\tfrac{a(u(\z))}{\delta}) \Psi\big(a(u(\z)),\x-\z\big) \sigma(u(\z)) \xi(\z) \\
&\hphantom{\coloneqq\int_{(0,\infty)\times\R^d}d\z}- \partial_v\Big( \varphi^>(\tfrac{a(v)}{\delta}) \Psi(a(v),\x-\z) \sigma(v) \Big)_{|v=u(\z)}
\sigma(u(\z)) \cdumb^{a(u(\z))}(r) \Big] \\
&\,-\! \int_{(0,\infty)\times\R^d} \! d\z 
\Big[ \partial_v \big( \varphi^>(\tfrac{a(v)}{\delta}) \Psi(a(v),\x\!-\!\z) \big)
a(v) \big( |\nabla u(\z)|^2 \!-\! \sigma^2(v) \ccherry^{a(v)}(r) \big) \Big]_{|v=u(\z)}  \\
&\,+\! \int_{(0,\infty)\times\R^d}\! d\z \Big[\partial_v \big( \varphi^>(\tfrac{a(v)}{\delta}) \Psi(a(v),\x\!-\!\z) \big) \sigma^2(v) \big( \cdumb^{a(v)}(r) \!-\! a(v) \ccherry^{a(v)}(r)\big) \Big]_{|v=u(\z)}\\
&\,+\! \int_{(0,\infty)\times\R^d} \! d\z \, \varphi^>(\tfrac{a(u(\z))}{\delta}) \Psi(a(u(\z)),\x\!-\!\z) (\sigma'\sigma)(u(\z)) \Big( \cdumb^{a(u(\z))}(r)-C^{a(u(\z))}\Big) \, .
\end{align}

In the proof it will be convenient to decompose $\varphi^>$ further into dyadic blocks.
We thus fix a smooth $\varphi:\R\to[0,1]$ with $\mathrm{supp}\,\varphi\subseteq[\tfrac{1}{2},2]$ such that $\sum_{q\in\Z} \varphi(2^q t)=1$ for all $t>0$. 
Choosing $\varphi^>\coloneqq \sum_{q<0} \varphi(2^q \cdot)$ we obtain for dyadic $\delta\in(0,1)$ the decomposition 
\begin{equation}\label{eq:decompose_phi>}
\varphi^>(\tfrac{a(v)}{\delta}) = \sum_{q\colon \delta<2^{-q}} \varphi(2^q a(v)) \, .
\end{equation}
Denoting 
\begin{equation}\label{eq:Psi_q}
\Psi_q(\bar{a},\x)\coloneqq \varphi(2^q\bar{a})\Psi(\bar{a},\x) \, ,
\end{equation}
and 
\begin{align}
u_0^q(\x) &\coloneqq 
\int_\R dv \int_{\R^d} dz \, \Psi_q\big(a(v),t,x-z\big) \chi(0,z,v) \, , \\
u_1^q(\x) &\coloneqq
\int_{(0,\infty)\times\R^d} d\z \Big[ \Psi_q\big(a(u(\z)),\x-\z\big) \sigma(u(\z)) \xi(\z) \\ 
&\hphantom{\coloneqq\int_{(0,\infty)\times\R^d}d\z}- 
\partial_v\Big( \Psi_q\big(a(v),\x-\z\big) \sigma(v) \Big)_{|v=u(\z)} 
\sigma(u(\z)) \cdumb^{a(u(\z))}(r) \Big] \, , \\
u_2^q(\x) &\coloneqq
\int_{(0,\infty)\times\R^d} d\z 
\Big[ \partial_v \Psi_q\big(a(v),\x-\z\big)
a(v)\big( |\nabla u(\z)|^2 - \sigma^2(v) \ccherry^{a(v)}(r) \big) \Big]_{|v=u(\z)} \, , \\
u_3^q(\x) &\coloneqq
\int_{(0,\infty)\times\R^d} d\z \, 
\Big[ \partial_v \Psi_q\big(a(v),\x-\z\big)
\sigma^2(v) \big( \cdumb^{a(v)}(r) - a(v) \ccherry^{a(v)}(r) \big) \Big]_{|v=u(\z)} \\
&\,+\int_{(0,\infty)\times\R^d} d\z \, 
\Psi_q\big(a(u(\z)),\x-\z\big)
(\sigma'\sigma)(u(\z)) \big( \cdumb^{a(u(\z))}(r) - C^{a(u(\z))} \big) \, , 
\end{align}
we obtain for $u^>$ the decomposition 
\begin{equation}
u^> = \sum_{q\colon\delta<2^{-q}} \big(u^q_0 + u^q_1 - u^q_2 + u^q_3\big)\, .
\end{equation}
Analogously we can decompose 
\begin{align}
\Pi^>_\x[\lolly;\bar{a}](\y)
&\,= \sum_{q\colon \delta<2^{-q}} \Pi_\x^q[\lolly;\bar{a}](\y) \\
&\coloneqq \sum_{q\colon \delta<2^{-q}} 
\int_{(0,\infty)\times\R^d} d\z\,\Big(\Psi_q\big(\bar{a},\y-\z\big)-\Psi_q\big(\bar{a},\x-\z\big)\Big)\xi(\z) \, .
\end{align}
To prove Proposition~\ref{prop:u>} we thus have to estimate the $L^1_\x(D_{\y})$-norm of 
\begin{align}
&u_0^q(\x+\y) - u_0^q(\x) - \nabla u_0^q(\x)\cdot y \\
&+u_1^q(\x\!+\!\y) \!-\! u_1^q(\x) \!-\! \sigma(u(\x)) \Pi_\x^q[\lolly;\x](\x\!+\!\y) \!-\! \big(\nabla u_1^q(\x) \!-\! \sigma(u(\x)) \nabla\Pi_\x^q[\lolly;\x](\x) \big) \!\cdot\! y \\
&- \big( u_2^q(\x+\y) - u_2^q(\x) - \nabla u_2^q(\x) \cdot y \big) 
+ u_3^q(\x+\y) - u_3^q(\x) - \nabla u_3^q(\x) \cdot y  \, . 
\end{align}
We break up the contributions from $u_0,\dots,u_3$ by the triangle inequality and estimate them separately 
in the four following propositions.

\begin{proposition}\label{prop:u0}
Let $u_0\in L^1(\T^d)$, $T>0$, $\y\in\R\times\R^d$, and $\delta\in(0,1)$. 
Then 
\begin{equation}
\sum_{q\colon \delta<2^{-q}} \int_{D_{\y}} d\x\,
\Big|u_0^q(\x+\y)-u_0^q(\x)-\nabla u_0^q(\x) \cdot y \Big|
\lesssim \|\y\|_\s^{2\alpha} \, \delta^{-\alpha} T^{1-\alpha} \|u_0\|_{L_1(\T^d)} \, .
\end{equation}
\end{proposition}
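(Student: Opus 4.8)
The plan is to estimate each dyadic block $q$ separately and sum a geometric series in $2^{-q}$. Fix $q$ with $\delta<2^{-q}$ and write $\bar a=a(v)$; the key object is the kernel $\Psi_q(\bar a,\x+\y)-\Psi_q(\bar a,\x)-\nabla_x\Psi_q(\bar a,\x)\cdot y$, which is a second-order Taylor remainder of the dilated heat kernel cut off to $\bar a\in[2^{-q-1},2^{-q+1}]$. First I would record the scaling: under the anisotropic dilation $\D^{\bar a}$ one has $\Psi(\bar a,t,x)=\Phi(\bar at,x)$, so $\Psi_q(\bar a,\cdot)$ lives on the parabolic scale $\lambda\sim 2^{-q/2}$ in time and $\lambda\sim 2^{-q/2}$ in space (since $\bar a t\sim 1$ forces $t\sim 2^{-q}$, and the Gaussian forces $|x|^2\sim 2^{-q}$). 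Therefore $\Psi_q(\bar a,\cdot)$ obeys, uniformly in the relevant $\bar a$,
\begin{equation}
|\nabla_x^k \Psi_q(\bar a,\x)| \lesssim 2^{-q(-D-k)/2}\,\1_{\|\x\|_\s\lesssim 2^{-q/2}} \quad\text{(up to Gaussian tails)},
\end{equation}
for $k=0,1,2$, where $D=d+2$; equivalently $\Psi_q(\bar a,\cdot)=2^{q(D-2)/2}$ times an $L^1$-normalized bump at scale $2^{-q/2}$, reflecting that $\int\Psi(\bar a,t,\cdot)\,dx=1$ for each $t$ and the effective time-width is $2^{-q}$.

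Next I would bound the Taylor remainder: by the standard second-order Taylor estimate with the parabolic scaling of the kernel,
\begin{equation}
\big| \Psi_q(\bar a,\x+\y) - \Psi_q(\bar a,\x) - \nabla_x\Psi_q(\bar a,\x)\cdot y \big|
\lesssim \|\y\|_\s^{2}\, 2^{q}\,\sup_{|\theta|\le 1}\big(|\nabla_x^2\Psi_q|+\tfrac1{\|\cdot\|_\s}|\nabla_x\Psi_q|\big)(\bar a,\x+\theta\y),
\end{equation}
but I would rather interpolate this against the crude bound $\lesssim |\Psi_q(\bar a,\x+\y)|+|\Psi_q(\bar a,\x)|+\|\y\|_\s|\nabla\Psi_q(\bar a,\x)|$ to get, for the optimal exponent $2\alpha\in(0,2)$, a factor $\|\y\|_\s^{2\alpha}$ times $2^{q\alpha}$ times an $L^1_x$-normalized object at scale $2^{-q/2}$ — the point being that two derivatives cost $2^{q}$, so $2\alpha$ derivatives cost $2^{q\alpha}$. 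Then, integrating in $\x$ over $D_\y$ against $\chi(0,z,v)$ and $dv$: since $|\chi(0,z,v)|\le 1$ and $\chi(0,z,v)$ is supported in $|v|\le|u_0(z)|$, the $v$-integral over $\{\delta<a(v)<2^{-q+1}\}$ contributes at most $\int_\R dv\,\1_{a(v)\le 2^{-q+1}}\lesssim 2^{-q/(2(M-1))}$ by \eqref{eq:alower} — actually I expect the clean route is to integrate $dz$ first against $\|u_0\|_{L^1}$ and keep the $v$-support bound only through $a(v)\in\mathrm{supp}\,\varphi(2^q\cdot)$. The time integral $\int_0^T dt$ of the $L^1$-normalized-in-$x$ kernel at time-scale $2^{-q}$ contributes $\min(T,2^{-q})^{\,?}$; more carefully, $\int_0^T\|\Psi_q(\bar a,t,\cdot)\|_{L^1_x}\,dt$ — using $\|\Psi(\bar a,t,\cdot)\|_{L^1_x}=1$ and the $t$-support $t\lesssim 2^{-q}$ — gives $\lesssim\min(T,2^{-q})$, but for the remainder-with-$2\alpha$-derivatives the $t$-integrability improves the blow-up, yielding overall $\lesssim \|\y\|_\s^{2\alpha}\,2^{q\alpha}\,T^{1-\alpha}\,\|u_0\|_{L^1(\T^d)}$ per block $q$ (the $T^{1-\alpha}$ coming from $\int_0^T t^{-\alpha}dt$ after the two derivatives put a $t^{-1}$ and interpolation converts it to $t^{-\alpha}$).

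Finally I would sum over $q$ with $\delta<2^{-q}$: $\sum_{q:\,2^{-q}>\delta} 2^{q\alpha}\lesssim \delta^{-\alpha}$ since $\alpha>0$ makes this a convergent geometric series dominated by its largest term $2^{q\alpha}\sim\delta^{-\alpha}$. This yields exactly
\begin{equation}
\sum_{q:\,\delta<2^{-q}}\int_{D_\y}d\x\,\big|u_0^q(\x+\y)-u_0^q(\x)-\nabla u_0^q(\x)\cdot y\big| \lesssim \|\y\|_\s^{2\alpha}\,\delta^{-\alpha}\,T^{1-\alpha}\,\|u_0\|_{L^1(\T^d)}.
\end{equation}
The main obstacle I anticipate is getting the powers of $2^{-q}$ bookkeeping exactly right: one must simultaneously track (a) the $2^{q(D-2)/2}$ height of $\Psi_q$ against its $2^{-q/2}$ spatial width so that the $L^1_x$ norm is $O(1)$, (b) the $2^{q}$ cost of two spatial derivatives converted via interpolation to $2^{q\alpha}$ with gain $\|\y\|_\s^{2\alpha}$, and (c) the $t$-integration, where the naive $t^{-1}$ from two derivatives is not integrable at $0$ but the interpolated $t^{-\alpha}$ is, producing $T^{1-\alpha}$ — these three have to conspire to leave precisely $2^{q\alpha}$, after which summability over $q$ is immediate from $\alpha>0$. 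The Gaussian tails of $\Psi$ (so that "supported at scale $2^{-q/2}$" is only morally true) are handled routinely by absorbing polynomial weights into the exponential.
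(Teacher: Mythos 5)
Your proposal is, in substance, the paper's own argument: the paper proves Proposition~\ref{prop:u0} by applying Lemma~\ref{lem:integration1} with $f\equiv a(v)$ and $g=\chi(0,\cdot,v)$, and that lemma is exactly the per-block estimate you outline — the $L^1_x$ bounds $\int_{\R^d}dx\,|\partial_x^k\Phi(\bar a t,x)|\lesssim(\bar a t)^{-|k|_1/2}$, interpolation between the first- and second-order Taylor bounds to convert $\|\y\|_\s^{2}(\bar a t)^{-1}$ into $\|\y\|_\s^{2\alpha}(\bar a t)^{-\alpha}$, the time integral $\int_0^T t^{-\alpha}\,dt\lesssim T^{1-\alpha}$, the cutoff $\tilde\varphi(2^q\bar a)$ turning $\bar a^{-\alpha}$ into $2^{q\alpha}$, the identity $\int_\R dv\,|\chi(0,z,v)|\leq|u_0(z)|$, and finally the geometric sum over $q$ with $2^{-q}>\delta$. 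So the bookkeeping in your closing paragraph is correct and matches the paper.

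One caveat: the kernel ``localization'' you record at the outset is false as stated, and you should not let it into the written proof. The cutoff $\varphi(2^q\bar a)$ restricts the \emph{diffusivity}, not the time variable: $\Psi_q(\bar a,t,x)=\varphi(2^q\bar a)\,\Phi(\bar a t,x)$ is spread over all of $t\in(0,T)$ with $\|\Phi(\bar a t,\cdot)\|_{L^1_x}=1$ for every $t$, so neither the pointwise bound $|\nabla_x^k\Psi_q(\bar a,\x)|\lesssim 2^{q(D+k)/2}\1_{\|\x\|_\s\lesssim 2^{-q/2}}$ nor $\int_0^T\|\Psi_q(\bar a,t,\cdot)\|_{L^1_x}\,dt\lesssim\min(T,2^{-q})$ holds (the latter equals $T$ on the support of the cutoff), and ``$\bar a t\sim1$ forces $t\sim2^{-q}$'' should read $t\sim2^{q}$, which is irrelevant for $t\leq T\leq1$. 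Fortunately your final count never uses these claims: what it actually uses is that each spatial derivative costs $(\bar a t)^{-1/2}\sim 2^{q/2}t^{-1/2}$ in $L^1_x$, which is the correct mechanism producing both the $2^{q\alpha}$ and the integrable $t^{-\alpha}$. When writing it up, also treat the time increments of the kernel, including the boundary cases where exactly one of $t-r$, $t+s-r$ is negative (there the kernel vanishes and one uses $1\leq|s|/t$-type bounds), as in Step~1 of Lemma~\ref{lem:integration1}; and note the minor slip $\int dv\,\1_{a(v)\leq2^{-q+1}}\lesssim2^{-q/(M-1)}$, not $2^{-q/(2(M-1))}$ — harmless, since you rightly discard that route in favour of $\int dv\,|\chi(0,z,v)|\leq|u_0(z)|$.
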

\noindent
The proof of Proposition~\ref{prop:u0} is given in Section~\ref{sec:initial}.

In the following two propositions it is convenient to consider a smooth $\Theta\colon\R\to\R$ which is monotone increasing, coincides with the identity on the support of $\sigma$, and is constant outside a neighbourhood of the support of $\sigma$, so that in particular 
\begin{equation}\label{eq:Theta}
f(u)\sigma(u)=f(\Theta(u))\sigma(\Theta(u)) \quad\textnormal{and}\quad 
\sigma(u)=\Theta'(u)\sigma(u) \, .
\end{equation}
This will ensure in the following that $\nu$ is only evaluated where $|u|\lesssim1$, so that its $L^2$ norm can be estimated by $[\mathcal{U}]_{B_{1,\infty}^\beta}$, see Lemma~\ref{lem:absorb}~\ref{it:nu^2_0}.
\begin{proposition}\label{prop:u1}
Let the assumption of Theorem~\ref{thm:main} hold.
Furthermore let $0<R,T\leq1$, $\beta\in(2-\alpha,2\alpha]$, and $\delta\in(0,1)$. 
Then for $\|\y\|_\s\leq R$
\begin{align}
\sum_{q\colon\delta<2^{-q}} \int_{D_{\y}} d\x\,
\Big|u_1^q(\x+\y)&-u_1^q(\x)-\sigma(u(\x))\Pi_\x^q[\lolly;\x](\x+\y) \\
&\!\!\!\!- \big(\nabla u_1^q(\x) - \sigma(u(\x)) \nabla\Pi_\x^q[\lolly;\x](\x) \big) \cdot y \Big|
\lesssim \|\y\|_\s^{2\alpha} \delta^{-\alpha} \, C_1 \, ,
\end{align}
where $\lesssim$ depends on $\beta$ and $C$ of Assumption~\ref{ass:nonlinearities}, and where $C_1$ is given by 
\begin{align}
C_1&=
[\dumb]_{2\alpha-2} T \\
&\,+ R^{1-\alpha} [\xnoise]_{\alpha-1} \|\Theta'(u)\nu\|_{L^1} \\
&\,+ R^{1-\alpha} 
[\xi]_{\alpha-2} 
\Big( [\Theta(\mathcal{U})]_{B^{\beta}_{1,\infty}} 
+ [\lolly]_\alpha^2 T
+ [\lolly]_\alpha \|\Theta'(u)\nu\|_{L^1} 
+ \|\Theta'(u)\nu\|_{L^2}^2 \Big) \\
&\,+ R^{1-\alpha}  
[\dumb]_{2\alpha-2} [\Theta(u)]_{B^\alpha_{1,\infty}} \\
&\,+ R^{1-\alpha} 
[\xnoise]_{\alpha-1} 
\Big( 
[\Theta(u)]_{B^\alpha_{1,\infty}}^{1/2} \|\Theta'(u)\nu\|_{L^2} 
+ [\Theta'(u)\nu]_{B^{\beta-1}_{1,\infty}} \Big)
\, ,
\end{align}
with $[\Theta(\mathcal{U})]_{B_{1,\infty}^\beta}$ defined as the supremum over $\|\y\|_\s\leq R$ of
\begin{equation}
%\sup_{\|\y\|_\s\leq R} 
\|\y\|_\s^{-\beta}\! \int_{D_{\y}} d\x \, 
\big| \Theta(u(\x+\y)) - \Theta(u(\x)) - \Theta'(u(\x))\big(\sigma(u(\x))\Pi_{\x}[\lolly;\x](\x+\y) + \nu(\x)\cdot y\big) \big| \, .
\end{equation}
\end{proposition}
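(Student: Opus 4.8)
The goal is to estimate the $L^1_\x(D_\y)$-norm of the ``second-order increment'' of $u_1^q$ around the basepoint $\x$, summed over $q$ with $\delta<2^{-q}$. The central object is the singular forcing $\sigma(u(\z))\xi(\z) - \partial_v(\Psi_q(a(v),\x-\z)\sigma(v))_{|v=u(\z)}\sigma(u(\z))\cdumb^{a(u(\z))}(r)$, convolved against $\Psi_q(a(u(\z)),\x-\z)$. The plan is to recognize $u_1^q$ as the output of an abstract reconstruction-plus-integration operation applied to a suitable ``modelled distribution'' built from $\sigma(u)$, $\nu$, and the model symbols $\noise,\xnoise,\dumb$. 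First I would write $\sigma(v)$ and $a(v)$ via $\Theta$ (using \eqref{eq:Theta}) so that all nonlinearities are evaluated where $|u|\lesssim1$, which is what makes the blowups in $a^{-1},a',a''$ controllable by the vanishing of $\sigma$ near zero — this is exactly where conditions like \eqref{eq:I} (and its siblings up to \eqref{eq:XVI}) enter. Then I would Taylor-expand $\sigma(u(\z))$ and the relevant $v$-derivatives of $\Psi_q(a(v),\cdot)\sigma(v)$ around $v=u(\x)$, producing a local expansion whose leading term reconstructs $\sigma(u(\x))\Pi_\x^q[\lolly;\x](\x+\y)$, whose first-order term accounts for the $\nu$-contribution and the $\xnoise$-contribution, and whose remainder is governed by the $B^\beta_{1,\infty}$-modelledness of $\Theta(u)$ and the $B^\alpha_{1,\infty}$-regularity of $\Theta(u)$.

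Concretely, the key steps in order are: (1) split $u_1^q$ into the ``pure noise'' part (forcing $\sigma(u)\xi$, kernel $\Psi_q(a(u),\cdot)$) and the ``counterterm'' part (forcing $\sigma'\sigma(u)\cdumb$ and the $\partial_v\Psi_q$ term); the latter is not singular and yields the $[\dumb]_{2\alpha-2}T$ and $R^{1-\alpha}[\dumb]_{2\alpha-2}[\Theta(u)]_{B^\alpha_{1,\infty}}$ contributions via a direct Taylor estimate on $\Psi_q$ using \eqref{eq:modelbound}. (2) For the singular part, invoke the abstract reconstruction/integration results of Section~\ref{sec:roughkernels} with the rough kernel $\Psi_q(a(u(\cdot)),\x-\cdot)\sigma(u(\cdot))$: the reconstruction error is controlled by the modelledness of the coefficient field, and the integration step converts regularity $\alpha-2$ of $\xi$ plus modelledness $\beta>2-\alpha$ of $\Theta(u)$ into the $\|\y\|_\s^{2\alpha}$ gain, with the $\delta^{-\alpha}$ factor coming from summing $\sum_{q:\delta<2^{-q}} 2^{-q\alpha}$ weighted by the $\bar a^{-\mathfrak e(\tau)}$ blowup in \eqref{eq:modelbound}, localized to $\bar a\sim 2^{-q}$ by the cutoff $\varphi(2^q a(v))$. (3) Carefully bookkeep which cross-terms in the Taylor expansion produce which summand of $C_1$: the $[\lolly]_\alpha^2 T$ term comes from the quadratic term $\tfrac12\sigma''(u(\x))(\sigma(u(\x))\Pi_\x[\lolly;\x])^2$, the $[\lolly]_\alpha\|\Theta'(u)\nu\|_{L^1}$ and $\|\Theta'(u)\nu\|_{L^2}^2$ terms from the mixed $\Pi[\lolly]\cdot\nu$ and $|\nu|^2$ contributions, and the $[\xnoise]_{\alpha-1}$-weighted terms from expanding the kernel to first order in $(z-x)$.

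\textbf{Main obstacle.} The hard part will be step (2): setting up the reconstruction of a product $(\text{coefficient field})\times\xi$ where the coefficient field $\z\mapsto\Psi_q(a(u(\z)),\x-\z)\sigma(u(\z))$ has only the \emph{composite} regularity inherited from $u$ (at best $\alpha<1$, so the classical Young-type integration $\alpha+(\alpha-2)<0$ fails) and simultaneously depends on the basepoint $\x$ through the kernel's center. One must show that the counterterm $\cdumb^{a(u)}$ is precisely the one needed to make the reconstruction well-defined, i.e.\ that the naive product's divergence is cancelled, and then track the $\bar a$-dependence through every estimate so that the final bound carries exactly $\delta^{-\alpha}$ and not a worse power — this is where the precise exponents $\mathfrak e(\dumb)=1+\epsilon$, $\mathfrak e(\xnoise)=0$ in Table~\ref{tab:hom} get used, and where the restriction \eqref{eq:restrictionM} on $M$ (via $\beta<2\alpha/(1+(M-1)\alpha)$) must be verified to be compatible with the requirement $\beta>2-\alpha$ coming from making the integration converge. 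Keeping the algebra of Taylor remainders organized so that exactly the listed six groups of terms in $C_1$ appear, with no spurious higher-order pieces, will require the abstract machinery of Section~\ref{sec:roughkernels} to be stated flexibly enough; I expect the proof to largely consist of verifying hypotheses of those lemmas and then summing the dyadic $q$-series.
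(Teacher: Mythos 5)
Your overall plan—use the Section~\ref{sec:roughkernels} machinery, localize via $\Theta$, Taylor-expand around the basepoint value $u(\x)$, and get $\delta^{-\alpha}$ from the dyadic sum over $q$—is in the spirit of the paper, but your step (1) contains a genuine flaw. You propose to split $u_1^q$ into a ``pure noise'' part and a ``counterterm'' part and to estimate the latter on its own, claiming it is non-singular and yields the $[\dumb]_{2\alpha-2}T$ and $R^{1-\alpha}[\dumb]_{2\alpha-2}[\Theta(u)]_{B^\alpha_{1,\infty}}$ contributions ``via a direct Taylor estimate on $\Psi_q$ using \eqref{eq:modelbound}''. This cannot work uniformly in the regularization of $\xi$: nothing in Assumption~\ref{ass:noise} controls $\cdumb^{\bar a}(t)$ by itself (only the renormalized object $\Pi[\dumb]$ via \eqref{eq:modelbound}, and the differences $C^{a}-\cdumb^{a}$, $\cdumb^{a}-a\ccherry^{a}$), and in the examples $\cdumbeps$ diverges as the mollification is removed. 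The divergence of the counterterm integral must be paired with the divergence of the diagonal product $\Pi_\x[\lolly;\x]\,\xi$ \emph{before} any estimate; this is exactly what the paper's decomposition does. There the full combination (noise term, counterterm term, and the subtracted $\sigma(u(\x))\Pi^q_\x[\lolly;\x]$ term) is rewritten as the modelled contributions \eqref{eq:nf1}--\eqref{eq:nf2}, which involve $\Pi_\x[\dumb;\x]$ and $\Pi_\x[\xnoise]$ frozen at the basepoint, plus a commutator \eqref{eq:nf3} of the form $\bar F_\z-\bar F_\x$ for the $F_\x$ of \eqref{eq:F0}; \eqref{eq:nf1}--\eqref{eq:nf2} are handled by Lemma~\ref{lem:integration2} (this is where $[\dumb]_{2\alpha-2}T$ and $[\xnoise]_{\alpha-1}\|\Theta'(u)\nu\|_{L^1}$ actually come from), and the commutator by Lemmas~\ref{lem:reconstruction0}--\ref{lem:reconstruction1}, whose proof uses the transformation rules \eqref{eq:trafodumb}, \eqref{eq:trafoxnoise}, the fundamental-theorem expansions \eqref{eq:mt05}--\eqref{eq:mt10}, and the balancing conditions \eqref{eq:III}--\eqref{eq:V} and their siblings. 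Your own ``main obstacle'' paragraph (the counterterm must cancel the naive product's divergence) is correct, but it contradicts your step (1), and without repairing that split the claimed constant $C_1$ cannot be reached.

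Two smaller misattributions point the same way: the term $R^{1-\alpha}[\dumb]_{2\alpha-2}[\Theta(u)]_{B^\alpha_{1,\infty}}$ does not come from the counterterm part but from the commutator term \eqref{eq:mt08}, i.e.\ from comparing $\sigma(u)\Pi_{\cdot}[\dumb;\cdot]$ at $\x+\y$ and at $\x$; and the $[\xnoise]_{\alpha-1}$-weighted terms do not arise from ``expanding the kernel to first order in $(z-x)$'' but from the Gubinelli-derivative part $\nu(\x)\cdot\Pi_\x[\xnoise]$ of the local description of $\sigma(u)\xi$ (terms \eqref{eq:nf2}, \eqref{eq:mt09}, \eqref{eq:mt10}). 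The $[\lolly]_\alpha^2T$ term is indeed a second-order Taylor contribution, but of the full composition $G(v)=K(a(v),\cdot)\sigma(v)$ (so $G''$ carries $\partial_{\bar a}K$ and $\partial_{\bar a}^2K$, balanced by \eqref{eq:IV}--\eqref{eq:V}), not merely of $\sigma''$; tracking these kernel derivatives is where the $\bar a$-bookkeeping you worry about is actually carried out.
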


\noindent
The proof of Proposition~\ref{prop:u1} is given in Section~\ref{sec:singularforcing}.

\begin{proposition}\label{prop:u2}
Let the assumption of Theorem~\ref{thm:main} hold. 
Furthermore let $0<R,T\leq1$, $\beta\in(2-\alpha,2\alpha]$, $\epsilon>0$, and $\delta\in(0,1)$. 
Then for $\|\y\|_\s\leq R$
\begin{equation}
\sum_{q\colon \delta<2^{-q}} \int_{D_{\y}} d\x\,
\Big|u_2^q(\x+\y)-u_2^q(\x)-\nabla u_2^q(\x) \cdot y \Big|
\lesssim \|\y\|_\s^{2\alpha} \, \delta^{-\alpha-\epsilon} C_2 \, ,
\end{equation}
where $\lesssim$ depends on $\beta$, $\epsilon$ and the constants $C$ of Assumptions~\ref{ass:nonlinearities} and~\ref{ass:noise}, and where $C_2$ is with $g(v)=\int_0^vdw\,|w|^{-1+\epsilon(M-1)}$ given by 
\begin{align}
C_2 
&= [\derivativecherry]_{2\alpha-2} 
\Big( [\Theta(u)]_{B^\alpha_{1,\infty}} 
+ T \Big) \\
&\,+ R^{1-\alpha} [\lolly]_\alpha 
\Big( [\Theta'(u)\nu]_{B_{1,\infty}^{\beta-1}}
+ 
[\Theta(u)]_{B_{1,\infty}^\alpha}^{1/2} \|\Theta'(u)\nu\|_{L^2}
+ \|\Theta'(u)\nu\|_{L^1} \Big) \\
&\,+ T^{1-\alpha} \Big( \|u_0\|_{L^{1+\epsilon(M-1)}(\T^d)}^{1+\epsilon(M-1)} + 1\Big) \\
&\,+ T^{1-\alpha} [\xi]_{\alpha-2} 
\Big( [(g\sigma)(\mathcal{U})]_{B^{\beta}_{1,\infty}} 
+ 1 \Big) \\
&\,+ T^{1-\alpha} ([\dumb]_{2\alpha-2} +[\derivativecherry]_{2\alpha-2} )
\Big( [\Theta(u)]_{B^\alpha_{1,\infty}} 
+ 1 \Big) \\
&\,+ T^{1-\alpha} ([\xnoise]_{\alpha-1} +  [\lolly]_\alpha ) \\
&\quad\times\Big( [\Theta'(u)\nu]_{B_{1,\infty}^{\beta-1}}
+ 
[\Theta(u)]_{B_{1,\infty}^\alpha}^{1/2} \|\Theta'(u)\nu\|_{L^2}
+  \|\Theta'(u)\nu\|_{L^1} (\sqrt{T})^{\alpha-1}\Big) \, .
\end{align}
\end{proposition}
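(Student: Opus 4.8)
\textbf{Plan of proof for Proposition~\ref{prop:u2}.}
The plan is to estimate the increment of $u_2^q$ via a combination of the renormalized energy inequality (Proposition~\ref{prop:energy}) and the abstract reconstruction/integration machinery for rough kernels developed in Section~\ref{sec:roughkernels}. The starting observation is that $u_2^q$ is obtained by integrating the renormalized kinetic measure $a(v)(|\nabla u(\z)|^2-\sigma^2(v)\ccherry^{a(v)}(r))_{|v=u(\z)}$ against $\partial_v\Psi_q(a(v),\x-\z)_{|v=u(\z)}$, a kernel which is only rough (regularity $\alpha$) because of the composition with $u$. Following the heuristic in Section~\ref{sec:strategy}, I would first use the identity
\begin{align}
&a(u(\z))\big(|\nabla u(\z)|^2-\sigma^2(u(\z))\ccherry^{a(u(\z))}(r)\big) \\
&= a(u(\z))|\nu(\z)|^2 + 2a(u(\z))\sigma(u(\z))\nabla\Pi_\z[\lolly;\z](\z)\cdot\nu(\z) \\
&\quad + a(u(\z))\sigma^2(u(\z))\big(|\nabla\Pi_\z[\lolly;\z](\z)|^2-\ccherry^{a(u(\z))}(r)\big)
\end{align}
to trade the singular product $|\nabla u|^2$ for three terms: the genuinely non-negative term $a(u)|\nu|^2$, a ``cross'' term linear in $\nabla\Pi[\lolly]$ and $\nu$, and a term built purely from the model (which is controlled by $[\derivativecherry]_{2\alpha-2}$ after recognizing $|\nabla\Pi_\z[\lolly;\z]|^2-\ccherry = \Pi_\z[\derivativecherry;\z]$). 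The term $a(u)|\nu|^2$ is exactly what the renormalized energy inequality of Proposition~\ref{prop:energy} bounds in $L^1_{t,x}$ — this is where the $\|u_0\|_{L^{1+\epsilon(M-1)}}^{1+\epsilon(M-1)}$ contribution, the $[(g\sigma)(\mathcal{U})]_{B^\beta_{1,\infty}}$ contribution and the various model-norm terms in the third through last lines of $C_2$ enter.

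Having rewritten the kinetic measure, the second step is to estimate the $\x$-increment of $u_2^q$. For this I would recognize that the increment
$u_2^q(\x+\y)-u_2^q(\x)-\nabla u_2^q(\x)\cdot y$ is of the form treated by the rough-kernel integration results of Section~\ref{sec:roughkernels}: one integrates a ``distribution'' (here the renormalized kinetic measure, which after reconstruction has effective regularity $2\alpha-2$) against the second-order Taylor remainder of the rough kernel $\partial_v\Psi_q(a(v),\cdot)$. Summing over the dyadic blocks $q$ with $\delta<2^{-q}$ produces the factor $\delta^{-\alpha-\epsilon}$: each block contributes a bound scaling like $2^{q\alpha}$ from the kernel (its $v$-derivative costs one power of $a^{-1}\sim 2^q$, the localization $\varphi(2^q a)$ confines $a\sim 2^{-q}$, and the parabolic regularity gives the rest), and $\sum_{q:\,2^{-q}>\delta}2^{q\alpha}\lesssim\delta^{-\alpha}$; the extra $\epsilon$ absorbs logarithmic losses and the slightly-worse-than-$1$ exponents $\mathfrak{e}(\tau)=2+2\epsilon$ from Table~\ref{tab:hom}. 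The factor $\|\y\|_\s^{2\alpha}$ is the expected order-$2\alpha$ modelledness coming from the second-order Taylor remainder against a regularity-$(2\alpha-2)$ object. The powers $|v|^{-1+\epsilon(M-1)}$ appearing in $g$ are dictated by needing $g\sigma$ to compensate the blowup of $a^{-1}$ (quantitatively $a(v)^{-1}\lesssim|v|^{-(M-1)}$) when testing the energy inequality against $|u|^\epsilon$.

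The bookkeeping of which model seminorm multiplies which solution seminorm in $C_2$ is the routine but lengthy part; it follows by carefully matching, in each of the terms produced by the identity above, the reconstruction estimate (which brings in $[\derivativecherry]_{2\alpha-2}$, $[\lolly]_\alpha$, $[\xi]_{\alpha-2}$, $[\dumb]_{2\alpha-2}$, $[\xnoise]_{\alpha-1}$ depending on whether one is reconstructing $|\nabla\Pi[\lolly]|^2-\ccherry$, the cross term, or applying the energy inequality) against the integration estimate, using the boundedness $|u|\lesssim1$ on $\mathrm{supp}\,\sigma$ (via $\Theta$) so that $\nu$ is only ever evaluated where its $L^2$-norm is controlled by $[\mathcal{U}]_{B^\beta_{1,\infty}}$, cf.~Lemma~\ref{lem:absorb}. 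The factors $R^{1-\alpha}$ and $T^{1-\alpha}$ arise from estimating $\int_{D_\y}$ of a kernel-type bound by its $L^1$-in-space and $L^1$-in-time mass respectively.

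\textbf{Main obstacle.} The genuinely hard step is the renormalized energy inequality itself — establishing a usable $L^1_{t,x}$ bound on $a(u)|\nu|^2$ for a singular equation, where the naive energy identity contains the ill-defined products $u\sigma(u)\xi$ and $a(u)|\nabla u|^2$ whose divergences must be shown to cancel (Proposition~\ref{prop:energy}). Within the present proposition, the subtler difficulty is the \emph{simultaneous} need for reconstruction (to give meaning to $|\nabla u|^2$, equivalently to $\Pi[\derivativecherry]$, against the rough kernel) and for the precise $\delta$-dependence: one must perform reconstruction with a test kernel that is itself only $C^\alpha$ through its dependence on $u$, which is exactly why the bespoke results of Section~\ref{sec:roughkernels} are needed rather than off-the-shelf regularity-structures reconstruction, and tracking the diffusivity-dependence through that argument so that the dyadic sum converges to $\delta^{-\alpha-\epsilon}$ rather than something worse is the crux.
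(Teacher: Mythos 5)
Your plan coincides with the paper's proof: the same rewriting of the renormalized kinetic measure via $\nu$ and $\Pi[\derivativecherry]$ into the three contributions $a'(u)a(u)\sigma^2(u)\Pi_\z[\derivativecherry;\z]$, the cross term $2a'(u)a(u)\sigma(u)\nabla\Pi_\z[\lolly;\z]\cdot\nu$, and $a'(u)a(u)|\nu|^2$, with the first two handled by the rough-kernel reconstruction and integration lemmas of Section~\ref{sec:roughkernels} (Lemmas~\ref{lem:reconstruction0}, \ref{lem:integration2}, via Lemmas~\ref{lem:reconstruction2} and \ref{lem:reconstruction3}) and the last by the renormalized energy inequality, Proposition~\ref{prop:energy}, followed by the dyadic summation in $q$. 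One minor correction to your bookkeeping: the extra $\epsilon$ in $\delta^{-\alpha-\epsilon}$ does not come from logarithmic losses or from $\mathfrak{e}(\tau)=2+2\epsilon$, but exclusively from the $a(u)|\nu|^2$ term, where (absent any compensating factor of $\sigma$) one gives up a weight $a(u)^{\epsilon}$ in the dyadic sum so that the remaining integrand $|a'(u)|a(u)^{\epsilon}|\nu|^2$ is dominated by $g'(u)a(u)|\nu|^2$ with $g'(v)=|v|^{-1+\epsilon(M-1)}$, which is exactly what Proposition~\ref{prop:energy} with $p=1+\epsilon(M-1)$ controls.
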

\noindent
The proof of Proposition~\ref{prop:u2} is given in Section~\ref{sec:energy}.

\begin{proposition}\label{prop:u3}
Under the assumption of Theorem~\ref{thm:main}, 
and for $T>0$, $\y\in\R\times\R^d$, and $\delta\in(0,1)$, it holds
\begin{equation}
\sum_{q\colon \delta<2^{-q}} \int_{D_{\y}} d\x\,
\Big|u_3^q(\x+\y)-u_3^q(\x)-\nabla u_3^q(\x) \cdot y \Big|
\lesssim \|\y\|_\s^{2\alpha} \, \delta^{-\alpha} T^{1-\alpha} \, ,
\end{equation}
where $\lesssim$ depends on the constants $C$ of Assumptions~\ref{ass:nonlinearities} and \ref{ass:noise}. 
\end{proposition}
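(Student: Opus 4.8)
The plan is to resum the dyadic pieces and then use elementary heat‑kernel estimates, much as for $u_0^q$ in Proposition~\ref{prop:u0}. Summing the definition of $u_3^q$ over $q$ with $\delta<2^{-q}$ and using $\sum_{q\colon\delta<2^{-q}}\varphi(2^q\bar a)=\varphi^{>}(\bar a/\delta)$, set $\Psi^{>}(\bar a,\cdot):=\varphi^{>}(\bar a/\delta)\Psi(\bar a,\cdot)$, which vanishes for $\bar a\leq\delta$. Since $\varphi\geq 0$ and multiplication by the cutoff $\varphi(2^q\cdot)$ commutes with the increment operator $F\mapsto F(\x+\y)-F(\x)-\nabla F(\x)\cdot y$ acting in $\x$ (the cutoff is $\x$‑independent), the left‑hand side of the claim is bounded by the integral over $\x\in D_{\y}$ and $\z=(r,z)\in(0,\infty)\times\R^d$ of $|\mathfrak c_{\mathrm I}(u(\z),r)|$ times the $\x$‑increment (applied to the kernel factor only) of $\partial_v\big(\Psi^{>}(a(v),\x-\z)\big)_{|v=u(\z)}$, plus $|\mathfrak c_{\mathrm{II}}(u(\z),r)|$ times the $\x$‑increment of $\Psi^{>}(a(u(\z)),\x-\z)$; here $\mathfrak c_{\mathrm I}(v,r):=\sigma^2(v)\big(\cdumb^{a(v)}(r)-a(v)\ccherry^{a(v)}(r)\big)$ and $\mathfrak c_{\mathrm{II}}(v,r):=(\sigma'\sigma)(v)\big(\cdumb^{a(v)}(r)-C^{a(v)}\big)$. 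The point is that no singular product appears: these are convolutions of the smooth dilated heat kernel against coefficients which are merely time‑integrable but harmless in the velocity, so Assumption~\ref{ass:noise}(ii) suffices.

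Next I would record the needed bounds. By \eqref{eq:counterterm_dumb_cherry}, $|\mathfrak c_{\mathrm I}(v,r)|\leq|v|\,h_{\mathrm I}(r)$ with $h_{\mathrm I}(r):=\sup_v|\mathfrak c_{\mathrm I}(v,r)/v|$ and $\int_0^Th_{\mathrm I}\leq C$; by \eqref{eq:counterterm_time_independent}, $|\mathfrak c_{\mathrm{II}}(v,r)|\leq h_{\mathrm{II}}(r)$ with $h_{\mathrm{II}}(r):=\sup_v|\mathfrak c_{\mathrm{II}}(v,r)|$ and $\int_0^Th_{\mathrm{II}}\leq C$. In the first contribution I would apply the chain rule $\partial_v\Psi^{>}(a(v),\cdot)=a'(v)\,\partial_{\bar a}\big[\varphi^{>}(\bar a/\delta)\Psi(\bar a,\cdot)\big]_{|\bar a=a(v)}$ and use $|a'(v)||v|\leq C|v|^{M-1}\leq C^2a(v)$ from \eqref{eq:alower}, so that the coefficient multiplying $\partial_{\bar a}[\varphi^{>}(\bar a/\delta)\Psi(\bar a,\cdot)]$ is $\lesssim\bar a\,h_{\mathrm I}(r)$ at $\bar a=a(u(\z))$. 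For the kernels, with $\Psi(\bar a,t',x')=\Phi(\bar at',x')$, the Gaussian estimates give for $\bar a>\delta$ and $t'>0$: the $L^1(\R^d)$‑masses $\|\Psi^{>}(\bar a,t',\cdot)\|_{L^1}\leq 1$ and $\|\partial_{\bar a}[\varphi^{>}(\bar a/\delta)\Psi(\bar a,t',\cdot)]\|_{L^1}\lesssim\bar a^{-1}$; the order‑$2\alpha$ spatial increments (first difference by $y$ minus the gradient term) have $L^1$‑norms $\lesssim|y|^{2\alpha}(\bar at')^{-\alpha}$, resp.\ $\lesssim|y|^{2\alpha}\bar a^{-1}(\bar at')^{-\alpha}$, by real interpolation at the fractional order $2\alpha<2$; and the temporal first increments by $s$ have $L^1$‑norms $\lesssim(|s|/t')^{\alpha}$, resp.\ $\lesssim\bar a^{-1}(|s|/t')^{\alpha}$, the factor $(|s|/t')^{\alpha}$ being uniform in $\bar a$ because the time increment of $\Phi(\bar a\,\cdot,\cdot)$ depends only on the internal time $\bar at'$. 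Crucially, on $\mathrm{supp}\,\varphi^{>}(\cdot/\delta)$ one has $a(u(\z))>\delta$, whence $(a(u(\z))\,t')^{-\alpha}\leq\delta^{-\alpha}(t')^{-\alpha}$.

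It then suffices to treat $\y=(0,y)$ and $\y=(s,0)$ separately (both $|y|^{2\alpha}$ and $|s|^{\alpha}$ are $\lesssim\|\y\|_\s^{2\alpha}$), the temporal increment being first‑order since the Taylor correction involves only $y$. For fixed $\z$ I would integrate first over $x$ (equivalently over $x'=x-z$): for $\y=(0,y)$ the $L^1(\R^d)$‑norm of the kernel increment is $\lesssim|y|^{2\alpha}(a(u(\z))\,t')^{-\alpha}$ in the $\mathfrak c_{\mathrm{II}}$‑term and $\lesssim|y|^{2\alpha}a(u(\z))^{-1}(a(u(\z))\,t')^{-\alpha}$ in the $\mathfrak c_{\mathrm I}$‑term; after multiplying by the coefficient the powers of $a(u(\z))$ cancel (using $|a'||v|\lesssim a$ in the $\mathfrak c_{\mathrm I}$‑term), then $a(u(\z))>\delta$ turns $(a(u(\z))\,t')^{-\alpha}$ into $\delta^{-\alpha}(t')^{-\alpha}$; now $\int dz$ is trivial and Young's inequality gives $\int_0^T dt\int_0^t h_\bullet(r)(t-r)^{-\alpha}\,dr\leq\|h_\bullet\|_{L^1(0,T)}\||\cdot|^{-\alpha}\|_{L^1(0,T)}\lesssim C\,T^{1-\alpha}$, so this contribution is $\lesssim|y|^{2\alpha}\delta^{-\alpha}T^{1-\alpha}$. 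The temporal increment $\y=(s,0)$ is treated identically, except that the kernel increment is already $\bar a$‑uniform after the cancellation, so there is in fact no $\delta$‑loss (a fortiori $\lesssim\delta^{-\alpha}$); the boundary term from the moving upper limit of integration is $\lesssim\int_t^{t+s}h_\bullet(r)\,dr$ in $L^1_x$, contributing $\lesssim|s|\,\|h_\bullet\|_{L^1}\lesssim|s|^{\alpha}$ since $\|\y\|_\s\leq 1$.

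The main obstacle is twofold. First, the naive estimate of the second‑order increment using two (parabolic) derivatives of $u_3^q$ fails, because $\nabla_x^2\Psi$ and $\partial_t^2\Psi$ scale like $(t')^{-1}$ and $(t')^{-2}$ as $t'\to0^+$, so already $\int_0^t h_\bullet(r)(t-r)^{-1}\,dr$ diverges; one must interpolate down to the fractional order $2\alpha<2$, which is precisely what renders $\int_0^t h_\bullet(r)(t-r)^{-\alpha}\,dr$ integrable in $t$. Second, the bookkeeping must be arranged so that every power of $a(u(\z))$ cancels — the $a'(v)$ from the chain rule, the weight $a(u(\z))^{-1}$ carried by $\partial_{\bar a}$, and the growth bound $|a'||v|\lesssim a(v)$ must all conspire — leaving a single clean factor $(a(u(\z))\,t')^{-\alpha}$ which, via $a(u(\z))>\delta$ on $\mathrm{supp}\,\varphi^{>}(\cdot/\delta)$, yields exactly the announced $\delta^{-\alpha}$ and no worse, with all constants depending on the data alone; this is possible precisely because the counterterm differences are time‑integrable by Assumption~\ref{ass:noise}(ii) and the $\sigma$‑weighted coefficients stay bounded thanks to the compact support of $\sigma$ and $N\geq M+1$.
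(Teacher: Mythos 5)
Your argument is correct and is essentially the paper's own: the paper proves exactly these $L^1$ heat-kernel Taylor-remainder bounds blockwise in Lemma~\ref{lem:integration1} (applied with $m=1$ for the $\sigma^2(\cdot)$-weighted counterterm difference, producing the chain-rule factor $a'(u)$, and with $m=0$ for the $\sigma'\sigma$-weighted one), then sums $\sum_{q\colon\delta<2^{-q}}(1+a^{-\alpha})|\tilde\varphi(2^qa)|\lesssim\delta^{-\alpha}$ and closes with $|a'(v)/a(v)|\lesssim|v|^{-1}$ together with \eqref{eq:counterterm_time_independent} and \eqref{eq:counterterm_dumb_cherry} — precisely the cancellation and time-integrability you exploit, with the same interpolation to order $2\alpha$ giving $T^{1-\alpha}$. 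Resumming the dyadic cutoffs into $\varphi^>(\cdot/\delta)$ before estimating is only a cosmetic difference; just note that when you pull absolute values past $\partial_v$ of the cutoff you should use $\sum_q 2^q|\varphi'(2^q\bar a)|\lesssim\bar a^{-1}$ (since $\varphi'$ is not sign-definite), and that the boundary term in time is handled for arbitrary $\y$ via $|s|\leq T$ on $D_\y$ rather than $\|\y\|_\s\leq1$.
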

\noindent
The proof of Proposition~\ref{prop:u3} is given in Section~\ref{sec:initial}.

%%%%%%%%%%%%%%%%%%%%%%%%%%%%%%%%%%%%%%%%
\subsection{Proof of Proposition \texorpdfstring{\ref{prop:u>} (Large velocities)}{large velocities}}\label{sec:proof_prop:u>}

Combining Propositions~\ref{prop:u0},~\ref{prop:u1},~\ref{prop:u2}, and \ref{prop:u3}, 
yields an estimate of $[\mathcal{U}^>]_{B_{1,\infty}^{2\alpha}}$
in terms of $C_1$ and $C_2$. 
The following lemma allows to estimate the constants $C_1$ and $C_2$ in terms of $[\mathcal{U}]_{B_{1,\infty}^\beta}$.

\begin{lemma}[Back to $\mathcal{U}$]\label{lem:absorb}
Let the assumption of Theorem~\ref{thm:main} hold.
Furthermore let $\beta\in(2-\alpha,2\alpha)$ and $0<R,T\leq1$.
Then for $f\in C^2(\R)$ with $f(v)=const$ for $|v|\geq C$ for some $C$
~\begin{enumerate}[label=(\roman*)]
\item \label{it:u_alpha}
$[f(u)]_{B_{1,\infty}^\alpha}
\lesssimdata (R^{\beta-\alpha}+R^{1-\alpha}\sqrt{T}) [\mathcal{U}]_{B_{1,\infty}^{\beta}} + 1$,
\item \label{it:f(U)_2alpha}
$[f(\mathcal{U})]_{B_{1,\infty}^{\beta}} 
\lesssimdata [\mathcal{U}]_{B_{1,\infty}^{\beta}} + 1$,
%
%\item \label{it:nu_0}
%$\|\nu\|_{L^1} 
%\lesssimdata R^{\beta-1} [\mathcal{U}]_{B_{1,\infty}^{\beta}} + 1$,
%
\item \label{it:nu^2_0}
$\|f'(u)\nu\|_{L^2}^2 
\lesssimdata [\mathcal{U}]_{B_{1,\infty}^{\beta}} + 1$,
\item\label{it:nu_2alpha-1}
$[f'(u)\nu]_{B_{1,\infty}^{\beta-1}} 
\lesssimdata [\mathcal{U}]_{B_{1,\infty}^{\beta}} + 1$.
\end{enumerate}
The implicit constant in $\lesssimdata$ depends on $\beta,C$, and the data as in Theorem~\ref{thm:main}.
\end{lemma}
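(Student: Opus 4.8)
{\bf Proof sketch for Lemma~\ref{lem:absorb}.}

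The plan is to interpret each quantity on the left as measuring how well $f(u)$ (or a derived object) is approximated by the ``model'' built from $\Pi[\lolly]$ and $\nu$, and to reduce everything to the single quantity $[\mathcal{U}]_{B_{1,\infty}^\beta}$ using (a) the Taylor expansion of $f$ and the chain rule, (b) the quantitative bounds on $\sigma,a$ from Assumption~\ref{ass:nonlinearities} together with $N\geq M+1$, (c) the model estimate \eqref{eq:lollybound} for $\Pi[\lolly]$, and (d) the fact that $f=const$ for $|v|\geq C$, so that $f(u),f'(u),f''(u)$ are evaluated only where $|u|\lesssim 1$. The latter is crucial: it lets us convert powers $|u|^k$ with $k\geq 0$ into $O(1)$, and it is exactly the reason $\Theta$ was introduced in \eqref{eq:Theta}. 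Throughout, $f'(u)\nu$ will be controlled by first noting $\nu$ only enters multiplied by $\sigma(u)$-type factors or $f'(u)$, hence only on $\{|u|\lesssim 1\}$, so that pointwise $|f'(u)\nu|\lesssim|\nu|$, and then relating $\|\nu\|_{L^2}$ and $[\nu]_{B_{1,\infty}^{\beta-1}}$ back to $[\mathcal{U}]_{B_{1,\infty}^\beta}$.

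For \ref{it:u_alpha}: split the increment of $f(u)$ using the definition of $[\mathcal{U}]_{B_{1,\infty}^\beta}$ applied to $f$ (i.e.\ \ref{it:f(U)_2alpha}, proved first), which controls $f(u(\x+\y))-f(u(\x))-f'(u(\x))(\sigma(u(\x))\Pi_\x[\lolly;\x](\x+\y)+\nu(\x)\cdot y)$ in $L^1$ by $\|\y\|_\s^\beta([\mathcal{U}]+1)$. It then remains to bound the ``model'' part $f'(u(\x))(\sigma(u(\x))\Pi_\x[\lolly;\x](\x+\y)+\nu(\x)\cdot y)$ in $L^1$ by $\|\y\|_\s^\alpha$ times the asserted constant: the $\Pi[\lolly]$ term is handled by \eqref{eq:lollybound} (giving $\|\y\|_\s^\alpha$ after absorbing the $\bar a^{-\mathfrak e}$ blow-up into $|u|$ via \eqref{eq:I}-type estimates, then using $|u|\lesssim 1$), and the $\nu(\x)\cdot y$ term is $\lesssim\|\y\|_\s\,|f'(u)\nu|$, which after integrating and using $\|\y\|_\s\leq R$ and Cauchy--Schwarz in the form $\|f'(u)\nu\|_{L^1}\lesssim\sqrt{T}\,\|f'(u)\nu\|_{L^2}\lesssim\sqrt T(\,[\mathcal U]+1)^{1/2}$ (via \ref{it:nu^2_0}) yields the $R^{1-\alpha}\sqrt T$ factor; converting $\|\y\|_\s^\beta$ to $\|\y\|_\s^\alpha$ costs $R^{\beta-\alpha}$.

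For \ref{it:f(U)_2alpha}: write $f(u(\x+\y))-f(u(\x))=f'(u(\x))(u(\x+\y)-u(\x))+\tfrac12 f''(\theta)(u(\x+\y)-u(\x))^2$; the linear part, after subtracting $f'(u(\x))(\sigma(u(\x))\Pi_\x[\lolly;\x](\x+\y)+\nu(\x)\cdot y)$, is exactly $f'(u(\x))$ times the integrand defining $[\mathcal{U}]_{B_{1,\infty}^\beta}$ and is $\lesssim\|\y\|_\s^\beta[\mathcal{U}]_{B_{1,\infty}^\beta}$ since $|f'(u)|\lesssim 1$; the quadratic remainder is bounded by $\|f''\|_\infty\int|u(\x+\y)-u(\x)|^2$, which (using that $f''$ is supported where $|u|\lesssim1$ so the increment is $O(1)$, hence the square is $\lesssim$ the modulus, and invoking the already-established $[u]_{B_{1,\infty}^\alpha}$ bound from \ref{it:u_alpha}) is $\lesssim\|\y\|_\s^\alpha\leq\|\y\|_\s^\beta R^{\alpha-\beta}$; with $R\leq 1$ this gives a constant, i.e.\ the $+1$. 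For \ref{it:nu^2_0} and \ref{it:nu_2alpha-1}: these follow the strategy of \cite{GST20,Ges21,BGW22} — $\nu$ is the ``Gubinelli derivative'' and one recovers it by testing the $\beta$-modelledness of $u$ against suitable rescaled bumps at two scales and differencing, which on the one hand produces $\nabla\Pi[\lolly]$-type quantities (controlled by \eqref{eq:lollybound}) and on the other hand, after an interpolation/averaging argument (the ``reconstruction'' step), yields an $L^2$ respectively $B_{1,\infty}^{\beta-1}$ bound on $\nu$ in terms of $[\mathcal{U}]_{B_{1,\infty}^\beta}+1$; the $f'(u)$ prefactor only restricts to $\{|u|\lesssim1\}$ and is harmless.

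I expect the main obstacle to be \ref{it:nu^2_0}--\ref{it:nu_2alpha-1}: extracting quantitative $L^2$ and Besov control of the basepoint-dependent object $\nu$ from the $L^1$-type modelledness seminorm $[\mathcal{U}]_{B_{1,\infty}^\beta}$ is precisely the delicate ``real interpolation for basepoint-dependent seminorms'' advertised as a contribution of the paper, and it requires care that the heat-kernel rescaling used to probe $\nu$ interacts correctly with the $\delta$-independent constants and with the degeneracy of $a$ (here, though, the support restriction on $f$ keeps us away from the degenerate region, which should simplify matters relative to the main body). The other parts are essentially bookkeeping: Taylor expansion, the algebraic inequalities of Assumption~\ref{ass:nonlinearities}~(ii) combined with $N\geq M+1$ and \eqref{eq:restrictionM}, and repeated use of $R,T\leq1$ to turn higher powers of $\|\y\|_\s$ into the stated $+1$.
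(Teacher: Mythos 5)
There are genuine gaps, the most serious ones in items (iii) and (ii). For (iii) you propose to extract the $L^2$ bound on $f'(u)\nu$ by testing the $\beta$-modelledness against rescaled bumps at two scales and then interpolating; but $[\mathcal{U}]_{B_{1,\infty}^\beta}$ is an $L^1$-based quantity, and no deterministic testing/differencing argument can upgrade it to square integrability of $\nu$ --- at best such an argument recovers $\|f'(u)\nu\|_{L^1}$ (this is exactly what is done at the end of the proof of Theorem~\ref{thm:main}). The paper obtains (iii) from the equation itself: the renormalized energy inequality of Proposition~\ref{prop:energy} (testing \eqref{eq:spm} with $g(u)$, $g'(v)=|v|^{p-2}$, keeping the good sign of $\int g'(u)a(u)|\nu|^2$, and handling the singular right-hand side by the reconstruction Lemma~\ref{lem:reconstruction4}); since $|v|^{p'-2}a(v)\gtrsim \1_{|v|\leq 2C}$ for suitable $p'$ by Assumption~\ref{ass:nonlinearities}~(ii), this controls $\|\1_{|u|\leq 2C}\nu\|_{L^2}^2$ by quantities which, fed back through the other items, yield a self-consistent inequality with subcritical powers of $\|\1_{|u|\leq2C}\nu\|_{L^2}$, closed by a Young-type absorption argument. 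This mechanism, and the resulting ordering of the items (an auxiliary lemma expressing each seminorm in terms of the others and of $\|\1_{|u|\leq2C}\nu\|_{L^1},\|\1_{|u|\leq2C}\nu\|_{L^2}$, then closing the loop), is absent from your sketch, which as written is moreover circular: you prove (i) using (ii) and (ii) using (i), with no absorption step to resolve the circle.

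Your treatment of the quadratic Taylor remainder in (ii) also fails quantitatively. First, bounding $\tfrac12 f''(\theta)(u(\x+\y)-u(\x))^2$ by a constant times the modulus of the increment relies on the increment being $O(1)$ wherever $f''\neq0$; but only the intermediate point $\theta$ is confined to $[-C,C]$, while $u$ carries no uniform bound (only $u_0\in L^p$), so the increment can be arbitrarily large there. Second, even granting that reduction, a term of order $\|\y\|_\s^\alpha$ cannot be absorbed into a $B_{1,\infty}^\beta$ bound: since $\beta>2-\alpha>\alpha$, for $\|\y\|_\s\leq R$ one has $\|\y\|_\s^\alpha\geq\|\y\|_\s^\beta R^{\alpha-\beta}$, i.e.\ the inequality you invoke points the wrong way and $\|\y\|_\s^{\alpha-\beta}$ blows up as $\|\y\|_\s\to0$. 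The paper avoids this by expanding $f$ around the full model $u(\x)+\sigma(u(\x))\Pi_\x[\lolly;\x](\x+\y)+\nu(\x)\cdot y$, so that all second-order contributions are products of model terms of total order $2\alpha$, $1+\alpha$ or $2$ (all $\geq\beta$), with the $(\nu(\x)\cdot y)^2$ term interpolated between the $L^1$ and $L^2$ bounds of $\nu$, and by treating the region $|u(\x)|>2C$ separately with a Chebyshev-type indicator argument exploiting $f'=0$ there. Your sketch for (iv) (a three-point/differencing argument in the spirit of Gubinelli-derivative uniqueness) is close to the paper's actual identity built on the recentering property of $\Pi[\lolly]$, but it too ultimately requires the $L^2$ bound of (iii), hence the energy inequality you omitted.
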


\noindent
The proof of Lemma~\ref{lem:absorb} is standard but tedious due to the degeneracy  and is given in Appendix~\ref{app:absorb}.
We have now all ingredients to prove Proposition~\ref{prop:u>}.

\begin{proof}[Proof of Proposition~\ref{prop:u>}]
We begin with plugging Lemma~\ref{lem:absorb} into $C_1$ defined in Proposition~\ref{prop:u1}. 
Absorbing $\{[\tau]_{|\tau|}\}_{\tau\in\mathcal{T}}$ in the implicit constant and using $R,T\leq1$ and $\|\cdot\|_{L^1}\leq\sqrt{T}\|\cdot\|_{L^2}$ yields 
\begin{align}
C_1
\lesssimdata 
1 
&+ R^{1-\alpha} \big( \|\Theta'(u)\nu\|_{L^2} 
+ [\Theta(\mathcal{U})]_{B_{1,\infty}^{\beta}} 
+ [\Theta(u)]_{B_{1,\infty}^\alpha}
+ [\Theta'(u)\nu]_{B_{1,\infty}^{\beta-1}} \big) \, .
\end{align}
Using Lemma~\ref{lem:absorb} together with $R,T\leq1$ and $\beta\geq1$ implies 
\begin{equation}
C_1 
\lesssimdata 1 + R^{1-\alpha} [\mathcal{U}]_{B_{1,\infty}^{\beta}} \, .
\end{equation}
Appealing to the same argumentation for $C_2$ defined in Proposition~\ref{prop:u2}, 
and using that $\|u_0\|_{L^{1+\epsilon(M-1)}(\T^d)}\leq\|u_0\|_{L^p(\T^d)}$ for $\epsilon>0$ small enough yields
\begin{align}
C_2 
&\lesssimdata 
1
+ [\Theta(u)]_{B_{1,\infty}^{\alpha}} 
	+ R^{1-\alpha} \big([\Theta'(u)\nu]_{B_{1,\infty}^{\beta-1}} 
	+ \|\Theta'(u)\nu\|_{L^2}^2 \big) \\
&+ T^{1-\alpha} [(g\sigma)(\mathcal{U})]_{B_{1,\infty}^{\beta}} 
	+ T^{1-\alpha} \big([\Theta'(u)\nu]_{B_{1,\infty}^{\beta-1}} 
	+ \|\Theta'(u)\nu\|_{L^2}^2 \big)\, .
\end{align}
Recall that here $g(v)=\int_0^vdw\, |w|^{-1+\epsilon(M-1)}$ and consequently 
$g\sigma\in C^2(\R)$ and compactly supported (by Assumption~\ref{ass:nonlinearities}~(ii)).
Using Lemma~\ref{lem:absorb} together with $R,T\leq1$ and $\beta\geq1$ implies 
\begin{equation}
C_2
\lesssimdata 1 
+ R^{1-\alpha} [\mathcal{U}]_{B_{1,\infty}^{\beta}} 
+ T^{1-\alpha} [\mathcal{U}]_{B_{1,\infty}^{\beta}}  \, .
\end{equation}
Combining this with Propositions~\ref{prop:u0}~--~\ref{prop:u3} we thus have 
\begin{equation}
[\mathcal{U}^>]_{B_{1,\infty}^{2\alpha}} 
\lesssimdata \delta^{-\alpha-\epsilon} \big( 1 + C_1+C_2 \big)
\lesssimdata \delta^{-\alpha-\epsilon} \big( 1 + (R+T)^{1-\alpha} [\mathcal{U}]_{B_{1,\infty}^{\beta}} \big) \, .
\end{equation}
\end{proof}

%%%%%%%%%%%%%%%%%%%%%%%%%%%%%%%%%%%%%%%%%%
\section{Reconstruction and integration with rough singular kernels}\label{sec:roughkernels}

In this section we collect a number of integration and reconstruction arguments used in Section~\ref{sec:averaging} to prove Propositions~\ref{prop:u0} -- \ref{prop:u3}.
Some of these results are well known, however we provide all proofs as we need the dependence on the diffusion coefficient. 

When estimating $u_0,\dots,u_3$ in Propositions~\ref{prop:u0} -- \ref{prop:u3}, increments of the form 
\begin{equation}
\Psi_q\big(a,\x+\y-\z\big)
- \Psi_q\big(a,\x-\z\big) 
- \nabla\Psi_q\big(a,\x-\z\big)\cdot y
\end{equation}
integrated in $\x$ over $D_\y$ and in $\z$ over $(0,\infty)\times\R^d$ will come up. 
It will be convenient to replace $\Psi_q$ defined in \eqref{eq:Psi_q} by a compactly supported $K$; 
we suppress the $q$-dependence of $K$ in the notation as $q$ is fixed in what follows. 
Since the time components of the arguments $\x+\y-\z$ and $\x-\z$ are elements of $[0,T]$ by the definitions of $\Psi_q$ and $D_\y$, 
we only worry about spatial components. 
Furthermore, since we only ever consider convolutions of $\Psi_q$ with $1$-periodic (in space) functions, 
we can replace $\Psi_q$ by any function having the same (spatial) periodization. 
We thus fix a smooth $\zeta\colon\R^d\to[0,1]$ 
with $\mathrm{supp}\,\zeta\subseteq \{x\in\R^d\,|\,|x|<1\}$
such that $\sum_{k\in\Z^d} \zeta(x+k) = 1$ for all $x\in\R^d$. 
An example of such a $\zeta$ is given by $\zeta = \1_{[-1/2,1/2]^d}*\tilde\zeta$
for a smooth approximation $\tilde\zeta\colon\R^d\to\R$ of the Dirac, 
say with compact support in the ball of radius $2^{-10}$ centered at the origin and $\int\tilde\zeta=1$; 
then 
\begin{equation}
\sum_{k\in\Z^d} \zeta(x+k)
= \sum_{k\in\Z^d} \int_{\R^d} dy \, \1_{[-\frac12,\frac12]^d}(x+k-y) \tilde\zeta(y) 
= \int_{\R^d} dy \, \tilde\zeta(y) 
= 1 \, .
\end{equation}
With this $\zeta$ define (remember that we suppress the $q$-dependence in the notation) 
\begin{equation}\label{eq:defK}
K(\bar{a},\x) \coloneqq 
\sum_{k\in\Z^d}
\Psi_q(\bar{a}, (t, x+k)) \zeta(x)
\quad \textnormal{for }\bar{a}\in\R, \, \x=(t,x)\in\R\times\R^d \, .
\end{equation}
The (spatial) periodization of $K$ is given by 
\begin{equation}
\sum_{\ell\in\Z^d} K(\bar a, t, x+\ell) 
= \sum_{k\in\Z^d} \Psi_q(\bar a, (t, x+k)) 
\sum_{\ell\in\Z^d} \zeta(x+\ell) \, , 
\end{equation}
which coincides with the (spatial) periodization of $\Psi_q$. 

To establish estimates for convolutions of $K$ it is convenient to decompose $K$ as follows.
Fix a smooth $\eta$ compactly supported in 
$\{\x\in\R\times\R^d \,\big|\, 2^{-1} <\|\x\|_\s<2 \}$
such that $\{\eta(\S^{2^n}\cdot)\}_{n\in\N}$ is a partition of unity on\footnote{strictly speaking this requires $\sqrt{2T+1}<2$, which will always be satisfied as we are interested in small $T$} $[0,2T]\times\{x\in\R^d\,|\,|x|<1\}$.
Then for $q\in\N$ 
\begin{equation}\label{eq:splittingk}
K(\bar{a},\x) = \sum_{n\in\N} \Kn(\bar{a},\x) 
\quad\textnormal{for }\bar{a}\in\R\textnormal{ and }\x\in[0,T]\times\R^d \, ,
\end{equation}
where $\Kn(\bar{a},\x)\coloneqq\eta(\S^{2^n}\Da\x) K(\bar{a},\x)$ with the dilation $\Da\x\coloneqq(\bar at,x)$, 
and where we used that $K(\bar a,\cdot)$ vanishes unless $\bar a\leq2\cdot2^{-q}$, see \eqref{eq:defK} and \eqref{eq:Psi_q}.

Using this decomposition 
we shall often distinguish the cases $2^{-n}\leq\|\y\|_\s$ and $2^{-n}>\|\y\|_\s$; 
the former is referred to as the near-field regime 
and the latter is referred to as the far-field regime. 
In the near field we shall not make use of cancellations due to the Taylor remainder of $K$ 
and estimate the contributions from $K(\bar{a},\x+\y-\z)$, $K(\bar{a},\x-\z)$, and 
$\nabla K(\bar{a},\x-\z)\cdot y$ separately. 
In the far field we use the following representation of the anisotropic Taylor remainder: 
\begin{align}
&\bar{K}\big(\bar{a},\x,\y,\z\big) \\
&\coloneqq K\big(\bar{a},\x+\y-\z\big) - K\big(\bar{a},\x-\z\big) - \nabla K\big(\bar{a},\x-\z\big) \cdot y \\
&\,=\sum_{\substack{\hphantom{2}l+|k|_1\leq2,\\2l+|k|_1\geq2\hphantom{,}}} 
C_{l,k} 
\int_0^1 d\vartheta \, (1-\vartheta) \vartheta^{2l+|k|_1-2} \partial_t^l\partial_x^k K\big(\bar{a},\x+\S^\vartheta \y-\z\big) s^l y^k \label{eq:anisotropicTaylor}
\end{align}
for some generic constants $C_{l,k}$, 
which can be checked easily by plugging $f(\vartheta)\coloneqq K\big(\bar{a},\x+\S^\vartheta \y-\z\big)$ 
into $f(1)-f(0)-f'(0) = \int_0^1 d\vartheta\, (1-\vartheta) f''(\vartheta)$. 

With this decomposition at hand we give a couple of lemmas which are frequently used. 
The reader may directly jump to Section~\ref{sec:averaging} on first reading, 
and come back to these rather technical results when they are used.
We start with the following $L^1$-based regularity estimate for convolutions with the heat kernel, where we do not yet make use of the decomposition introduced above. 

\begin{lemma}\label{lem:integration1}
Recall $\Psi_q$ defined in \eqref{eq:Psi_q} for $q\in\Z$, and let $m\in\N$, $\y\in\R\times\R^d$, and $f,g\colon(0,T)\times\T^d\to\R$. 
Then 
\begin{align}
&\int_{D_\y}\!\! d\x \Big|\! \int_{(0,\infty)\times\R^d} \!\! d\z\, 
\partial_{\bar a}^m \! \big(\Psi_{\!q}(\bar a,\x\!+\!\y\!-\!\z) \!-\! \Psi_{\!q}(\bar a, \x\!-\!\z) \!-\! \nabla\Psi_{\!q}(\bar a, \x\!-\!\z) \!\cdot\! y \big)_{|\bar a = f(\z)}
g(\z) \Big| \\
&\lesssim T^{1-\alpha} \|\y\|_\s^{2\alpha} 
\int_{(0,T)\times\T^d} d\z \, |g(\z)| |f(\z)|^{-m} (1+|f(\z)|^{-\alpha}) |\tilde\varphi(2^q f(\z))| 
\end{align}
for some smooth $\tilde\varphi:\R\to\R$ supported in $(1/2,2)$. 
The same statement holds if $\int_{(0,\infty)\times\R^d}d\z$ and $\int_{(0,T)\times\T^d}d\z$ are replaced by $\int_{\R^d}dz$ and $\int_{\T^d}dz$, respectively. 
\end{lemma}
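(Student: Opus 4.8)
The statement is an $L^1$-type Besov regularity estimate for a convolution against the rescaled heat kernel $\Psi_q$, with the diffusivity $\bar a$ evaluated at a point-dependent value $f(\z)$. The plan is to reduce everything to pointwise kernel bounds for the $m$-th $\bar a$-derivative of the Taylor remainder of $\Psi_q$ and then integrate in $\x$ over $D_\y$ (or $\z$ over $(0,T)\times\T^d$). First I would replace $\Psi_q$ by the compactly supported $K$ built in \eqref{eq:defK}: since $f,g$ are $1$-periodic in space and only the spatial periodization of $\Psi_q$ matters after integration against periodic functions, the $\z$-integral over $(0,\infty)\times\R^d$ against $g$ equals the $\z$-integral of $K$-increments. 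This reduces the claim to bounding
\[
\int_{D_\y} d\x \Big| \int d\z \, \partial_{\bar a}^m \bar K\big(\bar a,\x,\y,\z\big)_{|\bar a=f(\z)} g(\z) \Big|,
\]
with $\bar K$ the anisotropic Taylor remainder from \eqref{eq:anisotropicTaylor}.

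The core of the argument is a scaling analysis. Writing $\Psi_q(\bar a,\x)=\varphi(2^q\bar a)\Psi(\bar a,\x)=\varphi(2^q\bar a)\Phi(\bar a t,x)$ and using that $\Phi$ is the standard heat kernel, each application of $\partial_{\bar a}$ either hits $\varphi(2^q\bar a)$, producing a factor $2^q\lesssim\bar a^{-1}$ on the support of $\varphi$ (this is where the support factor $\tilde\varphi(2^q f(\z))$ and the $|f(\z)|^{-m}$ come from), or hits $\Phi(\bar a t,x)$, producing a factor $t\,\partial_t\Phi$; in both cases, by the standard Gaussian bounds $|t^l\partial_t^l\partial_x^k\Phi(\bar a t,x)|\lesssim \bar a^{-m}\|\D^{\bar a}\x\|_\s^{-d-2l-|k|_1}$ (with the $\bar a^{-m}$ collecting the $\partial_{\bar a}$ hits), the increments of order two in $\y$ gain $\|\y\|_\s^2$ against the loss of two scaling units. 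Splitting the near field $2^{-n}\lesssim\|\y\|_\s$ (estimating the three terms $K(\bar a,\x+\y-\z)$, $K(\bar a,\x-\z)$, $\nabla K(\bar a,\x-\z)\cdot y$ separately) against the far field $2^{-n}>\|\y\|_\s$ (using the Taylor remainder \eqref{eq:anisotropicTaylor} to gain the square), and summing the geometric series in $n$, one finds for fixed $\z$ that
\[
\int_{D_\y} d\x\, |\partial_{\bar a}^m \bar K(\bar a,\x,\y,\z)|_{|\bar a=f(\z)} \lesssim T^{1-\alpha}\|\y\|_\s^{2\alpha}\, |f(\z)|^{-m}(1+|f(\z)|^{-\alpha})\,|\tilde\varphi(2^q f(\z))|,
\]
because $\int_{D_\y}d\x$ of $\|\D^{f}(\x-\z)\|_\s^{-d-2+\theta}$ over the $\x$ with $\|\D^f(\x-\z)\|_\s\gtrsim\|\y\|_\s$ is controlled, after changing variables $\x\mapsto \D^f\x$, by $T^{1-\alpha}\|\y\|_\s^{2\alpha-2}$ at the endpoint, times the appropriate power of $f(\z)$ from undoing the dilation; the $1+|f(\z)|^{-\alpha}$ reflects that the effective time horizon is $\min(T,\bar a^{-1}\cdot\text{stuff})$ so both the small-$\bar a$ and large-$\bar a$ regimes are captured. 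Then integrating in $\z$ against $|g(\z)|$ and using Fubini gives the claim; the $\z$-version follows by swapping the roles of $\x$ and $\z$ in the same computation (the kernel is, up to sign and the $\nabla K\cdot y$ term which is handled by the same bounds, symmetric under $\x-\z\leftrightarrow\z-\x$).

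The main obstacle I expect is bookkeeping the interplay of the three scaling parameters simultaneously: the dyadic kernel scale $2^{-n}$, the increment size $\|\y\|_\s$, and the diffusivity $\bar a=f(\z)$ which enters through the anisotropic dilation $\D^{\bar a}$ and whose negative powers must be tracked to exactly $|f(\z)|^{-m}(1+|f(\z)|^{-\alpha})$ — being sloppy here produces the wrong power of $\bar a$ and the estimate would not survive the later absorption arguments. In particular one must be careful that each $\partial_{\bar a}$ genuinely costs only $\bar a^{-1}$ (not more) whether it lands on the cutoff $\varphi(2^q\bar a)$ or on the Gaussian, and that the cutoff $\varphi$ is not differentiated more than needed so that the resulting $\tilde\varphi$ is still a fixed bump supported in $(1/2,2)$. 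The near-field/far-field split, the change of variables $\x\mapsto\D^{\bar a}\S^{2^n}\x$, and the Gaussian decay estimates are otherwise routine.
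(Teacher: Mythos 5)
Your route is genuinely different from the paper's, and it is heavier. The paper proves this lemma without any kernel decomposition: since the estimate is $L^1$ in $\x$, it periodizes, applies Fubini, and reduces everything to the classical $L^1$-in-space bounds $\int_{\R^d}|\partial_t^l\partial_x^k\Phi(t,\cdot)|\,dx\lesssim t^{-(2l+|k|_1)/2}$; time increments are handled by one application of the fundamental theorem of calculus (giving $|s|/\min\{t,t+s\}$, with no $\bar a$ at all), spatial increments by two (giving $|y|^2/(\bar a t)$), interpolation with the trivial bound produces $\|\y\|_\s^{2\alpha}$ together with the singularities $t^{-\alpha}$ and $(\bar a t)^{-\alpha}$, and $\int_0^T t^{-\alpha}\,dt\lesssim T^{1-\alpha}$ finishes; this is exactly where $T^{1-\alpha}$ and $(1+|f(\z)|^{-\alpha})$ come from. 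You instead import the machinery of Section~\ref{sec:roughkernels} (replacement of $\Psi_q$ by $K$ from \eqref{eq:defK}, the dyadic decomposition \eqref{eq:splittingk}, near/far field, pointwise Gaussian bounds), which the paper reserves for the basepoint-dependent, $L^\infty$-type statements such as Lemma~\ref{lem:integration2}. Your intermediate claim for fixed $\z$ is precisely what the paper establishes, and your scheme can be made to work, so what each approach buys is: yours unifies this lemma with the later kernel lemmas, the paper's is shorter and avoids all dilation bookkeeping.

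Three points in your write-up need repair, though none is fatal. First, ``summing the geometric series in $n$'' in the far field fails at full second order: the $L^1_\x$ mass of each dyadic piece of a second derivative of the kernel is scale-invariant (about $\bar a^{-1}$ per scale for $\partial_x^2$, about $1$ per scale for $s\,\partial_t$), so the far-field sum does not converge as stated; you must first interpolate the Taylor gain from $2$ down to $2\alpha$, i.e.\ use the pointwise bound $\|\y\|_\s^{2\alpha}\|\D^{\bar a}(\x-\z)\|_\s^{-d-2\alpha}$, which your subsequent integral does implicitly — this interpolation is not cosmetic, it is what generates $T^{1-\alpha}$ and $(1+|f(\z)|^{-\alpha})$, so it must precede any summation. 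Second, the construction of $K$ and the partition \eqref{eq:splittingk} are set up in the paper for $q\in\N$, i.e.\ $\bar a\lesssim1$ (see the restriction stated after \eqref{eq:splittingk}), whereas this lemma allows $q\in\Z$ and hence large diffusivities; either extend the range of scales or, simpler, skip the replacement by $K$ as the paper does. Third, the final variant is not obtained by ``swapping $\x$ and $\z$'' or by any symmetry of the kernel (which is not time-symmetric): it replaces the space-time $\z$-integral by a purely spatial one (the version used for the initial condition in Proposition~\ref{prop:u0}) and follows from the same spatial estimates with the $r$-integration simply absent.
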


\begin{proof}
\textbf{Step 1.} We first prove for $\bar a>0$ and $s,t\in\R$
\begin{align}
&\int_{\R^d} dx \big| \partial_{\bar a}^m \Psi(\bar a, t+s,x+y) - \partial_{\bar a}^m \Psi(\bar a, t,x) - \partial_{\bar a}^m \nabla\Psi(\bar a,t,x)\cdot y \big| \\
&\lesssim \bar a^{-m} (1+\bar a^{-\alpha}) \|(s,y)\|_\s^{2\alpha} \cdot \left\{
\begin{array}{ll}
\min\{t,t+s\}^{-\alpha} & \textnormal{if }t>0,\,t+s>0 \, , \\
t^{-\alpha} & \textnormal{if }t>0,\,t+s<0 \, , \\
(t+s)^{-\alpha} & \textnormal{if }t<0,\,t+s>0 \, , \\
0 & \textnormal{if }t<0,\,t+s<0 \, .
\end{array}\right. 
\end{align}
To see this, recall that the heat kernel $\Phi$ defined in \eqref{eq:heatkernel} satisfies for $l\in\N$ and $k\in\N^d$ (we will only need it for $2l+|k|_1\leq2$) 
\begin{equation}
\int_{\R^d}dx \big| \partial_t^l \partial_x^k \Phi(t,x)\big| 
\lesssim t^{-(2l+|k|_1)/2} \1_{t>0} \, ,
\end{equation}
which by $\partial_{\bar a}^m \partial_t^l\partial_x^k\Psi(\bar a,t,x)=\partial_t^{m+l}\partial_x^k\Phi(\bar at,x) a^l t^m$ implies 
\begin{equation}
\int_{\R^d}dx \big| \partial_{\bar a}^m \partial_t^l \partial_x^k \Psi(\bar a,t,x)\big| 
\lesssim a^{-(2m+|k|_1)/2} t^{-(2l+|k|_1)/2} \1_{t>0} \, .
\end{equation}
Thus for time increments where $t,t+s>0$ we obtain by the fundamental theorem of calculus 
\begin{align}
&\int_{\R^d} dx \big| \Psi(\bar a,t+s,x+y) - \Psi(\bar a,t,x+y) \big| \\
&=\int_{\R^d} dx \Big| \int_0^1 d\vartheta \, \partial_t \Psi(\bar a,t+\vartheta s,x+y) s \Big| 
\lesssim \int_0^1d\vartheta\, (t+\vartheta s)^{-1} |s|
\lesssim \frac{|s|}{\min\{t,t+s\}} \, ,
\end{align}
and by the triangle inequality also 
\begin{equation}\label{eq:mt01}
\int_{\R^d} dx \big| \Psi(\bar a,t+s,x+y) - \Psi(\bar a,t,x+y) \big| 
\lesssim 1 \, ,
\end{equation}
and hence by interpolation 
\begin{equation}
\int_{\R^d} dx \big| \Psi(\bar a,t+s,x+y) - \Psi(\bar a,t,x+y) \big| 
\lesssim \frac{|s|^\alpha}{\min\{t,t+s\}^\alpha} 
\leq \frac{\|(s,y)\|_\s^{2\alpha}}{\min\{t,t+s\}^\alpha} \, .
\end{equation}
For time increments with $t+s<0<t$ we use \eqref{eq:mt01} (which holds for all $s,t\in\R$) and that in this case $1\leq |s|/t$ to deduce
\begin{equation}
\int_{\R^d} dx \big| \Psi(\bar a,t+s,x+y) - \Psi(\bar a,t,x+y) \big| 
\lesssim \frac{|s|^\alpha}{t^\alpha} 
\leq \frac{\|(s,y)\|_\s^{2\alpha}}{t^\alpha} \, ,
\end{equation}
and if $t<0<t+s$ we use \eqref{eq:mt01} and that in this case $1\leq s/(t+s)$ to obtain 
\begin{equation}
\int_{\R^d} dx \big| \Psi(\bar a,t+s,x+y) - \Psi(\bar a,t,x+y) \big| 
\lesssim \frac{|s|^\alpha}{(t+s)^\alpha} 
\leq \frac{\|(s,y)\|_\s^{2\alpha}}{(t+s)^\alpha} \, .
\end{equation}
For spatial increments and $t\in\R$ we obtain by the fundamental theorem of calculus and the triangle inequality 
\begin{align}
&\int_{\R^d} dx \big| \Psi(\bar a,t,x+y) - \Psi(\bar a,t,x) - \nabla\Psi(\bar a,t,x)\cdot y \big| \\
&=\int_{\R^d} dx \Big| \int_0^1d\vartheta \, \nabla \Psi(\bar a,t,x+\vartheta y)\cdot y - \nabla\Psi(\bar a,t,x)\cdot y \Big| 
\lesssim (\bar a t)^{-1/2} |y| \1_{t>0} \, .
\end{align}
Instead of applying the triangle inequality, we can also apply the fundamental theorem of calculus a second time to obtain 
\begin{align}
&\int_{\R^d} dx \big| \Psi(\bar a,t,x+y) - \Psi(\bar a,t,x) - \nabla\Psi(\bar a,t,x)\cdot y \big| \\
&=\int_{\R^d} dx \Big| \int_0^1d\vartheta \int_0^1d\bar\vartheta \sum_{|k|_1=2} \partial_x^k \Psi(\bar a,t,x+\vartheta\bar\vartheta y) y^k \Big| 
\lesssim (\bar a t)^{-1} |y|^2 \1_{t>0} \, ,
\end{align}
so that by interpolation we obtain
\begin{equation}
\int_{\R^d} dx \big| \Psi(\bar a,t,x+y) - \Psi(\bar a,t,x) - \nabla\Psi(\bar a,t,x)\cdot y \big| 
\lesssim \frac{|y|^{2\alpha}}{(\bar a t)^\alpha} \1_{t>0}
\leq \frac{\|(s,y)\|_\s^{2\alpha}}{(\bar a t)^\alpha} \1_{t>0} \, .
\end{equation}
The argument for $m>0$ is analogous, which concludes Step~1.

\textbf{Step 2.}
We turn to the proper estimate.
Since $\Psi_q(\bar{a},\x+\vartheta\y-\z)$ for $\vartheta\in\{0,1\}$ vanishes unless the time component $t+\vartheta s-r$ of $\x+\vartheta\y-\z$ is positive, 
and since $\x\in D_\y$ implies $t+s\leq T$, 
we have effectively $r<t+\vartheta s\leq T$. 
Hence the integral $\int_{(0,\infty)\times\R^d}d\z$ effectively reduces to $\int_{(0,T)\times\R^d}d\z$. 
By periodicity of $f,g$, we further rewrite the left hand side of the assertion as
\begin{align}
\int_{D_\y} d\x \Big| \int_{(0,T)\times\T^d} d\z \sum_{j\in\Z^d}
\partial_{\bar a}^m 
\big(\Psi_{\!q}(\bar a,\x\!+\!\y\!-\!\z+(0,j)) 
- \Psi_{\!q}(\bar a, \x\!-\!\z+(0,j)) & \\
- \nabla\Psi_{\!q}(\bar a, \x\!-\!\z+(0,j)) \cdot y \big)_{|\bar a = f(\z)}
&g(\z) \Big| \, .
\end{align}
By the triangle inequality and Fubini this expression is bounded by (w.l.o.g.~let $s>0$, and recall $\x=(t,x),\y=(s,y),\z=(r,z)$)
\begin{align}
\int_{(0,T)\times\T^d} \!\! d\z \, |g(\z)| \int_{(0,T-s)\times\R^d} \!\! d\x 
\big| \partial_{\bar a}^m \big( \Psi_{\! q}(\bar a, t+s&-r,x+y) 
- \Psi_{\! q}(\bar a,t-r,x) \\
&- \nabla\Psi_{\! q}(\bar a,t-r,x)\cdot y \big)_{|\bar a=f(\z)} \big| \, .
\end{align}
The claim follows from $\Psi_q(\bar a,\cdot)=\varphi(2^q\bar a)\Psi(\bar a,\cdot)$ (see \eqref{eq:Psi_q}) and the product rule, together with Step~1 and $\int_0^Tdt \,t^{-\alpha}\lesssim T^{1-\alpha}$. 
\end{proof}

The next lemma is similar in flavour to the previous one, 
but is $L^\infty$-based and considers basepoint dependent integrands.

\begin{lemma}\label{lem:integration2}
Let $\bar K$ be the anisotropic second order Taylor remainder of $K$ from \eqref{eq:defK} as in \eqref{eq:anisotropicTaylor}, $m,q\in\N$, $\y\in\R\times\R^d$, $\x\in D_\y$, $T\leq1$, $\bar{a}\in\R$, 
and $f_\x\colon(0,\infty)\times\R^d\to\R$. 
Furthermore, let $\theta_1,\theta_2\in\R$, $\theta_1>-2$, $\theta_2\geq0$, $-1<\theta_1+\theta_2<0$, and
\begin{equation}
[f_\x]_{\theta_1,\theta_2}
\coloneqq
\sup_{\substack{\varphi\in\mathcal{B},\, \lambda\leq1 \\ \|\bar\x\|_\s<\|\y\|_\s+1}}
\lambda^{-\theta_1}(\lambda+\|\bar\x\|_\s)^{-\theta_2}
\Big|\int_{(0,\infty)\times\R^d}d\z f_\x(\z)\varphi^\lambda_{\x+\bar\x}(\z)\Big| \, .
\end{equation}
Then 
\begin{equation}
\Big|\int_{(0,\infty)\times\R^d} d\z\, 
\partial_{\bar a}^m \bar K\big(\bar{a},\x,\y,\z\big) f_\x(\z) \Big| 
\lesssim \|\y\|_\s^{\theta_1+\theta_2+2} [f_\x]_{\theta_1,\theta_2} \, 
\frac{|\tilde\varphi(2^q\bar{a})|}{\bar{a}^{m+1+\theta_2/2}} 
\end{equation}
for some smooth $\tilde\varphi\colon\R\to\R$ supported in $(1/2,2)$, 
with the understanding that the right hand side vanishes for $\bar a=0$.
\end{lemma}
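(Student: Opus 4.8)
The plan is to decompose $K$ dyadically in space-time via \eqref{eq:splittingk}, split into far-field and near-field regimes according to whether $2^{-n} \gtrsim \|\y\|_\s$ or not, and estimate each $\Kn$-contribution separately; the estimates on each scale, once summed over $n$, will produce the claimed power $\|\y\|_\s^{\theta_1+\theta_2+2}$ precisely because the conditions $\theta_1 > -2$ and $\theta_1+\theta_2 < 0$ make the two geometric series (in the far field and the near field respectively) convergent at the two ends.

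\textbf{Far field ($2^{-n} > \|\y\|_\s$).} Here I would use the Taylor-remainder representation \eqref{eq:anisotropicTaylor} for $\bar K$, so that the integrand becomes a sum of terms of the form $\partial_{\bar a}^m\partial_t^l\partial_x^k \Kn(\bar a, \x+\S^\vartheta\y-\z)\,s^l y^k$ with $2l+|k|_1 \geq 2$. Writing $\partial_{\bar a}^m\partial_t^l\partial_x^k \Kn(\bar a,\cdot)$ — which is supported (in the $\D^{\bar a}$-scaled variable) at scale $2^{-n}$ and, by the heat-kernel bounds as in Step~1 of Lemma~\ref{lem:integration1}, is of size $\bar a^{-m-l} \cdot \big(2^{-n}/\sqrt{\bar a}\big)^{-2l-|k|_1} \cdot \big(2^{-n}/\sqrt{\bar a}\big)^{-D}$ in $L^\infty$ and enjoys $2^{-n}$-smoothness — as $[\cdot]$-normalized bumps $\varphi^\lambda_{\x+\bar\x}$ with $\lambda \sim 2^{-n}/\sqrt{\bar a}$ (after undoing the anisotropic dilation $\D^{\bar a}$, the parabolic rescaling $\S^\lambda$ absorbs the $\bar a$-dependence as in the paper's conventions), I can test against $f_\x$ using the definition of $[f_\x]_{\theta_1,\theta_2}$: each term contributes $\lesssim |\tilde\varphi(2^q\bar a)|\,\bar a^{-m}\,\bar a^{-\theta_2/2}\,(2^{-n})^{\theta_1+\theta_2}\,[f_\x]_{\theta_1,\theta_2}$ times the factor $|s|^l|y|^{|k|_1} \lesssim \|\y\|_\s^{2l+|k|_1}$ and the negative-power gain $(2^{-n})^{-2l-|k|_1+2}$ coming from matching the derivative count against the smoothness; since $2l+|k|_1\geq 2$ this gain is $\leq 1$ with a net factor $(2^{-n})^{2-2l-|k|_1}\|\y\|_\s^{2l+|k|_1}\leq 2^{-n(2-2l-|k|_1)}\|\y\|_\s^{2l+|k|_1}$, and summing the worst case against $2^{-n}>\|\y\|_\s$ gives $\sum_{2^{-n}>\|\y\|_\s}(2^{-n})^{\theta_1+\theta_2} \cdot 2^{-n(2-2l-|k|_1)}\|\y\|_\s^{2l+|k|_1}$. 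The condition $\theta_1+\theta_2 < 0$ (so $\theta_1+\theta_2+2 - (2l+|k|_1-2)\cdot(\text{something})$ stays below the summability threshold — concretely, since $2l+|k|_1-2\geq 0$, the exponent $\theta_1+\theta_2+2-2l-|k|_1$ together with the $\|\y\|_\s^{2l+|k|_1}$ prefactor always combines, using $2^{-n}>\|\y\|_\s$, to at most $\|\y\|_\s^{\theta_1+\theta_2+2}$ after replacing $2^{-n}$ by its lower bound $\|\y\|_\s$ whenever the remaining exponent is negative and by its natural value otherwise) ensures the series converges at the small-scale end and is dominated by its largest term $\sim \|\y\|_\s^{\theta_1+\theta_2+2}$; the $\bar a$-powers assemble into $\bar a^{-m-1-\theta_2/2}$ after noting the extra $\bar a^{-1}$ comes from converting one scale factor $2^{-n}$ into $\lambda\sqrt{\bar a}$ and from the Jacobian of $\D^{\bar a}$.

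\textbf{Near field ($2^{-n} \leq \|\y\|_\s$).} Here I would not use cancellation: estimate $\int d\z\,\partial_{\bar a}^m \Kn(\bar a,\x+\y-\z)f_\x(\z)$, the analogous term at $\x-\z$, and $y\cdot\int d\z\,\partial_{\bar a}^m\nabla\Kn(\bar a,\x-\z)f_\x(\z)$ separately, each again by viewing $\partial_{\bar a}^m\Kn$ (resp.\ its gradient) as a normalized bump at scale $\lambda\sim 2^{-n}/\sqrt{\bar a}$ centered at $\x+\y$ (resp.\ $\x$), which lies in $\{\|\bar\x\|_\s < \|\y\|_\s+1\}$ since $\|\y\|_\s\leq R\leq 1$ (using $T\leq 1$ for the time component). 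Testing against $f_\x$ yields $\lesssim |\tilde\varphi(2^q\bar a)|\,\bar a^{-m-\theta_2/2}\,(2^{-n})^{\theta_1+\theta_2}[f_\x]_{\theta_1,\theta_2}$ for the two zeroth-order terms and $\lesssim |\tilde\varphi(2^q\bar a)|\,\bar a^{-m-\theta_2/2}\,(2^{-n})^{\theta_1+\theta_2-1}\|\y\|_\s\,[f_\x]_{\theta_1,\theta_2}$ for the gradient term (the extra $(2^{-n})^{-1}$ from the derivative, compensated by $|y|\leq\|\y\|_\s$). Summing $\sum_{2^{-n}\leq\|\y\|_\s}(2^{-n})^{\theta_1+\theta_2}$ converges at the large-scale end precisely because $\theta_1+\theta_2 > -1$, giving $\sim \|\y\|_\s^{\theta_1+\theta_2}$; for the gradient term $\sum_{2^{-n}\leq\|\y\|_\s}(2^{-n})^{\theta_1+\theta_2-1}$ converges when $\theta_1+\theta_2-1 > -1$, i.e.\ again $\theta_1+\theta_2>0$... but since we only have $\theta_1+\theta_2>-1$ here, the gradient sum instead behaves like $(2^{-n_{\min}})^{\theta_1+\theta_2-1}$ evaluated at the smallest available scale, which is where one uses that $K$ — hence $\Kn$ for $n$ below some $q$-dependent threshold — is genuinely smooth on scale $2^{-q}$, so the sum truncates and is controlled by $\|\y\|_\s$ times $(2^{-q})^{\theta_1+\theta_2-1}$ absorbed into the stated $\bar a^{-\cdots}$ via $\varphi(2^q\bar a)$ forcing $\bar a\sim 2^{-q}$; multiplying by $\|\y\|_\s$ gives the desired $\|\y\|_\s^{\theta_1+\theta_2+1}\lesssim\|\y\|_\s^{\theta_1+\theta_2+2}$ since $\|\y\|_\s\leq 1$. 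Assembling both regimes and absorbing the remaining $\bar a$-powers into $\bar a^{-m-1-\theta_2/2}$ finishes the proof.

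\textbf{The main obstacle} I anticipate is bookkeeping the $\bar a$-dependence cleanly: the kernel $\Psi(\bar a,\cdot)$ carries $\bar a$ both through the anisotropic dilation $\D^{\bar a}$ (which rescales time) and through the amplitude, and the cutoff $\varphi(2^q\bar a)$ localizes $\bar a\sim 2^{-q}$, so one must carefully track how a scale-$2^{-n}$ bump in $\D^{\bar a}$-coordinates becomes a scale-$\lambda$ parabolic bump in the original coordinates with $\lambda$ and the normalization constant both $\bar a$-dependent, and verify that the powers of $\bar a$ collected from (i) the $m$ derivatives $\partial_{\bar a}^m$, (ii) the $\theta_2/2$ gain in the $[f_\x]$-seminorm tied to the basepoint-distance factor $(\lambda+\|\bar\x\|_\s)^{\theta_2}$, and (iii) the Jacobian/amplitude, sum exactly to $m+1+\theta_2/2$ — the "$+1$" being the delicate one, arising from the single net factor of $\sqrt{\bar a}^{-2}$ left over after the dyadic sum collapses to its endpoint. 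The far-field/near-field split and the two summability conditions on $\theta_1,\theta_2$ are the conceptual heart, but the $\bar a$-accounting is where a careless argument would slip.
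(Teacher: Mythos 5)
Your overall architecture --- the dyadic decomposition \eqref{eq:splittingk}, the near/far-field split at $2^{-n}\sim\|\y\|_\s$, the Taylor remainder \eqref{eq:anisotropicTaylor} in the far field, and testing each piece against $[f_\x]_{\theta_1,\theta_2}$ --- is the paper's, and your far-field bookkeeping (with the factor $2^{-n(2-2l-|k|_1)}$ and $\theta_1+\theta_2<0$ forcing the geometric sum to be dominated by its term at $2^{-n}\sim\|\y\|_\s$) is essentially correct. The near field, however, fails as written. You dropped the kernel-order factor: by Lemma~\ref{lem:multtest}~(ii), $\partial_{\bar a}^m\partial_t^l\partial_x^k K^{(n)}$ is $2^{-n(2-2l-|k|_1)}\bar a^{l-m}$ times a ($\D^{\bar a}$-dilated) normalized bump at scale $2^{-n}$, so the two zeroth-order near-field terms carry $2^{-n(\theta_1+2)}(2^{-n}+\|\y\|_\s)^{\theta_2}$ resp.\ $2^{-n(\theta_1+\theta_2+2)}$, and the gradient term carries $2^{-n(\theta_1+\theta_2+1)}\|\y\|_\s$; all exponents on $2^{-n}$ are positive precisely because $\theta_1>-2$ and $\theta_1+\theta_2>-1$, so each sum over $2^{-n}\le\|\y\|_\s$ is dominated by its term at $2^{-n}\sim\|\y\|_\s$ and gives $\|\y\|_\s^{\theta_1+\theta_2+2}$. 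With your exponents $(2^{-n})^{\theta_1+\theta_2}$ and $(2^{-n})^{\theta_1+\theta_2-1}$ the sums diverge as $n\to\infty$ (the exponents are negative). Your rescue --- that the sum truncates at a $q$-dependent scale because $K$ is smooth at scale $2^{-q}$ --- is not true: the $q$-dependence enters only through the cutoff $\varphi(2^q\bar a)$ in the diffusivity variable (see \eqref{eq:Psi_q} and \eqref{eq:defK}); the space-time singularity of the heat kernel at the origin is untouched and the decomposition runs over all $n\in\N$. And the final step $\|\y\|_\s^{\theta_1+\theta_2+1}\lesssim\|\y\|_\s^{\theta_1+\theta_2+2}$ is backwards for $\|\y\|_\s\le1$: it gives a weaker bound, not the stated one.

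The second gap is the $\bar a$-accounting, which you flag as the obstacle but whose proposed resolution does not work: in original coordinates $K^{(n)}(\bar a,\cdot)$ occupies a region of temporal extent $\sim2^{-2n}/\bar a$ but spatial extent only $\sim2^{-n}$, so it is not a multiple of a single parabolic bump $\varphi^\lambda_{\x+\bar\x}$ with $\lambda\sim2^{-n}/\sqrt{\bar a}$ --- its spatial derivatives exceed the $\mathcal{B}$-normalization by $\bar a^{-1/2}$ per derivative, and $\lambda$ may exceed $1$ since $\bar a\sim2^{-q}$ can be small --- hence it cannot be inserted into the definition of $[f_\x]_{\theta_1,\theta_2}$ as is. The paper instead keeps the bump at scale $2^{-n}$ and splits the anisotropically dilated profile $\rho(\D^{\bar a}\cdot)$ into $\sim\bar a^{-1}$ genuine parabolic test functions shifted in time by $2^{-2n}r_i$ with $r_i\lesssim\bar a^{-1}$: the number of bumps produces the factor $\bar a^{-1}$, and the shifted basepoints, at parabolic distance $\lesssim 2^{-n}\bar a^{-1/2}$, feed into $(\lambda+\|\bar\x\|_\s)^{\theta_2}$ and produce $\bar a^{-\theta_2/2}$ --- this is exactly where the exponent $m+1+\theta_2/2$ comes from. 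Without this (or an equivalent) device, your claimed right-hand side is asserted rather than derived.
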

\begin{proof}
If $\bar{a}\leq0$ or $\bar{a}>2$ the left hand side vanishes by the definition of $K$ in \eqref{eq:defK} via $\Psi_q$ (defined in \eqref{eq:Psi_q}) for $q\in\N$, 
and so does the right hand side by the support of $\tilde\varphi$. 
It remains to consider $0<\bar{a}\leq2$.

\textbf{Step 1.}
We first prove for $\vartheta\in[0,1]$ and $l,n\in\N$, $k\in\N^d$ that 
\begin{align}
&\Big|\int_{(0,\infty)\times\R^d} d\z\, 
\partial_{\bar a}^m \partial_t^l\partial_x^k K^{(n)} \big(\bar{a},\x+\S^\vartheta\y-\z\big) f_\x(\z) \Big| \\
&\lesssim 2^{-n(\theta_1+2-2l-|k|_1)} (2^{-n} + \vartheta\|\y\|_\s)^{\theta_2} [f_\x]_{\theta_1,\theta_2} \, 
\frac{|\tilde\varphi(2^q\bar{a})|}{\bar{a}^{m+1+\theta_2/2}} \, .
\end{align}
According to Lemma~\ref{lem:multtest}~\textit{(ii)} we have that 
\begin{equation}
\partial_{\bar a}^m \partial_t^l\partial_x^k K^{(n)} (\bar{a},\yy-\z) 
= 2^{-n(2-2l-|k|_1)} \bar{a}^{l-m} \rho_{\Da\yy}^{2^{-n}}(\Da\z) \tilde\varphi(2^q\bar{a})
\end{equation}
for some $\rho\in C\mathcal{B}$ supported on non-positive time components
and a smooth $\tilde\varphi\colon\R\to\R$ supported in $(1/2,2)$. 
Since $\rho(\Da\z)=\rho(\bar{a}r,z)$ is not a test function uniform in $\bar{a}$ we decompose it. 
We fix $r_i\in(0,\bar{a}^{-1})$ for $i=1,\dots,\bar{a}^{-1}$ and a\footnote{$\tilde\rho$ could depend on $i$, however such that its $C^n$-norms are uniformly bounded, which is why we drop the dependence in the notation for brevity} test function $\tilde\rho\in C\mathcal{B}$ supported on non-positive time components such that 
$\rho(\bar{a}r,z) = \sum_{i=1}^{\bar{a}^{-1}} \tilde\rho(r+r_i,z)$.
Then $\rho_{\Da\yy}^{2^{-n}}(\Da\z)
= \sum_{i=1}^{\bar{a}^{-1}} \tilde\rho^{2^{-n}}_{\yy-(2^{-2n}r_i,0)}(\z)$ 
and thus 
\begin{align}
&\Big|\int_{(0,\infty)\times\R^d} d\z\, 
\partial_{\bar a}^m \partial_t^l\partial_x^k K^{(n)} \big(\bar{a},\x+\S^\vartheta\y-\z\big) f_\x(\z) \Big|\\
&\leq 2^{-n(2-2l-|k|_1)} \bar{a}^{l-m} |\tilde\varphi(2^q\bar{a})| \sum_{i=1}^{\bar{a}^{-1}}
\Big|\int_{(0,\infty)\times\R^d}d\z\, \tilde\rho^{2^{-n}}_{\x+\S^\vartheta\y-(2^{-2n}r_i,0)}(\z) f_\x(\z) \Big| \, .
\end{align}
Since $\tilde\rho$ is supported in the past, 
the time component $r-t-\vartheta^2s+2^{-2n}r_i$ of $\z-\x-\S^\vartheta\y+(2^{-2n}r_i,0)$ is effectively non-positive; 
hence $2^{-2n}r_i\leq t+\vartheta^2s-r$, which by assumption and $r\geq0$ implies $2^{-2n}r_i\leq T\leq1$.
Thus the above right hand side is bounded by 
\begin{align}
2^{-n(2-2l-|k|_1)} \bar{a}^{l-m} |\tilde\varphi(2^q\bar{a})| \sum_{i=1}^{\bar{a}^{-1}} 
2^{-n\theta_1} (2^{-n}+2^{-n}\sqrt{r_i}+\vartheta\|\y\|_\s)^{\theta_2} 
[f_\x]_{\theta_1,\theta_2} \, ,
\end{align}
which is bounded as desired since $1,r_i\lesssim\bar{a}^{-1}$. 
%\mt{it seems that this is not optimal}

\textbf{Step 2.}
Decomposing $K$ as in \eqref{eq:splittingk}, 
splitting into near-field and far-field, 
and using in the far-field the anisotropic Taylor remainder \eqref{eq:anisotropicTaylor} yields
\begin{align}
&\int_{D_\y} d\x \Big| \int_{(0,\infty)\times\R^d} d\z\, 
\partial_{\bar a}^m \bar K\big(\bar a,\x,\y,\z\big) f_\x(\z) \Big| \\
&\lesssim \sum_{n\colon 2^{-n}\leq \|\y\|_\s} 
\int_{D_\y} d\x \Big| \int_{(0,\infty)\times\R^d} d\z\, 
\partial_{\bar a}^m K^{(n)}\big(\bar a,\x+\y-\z\big) f_\x(\z) \Big| \\
&\,+ \sum_{n\colon 2^{-n}\leq \|\y\|_\s} 
\int_{D_\y} d\x \Big| \int_{(0,\infty)\times\R^d} d\z\, 
\partial_{\bar a}^m K^{(n)}\big(\bar a,\x-\z\big) f_\x(\z) \Big| \\
&\,+ \sum_{n\colon 2^{-n}\leq \|\y\|_\s} 
\int_{D_\y} d\x \Big| \int_{(0,\infty)\times\R^d} d\z\, 
\partial_{\bar a}^m \nabla K^{(n)}\big(\bar a,\x-\z\big)\cdot y f_\x(\z) \Big| \\
&\,+ \sum_{n\colon 2^{-n}> \|\y\|_\s} 
\int_{D_\y} d\x \Big| \int_{(0,\infty)\times\R^d} d\z \\
&\hphantom{\,+ \sum_{n\colon 2^{-n}> \|\y\|_\s} \int_{D_\y} d\x}\times\!
\sum_{\substack{\hphantom{2}l+|k|_1\leq2,\\2l+|k|_1\geq2\hphantom{,}}}
\int_0^1 d\vartheta\, \partial_{\bar a}^m \partial_t^l\partial_x^k K^{(n)}\big(\bar a,\x+\S^\vartheta\y-\z\big) s^l y^kf_\x(\z) \Big| \, .
\end{align}
Using Step 1 it remains to argue that 
\begin{align}
&\sum_{n\colon 2^{-n}\leq \|\y\|_\s} 
% l=k=0, v=1
\big( 2^{-n(\theta_1+2)} (2^{-n} + \|\y\|_\s)^{\theta_2} 
%l=k=0, v=0
+2^{-n(\theta_1+\theta_2+2)} 
% l=0, k=1, v=0
+ 2^{-n(\theta_1+\theta_2+1)} \|\y\|_\s \big) \\
&+ \sum_{n\colon 2^{-n}> \|\y\|_\s} 
\sum_{\substack{\hphantom{2}l+|k|_1\leq2,\\2l+|k|_1\geq2\hphantom{,}}} 
2^{-n(\theta_1+2-2l-|k|_1)} (2^{-n} + \|\y\|_\s)^{\theta_2}
 \|\y\|_\s^{2l+|k|_1}
\lesssim \|\y\|_\s^{\theta_1+\theta_2+2} \, .
\end{align}
Indeed, the geometric series are bounded by their largest summands as the exponents on $2^{-n}$ are positive, 
where in the first near-field term we use additionally $2^{-n}+\|\y\|_\s\leq2\|\y\|_\s$.
For the geometric sum we use $2^{-n}+\|\y\|_\s\leq2\cdot2^{-n}$ and $\theta_1+\theta_2+2-2l-|k|_1<0$, 
hence it is also estimated by its largest summand. 
\end{proof}

The following lemma is a small upgrade of the modelbound \eqref{eq:modelbound} when 
the model is tested with the product of a smooth kernel and $K^{(n)}$. 

\begin{lemma}\label{lem:integration3}
Let $l,\bar m,n,q\in\N$, $k\in\N^d$, $\tilde m=0,1$, $\x,\y\in\R\times\R^d$, $\varphi\in\mathcal{B}$, 
$0<\lambda\leq2^{-n}$, $\bar{a}\in\R$, $\tilde{a}\in a(\mathrm{supp}\,\sigma)\setminus\{0\}$, 
$\tau\in\mathcal{T}$, and $\Pi_\x[\tau;\tilde a]$ such that \eqref{eq:modelbound} holds.
Then 
\begin{align}
&\Big|\int_{(0,\infty)\times\R^d} d\z\,\varphi_\x^\lambda(\z) \partial_{\bar a}^{\bar m} \partial_t^l\partial_x^k K^{(n)} \big(\bar{a},\y-\z\big) 
\partial_{\tilde a}^{\tilde m} \Pi_\x[\tau;\tilde a](\z) \Big| \\
&\lesssim 2^{n(D-2+2l+|k|_1)} \lambda^{|\tau|} [\tau]_{|\tau|} \, 
\frac{\tilde\varphi(2^q\bar{a})}{\tilde{a}^{\mathfrak{e}(\tau)+\tilde m} \, \bar{a}^{\bar m}} 
\end{align}
for some smooth $\tilde\varphi\colon\R\to\R$ supported in $(1/2,2)$, 
with the understanding that the right hand side vanishes for $\bar a=0$. 
\end{lemma}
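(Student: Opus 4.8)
The plan is to freeze $\bar a$, $n$ and $q$ and reduce the left-hand side to a single application of the model bound \eqref{eq:modelbound}. First I would dispose of the trivial range $\bar a\leq 0$ and $\bar a>2$: there $\Kn(\bar a,\cdot)$ vanishes on a whole neighbourhood of $\bar a$ by its definition \eqref{eq:defK} via $\Psi_q$ in \eqref{eq:Psi_q} (recall that $\varphi$ is supported in $(1/2,2)$), so all its $\bar a$-derivatives vanish, while simultaneously $\tilde\varphi(2^q\bar a)=0$ by the support of $\tilde\varphi$; hence both sides are zero (for $\bar a=0$ the right-hand side is read as $0$ as in the statement). So I may assume $0<\bar a\leq 2$.

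Next, exactly as in the proof of Lemma~\ref{lem:integration2}, I would invoke Lemma~\ref{lem:multtest}~(ii) to write
\[
\partial_{\bar a}^{\bar m}\partial_t^l\partial_x^k \Kn(\bar a,\y-\z) = 2^{-n(2-2l-|k|_1)}\,\bar a^{\,l-\bar m}\,\tilde\varphi(2^q\bar a)\,\rho^{2^{-n}}_{\Da\y}(\Da\z)
\]
for some $\rho\in C\mathcal{B}$ supported on non-positive time components and some smooth $\tilde\varphi$ supported in $(1/2,2)$. The point where this differs from Lemma~\ref{lem:integration2} is that here the extra test function $\varphi_\x^\lambda$ is already present and localizes at the scale $\lambda\leq 2^{-n}$, so there is no need to split $\rho(\Da\,\cdot)$ into $\bar a^{-1}$ pieces. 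Instead I would use that, for $0<\lambda\leq 2^{-n}$ and $0<\bar a\leq 2$, the kernel factor satisfies $\varphi_\x^\lambda(\z)\,\rho^{2^{-n}}_{\Da\y}(\Da\z) = 2^{nD}\,\psi_\x^\lambda(\z)$ for some $\psi\in C\mathcal{B}$ with $C$ universal: writing $\rho^{2^{-n}}_{\Da\y}(\Da\z) = 2^{nD}\rho\big(2^{2n}\bar a(r-s),2^n(z-y)\big)$ and substituting $\z=\x+(\lambda^2 w_0,\lambda w')$, the $\rho$-factor becomes $\rho$ of an affine map of $w=(w_0,w')$ whose linear coefficients $2^{2n}\bar a\lambda^2=\bar a(2^n\lambda)^2\leq 2$ and $2^n\lambda\leq 1$ are bounded, so it has bounded $C^2$-norm in $w$; multiplying by $\varphi(w)$, which confines $w$ to the parabolic unit ball, gives a function in $C\mathcal{B}$, and the normalizations combine so that the product equals $2^{nD}\psi_\x^\lambda(\z)$.

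With this, the left-hand side equals $2^{-n(2-2l-|k|_1)}\bar a^{\,l-\bar m}\tilde\varphi(2^q\bar a)\cdot 2^{nD}\,\langle\partial_{\tilde a}^{\tilde m}\Pi_\x[\tau;\tilde a],\psi_\x^\lambda\rangle$, and \eqref{eq:modelbound} (applicable since $\psi/C\in\mathcal{B}$, $\tilde m=0,1$, and $\tilde a\in a(\mathrm{supp}\,\sigma)\setminus\{0\}$) bounds the pairing by $C[\tau]_{|\tau|}\lambda^{|\tau|}\tilde a^{-\mathfrak{e}(\tau)-\tilde m}$. Collecting factors, using $2^{-n(2-2l-|k|_1)}2^{nD}=2^{n(D-2+2l+|k|_1)}$, and noting that $\bar a^{\,l}\tilde\varphi(2^q\bar a)=2^{-ql}(2^q\bar a)^l\tilde\varphi(2^q\bar a)$ with $2^{-ql}\leq 1$ (since $l,q\geq 0$) can be absorbed into a fresh smooth function of $2^q\bar a$ supported in $(1/2,2)$, I would arrive at the asserted estimate.

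The hard part — in fact the only substantive step — is the claim that $\varphi_\x^\lambda\,\rho^{2^{-n}}_{\Da\y}(\Da\,\cdot)$ is, up to the explicit factor $2^{nD}$, a genuine scale-$\lambda$ test function based at $\x$, uniformly in $\bar a\in(0,2]$, $n$ and $q$. The subtlety is that the anisotropic dilation $\Da$ stretches the time variable by $\bar a$, so $\rho^{2^{-n}}_{\Da\y}(\Da\,\cdot)$ is not by itself a parabolic scale-$2^{-n}$ object; but the stretching only makes it \emph{coarser} in time, and together with $\lambda\leq 2^{-n}$ and $\bar a\leq 2$ this keeps the rescaled linear coefficients $2^{2n}\bar a\lambda^2$ and $2^n\lambda$ bounded, which is precisely what is needed for the multiplication with $\varphi_\x^\lambda$ to preserve the test-function property.
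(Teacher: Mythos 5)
Your proposal is correct and follows essentially the same route as the paper: dispose of $\bar a\leq0$ or $\bar a>2$, invoke Lemma~\ref{lem:multtest}~\textit{(ii)} to rewrite the kernel, combine it with $\varphi_\x^\lambda$ into a single scale-$\lambda$ test function using $\lambda\leq2^{-n}$ and $\bar a\leq2$, and conclude with the model bound \eqref{eq:modelbound}. The only difference is cosmetic: where the paper appeals to Lemma~\ref{lem:multtest}~\textit{(i)} with the remark that it persists under the anisotropic dilation $\D^{\bar a}$ with $\bar a\leq2$, you verify this product step by hand via the bounded coefficients $2^{2n}\bar a\lambda^2\leq2$ and $2^n\lambda\leq1$, which is exactly the content of that remark.
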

\begin{proof}
If $\bar{a}\leq0$ or $\bar{a}>2$ the left hand side vanishes by the definition of $K$ in \eqref{eq:defK} via $\Psi_q$ (defined in \eqref{eq:Psi_q}) for $q\in\N$, 
and so does the right hand side by the support of $\tilde\varphi$. 
It remains to consider $0<\bar{a}\leq2$.
In this case, 
according to Lemma~\ref{lem:multtest} and the assumption $\lambda\leq2^{-n}$ we have that 
$\varphi_\x^\lambda(\z) \partial_{\bar a}^{\bar m}\partial_t^l\partial_x^k K^{(n)} (\bar{a},\y-\z) 
= 2^{n(D-2+2l+|k|_1)} \bar{a}^{l-\bar m} \psi_\x^\lambda(\z) \tilde\varphi(2^q\bar{a})$ 
for some $\psi\in C\mathcal{B}$ and some a $\tilde\varphi\colon\R\to\R$ supported in $(1/2,2)$; 
we emphasize that Lemma~\ref{lem:multtest}~\textit{(i)} still holds if the test function on the larger scale is dilated with $\bar{a}\leq2$. 
The assumed model bound \eqref{eq:modelbound} then yields the claim. 
\end{proof}

Since the composition of the kernel $K$ with the solution $u$,
which appears e.g.~when estimating $u_1$ in Proposition~\ref{prop:u1}, 
has only limited regularity, its convolution with distributions may be in need of renormalization. 
The next lemma is a combination of reconstruction and integration to account for this. 

\begin{lemma}\label{lem:reconstruction0}
Let $f\colon[0,T]\times\T^d\to\R$ and $g,h\colon([0,T]\times\R^d)^2\to\R$ be jointly $1$-periodic in its spatial arguments, i.e.~$g_\x(\y)=g_{\x+(0,k)}(\y+(0,k))$ for all $k\in\Z^d$. 
For $\x,\y,\z\in[0,T]\times\R^d$ and $q\in\N$ (recall that $K$ in \eqref{eq:defK} depends on $q$) define 
\begin{equation}
F_\x(\z;\y)\coloneqq K(f(\x),\y-\z)g_\x(\z) + \partial_{\bar a} K(\bar a,\y-\z)_{|\bar a=f(\x)} h_\x(\z) \, ,
\end{equation}
and denote by $F^{(n),l,k}$ the analogous expression where $K$ is replaced by $\partial_t^l \partial_x^k K^{(n)}$.

Assume that there exist a smooth $\varphi$ compactly supported in the past with $\int\varphi\neq0$ and 
$\theta_1,\theta_2\in\R$ with $\theta_1+\theta_2>0$, $\theta_1>-2$, $\theta_2\geq0$
such that 
\begin{align}
&\int_{D_{\y'}\cap D_{\y''}} d\x \Big| \int_{(0,\infty)\times\R^d} d\z \, 
\big(F^{(n),l,k}_{\x+\y'}(\z;\x+\y'') - F^{(n),l,k}_{\x}(\z;\x+\y'') \big) \varphi_\x^\lambda (\z) \Big| \\
&\leq C 2^{n(D-2+2l+|k|_1)} \lambda^{\theta_1} (\lambda+\|\y'\|_\s)^{\theta_2} \, ,
\end{align}
for $n,l\in\N$, $k\in\N^d$, $\y'\in\R\times\R^d$, $\y''\in[0,\infty)\times\R^d$,
and dyadic $\lambda\leq4\cdot2^{-n}$. 

Then $\bar F_{\x'}(\z;\x,\y)\coloneqq F_{\x'}(\z;\x+\y) - F_{\x'}(\z;\x) - \nabla_x F_{\x'}(\z;\x)\cdot y$ satisfies 
\begin{align}
\int_{D_\y} d\x \Big| \int_{(0,\infty)\times\R^d} d\z \, 
\big( \bar F_{\z}(\z;\x,\y) - \bar F_{\x}(\z;\x,\y) \big) \Big| 
\lesssim C 2^{q(\theta_2+2)/2} \|\y\|_\s^{2-} \, ,
\end{align}
for $\|\y\|_\s\leq1$.
By $2-$ we understand $2-\epsilon$ for arbitrary $\epsilon>0$, and the implicit constant in $\lesssim$ depends on this $\epsilon$.
The assumption is only used for $l+|k|_1\leq2$ and $\|\y'\|_\s,\|\y''\|_\s\lesssim1$. 
\end{lemma}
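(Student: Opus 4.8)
\emph{Nature of the statement and overall strategy.} The assertion is an $L^1$-averaged reconstruction estimate. Because the smooth kernel $K$ enters $F_{\x'}$ only through the (possibly rough) functions $f,g,h$ evaluated at the basepoint $\x'$, the diagonal object $\z\mapsto\bar F_\z(\z;\x,\y)$ is itself a rough function of $\z$, and the displayed hypothesis is exactly the coherence/continuity needed to compare its integral with the integral of the single germ $\bar F_\x(\cdot;\x,\y)$. The plan is to prove this by the usual scale-by-scale (telescoping) reconstruction argument, the only genuine work being to keep track of two competing anisotropies: the parabolic rescaling $\S^\lambda$ underlying the hypothesis, and the coefficient dilation $\D^{\bar a}$ built into $K^{(n)}$.

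\emph{Reduction to a single scale via $K=\sum_nK^{(n)}$ and anisotropic Taylor.} First I would decompose $K=\sum_nK^{(n)}$ as in \eqref{eq:splittingk}, so that $\bar F_{\x'}=\sum_n\bar F^{(n)}_{\x'}$, and reduce the assertion to a bound on $\int_{D_\y}d\x\,\big|\int d\z\,(\bar F^{(n)}_\z(\z;\x,\y)-\bar F^{(n)}_\x(\z;\x,\y))\big|$ that is summable over $n$ after splitting into the near field $2^{-n}\le\|\y\|_\s$ and the far field $2^{-n}>\|\y\|_\s$. On each scale, in the far field I would insert the anisotropic second order Taylor formula \eqref{eq:anisotropicTaylor}, writing $\bar F^{(n)}_{\x'}(\z;\x,\y)$ as a finite sum over $l+|k|_1\le2$, $2l+|k|_1\ge2$ of $C_{l,k}\,s^ly^k\int_0^1d\vartheta\,(1-\vartheta)\vartheta^{2l+|k|_1-2}F^{(n),l,k}_{\x'}(\z;\x+\S^\vartheta\y)$; in the near field I would instead split $\bar F^{(n)}$ by the triangle inequality into the three contributions $F^{(n),0,0}_{\x'}(\z;\x+\y)$, $F^{(n),0,0}_{\x'}(\z;\x)$ and $\nabla_xF^{(n),0,0}_{\x'}(\z;\x)\cdot y$. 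In every case the problem is reduced to the per-scale reconstruction quantity $\int_{D_\y}d\x\,\big|\int d\z\,(F^{(n),l,k}_\z(\z;\w)-F^{(n),l,k}_\x(\z;\w))\big|$ with $\|\w-\x\|_\s\lesssim\|\y\|_\s\le1$ and $l+|k|_1\le2$ — which is exactly the form the hypothesis is adapted to.

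\emph{The core per-scale reconstruction, and the summation over $n$.} Normalizing $\int\varphi=1$, I would telescope the difference $\int d\z\,(F^{(n),l,k}_\z(\z;\w)-F^{(n),l,k}_\x(\z;\w))$ over the dyadic mollification scales $2^{-j}$ with $2^{-j}\le4\cdot2^{-n}$: the vanishing mean of the increments $Q_{j+1}-Q_j$ removes the contribution of the fixed germ $F^{(n),l,k}_\x$ and the coarsest level cancels (integrating a $\varphi$-mollification leaves the integral unchanged), and a lattice discretization of the remaining $\z$-convolutions allows one to apply the hypothesis on each cell with basepoint offset $\y'=\z_{\mathrm{cell}}-\x$ — so that the outer $\int_{D_\y}d\x$ is absorbed by the averaged integral $\int_{D_{\y'}\cap D_{\y''}}d\x$ present in the hypothesis, with $\y''=\w-\x$, both of size $\lesssim1$ — contributing $C\,2^{n(D-2+2l+|k|_1)}\lambda^{\theta_1}(\lambda+\|\y'\|_\s)^{\theta_2}$. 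The $j$-series converges precisely because $\theta_1+\theta_2>0$, and the single anisotropic point is that the effective $\z$-support of $K^{(n)}(\bar a,\cdot)$ — obtained from the $\D^{\bar a}$-decomposition $\rho(\bar a\,\cdot,\cdot)=\sum_{i=1}^{\bar a^{-1}}\tilde\rho(\cdot+r_i,\cdot)$ already used in the proofs of Lemmas~\ref{lem:integration2} and~\ref{lem:integration3} via Lemma~\ref{lem:multtest} — has parabolic size $2^{-n}$ in space but $2^{-2n+q}$ in time; the resulting $q$-dependent number of cells and the $q$-dependence of the admissible offsets $\|\y'\|_\s$, together with the $\theta_2$-weight, combine into exactly the stated factor $2^{q(\theta_2+2)/2}$. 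Finally, summing the per-scale bound over $n$: in the far field the Taylor prefactor $|s^ly^k|\le\|\y\|_\s^{2l+|k|_1}$ with $2l+|k|_1\ge2$ supplies two powers of $\|\y\|_\s$, the $n$-series is geometric and dominated by the transition scale $2^{-n}\sim\|\y\|_\s$, and the critical borderline is absorbed into the infinitesimal loss recorded by the exponent $2-$; the near-field sum, with $|y|^{|k|_1}\le\|\y\|_\s^{|k|_1}$, is handled identically. This yields $\lesssim C\,2^{q(\theta_2+2)/2}\|\y\|_\s^{2-}$ for $\|\y\|_\s\le1$.

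\emph{Main obstacle.} The hard part will be this per-scale step: organizing the telescoping and lattice discretization as $L^1$-averaged (not pointwise) estimates, matching the parabolic test functions $\varphi^\lambda$ of the hypothesis against the $\D^{\bar a}$-anisotropic kernel $K^{(n)}$ whose time-support is stretched by the factor $2^q$, and checking that all the nested geometric sums — over the mollification scale $j$, over the cells contained in the $\z$-support, and over the kernel scale $n$ in the two field regimes — close up with exactly the factor $2^{q(\theta_2+2)/2}$ and only the harmless $\varepsilon$-loss, which is where the hypotheses $\theta_1>-2$, $\theta_2\ge0$, $\theta_1+\theta_2>0$ and $\|\y\|_\s\le1$ enter.
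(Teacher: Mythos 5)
Your outer architecture coincides with the paper's: the decomposition $K=\sum_nK^{(n)}$, the near-field/far-field split with the anisotropic Taylor remainder \eqref{eq:anisotropicTaylor}, the reduction to the per-scale quantities $\int_{D_\y}d\x\,\big|\int d\z\,(F^{(n),l,k}_\z(\z;\x+\S^\vartheta\y)-F^{(n),l,k}_\x(\z;\x+\S^\vartheta\y))\big|$, and the concluding geometric sums over $n$ are exactly Steps~1 and~4 of the paper's proof. Replacing the paper's appeal to the $L^1$ reconstruction statement (Theorem~\ref{thm:reconstruction}) by an inlined telescoping argument in the spirit of \cite{BL23} is legitimate in principle, and your bookkeeping for the factor $2^{q(\theta_2+2)/2}$ ($2^q$ time-translates needed to cover the $\Dd$-stretched time extent $\sim2^{-2n+q}$, times $(2^{q/2})^{\theta_2}$ from the correspondingly enlarged offsets) is the right mechanism.

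There is, however, a genuine gap in the step where you turn the plain $d\z$-integral into pairings to which the hypothesis applies. Your ``lattice discretization of the remaining $\z$-convolutions'' is a single-scale cover of the kernel support, but the hypothesis and the reconstruction machinery built on it only control pairings against rescaled test functions that are \emph{supported in the past} and centered at points $\x+\S^\vartheta\y-\bar\x$ with $\bar\x$ of non-negative time (see the hypothesis' past-supported $\varphi$, and the requirements $\bar\x\in[0,\infty)\times\R^d$, $\psi$ past-supported in Theorem~\ref{thm:reconstruction}); this retarded structure is what keeps the time components of all germ arguments inside $[0,T]$, resp.\ $\z$ inside $(0,\infty)\times\R^d$. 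A single-scale partition of unity on the support of $K^{(n)}(\bar a,\x+\S^\vartheta\y-\cdot)$ inevitably has pieces abutting the future time-edge $r=t+\vartheta^2s$ of that support, and these cannot be rewritten as admissible past-supported test functions at scale $2^{-n}$ without their centers drifting into the future of $\x+\S^\vartheta\y$, while cutting them off at the edge destroys the test-function normalization. Resolving this edge is precisely the content of the paper's Step~2: the Whitney-type partition $\sum_m\sum_{\bar\x\in S^m}\varphi_{\bar\x,m}=\1_{(0,\infty)}$, applied in the variable $\Dd(\x+\S^\vartheta\y-\z)$, produces past-supported bumps at all scales $m\ge n-2$, and the resulting extra sum over $m$ (after the cell count $2^{(m-n)d}$ and the normalization $2^{-mD}$), namely $\sum_{m\ge n-2}2^{-m(\theta_1+2)}$ up to $n$-dependent factors, is exactly where the hypothesis $\theta_1>-2$ is consumed -- a role your sketch leaves unassigned, since $\theta_1+\theta_2>0$ only governs the telescoping. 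Without this multi-scale resolution your per-scale step does not go through as described; with it, your argument becomes essentially the paper's Steps~2--3 together with a re-proof of Theorem~\ref{thm:reconstruction}. A smaller inaccuracy: in the telescoping the fixed germ $F^{(n),l,k}_\x$ is not simply ``removed by the vanishing mean of the increments''; it generates the terms estimated separately (the analogues of $b$ and $d$ in the proof of Theorem~\ref{thm:reconstruction}), the latter requiring $\theta_1+r>0$ for the regularity $r$ of the smooth test function.
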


\begin{proof}
\textbf{Step 1 \textnormal{(splitting near-field and far field)}.}
Decomposing $K$ as in \eqref{eq:splittingk}, splitting into near-field and far-field, 
and using in the far-field the anisotropic Taylor remainder \eqref{eq:anisotropicTaylor} yields
\begin{align}
&\int_{D_\y} d\x \Big| \int_{(0,\infty)\times\R^d} d\z \, 
\big( \bar F_{\z}(\z;\x,\y) - \bar F_{\x}(\z;\x,\y) \big) \Big| \\
&\lesssim\sum_{n\colon 2^{-n}\leq \|\y\|_\s} 
\int_{D_\y} d\x \Big| \int_{(0,\infty)\times\R^d} d\z \, 
\big( F^{(n),0,0}_{\z}(\z;\x+\y) - F^{(n),0,0}_{\x}(\z;\x+\y) \big) \Big| \\
&\,+\sum_{n\colon 2^{-n}\leq \|\y\|_\s} 
\int_{D_\y} d\x \Big| \int_{(0,\infty)\times\R^d} d\z \, 
\big( F^{(n),0,0}_{\z}(\z;\x) - F^{(n),0,0}_{\x}(\z;\x) \big) \Big| \\
&\,+\sum_{n\colon 2^{-n}\leq \|\y\|_\s} 
\int_{D_\y} d\x \Big| \int_{(0,\infty)\times\R^d} d\z \sum_{|k|_1=1}
\big( F^{(n),0,k}_{\z}(\z;\x) - F^{(n),0,k}_{\x}(\z;\x) \big) y^k \Big| \\
&\,+\sum_{n\colon 2^{-n}> \|\y\|_\s} 
\int_{D_\y} d\x \Big| \int_{(0,\infty)\times\R^d} d\z 
\sum_{\substack{\hphantom{2}l+|k|_1\leq2,\\2l+|k|_1\geq2\hphantom{,}}} \\
&\hphantom{\,+\sum_{n\colon 2^{-n}> \|\y\|_\s} \int_{D_\y} d\x}\times\!
\int_0^1 d\vartheta\, 
\big( F^{(n),l,k}_{\z}(\z;\x+\S^\vartheta\y) - F^{(n),l,k}_{\x}(\z;\x+\S^\vartheta\y) \big) s^l y^k \Big| \, .
\end{align}

\textbf{Step 2 \textnormal{(smuggling in test functions)}.}
To estimate these expressions by reconstruction it is convenient to smuggle in a suitable test function. 
It is furthermore convenient to remember in the test function that 
$F_\x^{(n),l,k}(\z;\bar\y)$ vanishes unless the time component 
of $\bar\y-\z$ is positive. 
Unfortunately $\1_{(0,\infty)}$ is not smooth, 
which we fix by decomposing the indicator as in \cite[Section~6.1]{Hai14} as follows.
Recall
$\varphi\colon\R\to[0,1]$ with $\textrm{supp}\,\varphi\subseteq[\frac12,2]$ 
such that $\sum_{m\in\Z} \varphi(2^m s) = 1$ for all $s>0$ from \eqref{eq:decompose_phi>},
and 
$\zeta\colon\R^d\to[0,1]$ with $\textrm{supp}\,\zeta\subseteq \{x\in\R^d\,|\,|x|<1\}$ 
such that $\sum_{k\in\Z^d} \zeta(x+k) = 1$ for all $x\in\R^d$ from \eqref{eq:defK}. 
Finally, for $m\in\Z$ define the set 
\begin{equation}
S^m \coloneqq \{ \bar\x=(\bar{t},\bar{x})\in\R\times\R^d \, |\, 
\bar{t}=2^{-2(m+1)} \textnormal{ and } \bar{x}\in2^{-m}\Z^d \} \, .
\end{equation}
One can then check that $\varphi_{\bar\x,m}$ defined by 
\begin{equation}
\varphi_{\bar\x,m}(s,y) \coloneqq \varphi(2^m \sqrt{s}) \zeta(2^m(y-\bar{x})) 
\end{equation}
for $s>0$ and extended by $0$ for $s\leq0$, 
is a smooth partition of $(0,\infty)\times\R^d$, i.e.
\begin{equation}
\sum_{m\in\Z} \sum_{\bar\x\in S^m} \varphi_{\bar\x,m}(s,y) = \1_{(0,\infty)}(s) \, .
\end{equation}
Replacing $(s,y)$ by $\Dd(\yy-\z)$ we thus obtain
\begin{equation}
F^{(n),l,k}_\x(\z;\bar\y) 
= \sum_{m\in\Z} \sum_{\bar\x\in S^m} \varphi_{\bar\x,m}(\Dd(\bar\y-\z)) F^{(n),l,k}_\x(\z;\bar\y) \, .
\end{equation}
Since $F_\x^{(n),l,k}(\z;\yy)$ is for $\yy=(\bar{s},\bar{y})$ and $\z=(r,z)$ supported on 
$$2^{-n-1}<\|(f(\x)(\bar{s}-r),\bar{y}-z)\|_\s<2^{-n+1} \textnormal{ and }
2^{-q-1}<f(\x)<2^{-q+1} \, ,$$ 
and since $\varphi_{\bar\x,m}(\Dd(\yy-\z))$ is supported on $2^{-m-1}<\sqrt{2^{-q}(\bar{s}-r)}<2^{-m+1}$ 
we deduce that their product $\varphi_{\bar\x,m} F_\x^{(n),l,k}$ is only non-vanishing if 
$2^{-m-1}<2^{-n+3/2}$, i.e.~$m\geq n-2$. 
Similarly, $\varphi_{\bar\x,m}(\Dd(\yy-\z))$ is supported on $|\bar{y}-z-\bar{x}|<2^{-m}$, 
hence effectively 
$|\bar{x}|\leq|\bar{y}-z-\bar{x}|+|\bar{y}-z|<2^{-m}+2^{-n+1}<10\cdot2^{-n}$.
Furthermore there is a $\psi\in C \mathcal{B}$ (for some $C>0$) such that 
$\varphi_{\bar\x,m}(\Dd(\yy-\z)) = 2^{-mD} \psi_{\Dd(\yy-\bar\x)}^{2^{-m}}(\Dd\z)$.
Altogether this yields
\begin{equation}\label{eq:smuggleTestfunction}
F^{(n),l,k}_\x(\z;\bar\y) 
= \sum_{m\geq n-2} 2^{-mD} \sum_{\substack{\bar\x\in S^m \\ |\bar{x}|<10\cdot2^{-n}}} 
\psi_{\Dd(\yy-\bar\x)}^{2^{-m}}(\Dd\z) F^{(n),l,k}_\x(\z;\bar\y) \, .
\end{equation}
Let us point out that the kernel $\psi$ is supported on negative times: 
by definition 
\begin{align}
\psi(s,y) 
&= \varphi_{\bar{\x},m}\big((2^{-q}\bar{t},\bar{x})-\S^{2^{-m}}(s,y)\big) \\
&= \varphi\big(2^m \sqrt{2^{-q}\bar{t}-2^{-2m}s}\,\big) \,
\zeta\big(2^m (\bar{x}-2^{-m}y-\bar{x})\big) \\
&= \varphi\big(\sqrt{2^{-q}2^{-2}-s}\,\big) \, \zeta(-y) \, ,
\end{align}
where in the last equality we used that\footnote{Note that it is here where the choice of $\bar{t}$ in the definition of $S^m$ matters, 
whereas $\varphi_{\bar\x,m}$ is independent of this choice of $\bar{t}$.} 
$\bar\x=(\bar{t},\bar{x})\in S^m$, 
hence the claim follows from $\varphi$ being supported in $(1/2,2)$ and $q\in\N$.

\textbf{Step 3 \textnormal{(reconstruction)}.}
We prove for $n,l\in\N$, $k\in\N^d$, $\|\y\|_\s\lesssim1$, and $\vartheta\in[0,1]$ that 
\begin{align}
&\int_{D_\y} d\x \Big| \int_{(0,\infty)\times\R^d} d\z \, 
\big( F_{\z}^{(n),l,k}(\z;\x+\S^\vartheta\y) - F_{\x}^{(n),l,k}(\z;\x+\S^\vartheta\y) \big) \Big| \\
&\lesssim C 2^{q(\theta_2+2)/2} 2^{-n(\theta_1+2-2l-|k|_1)} (2^{-n}+\vartheta\|\y\|_\s)^{\theta_2} \, .
\end{align}
By \eqref{eq:smuggleTestfunction} and the triangle inequality we have
\begin{align}
&\int_{D_\y} d\x \Big| \int_{(0,\infty)\times\R^d} d\z \, 
\big( F_{\z}^{(n),l,k}(\z;\x+\S^\vartheta\y) - F_{\x}^{(n),l,k}(\z;\x+\S^\vartheta\y) \big) \Big| \\
&\leq 
\sum_{m\geq n-2} 2^{-mD} 
\sum_{\bar\x\in S^m\colon\, |\bar{x}|<10\cdot2^{-n}} 
\int_{D_\y} d\x\\
&\times\! \Big|\! \int_{(0,\infty)\times\R^d}\! d\z 
\big( F_{\z}^{(n),l,k}(\z;\!\x\!+\!\S^\vartheta\y) \!-\! F_{\x}^{(n),l,k}(\z;\!\x\!+\!\S^\vartheta\y) \big) 
\psi_{\Dd(\x+\S^\vartheta\y-\bar\x)}^{2^{-m}}(\Dd\!\z) \Big|
\end{align}
for some test function $\psi$ supported in the past. 
Decomposing $\psi$ as in the proof of Lemma~\ref{lem:integration2} 
to eliminate its $q$-dependence this is bounded by 
\begin{align}
&\sum_{m\geq n-2} 2^{-mD} 
\sum_{\bar\x\in S^m\colon\, |\bar{x}|<10\cdot2^{-n}} 
\sum_{i=1}^{2^q} 
\int_{D_\y} d\x \\
&\times\! \Big|\! \int_{(0,\infty)\times\R^d} \!d\z  
\big( F_{\z}^{(n),l,k}(\z;\!\x\!+\!\S^\vartheta\y) \!-\! F_{\x}^{(n),l,k}(\z;\!\x\!+\!\S^\vartheta\y) \big) 
\tilde\psi_{\x+\S^\vartheta\y-\bar\x-(2^{-2m}r_i,0)}^{2^{-m}}(\z) \Big|
\end{align}
for some $r_i\in(0,2^q)$ and $\tilde\psi\in C\mathcal{B}$ supported in the past.
As a consequence of the assumption and 
general reconstruction Theorem~\ref{thm:reconstruction}, 
this is estimated by 
\begin{align}
\sum_{m\geq n-2} 2^{-mD} 
\hspace{-3ex}
\sum_{\substack{\bar\x\in S^m \\ |\bar{x}|<10\cdot2^{-n}}} 
\hspace{-3ex}
\sum_{i=1}^{2^q} 
C 2^{n(D-2+2l+|k|_1)} 
2^{-m\theta_1} (2^{-m}+\|\S^\vartheta\y-\bar\x-(2^{-2m}r_i,0)\|_\s)^{\theta_2} \, .
\end{align}
For $\bar\x\in S^m$ with $|\bar x|\lesssim2^{-n}$ we have $\|\bar\x\|_\s\lesssim 2^{-m}+2^{-n}$, 
and since $m\geq n-2$ and $1,r_i\leq2^q$ we have by the triangle inequality
$2^{-m}+\|\S^\vartheta\y-\bar\x-(2^{-2m}r_i,0)\|_\s\lesssim 2^{q/2}(2^{-n}+\vartheta\|\y\|_\s)$. 
Furthermore, $\sum_{\bar\x\in S^m, \, |\bar{x}|\lesssim 2^{-n}}1$ is of the order $(2^{-n}/2^{-m})^d$, 
hence the above expression is further estimated by 
\begin{align}
\sum_{m\geq n-2} 
C 2^{q(\theta_2+2)/2} 2^{n(2l+|k|_1)} 2^{-m(\theta_1+2)} (2^{-n}+\vartheta\|\y\|_\s)^{\theta_2} \, .
\end{align}
Since the exponent on $2^{-m}$ is positive this geometric series is summable 
and bounded by its largest summand, yielding the desired estimate.

\textbf{Step 4 \textnormal{(conclusion)}.}
Combining Steps~1 and 3 it remains to argue that 
\begin{align}
&\sum_{n\colon 2^{-n}\leq \|\y\|_\s} \big(
% l=k=0, v=1
2^{-n(\theta_1+2)} (2^{-n}+\|\y\|_\s)^{\theta_2}
% l=k=0, v=0
+2^{-n(\theta_1+\theta_2+2)} 
% l=0, k=1, v=0
+2^{-n(\theta_1+\theta_2+1)} \|\y\|_\s \big) \\
&+ \sum_{n\colon 2^{-n}> \|\y\|_\s} 
\sum_{\substack{\hphantom{2}l+|k|_1\leq2,\\2l+|k|_1\geq2\hphantom{,}}}
2^{-n(\theta_1+2-2l-|k|_1)} (2^{-n}+\|\y\|_\s)^{\theta_2} \|\y\|_\s^{2l+|k|_1} 
\lesssim \|\y\|_\s^{2-} \, .
\end{align}
The three geometric series are each bounded by their largest summand as the exponents on $2^{-n}$ are all positive, where for the first term we use additionally $2^{-n}+\|\y\|_\s\leq 2\|\y\|_\s$. 
In the geometric sum we first use $2^{-n}+\|\y\|_\s\leq 2\cdot2^{-n}$. 
We only know that $2l+|k|_1\geq2$, hence the exponent $\theta_1+\theta_2+2-2l-|k|_1$ on $2^{-n}$ does not have a sign. 
We remedy this by giving up $2^{-n(\theta_1+\theta_2+\epsilon)}\leq1$, 
so that the remaining $2^{-n(2-2l-|k|_1-\epsilon)}$ has an overall positive exponent and the  corresponding geometric sum is bounded by its largest summand. 
\end{proof}

Finally we provide a version of reconstruction which allows to ``test with indicators''.

\begin{lemma}\label{lem:reconstruction4}
Let $F_\x(\z)$ for 
$\x\in[0,T]\times\R^d$ and $\z\in[0,\infty)\times\R^d$
be jointly $1$-periodic in its spatial arguments, 
i.e.~$F_{\x}(\z) = F_{\x+(0,k)}(\z+(0,k))$ for all $k\in\Z^d$. 

Assume that there exist 
a smooth $\varphi$ compactly supported in the past with $\int\varphi\neq0$ and 
$\theta_1,\theta_2\in\R$ with $\theta_1+\theta_2>0$, $\theta_1>-2$, $\theta_2\geq0$ 
such that 
\begin{align}
\int_{D_{\y}} d\x \Big| \int_{(0,\infty)\times\R^d} d\z \, 
\big(F_{\x+\y}(\z) - F_{\x}(\z) \big) \varphi_\x^\lambda (\z) \Big|
\leq C_1 \lambda^{\theta_1} (\lambda+\|\y\|_\s)^{\theta_2} \, ,
\end{align}
for $\y$ in compacts and $0<\lambda\leq\sqrt{T}$.

Assume furthermore that there exist $\theta_3,\theta_4\in\R$ with $\theta_3>-2$, $\theta_4\geq0$ such that
\begin{align}
\int_{(0,T)\times\T^d} d\x \Big| \int_{(0,\infty)\times\R^d} d\z \, 
F_{\x}(\z) \psi_{\x-(\bar t,0)}^\lambda(\z) \Big|
\leq C_2 \lambda^{\theta_3} (\lambda+\sqrt{\bar t\,}\,)^{\theta_4} \, ,
\end{align}
for smooth $\psi$ compactly supported in the past, 
$0<\lambda\leq\sqrt{T}$, and $0\leq\bar t\leq T$.

Then 
\begin{equation}
\Big| \int_{(0,T)\times\T^d} d\z \, F_{\z}(\z) \Big|
\lesssim C_1 (\sqrt{T})^{\theta_1+\theta_2} + C_2 (\sqrt{T})^{\theta_3+\theta_4} \, .
\end{equation}
\end{lemma}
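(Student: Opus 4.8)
The plan is to treat the statement as a reconstruction result and to evaluate the diagonal integral $\int_{(0,T)\times\T^d} d\z\, F_\z(\z)$ by resolving the integration domain on dyadic time-scales. Concretely, set $\lambda_j := 2^{-j}\sqrt{T}$ for $j\in\N$ and decompose $(0,T]\times\T^d$ into the time-shells $\Lambda_j := \{(t,x)\colon 2^{-2(j+1)}T< t\le 2^{-2j}T\}$, so that $\Lambda_0$ sits at $t\sim T$ and the $\Lambda_j$ exhaust the domain as $j\to\infty$; on each $\Lambda_j$ I would fix a covering by $O(\lambda_j^{-d})$ parabolic cubes $Q$ of sidelength $\sim\lambda_j$ with centre $\x_Q$ placed at the future end (in time) of $Q$. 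To avoid sharp cutoffs one replaces, exactly as in Step~2 of the proof of Lemma~\ref{lem:reconstruction0}, the indicator of $Q$ by $2^{-jD}$ times a fixed $C\mathcal{B}$-bump supported in the past; since the $\z$-argument of $F_\x$ always lies in the past of the basepoint and the time component of $\x$ is $<T$ on the domain, no truncation is needed at the upper end $t=T$.

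On each cube $Q$ I would split $F_\z(\z) = \bigl(F_\z(\z)-F_{\x_Q}(\z)\bigr) + F_{\x_Q}(\z)$. The fixed-basepoint piece, summed over the cubes of a fixed shell $\Lambda_j$, is of the form controlled by the second hypothesis with $\lambda=\lambda_j$ and time-shift $\bar t\sim\lambda_j^2$; comparing the discrete sum over the $\lambda_j$-separated centres $\x_Q$ with the continuous $\x$-integral appearing there (a step which itself generates only a further germ-difference error of the type handled by the first hypothesis) bounds its contribution from $\Lambda_j$ by $\lesssim C_2\,\lambda_j^{\theta_3}(\lambda_j+\lambda_j)^{\theta_4}\simeq C_2\,\lambda_j^{\theta_3+\theta_4}$. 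For the germ-difference piece $\int_Q d\z\,\bigl(F_\z(\z)-F_{\x_Q}(\z)\bigr)$ I would invoke the general reconstruction Theorem~\ref{thm:reconstruction}: the first hypothesis is exactly its input (after decomposing the $q$-dependent bumps into genuine $\mathcal{B}$-test functions as in the proof of Lemma~\ref{lem:integration2}), and it upgrades the comparison of the germs $F_\z$ and $F_{\x_Q}$ from pairings against the distinguished $\varphi$ to pairings against the bump attached to $Q$; telescoping the mollification scale from $\lambda_j$ down to $0$ inside $Q$, where $t\sim\lambda_j^2$ stays bounded below so that the increments supplied by the first hypothesis at scale $2^{-k}\le\lambda_j$ form a convergent geometric series (using $\theta_1+\theta_2>0$), gives a contribution from $\Lambda_j$ of $\lesssim C_1\,\lambda_j^{\theta_1+\theta_2}$.

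Summing over $j\in\N$ and dominating each geometric series by its $j=0$ term (the relevant exponents, $\theta_1+\theta_2$ and $\theta_3+\theta_4$, being positive) would yield
\[
\Bigl|\int_{(0,T)\times\T^d} d\z\, F_\z(\z)\Bigr| \;\lesssim\; \sum_{j\in\N}\bigl(C_1\,\lambda_j^{\theta_1+\theta_2}+C_2\,\lambda_j^{\theta_3+\theta_4}\bigr) \;\lesssim\; C_1(\sqrt{T})^{\theta_1+\theta_2}+C_2(\sqrt{T})^{\theta_3+\theta_4},
\]
which is the claim. The step I expect to be the main obstacle is arranging matters so that the three complications which occur together --- the non-smooth domain indicator, the degeneration of the germ $F_\x$ as $t\to 0$, and the basepoint dependence --- are handled simultaneously: organising everything around the dyadic time-shells $\Lambda_j$ is precisely what lets the $t\to 0$ degeneration be absorbed into the shrinking resolution scale $\lambda_j$, with the second hypothesis supplying exactly the size information at the bottom of each shell that the first hypothesis alone does not provide.
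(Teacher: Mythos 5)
Your overall architecture --- smuggle in past-supported test functions, split $F_\z=(F_\z-F_{\x'})+F_{\x'}$ for a nearby basepoint $\x'$, handle the difference by Theorem~\ref{thm:reconstruction} fed with the first hypothesis and the frozen-basepoint part by the second hypothesis --- is the same as the paper's. The genuine gap is in how you distribute the scales and in the final summation. You resolve the time-shell $\Lambda_j=\{t\sim 2^{-2j}T\}$ at the \emph{single} scale $\lambda_j=2^{-j}\sqrt T$, and the conversion of your discrete sum over cube centres into the $\x$-integrals of the hypotheses consumes the shell's small measure exactly (there are $\sim\lambda_j^{-d}$ cubes, each weighted by $\lambda_j^{D}$), so the best bound you can extract per shell is $C_1\lambda_j^{\theta_1+\theta_2}+C_2\lambda_j^{\theta_3+\theta_4}$ with no additional small factor: both hypotheses only control the $\x$-integral over the \emph{whole} domain, so restricting to the shell brings no gain. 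Summing over $j$ then forces you to assume $\theta_3+\theta_4>0$, which you indeed invoke (``the relevant exponents \dots being positive''), but the lemma only assumes $\theta_3>-2$, $\theta_4\geq0$. This is not a cosmetic issue: in both applications inside the proof of Proposition~\ref{prop:energy} one has $\theta_4=0$ and $\theta_3=2\alpha-2<0$, respectively $\theta_3=\alpha-2<0$, so your series $\sum_j\lambda_j^{\theta_3+\theta_4}$ diverges precisely where the lemma is used.

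The paper's proof avoids this by not tying the test scale to the distance of the diagonal point from $t=0$. It inserts an auxiliary average $\tfrac1T\int_{(0,T)\times\T^d}d\x$ and dyadically decomposes the indicator $\1_{(0,T)}(t-r)$ of the \emph{relative} time between the auxiliary point $\x=(t,x)$ and the diagonal point $\z=(r,z)$ (near both endpoints $t-r\approx0$ and $t-r\approx T$). The piece at scale $\sqrt T\,2^{-|m|}$ then carries total weight $\sim2^{-2|m|}$, coming from the bookkeeping $2^{-|m|D}\cdot 2^{|m|d}$ in Step~3, i.e.\ the measure fraction of the corresponding relative-time interval, while each individual piece is still estimated through the full-domain bounds of the two hypotheses, giving $C_1(\sqrt T)^{\theta_1+\theta_2}2^{-|m|\theta_1}+C_2(\sqrt T)^{\theta_3+\theta_4}2^{-|m|\theta_3}$ per piece. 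The geometric series in $m$ then converges under exactly the stated assumptions $\theta_1>-2$, $\theta_3>-2$ (positivity of $\theta_1+\theta_2$ is needed only inside the reconstruction theorem). So the missing idea is that even for $\z$ near $t=0$ one must test mostly at coarse scales, with only an $O(2^{-2|m|})$ fraction of the weight at fine scales; a shell-by-shell, one-scale-per-shell resolution cannot produce this factor, and without it the claimed conclusion is out of reach for the range of exponents the lemma must cover.
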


\begin{proof}
\textbf{Step 1 \textnormal{(smuggling in test functions)}.}
In this first step we smuggle in another integral $\int_{(0,T)\times\T^d} d\x$ and a suitable test function $\psi_\x(\z)$. 
To do so recall from the definition \eqref{eq:defK} of $K$ the smooth $\zeta\colon\R^d\to[0,1]$ 
with $\mathrm{supp}\,\zeta\subseteq \{x\in\R^d\,|\,|x|<1\}$
such that $\sum_{k\in\Z^d} \zeta(x+k) = 1$ for all $x\in\R^d$. 
Then for $1$-periodic $f$ we have 
\begin{align}
\int_{(0,T)\times\T^d} d\z \, f(\z)
&= \frac{1}{T} \int_{(0,T)\times\T^d} d\x \int_{(0,T)\times\T^d} d\z \, f(\z) \\
&= \frac{1}{T} \int_{(0,T)\times\T^d} d\x \sum_{k\in\Z^d} \int_{(0,T)\times\T^d} d\z\, f(\z) \, 
\zeta(x-z+k) \\
&= \frac{1}{T} \int_{(0,T)\times\T^d} d\x \int_{(0,T)\times\R^d} d\z\, f(\z) \, 
\zeta(x-z) \, .
\end{align}
Noting that $t\in(0,T)$ and $t-r\in(0,T)$ imply $r<t<T$, 
yields
\begin{equation}
\int_{(0,T)\times\T^d} d\z \, f(\z) 
= \frac{1}{T} \int_{(0,T)\times\T^d} d\x 
\int_{(0,\infty)\times\R^d} d\z\, f(\z) \zeta(x-z)\1_{(0,T)}(t-r) \, .
\end{equation}
Unfortunately the indicator $\1_{(0,T)}$ is not a suitable test function, which we remedy by a decomposition similar to the one in Step~2 of the proof of Lemma~\ref{lem:reconstruction0}. 
Similar to before, we fix a smooth $\varphi\colon\R\to[0,1]$ with 
$\textrm{supp}\,\varphi\subseteq[\frac14,1]$ 
such that $\sum_{m\in\Z} \varphi(2^m s) = 1$ for all $s>0$. 
Then define $\varphi_m(t)\coloneqq\varphi(2^{|m|} t)$ for $m\geq1$, 
$\varphi_m(t)\coloneqq\varphi_{-m}(1-t)$ for $m\leq-1$ 
(made such that $\varphi_m$ is the around $1/2$ mirrored version of $\varphi_{-m}$), 
and $\varphi_0\coloneqq \1_{(0,1)}-\sum_{m\in\Z\setminus\{0\}}\varphi_m$.
This yields a smooth decomposition $\sum_{m\in\Z} \varphi_m=\1_{(0,1)}$. 
Finally, for $m\in\Z$ define the set 
\begin{align}
S^m \coloneqq \{ \bar\x=(\bar{t},\bar{x})\in\R\times\R^d \, |\, 
&\bar{t}=2^{-2(|m|+2)} \textnormal{ if } m\geq1, \, 
\bar{t}=(1\!-\!2^{-|m|})^2 \textnormal{ if } m\leq-1, \\
&\bar{t}=(\tfrac14)^2 \textnormal{ if } m=0,
\textnormal{ and } \bar{x}\in2^{-|m|}\Z^d \} \, ,
\end{align}
and note that $\bar t$ is the square of the left end point of the support of $\varphi_m$, see Figure~\ref{fig:dyadic}. 
%
% FIGURE
%
\begin{figure}[h]
\centering
\includegraphics{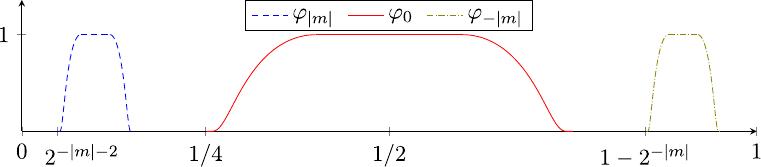}
\caption{Visualization of the dyadic decomposition $\sum_{m\in\Z}\varphi_m=\1_{(0,1)}$ of Step~1 of the proof of Lemma~\ref{lem:reconstruction4}.}
\label{fig:dyadic}
\end{figure}
One can then check that $\varphi_{\bar\x,m}$ defined by 
\begin{equation}
\varphi_{\bar\x,m}(s,y) \coloneqq \varphi_m(\sqrt{s}) \zeta(2^{|m|}(y-\bar{x})) 
\end{equation}
for $s>0$ and extended by $0$ for $s\leq0$, 
is a smooth partition of $(0,1)\times\R^d$, i.e.
\begin{equation}
\sum_{m\in\Z} \sum_{\bar\x\in S^m} \varphi_{\bar\x,m}(s,y) = \1_{(0,1)}(s) \, .
\end{equation}
In particular 
\begin{equation}
\1_{(0,T)}(s) 
= \1_{(0,1)}(\tfrac{s}{T}) 
= \sum_{m\in\Z} \sum_{\bar\x\in S^m} \varphi_{\bar\x,m}(\tfrac{s}{T},y) 
= \sum_{m\in\Z} \sum_{\bar\x\in S^m} \varphi_m(\sqrt{\tfrac{s}{T}}) \zeta(2^{|m|}(y-\bar{x})) \, , 
\end{equation}
and plugging this into the above expression for $\int d\z f(\z)$ yields
\begin{align}
&\int_{(0,T)\times\T^d} d\z \, f(\z) \\
&= \frac{1}{T}\! \int_{(0,T)\times\T^d} \! d\x 
\int_{(0,\infty)\times\R^d} \! d\z\, f(\z) \zeta(x\!-\!z) 
\sum_{m\in\Z} \sum_{\bar\x\in S^m} 
\varphi_m(\sqrt{\tfrac{t-r}{T}}) \zeta(2^{|m|}(x\!-\!z\!-\!\bar{x})) \, .
\end{align}
Note that since $\zeta$ 
is supported in a ball of radius $1$, the sum over $\bar\x\in S^m$ effectively restricts to $|\bar{x}|\leq2$.
Since $2^{-|m|}\leq1$ Lemma~\ref{lem:multtest}~\textit{(i)} yields 
$\zeta(x-z) \zeta(2^{|m|}(x-z-\bar{x}))
= \hat\psi(2^{|m|}(x-z-\bar{x}))$
for some test function $\hat\psi$. 
Furthermore, the set $S^m$ (in particular the values of $\bar{t}$) is chosen such that 
$\varphi_m(\sqrt{\tfrac{t-r}{T}}) = \check\psi(\tfrac{t-r-T\bar{t}}{T 2^{-2|m|}})$ 
for $(\bar{t},\bar{x})\in S^m$ and for some\footnote{$\check\psi$ could depend on the sign of $m$, however such that its $C^n$-norms are uniformly bounded, which is why we drop the dependence in the notation for brevity} test function $\check\psi$ supported on positive times. 
Since $\check\psi(2^{2|m|}(t-r-T\bar{t})/T)$ and $\hat\psi(2^{|m|}(x-z-\bar{x}))$ 
live on different scales (w.l.o.g.~$T\leq1$ hence $\hat\psi$ lives on larger scales)
we decompose $\hat\psi$ further. 
We fix $x_k\in\R^d$ with $|x_k|<2^{-|m|}$ and a\footnote{also here $\tilde\psi$ could depend on $k$, however such that its $C^n$-norms are uniformly bounded, which is why we drop the dependence in the notation for brevity} test function $\tilde\psi$ such that 
$\hat\psi(2^{|m|}(x-z-\bar{x}))
= \sum_{k=1}^{T^{-d/2}} \tilde\psi(2^{|m|} (x-z-\bar{x}-x_k)/\sqrt{T})$.
We thus have
\begin{equation}
\check\psi(2^{2|m|}(t-r-T\bar{t})/T)
\hat\psi(2^{|m|}(x-z-\bar{x})) 
= \sum_{k=1}^{T^{-d/2}} \psi_{\x-(T\bar t,\bar x+x_k)}^{\sqrt{T}2^{-|m|}} (\z) 
2^{-|m|D} T^{D/2}
\end{equation}
for some $\psi\in C\mathcal{B}$ supported in the past. 
Altogether we obtain (for $f(\z)=F_\z(\z)$)
\begin{align}
&\int_{(0,T)\times\T^d} d\z \, F_\z(\z) \\
&= \sum_{m\in\Z} \sum_{\substack{\bar\x\in S^m\\ |\bar{x}|\leq2}} \sum_{k=1}^{T^{-d/2}} 
2^{-|m|D} T^{d/2} 
\int_{(0,T)\times\T^d} d\x 
\int_{(0,\infty)\times\R^d} d\z\, 
F_\z(\z) \psi_{\x-(T\bar t,\bar x+x_k)}^{\sqrt{T}2^{-|m|}}(\z) \, .
\end{align}

\textbf{Step 2 \textnormal{(reconstruction)}.}
Using a change of variables together with periodicity, 
and using the triangle inequality, yield
\begin{align}
&\int_{(0,T)\times\T^d} d\x \Big| \int_{(0,\infty)\times\R^d} d\z \, 
F_\z(\z) \psi_{\x-(T\bar t,\bar x+x_k)}^{\sqrt{T}2^{-|m|}}(\z) \Big| \\
&=\int_{(0,T)\times\T^d} d\x \Big| \int_{(0,\infty)\times\R^d} d\z \, 
F_\z(\z) \psi_{\x-(T\bar{t},0)}^{\sqrt{T}2^{-|m|}}(\z) \Big| \\
&\leq
\int_{(0,T)\times\T^d} d\x \Big| \int_{(0,\infty)\times\R^d} d\z \, 
\big( F_\z(\z) - F_\x(\z) \big) \psi_{\x-(T\bar{t},0)}^{\sqrt{T}2^{-|m|}}(\z) \Big| \\
&\,+
\int_{(0,T)\times\T^d} d\x \Big| \int_{(0,\infty)\times\R^d} d\z \, 
F_\x(\z) \psi_{\x-(T\bar{t},0)}^{\sqrt{T}2^{-|m|}}(\z) \Big| \, .
\end{align}
The first right hand side term can be estimated by general reconstruction (Theorem~\ref{thm:reconstruction}) as a consequence of the assumption; 
the second right hand side term can be estimated directly using the assumption,
so that 
\begin{align}
&\int_{(0,T)\times\T^d} d\x \Big| \int_{(0,\infty)\times\R^d} d\z \, 
F_\z(\z) \psi_{\x-(T\bar t,\bar x+x_k)}^{\sqrt{T}2^{-|m|}}(\z) \Big| \\
&\lesssim C_1 (\sqrt{T}2^{-|m|})^{\theta_1} (\sqrt{T}2^{-|m|} + \sqrt{T\bar t}\,)^{\theta_2} 
+ C_2 (\sqrt{T}2^{-|m|})^{\theta_3} (\sqrt{T}2^{-|m|} + \sqrt{T\bar t}\,)^{\theta_4} \, .
\end{align}
Since $2^{-|m|}+\sqrt{\bar t\,}\lesssim1$ this right hand side is bounded by 
\begin{equation}
C_1(\sqrt{T})^{\theta_1+\theta_2} 2^{-|m|\theta_1} 
+ C_2 (\sqrt{T})^{\theta_3+\theta_4} 2^{-|m|\theta_3} \, .
\end{equation}

\textbf{Step 3 \textnormal{(conclusion)}.}
Plugging the estimate of Step~2 into the expression for $\int d\z\, F_\z(\z)$ from Step~1 yields
\begin{align}
&\Big| \int_{(0,T)\times\T^d} d\z \, F_\z(\z) \Big| \\
&\lesssim 
\sum_{m\in\Z} \sum_{\substack{\bar\x\in S^m\\ |\bar{x}|\leq2}} \sum_{k=1}^{T^{-d/2}} 
2^{-|m|D} T^{d/2} 
\big(C_1(\sqrt{T})^{\theta_1+\theta_2} 2^{-|m|\theta_1} 
+ C_2 (\sqrt{T})^{\theta_3+\theta_4} 2^{-|m|\theta_3} \big) \\
&=\sum_{m\in\Z} \sum_{\substack{\bar\x\in S^m\\ |\bar{x}|\leq2}} 2^{-|m|D} 
\big(C_1(\sqrt{T})^{\theta_1+\theta_2} 2^{-|m|\theta_1} 
+ C_2 (\sqrt{T})^{\theta_3+\theta_4} 2^{-|m|\theta_3} \big) \, .
\end{align}
The number of $\bar\x\in S^m$ with $|\bar{x}|\leq2$ is of the order $2^{|m|d}$, 
hence this right hand side is further estimated by 
\begin{equation}
\sum_{m\in\Z} 
2^{-2|m|} 
\big(C_1(\sqrt{T})^{\theta_1+\theta_2} 2^{-|m|\theta_1} 
+ C_2 (\sqrt{T})^{\theta_3+\theta_4} 2^{-|m|\theta_3} \big) \, .
\end{equation}
We conclude the proof by noting that the exponent on $2^{-|m|}$ is positive by assumption and the geometric series thus bounded by its largest summand. 
\end{proof}

%%%%%%%%%%%%%%%%%%%%%%%%%%%%%%%%%%%%%%%%%%
\section{Large velocity contributions}\label{sec:averaging}

In the following we frequently use that 
as a consequence of its definition, $\Pi_\x$ is transforming according to 
\begin{align}
\Pi_\x[\noise] &= \Pi_\y[\noise] \, , \\
\Pi_\x[\lolly;\bar{a}] &= \Pi_\y[\lolly;\bar{a}] + \Pi_\x[\lolly;\bar{a}](\y) \, , \label{eq:trafololly}\\
\Pi_\x[\dumb;\bar{a}] &= \Pi_\y[\dumb;\bar{a}] + \Pi_\x[\lolly;\bar{a}](\y) \xi \, , \label{eq:trafodumb} \\
\Pi_\x[\derivativecherry;\bar{a}] &= \Pi_\y[\derivativecherry;\bar{a}] \, , \label{eq:trafoderivativecherry} \\
\Pi_\x[\xnoise] &= \Pi_\y[\xnoise] + (y-x) \xi \label{eq:trafoxnoise}  \, , 
\end{align}
which combined with \eqref{eq:modelbound} yields for $\tau\in\mathcal{T}$
\begin{equation}\label{eq:offdiagonal}
|\big\langle \partial_{\bar{a}}^m \Pi_\x[\tau;\bar{a}] , \varphi_\y^\lambda \big\rangle |
\lesssim [\tau]_{|\tau|} \lambda^{\alpha-2} (\lambda+\|\x-\y\|_\s)^{|\tau|+2-\alpha} \, \bar{a}^{-\mathfrak{e}(\tau)-m} 
\end{equation}
uniformly for $\x,\y$ in compacts, $\lambda\in(0,1)$, $\bar a\in a(\mathrm{supp}\,\sigma)\setminus\{0\}$, $m=0,1$, and $\varphi\in\mathcal{B}$. 
Here we made a small abuse of notation: the constant $[\dumb]_{2\alpha-2}$ in \eqref{eq:offdiagonal} in case of $\tau=\dumb$ should actually be $[\dumb]_{2\alpha-2}+[\lolly]_{\alpha}[\noise]_{\alpha-2}$ in order to be consistent with \eqref{eq:trafodumb} and \eqref{eq:modelbound}, 
which however we ignore for notational simplicity.\footnote{The reason why we keep $[\tau]_{|\tau|}$ at all throughout most estimates and do not hide it in $\lesssim$ is that we hope it helps the reader to match in long estimates, e.g.~right before equation \eqref{eq:XIII}, which terms arise from which contributions.}

\subsection{Contributions of the initial condition and counterterms}\label{sec:initial} 

Both Propositions~\ref{prop:u0} and \ref{prop:u3} are a simple consequence of Lemma~\ref{lem:integration1}.

\begin{proof}[Proof of Proposition~\ref{prop:u0}]
By the definition of $u_0^q$ in Section~\ref{sec:mild} we have
\begin{align}
&u_0^q(\x+\y)-u_0^q(\x)-\nabla u_0^q(\x)\cdot y \\
&= \int_\R dv \int_{\R^d} dz
\Big( \Psi_q\big(a(v),t+s,x+y-z\big) 
- \Psi_q\big(a(v),t,x-z\big) \\
&\hphantom{= \int_\R dv \int_{\R^d} dz\Big( \Psi_q\big(a(v),t+s,x+y-z\big)\ }
- \nabla \Psi_q\big(a(v),t,x-z\big) \cdot y \Big)
\chi(0,z,v) \, .
\end{align}
Applying Lemma~\ref{lem:integration1} 
(with $m=0$, $f(z)=a(v)$, i.e.~$f$ is constant in $z$, and $g(z)=\chi(0,z,v)$) yields 
\begin{align}
&\int_{D_\y}d\x \big| u_0^q(\x+\y)-u_0^q(\x)-\nabla u_0^q(\x)\cdot y \big| \\
&\lesssim T^{1-\alpha} \|\y\|_\s^{2\alpha} \int_\R dv \int_{\T^d} dz \, \big|
\chi(0,z,v)\,(1+a(v)^{-\alpha}) \tilde\varphi(2^q a(v)) \big| \, .
\end{align}
Here $\tilde\varphi$ is smooth and supported in $(1/2,2)$, 
hence $(1+ a^{-\alpha})|\tilde\varphi(2^q a)| \lesssim (1+2^{q\alpha})|\tilde\varphi(2^q a)|$. 
Summing over $q\in\Z$ with $\delta<2^{-q}$ we observe that for such $q$ we have $(1+2^{q\alpha})\lesssim\delta^{-\alpha}$, and furthermore $\sum_{q} |\tilde\varphi(2^q a)|\lesssim 1$. 
Thus the claim follows from noting that 
$\int_\R dv\,|\chi(0,z,v)|\leq |u_0(z)|$. 
\end{proof}

%%%%%%%%%%%%%%%%%%%%%%%%%%%%%%%%%%%%%%%%

\begin{proof}[Proof of Proposition~\ref{prop:u3}]
By the definition of $u_3^q$ in Section~\ref{sec:mild} we have
\begin{align}
&u_3^q(\x+\y) - u_3^q(\x) - \nabla u_3^q(\x) \cdot y \\
&=\!\int_{(0,\infty)\times\R^d} \hspace{-3ex}d\z \, 
\partial_v \Big(\!\Psi_q\big(a(v),\x\!+\!\y\!-\!\z\big)
\!-\! \Psi_q\big(a(v),\x\!-\!\z\big) 
\!-\! \nabla\Psi_q\big(a(v),\x\!-\!\z\big)\!\cdot\! y \Big)_{|v=u(\z)} \\
&\qquad\times 
\sigma^2(u(\z)) \big( \cdumb^{a(u(\z))}(r) - a(u(\z)) \ccherry^{a(u(\z))}(r) \big) \\
&\,+\!\int_{(0,\infty)\times\R^d}\hspace{-3ex} d\z \, 
\Big(\!\Psi_q\big(a(u(\z)),\x\!+\!\y\!-\!\z\big)
\!-\! \Psi_q\big(a(u(\z)),\x\!-\!\z\big)
\!-\! \nabla\Psi_q\big(a(u(\z)),\x\!-\!\z\big)\!\cdot\! y\Big) \\
&\qquad\times
\sigma'(u(\z))\sigma(u(\z)) \big( \cdumb^{a(u(\z))}(r) - C^{a(u(\z))} \big) \, .
\end{align}
Applying Lemma~\ref{lem:integration1} 
(once with $m=1$ and $g=a'(u)\sigma^2(u)(\cdumb^{a(u)}-a(u)\ccherry^{a(u)})$,
once with $m=0$ and $g=\sigma'\sigma(\cdumb^{a(u)}-C^{a(u)})$,
and both times with $f=a(u)$)
yields 
\begin{align}
&\int_{D_{\y}} d\x\,
\Big|u_3^q(\x+\y)-u_3^q(\x)-\nabla u_3^q(\x) \cdot y \Big| 
\lesssim T^{1-\alpha} \|\y\|_\s^{2\alpha} \\
&\times\!\Big( 
\int_{(0,T)\times\T^d}\! d\z \big| \big(a'\!\sigma^2\!/a (1\!+\!a^{-\alpha}) \tilde\varphi(2^qa)\big)(u(\z)) \big(\cdumb^{a(u(\z))}(r)-a(u(\z))\ccherry^{a(u(\z))}(r)\big) \big| \\
&\ \ + \int_{(0,T)\times\T^d}\! d\z \big| \big(\sigma'\!\sigma(1+a^{-\alpha}) \tilde\varphi(2^qa)\big)(u(\z)) \big(\cdumb^{a(u(\z))}(r)-C^{a(u(\z))}\big) \big| \Big) \, .
\end{align}
Summing over $q\in\Z$ with $\delta<2^{-q}$ we argue as in the proof of Proposition~\ref{prop:u0} to estimate $\sum_{\delta<2^{-q}} (1+a^{-\alpha})|\tilde\varphi(2^qa)|\lesssim \delta^{-\alpha}$. 
Since Assumption~\ref{ass:nonlinearities} implies $|a'(v)/a(v)|\lesssim|v|^{-1}$ the claim follows from Assumption~\ref{ass:noise}. 
\end{proof}

%%%%%%%%%%%%%%%%%%%%%%%%%%%%%%%%%%%%%%%%%%
\subsection{Contributions of the singular forcing}\label{sec:singularforcing}

\begin{proof}[Proof of Proposition~\ref{prop:u1}]
Appealing to the definitions of $u_1^q$ and $\Pi_\x^q$ in Section~\ref{sec:mild}, we observe
\begin{align}
&u_1^q(\x\!+\!\y)\!-\!u_1^q(\x)\!-\!\sigma(u(\x))\Pi_\x^q[\lolly;\x](\x\!+\!\y) 
\!-\! \big(\nabla u_1^q(\x) \!-\! \sigma(u(\x)) \nabla\Pi_\x^q[\lolly;\x](\x) \big) \!\cdot\! y \\
&=\int_{(0,\infty)\times\R^d} d\z \, 
\bar\Psi_q\big(a(u(\z)),\x,\y,\z\big) \sigma(u(\z))\xi(\z) \\ 
&\,-\int_{(0,\infty)\times\R^d} d\z \, 
\partial_v\Big( \bar\Psi_q\big(a(v),\x,\y,\z\big) \sigma(v) \Big)_{|v=u(\z)} 
\sigma(u(\z)) \cdumb^{a(u(\z))}(r) \\
&\,-\int_{(0,\infty)\times\R^d} d\z \, 
\bar\Psi_q \big(a(u(\x)),\x,\y,\z\big) \sigma(u(\x)) \xi(\z) \, , 
\end{align}
where $\bar\Psi_q\big(\bar a,\x,\y,\z\big)$ denotes the anisotropic second order Taylor remainder 
\begin{equation}
\Psi_q \big(\bar a,\x+\y-\z\big) 
- \Psi_q \big(\bar a,\x-\z\big)
- \nabla \Psi_q \big(\bar a,\x-\z\big) \cdot y \, .
\end{equation}
Since all terms contain an instance of $\sigma(u)$, we effectively have to deal with bounded $u$, hence we may assume w.l.o.g.~that $a\leq1$ so that we only need to consider $q\in\N$. 
As in Section~\ref{sec:roughkernels} we replace $\Psi_q$ by $K$ defined in \eqref{eq:defK}. 
In the following we smuggle in suitable terms. 
To shorten the expressions a bit we 
write $\K(\bar{a},\x,\y,\z)$ as in \eqref{eq:anisotropicTaylor} to denote the anisotropic Taylor remainder. 
Furthermore we use the shorthand notation 
$\fnoise=f(u)$ and $\flolly=f'(u)$.

We start by rewriting 
\begin{align}
&\K\big(\anoise(\z),\x,\y,\z\big) \snoise(\z) \xi(\z)
- \partial_v \big( \K\big(a(v),\x,\y,\z\big) \sigma(v) \big)_{|v=u(\z)} \snoise(\z) \cdumb^{\anoise(\z)}(r) \\
&\,- \K\big(\anoise(\x),\x,\y,\z\big) \snoise(\x) \xi(\z) \\
&= \partial_v \big( \K \big(a(v),\x,\y,\z\big) \sigma(v) \big)_{|v=u(\x)}
\snoise(\x) \Pi_\x[\dumb;\x](\z) \label{eq:nf1} \\
&\,+ \partial_v \big( \K\big(a(v),\x,\y,\z\big) \sigma(v) \big)_{|v=u(\x)}
\nu(\x) \cdot \Pi_\x[\xnoise](\z) \label{eq:nf2} \\
&\,+ \K\big(\anoise(\z),\x,\y,\z\big) \snoise(\z) \xi(\z)
- \partial_v\big( \K\big(a(v),\x,\y,\z\big) \sigma(v) \big)_{|v=u(\z)} 
\snoise(\z) \cdumb^{\anoise(\z)}(r) \\ 
&\quad- \K\big(\anoise(\x),\x,\y,\z\big) \snoise(\x) \xi(\z) \\
&\quad- \partial_v \big( \K\big(a(v),\x,\y,\z\big) \sigma(v) \big)_{|v=u(\x)} 
\big( \snoise(\x) \Pi_\x[\dumb;\x](\z) 
+ \nu(\x)\cdot\Pi_\x[\xnoise](\z) \big) \, . \label{eq:nf3}
\end{align}
We estimate \eqref{eq:nf1} and \eqref{eq:nf2} in Step~1, and \eqref{eq:nf3} in Step~2 below. 

\textbf{Step 1 \textnormal{(estimate of \eqref{eq:nf1} and \eqref{eq:nf2})}.}
Note that 
\begin{equation}
\partial_v\big(\K\big(a(v),\cdot\big)\sigma(v)\big)_{|v=u(\x)}
= \partial_{\bar a}\K\big(\bar a,\cdot\big)_{|\bar a=\anoise(\x)} \alolly(\x) \snoise(\x) 
+ \K\big(\anoise(\x),\cdot\big) \slolly(\x) \, ,
\end{equation}
so that Lemma~\ref{lem:integration2} can be applied: 
once with 
$m=1$, $f_\x=\alolly(\x)\ssnoise(\x)\Pi_\x[\dumb;\x]$, $\theta_1=\alpha-2$, $\theta_2=\alpha$,
once with 
$m=0$, $f_\x=\slolly(\x)\snoise(\x)\Pi_\x[\dumb;\x]$, $\theta_1=\alpha-2$, $\theta_2=\alpha$,
once with
$m=1$, $f_\x=\alolly(\x)\snoise(\x)\nu(\x)\cdot\Pi_\x[\xnoise]$, $\theta_1=\alpha-2$, $\theta_2=1$,
and once with
$m=0$, $f_\x=\slolly(\x)\nu(\x)\cdot\Pi_\x[\xnoise]$, $\theta_1=\alpha-2$, $\theta_2=1$,
to deduce 
\begin{align}
&\int_{D_{\y}} d\x \Big| \int_{(0,\infty)\times\R^d} d\z \, 
\big( \eqref{eq:nf1} + \eqref{eq:nf2} \big) \Big| \\
&\lesssim \|\y\|_\s^{2\alpha} [\dumb]_{2\alpha-2} 
\int_{D_\y} d\x \, \Big( \frac{|\alolly(\x)\ssnoise(\x)\tilde\varphi(2^q\anoise(\x))|}{\anoise(\x)^{1+1+\alpha/2+\mathfrak{e}(\dumbsmall)}}
+ \frac{|\slolly(\x)\snoise(\x)\tilde\varphi(2^q\anoise(\x))|}{\anoise(\x)^{1+\alpha/2+\mathfrak{e}(\dumbsmall)}}\Big) \\
&\,+ \|\y\|_\s^{\alpha+1} [\xnoise]_{\alpha-1} \!
\int_{D_\y} \! d\x \, \Big( \frac{|\alolly(\x)\snoise(\x)\nu(\x)\tilde\varphi(2^q\anoise(\x))|}{\anoise(\x)^{1+1+1/2}}
+ \frac{|\slolly(\x)\nu(\x)\tilde\varphi(2^q\anoise(\x))|}{\anoise(\x)^{1+1/2}}\Big) \, ,
\end{align}
where we directly used \eqref{eq:offdiagonal} 
(and $\mathfrak{e}(\xnoise)=0$, see Table~\ref{tab:hom}).
Using \eqref{eq:Theta} and 
that Assumption~\ref{ass:nonlinearities}~(ii) and \eqref{eq:restrictionM} imply 
%($2N-1-(\alpha/2+2)(M-1)\geq-\alpha(M-1)$, 
%$2N-1-(\alpha/2+2)(M-1)\geq-\alpha(M-1)$, 
%$N+M-2-5/2(M-1)\geq-\alpha(M-1)$, 
%$N-1-3/2(M-1)\geq-\alpha(M-1)$) 
%
\begin{equation}
\max\Big\{
\Big|\frac{a'\sigma^2}{a^{\alpha/2+2+\mathfrak{e}(\dumbsmall)}}\Big| \, , \ 
\Big|\frac{\sigma'\sigma}{a^{\alpha/2+1+\mathfrak{e}(\dumbsmall)}}\Big|\, , \ 
\Big|\frac{a'\sigma}{a^{5/2}}\Big| \, , \ 
\Big|\frac{\sigma'}{a^{3/2}}\Big| \Big\} \lesssim \frac{1}{a^{\alpha}} 
\tag{II}
\end{equation}
we obtain 
\begin{align}
&\int_{D_{\y}} d\x \Big| \int_{(0,\infty)\times\R^d} d\z \, 
\big( \eqref{eq:nf1} + \eqref{eq:nf2} \big) \Big| \\
&\lesssim \|\y\|_\s^{2\alpha} 2^{q\alpha} [\dumb]_{2\alpha-2} T
+ \|\y\|_\s^{\alpha+1} 2^{q\alpha} [\xnoise]_{\alpha-1} \|\Theta'(u)\nu\|_{L^1} \, . \label{eq:nf5}
\end{align}
Summing over $q\in\N$ with $\delta<2^{-q}$ this is estimated as desired since $\sum_{\delta<2^{-q}} 2^{q\alpha}\lesssim\delta^{-\alpha}$. 

\textbf{Step 2 \textnormal{(estimate of \eqref{eq:nf3})}.}
We aim to apply Lemma~\ref{lem:reconstruction0} to 
\begin{align}
F_\x(\z;\yy) 
&\coloneqq K\big(\anoise(\x),\yy-\z\big) \snoise(\x) \xi(\z) \label{eq:F0}\\
&\ + \partial_v \big( K\big(a(v),\yy-\z\big) \sigma(v) \big)_{|v=u(\x)}
\big( \snoise(\x) \Pi_\x[\dumb;\x](\z) 
+ \nu(\x) \cdot\Pi_\x[\xnoise](\z) \big) \, .
\end{align}
Note that 
\begin{align}
F_\z(\z;\yy) 
&= K\big(\anoise(\z),\yy-\z\big) \snoise(\z) \xi(\z) \\
&\,- \partial_v \big( K\big(a(v),\yy-\z\big) \sigma(v) \big)_{v=u(\z)}  
\snoise(\z) \cdumb^{\anoise(\z)}(r) \, ,
\end{align}
so that (recall from Lemma~\ref{lem:reconstruction0} that $\bar F$ denotes the Taylor remainder of $F$) 
\begin{equation}
\eqref{eq:nf3} 
= \bar F_\z(\z;\x,\y) - \bar F_\x(\z;\x,\y) \, .
\end{equation}
The following lemma verifies the assumption of Lemma~\ref{lem:reconstruction0}
(recall that $F^{(n),l,k}$ is defined analogously to $F$ with $K$ replaced by $\partial_t^l\partial_x^k \Kn$).

\begin{lemma}\label{lem:reconstruction1}
Let $n,l,q\in\N$, $k\in\N^d$, $\y,\y'\in\mathfrak{K}\subseteq\R\times\R^d$ for compact $\mathfrak{K}$, $0<\lambda\leq2^{-n}$, $\varphi\in\mathcal{B}$, $\beta\in(1,2)$, and $\bar\alpha\coloneqq\min\{1+\alpha/2,\beta\}$. 
Then $F$ defined in \eqref{eq:F0} satisfies 
\begin{align}
&\int_{D_{\y}\cap D_{\y'}} d\x \Big| \int_{(0,\infty)\times\R^d} d\z \, 
\big(F^{(n),l,k}_{\x+\y}(\z;\x+\y') - F^{(n),l,k}_{\x}(\z;\x+\y') \big) \varphi_\x^\lambda (\z) \Big| \\
&\lesssim 
2^{n(D-2+2l+|k|_1)} 
\lambda^{\alpha-2} (\lambda+ \|\y\|_\s)^{\bar\alpha} C_{\eqref{eq:nf3}} 2^{-q} \, ,
\end{align}
where $\lesssim$ depends on $\mathfrak{K}$ and the constant $C$ of Assumption~\ref{ass:nonlinearities}, and where 
\begin{align}
C_{\eqref{eq:nf3}} 
&= [\xi]_{\alpha-2} 
\Big( [\Theta(\mathcal{U})]_{B^{\beta}_{1,\infty}} 
+ [\lolly]_\alpha^2 T 
+ [\lolly]_\alpha \|\Theta'(u)\nu\|_{L^1} 
+ \|\Theta'(u)\nu\|_{L^2}^2 \Big) \\
&\,+ [\dumb]_{2\alpha-2} 
[\Theta(u)]_{B_{1,\infty}^\alpha} \\
&\,+ [\xnoise]_{\alpha-1} 
\Big( [\Theta(u)]_{B^\alpha_{1,\infty}}^{1/2} \|\Theta'(u)\nu\|_{L^2}  
+ [\Theta'(u)\nu]_{B^{\beta-1}_{1,\infty}} \Big) \, ,
\end{align}
with $\Theta$ from \eqref{eq:Theta}.
\end{lemma}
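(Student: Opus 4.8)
The plan is to expand the increment $F^{(n),l,k}_{\x+\y}(\z;\x+\y')-F^{(n),l,k}_{\x}(\z;\x+\y')$ — base point shifted from $\x$ to $\x+\y$, evaluation point $\x+\y'$ kept fixed — by Taylor's theorem in the base point together with the transformation formulas \eqref{eq:trafololly}--\eqref{eq:trafoxnoise}, and to verify that the would-be-leading contributions (which on their own would carry only the regularity $\alpha-2$ of $\xi$ and so could not produce the required gain $(\lambda+\|\y\|_\s)^{\bar\alpha}$) cancel against the correction terms built into the definition \eqref{eq:F0} of $F$, leaving only genuine remainders. In detail I would set $g(v)\coloneqq\partial_t^l\partial_x^k\Kn(a(v),\x+\y'-\z)\,\sigma(v)$, so that the first line of \eqref{eq:F0} contributes $\big(g(u(\x+\y))-g(u(\x))\big)\,\xi(\z)$, and Taylor-expand this to first order into $g'(u(\x))\,(u(\x+\y)-u(\x))\,\xi(\z)$ plus a second-order remainder. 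Using \eqref{eq:Theta} to reduce every composition with $u$ to a composition with $\Theta(u)$, I would then replace the increment $\Theta(u(\x+\y))-\Theta(u(\x))$ by $\Theta'(u(\x))\big(\sigma(u(\x))\Pi_\x[\lolly;\x](\x+\y)+\nu(\x)\cdot y\big)$ up to the modelledness remainder of $\Theta(u)$, and, via \eqref{eq:trafodumb} and \eqref{eq:trafoxnoise}, identify $\sigma(u(\x))\Pi_\x[\lolly;\x](\x+\y)\,\xi(\z)$ with $\Pi_\x[\dumb;\x](\z)-\Pi_{\x+\y}[\dumb;\x+\y](\z)$ — modulo the counterterm $\sigma(u(\x))\,\cdumb^{a(u(\x))}(r)$, which is exactly the second line of \eqref{eq:F0}, and modulo a $\partial_{\bar a}$-correction for the change of diffusivity from $a(u(\x))$ to $a(u(\x+\y))$ — and $\nu(\x)\cdot y\,\xi(\z)$ with $\nu(\x)\cdot\big(\Pi_\x[\xnoise](\z)-\Pi_{\x+\y}[\xnoise](\z)\big)$. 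By construction the pieces surviving to leading order are precisely the correction terms of \eqref{eq:F0} and therefore cancel.

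The remainders then split into four groups, each treated by combining a \emph{pointwise in $\x$} kernel estimate from Lemma~\ref{lem:integration3} — applied with the appropriate $\tau\in\{\noise,\dumb,\xnoise\}$ and with $\bar a=\tilde a=a(u(\x))$, which supplies the prefactor $2^{n(D-2+2l+|k|_1)}$ and a factor $\lambda^{|\tau|}\,\tilde\varphi(2^q a(u(\x)))/a(u(\x))^{\bullet}$ — with an integration of $\x$ over $D_\y\cap D_{\y'}$. Group (a), of the form $[\text{kernel}]\times[\text{modelledness remainder of }\Theta(u)]\times\xi$, integrates in $\x$ to $[\xi]_{\alpha-2}\,[\Theta(\mathcal U)]_{B^\beta_{1,\infty}}\,\|\y\|_\s^\beta$. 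Group (b), the second-order Taylor remainders, is bounded using $|\Pi_\x[\lolly;\x](\x+\y)|\lesssim[\lolly]_\alpha\|\y\|_\s^\alpha a(u(\x))^{-\mathfrak{e}(\lollysmall)}$ and $|\nu(\x)\cdot y|\le|\nu(\x)|\,\|\y\|_\s$, producing $[\xi]_{\alpha-2}\big([\lolly]_\alpha^2 T+[\lolly]_\alpha\|\Theta'(u)\nu\|_{L^1}+\|\Theta'(u)\nu\|_{L^2}^2\big)\|\y\|_\s^{2\alpha}$ (the $T$ and the $L^1,L^2$ norms coming from integrating $1$, $|\nu|$, $|\nu|^2$ in $\x$, the $\Theta'$ because the $\sigma$-factors localise $u$ to $|u|\lesssim1$). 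Group (c), the explicit counterterm term together with the $\Pi[\dumb]$-recentering (including its $\partial_{\bar a}$-correction), is bounded directly by Lemma~\ref{lem:integration3} with $\tau=\dumb$ and integration against $[\Theta(u)]_{B^\alpha_{1,\infty}}$, giving $[\dumb]_{2\alpha-2}[\Theta(u)]_{B^\alpha_{1,\infty}}$. Group (d), the $\Pi[\xnoise]$-terms, contributes $[\xnoise]_{\alpha-1}[\Theta'(u)\nu]_{B^{\beta-1}_{1,\infty}}$ from the $\nu$-increment and, after Cauchy--Schwarz applied to the coefficient change $g'(u(\x+\y))-g'(u(\x))$ times $\nu$, $[\xnoise]_{\alpha-1}[\Theta(u)]_{B^\alpha_{1,\infty}}^{1/2}\|\Theta'(u)\nu\|_{L^2}$. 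In each case I would rewrite $\lambda^{|\tau|}=\lambda^{\alpha-2}\lambda^{|\tau|-\alpha+2}$ (with $|\tau|-\alpha+2\ge0$), absorb $\lambda\le2^{-n}$ into $(\lambda+\|\y\|_\s)$, and observe that the collected powers of $\|\y\|_\s$ — namely $\beta$ (group (a) and the $\nu$-increment in (d)), $2\alpha$ (groups (b), (c)), and $1+\alpha/2$ (the Cauchy--Schwarz term in (d)) — are all at least $\bar\alpha=\min\{1+\alpha/2,\beta\}$ for $\alpha>2/3$ and $\|\y\|_\s\lesssim1$, which is precisely the bound $\lambda^{\alpha-2}(\lambda+\|\y\|_\s)^{\bar\alpha}$ demanded by Lemma~\ref{lem:reconstruction0}.

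To extract the remaining factor $2^{-q}$ I would use that on the support of $\tilde\varphi$ one has $a(u(\x))\sim2^{-q}$, so each $a(u(\x))^{-\bullet}$ is $\lesssim2^{q\bullet}$, whereas the coefficients (built from $\sigma,\sigma',a',a''$ and products thereof) vanish like high powers of $|u(\x)|\lesssim a(u(\x))^{1/(M-1)}\sim2^{-q/(M-1)}$. Assumption~\ref{ass:nonlinearities}(ii) together with $N\ge M+1$ and the restriction \eqref{eq:restrictionM} are exactly what is needed for this $\sigma$-damping to beat the $a^{-\mathfrak{e}(\tau)}$-blow-up with one power of $2^{-q}$ to spare — this is the balancing of degeneracy against damping underlying \eqref{eq:restrictionM} — and it delivers the claimed $2^{-q}$.

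I expect the main obstacle to be twofold. First, the bookkeeping: the expansion generates a long list of terms, and for each one must check both that its non-remainder part cancels against \eqref{eq:F0} and that, after Lemma~\ref{lem:integration3}, the precise combination of $\sigma,\sigma',a',a''$ occurring beats the degeneracy of $a^{-1}$ with a surplus $2^{-q}$ (these are the many conditions on the vanishing of $\sigma$ near $0$ that Assumption~\ref{ass:nonlinearities}(ii) packages). Second, and more structural: the modelledness of $u$ is available only as the $L^1$-averaged seminorms $[\Theta(\mathcal U)]_{B^\beta_{1,\infty}}$, $[\Theta(u)]_{B^\alpha_{1,\infty}}$ and $[\Theta'(u)\nu]_{B^{\beta-1}_{1,\infty}}$, so the whole computation must be arranged so that every model-remainder factor depends only on $\x$ (never on the inner variable $\z$) and can be pulled out of the $\z$-integral before integrating in $\x$; this dictates the order in which the Taylor expansion and the reconstruction/integration lemmas are applied.
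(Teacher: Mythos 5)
Your plan is essentially the paper's own proof: rewrite $F_{\x+\y}-F_\x$ using \eqref{eq:trafodumb} and \eqref{eq:trafoxnoise} so that the leading $\xi$-terms pair off against the $\Pi[\dumb]$/$\Pi[\xnoise]$ corrections built into \eqref{eq:F0}, localize via $\Theta$ from \eqref{eq:Theta}, estimate each remainder with Lemma~\ref{lem:integration3} (with Cauchy--Schwarz for the term where the coefficient increment multiplies $\nu$), and extract $2^{-q}$ by balancing the vanishing of $\sigma,\sigma'$ against the blow-up of $a^{-1},a',a''$ exactly as in conditions \eqref{eq:III}--\eqref{eq:V} and the subsequent display; your groups (a)--(d) are the paper's terms \eqref{eq:mt05}, \eqref{eq:mt06}--\eqref{eq:mt07}, \eqref{eq:mt08}, and \eqref{eq:mt09}--\eqref{eq:mt10}.

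The one step that does not go through as you wrote it is the Taylor expansion. You expand $g(u(\x+\y))-g(u(\x))$ to first order in the \emph{raw} increment and only then insert modelledness into the linear term; the resulting second-order remainder is $\sim g''\cdot\big(\Theta(u(\x+\y))-\Theta(u(\x))\big)^2\,\xi$, which is \emph{not} quadratic in $\Pi_\x[\lolly;\x](\x+\y)$ and $\nu(\x)\cdot y$, so your group (b) bound does not follow from the two pointwise bounds you quote. The paper avoids this by applying the fundamental theorem of calculus between $\Theta(u(\x+\y))$ and the modelled point $\Theta(u(\x))+\sigma(u(\x))\Pi_\x[\lolly;\x](\x+\y)+\Theta'(u(\x))\nu(\x)\cdot y$ (term \eqref{eq:mt05}), so that the only genuinely quadratic objects are $\Pi[\lolly]$ and $\nu\cdot y$ (terms \eqref{eq:mt06}, \eqref{eq:mt07}). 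Your version can be salvaged — split the squared increment via modelledness, use $\|\Theta(u)\|_{L^\infty}\lesssim1$ to reduce the squared modelledness remainder to its $L^1$ norm, and Cauchy--Schwarz on the cross term with $\nu$ — but this produces additional contributions (e.g.\ $[\xi]_{\alpha-2}[\Theta(\mathcal U)]_{B^\beta_{1,\infty}}\|\y\|_\s^{\beta}$ and $[\xi]_{\alpha-2}[\Theta(u)]_{B^\alpha_{1,\infty}}^{1/2}\|\Theta'(u)\nu\|_{L^2}\|\y\|_\s^{1+\alpha/2}$) beyond the three you list, and the naive bound $\|\y\|_\s\|\Theta'(u)\nu\|_{L^1}$ one gets without this extra care has exponent $1<\bar\alpha$ and would fail. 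So either reorganize the expansion as the paper does, or spell out this salvage; otherwise the rest of the argument, including the exponent bookkeeping and the $2^{-q}$ gain, is correct.
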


Before giving the proof of Lemma~\ref{lem:reconstruction1} we continue Step~2 in the proof of Proposition~\ref{prop:u1}. 
Lemma~\ref{lem:reconstruction1} verifies the assumption of Lemma~\ref{lem:reconstruction0} with $\theta_1=\alpha-2$ and $\theta_2=\min\{1+\alpha/2,\beta\}$. 
Since by assumption $\alpha-2+\beta>0$ and $3\alpha-2>0$, Lemma~\ref{lem:reconstruction0} yields
\begin{align}
\int_{D_\y} d\x \Big| \int_{(0,\infty)\times\R^d} d\z\, \eqref{eq:nf3} \Big| 
\lesssim C_{\eqref{eq:nf3}} 2^{-q} 2^{q(2\alpha+2)/2} \|\y\|_\s^{2-} \, ,
\end{align}
where we have directly used that $\min\{1+\alpha/2,\beta\}\leq 2\alpha$. 
From $\|\y\|_\s\leq R\leq1$ follows $\|\y\|_\s^{2-}\leq\|\y\|_\s^{2\alpha}R^{1-\alpha}$ and hence the desired bound from summing over $q\in\N$ with $\delta<2^{-q}$. 
\end{proof}

\begin{proof}[Proof of Lemma~\ref{lem:reconstruction1}]
We start with having a closer look on $F_{\x+\y} - F_{\x}$,
\begin{align}
&F_{\x+\y}(\z;\yy) - F_{\x}(\z;\yy) \\
&= K\big(\anoise(\x+\y),\yy-\z\big) \snoise(\x+\y) \xi(\z) \\
&\,+ \partial_v \big( K\big(a(v),\yy-\z\big) \sigma(v) \big)_{|v=u(\x+\y)} \\[-1ex]
&\quad\times
\big( \snoise(\x+\y) \Pi_{\x+\y}[\dumb;\x+\y](\z) 
+ \nu(\x+\y) \cdot\Pi_{\x+\y}[\xnoise](\z) \big) \\
&\,- K\big(\anoise(\x),\yy-\z\big) \snoise(\x) \xi(\z) \\
&\,- \partial_v \big( K\big(a(v),\yy-\z\big) \sigma(v) \big)_{|v=u(\x)} 
\big( \snoise(\x) \Pi_\x[\dumb;\x](\z) 
+ \nu(\x) \cdot\Pi_\x[\xnoise](\z) \big) \, .
\end{align}
Using \eqref{eq:trafodumb} in form of 
$\Pi_{\x}[\dumb;\x] = \Pi_{\x+\y}[\dumb;\x] + \Pi_{\x}[\lolly;\x](\x+\y) \xi$
and \eqref{eq:trafoxnoise} in form of 
$\Pi_{\x}[\xnoise] = \Pi_{\x+\y}[\xnoise] + y \xi$,
we obtain 
\begin{align}
&F_{\x+\y}(\z;\yy) - F_{\x}(\z;\yy) \\
&= K\big(\anoise(\x+\y),\yy-\z\big) \snoise(\x+\y) \xi(\z) \\
&\,- K\big(\anoise(\x),\yy-\z\big) \snoise(\x) \xi(\z) \\
&\,- \partial_v \big( K\big(a(v),\yy-\z\big) \sigma(v) \big)_{v=u(\x)} 
\big( \snoise(\x) \Pi_{\x}[\lolly;\x](\x+\y) 
+ \nu(\x) \cdot y \big) \xi(\z) \\
&\,+ \partial_v \big( K\big(a(v),\yy-\z\big) \sigma(v) \big)_{v=u(\x+\y)} \\[-1ex]
&\quad\times
\big( \snoise(\x+\y) \Pi_{\x+\y}[\dumb;\x+\y](\z) 
+ \nu(\x+\y) \cdot\Pi_{\x+\y}[\xnoise](\z) \big) \\
&\,- \partial_v \big( K\big(a(v),\yy-\z\big) \sigma(v) \big)_{|v=u(\x)}
\big( \snoise(\x) \Pi_{\x+\y}[\dumb;\x](\z) 
+ \nu(\x) \cdot\Pi_{\x+\y}[\xnoise](\z) \big) \, .
\end{align}
Note that because of the presence of $\sigma$, all instances of $u$ 
(also those implicit in $\anoise=a(u)$ and $\snoise=\sigma(u)$) 
can be replaced by $\bar u\coloneqq \Theta(u)$ with $\Theta$ from \eqref{eq:Theta}, 
and all instances of $\nu$ can be replaced by $\bar\nu\coloneqq\Theta'(u)\nu$. 
Denoting $G(v)\coloneqq K(a(v),\yy-\z)\sigma(v)$ 
and $G'(v)=\partial_v G(v)$, we rewrite the first three lines of the right hand side as 
\begin{align}
&G\big( \bar u(\x+\y)\big)\xi(\z) 
- G\big( \bar u(\x) + \snoise(\x)\Pi_\x[\lolly;\x](\x+\y) + \bar\nu(\x) \cdot y \big) \xi(\z) \label{eq:mt05} \qquad \\
&+ G\big( \bar u(\x) + \snoise(\x)\Pi_\x[\lolly;\x](\x+\y) + \bar\nu(\x) \cdot y) \xi(\z) \\
&\quad- G\big( \bar u(\x) + \bar\nu(\x)\cdot y\big) \xi(\z)
- G'\big(\bar u(\x)\big) \snoise(\x) \Pi_\x[\lolly;\x](\x+\y) \xi(\z) \label{eq:mt06} \\
&+ G\big(\bar u(\x) + \bar\nu(\x) \cdot y \big) \xi(\z) 
- G\big(\bar u(\x)\big) \xi(\z) 
- G'\big(\bar u(\x)\big) \bar\nu(\x) \cdot y \xi(\z) \, , \label{eq:mt07}
\end{align}
and the last two lines as 
\begin{align}
&G'\big(\bar u(\x+\y)\big) \snoise(\x+\y) \Pi_{\x+\y}[\dumb;\x+\y](\z)
- G'\big(\bar u(\x)\big) \snoise(\x) \Pi_{\x+\y}[\dumb;\x](\z) \qquad \label{eq:mt08} \\
&+ \big( G'\big(\bar u(\x+\y)\big) - G'\big(\bar u(\x)\big) \big) \bar\nu(\x+\y) \cdot \Pi_{\x+\y}[\xnoise](\z) \label{eq:mt09} \\
&+ G'\big(\bar u(\x)\big) \big( \bar\nu(\x+\y) - \bar\nu(\x) \big) \cdot \Pi_{\x+\y}[\xnoise](\z) \label{eq:mt10} \, .
\end{align}
Using the fundamental theorem of calculus we have 
\begin{align}
\eqref{eq:mt05}
= \int_0^1 d\vartheta \, &G'\big( \vartheta \bar u(\x+\y) +(1-\vartheta) \big(\bar u(\x)+\snoise(\x)\Pi_\x[\lolly;\x](\x+\y)+\bar\nu(\x)\cdot y\big)\big) \\
&\times \big( \bar u(\x+\y) - \bar u(\x) - \snoise(\x)\Pi_\x[\lolly;\x](\x+\y) - \bar\nu(\x)\cdot y\big) \xi(\z) \, ,
\end{align}
which by $\snoise=\sigma(u)=\Theta'(u)\sigma(u)$, 
\begin{equation}
G'(v) 
= \partial_{\bar a}K(\bar a,\yy-\z)_{|\bar a=a(v)} a'(v)\sigma(v)
+ K(a(v),\yy-\z)\sigma'(v) \, ,
\end{equation}
and Lemma~\ref{lem:integration3} with $\yy=\x+\y'$ yields for $\lambda\leq2^{-n}$ 
\begin{align}
&\int_{D_{\y}\cap D_{\y'}} d\x \Big| \int_{(0,\infty)\times\R^d} d\z \, 
\varphi_\x^\lambda(\z) \eqref{eq:mt05}^{(n),l,k} \Big| \\
&\lesssim 
2^{n(D-2+2l+|k|_1)} \lambda^{\alpha-2} [\xi]_{\alpha-2} 
\big( \|a'\!\sigma/a\tilde\varphi(2^qa)\|_\infty + \|\sigma'\tilde\varphi(2^qa)\|_\infty \big)
[\Theta(\mathcal{U})]_{B_{1,\infty}^{\beta}} \|\y\|_\s^{\beta} \, .
\end{align}
For \eqref{eq:mt06} we use the fundamental theorem of calculus twice to see
\begin{align}
\eqref{eq:mt06}
&=\! \int_0^1\! d\vartheta\, G'\big( \bar u(\x) \!+\! \vartheta \snoise(\x) \Pi_\x[\lolly;\x](\x\!+\!\y) \!+\! \bar\nu(\x) \!\cdot\! y \big) 
\snoise(\x) \Pi_\x[\lolly;\x](\x\!+\!\y) \xi(\z) \\
&\,- G'\big( \bar u(\x)\big) \snoise(\x) \Pi_\x[\lolly;\x](\x+\y) \xi(\z) \\ 
&= \int_0^1 d\vartheta \int_0^1 d\bar{\vartheta} \, G''\big( \bar u(\x) + \vartheta\bar{\vartheta} \snoise(\x) \Pi_\x[\lolly;\x](\x+\y) + \bar{\vartheta} \bar \nu(\x) \cdot y \big) \\
&\quad\times \snoise(\x) \Pi_\x[\lolly;\x](\x+\y) 
\big( \vartheta \snoise(\x) \Pi_\x[\lolly;\x](\x+\y) + \bar \nu(\x)\cdot y\big) \xi(\z) \, .
\end{align}
Using 
\begin{align}
G''(v) 
&= 
\partial_{\bar a}^2K(\bar a,\yy-\z)_{|\bar a=a(v)} (a'(v))^2 \sigma(v) 
+ \partial_{\bar a}K(\bar a,\yy-\z)_{|\bar a=a(v)} a''(v)\sigma(v) \\
&\,+ 2 \partial_{\bar a}K(\bar a,\yy-\z)_{|\bar a=a(v)} a'(v)\sigma'(v)
+ K(a(v),\yy-\z)\sigma''(v)
\end{align}
and Lemma~\ref{lem:integration3} with $\yy=\x+\y'$ yields for $\lambda\leq2^{-n}$ 
\begin{align}
&\int_{D_{\y}\cap D_{\y'}} d\x \Big| \int_{(0,\infty)\times\R^d} d\z \, 
\varphi_\x^\lambda(\z) \eqref{eq:mt06}^{(n),l,k} \Big| \\
&\lesssim 
2^{n(D-2+2l+|k|_1)} \lambda^{\alpha-2} [\xi]_{\alpha-2} \\
&\,\times \!
\big( \|a^{-2}(a')^2\sigma\tilde\varphi(2^qa)\|_\infty 
\!+\! \|a^{-1}\!a''\!\sigma\tilde\varphi(2^qa)\|_\infty
\!+\! \|a^{-1}\!a'\!\sigma'\tilde\varphi(2^qa)\|_\infty
\!+\! \|\sigma''\tilde\varphi(2^qa)\|_\infty \big) \\
&\,\times \!
\big( \|a^{-\mathfrak{e}(\lollysmall)}\sigma \|^2_\infty [\lolly]_\alpha^2 \|\y\|_\s^{2\alpha} T 
+ \|a^{-\mathfrak{e}(\lollysmall)}\sigma\|_\infty [\lolly]_\alpha \|\y\|_\s^\alpha \|\Theta'(u)\nu\|_{L^1} \|\y\|_\s \big) \, .
\end{align}
Also for \eqref{eq:mt07} we use the fundamental theorem of calculus twice to obtain
\begin{align}
\eqref{eq:mt07}
&= \int_0^1 d\vartheta \, G'\big( \bar u(\x) + \vartheta\bar\nu(\x)\cdot y\big) \bar\nu(\x)\cdot y \xi(\z)
- G'\big( \bar u(\x)\big) \bar\nu(\x)\cdot y \xi(\z) \\ 
&= \int_0^1 d\vartheta \int_0^1 d\bar{\vartheta} \, G''\big( \bar u(\x) + \vartheta\bar{\vartheta}\bar\nu(\x)\cdot y\big) (\bar\nu(\x)\cdot y)^2 \xi(\z) \, .
\end{align}
As before we deduce from Lemma~\ref{lem:integration3} with $\yy=\x+\y'$ that for $\lambda\leq2^{-n}$ 
\begin{align}
&\int_{D_{\y}\cap D_{\y'}} d\x \Big| \int_{(0,\infty)\times\R^d} d\z \, 
\varphi_\x^\lambda(\z) \eqref{eq:mt07}^{(n),l,k} \Big| \\
&\lesssim 
2^{n(D-2+2l+|k|_1)} \lambda^{\alpha-2} [\xi]_{\alpha-2} \|\Theta'(u)\nu\|_{L^2}^2 \|\y\|_\s^2 \\
&\times \!
\big( \|a^{-2}(a')^2\sigma\tilde\varphi(2^qa)\|_\infty 
\!+\! \|a^{-1}\!a''\!\sigma\tilde\varphi(2^qa)\|_\infty
\!+\! \|a^{-1}\!a'\!\sigma'\tilde\varphi(2^qa)\|_\infty
\!+\! \|\sigma''\tilde\varphi(2^qa)\|_\infty \big) .
\end{align}
For \eqref{eq:mt08} we use the fundamental theorem of calculus to see
\begin{align}
&\eqref{eq:mt08} \\
&= \int_0^1 d\vartheta \, \partial_v \big( G'(v) \sigma(v) \Pi_{\x+\y}[\dumb;a(v)](\z) \big)_{|v=\vartheta \bar u(\x+\y)+(1-\vartheta)\bar u(\x)} \big( \bar u(\x+\y) - \bar u(\x)\big) \, ,
\end{align}
and deduce from Lemma~\ref{lem:integration3} with $\yy=\x+\y'$ that for $\lambda\leq2^{-n}$ 
\begin{align}
&\int_{D_{\y}\cap D_{\y'}} d\x \Big| \int_{(0,\infty)\times\R^d} d\z \, 
\varphi_\x^\lambda(\z) \eqref{eq:mt08}^{(n),l,k} \Big| \\
&\lesssim 
2^{n(D-2+2l+|k|_1)} \lambda^{2\alpha-2} [\dumb]_{2\alpha-2} \\
&\,\times \!
\big( \|a^{-2-\mathfrak{e}(\dumbsmall)}(a')^2\sigma^2\tilde\varphi(2^qa)\|_\infty 
\!+\! \|a^{-1-\mathfrak{e}(\dumbsmall)}a''\sigma^2\tilde\varphi(2^qa)\|_\infty
\!+\! \|a^{-1-\mathfrak{e}(\dumbsmall)}a'\sigma'\sigma\tilde\varphi(2^qa)\|_\infty \\
&\quad+ \|a^{-\mathfrak{e}(\dumbsmall)}\sigma''\sigma\tilde\varphi(2^qa)\|_\infty 
+ \|a^{-1-\mathfrak{e}(\dumbsmall)}a'\sigma\sigma'\tilde\varphi(2^qa)\|_\infty 
+ \|a^{-\mathfrak{e}(\dumbsmall)}(\sigma')^2\tilde\varphi(2^qa)\|_\infty \\
&\quad+ \|a^{-2-\mathfrak{e}(\dumbsmall)}a'\sigma^2\tilde\varphi(2^qa)\|_\infty 
+ \|a^{-1-\mathfrak{e}(\dumbsmall)}\sigma\sigma'\tilde\varphi(2^qa)\|_\infty \big) 
[\Theta(u)]_{B_{1,\infty}^\alpha} \|\y\|_\s^\alpha \, .
\end{align}
Similarly, we have by the fundamental theorem of calculus
\begin{align}
&\eqref{eq:mt09} \\
&= \!\int_0^1\! d\vartheta \, G''\big(\vartheta \bar u(\x\!+\!\y))+(1-\vartheta)\bar u(\x)\big) \big(\bar u(\x\!+\!\y)-\bar u(\x)\big) \bar\nu(\x\!+\!\y) \cdot \Pi_{\x+\y}[\xnoise](\z) \, ,
\end{align}
so that again by Lemma~\ref{lem:integration3} with $\yy=\x+\y'$ we obtain for $\lambda\leq2^{-n}$ 
\begin{align}
&\int_{D_{\y}\cap D_{\y'}} d\x \Big| \int_{(0,\infty)\times\R^d} d\z \, 
\varphi_\x^\lambda(\z) \eqref{eq:mt09}^{(n),l,k} \Big| \\
&\lesssim 
2^{n(D-2+2l+|k|_1)} \lambda^{\alpha-1} [\xnoise]_{\alpha-1} 
[\Theta(u)]_{B_{1,\infty}^\alpha}^{1/2} \|\y\|_\s^{\alpha/2} \|\Theta'(u)\nu\|_{L^2} \\
&\times \!
\big( \|a^{-2}(a')^2\sigma\tilde\varphi(2^qa)\|_\infty 
\!+\! \|a^{-1}\!a''\!\sigma\tilde\varphi(2^qa)\|_\infty
\!+\! \|a^{-1}\!a'\!\sigma'\tilde\varphi(2^qa)\|_\infty
\!+\! \|\sigma''\tilde\varphi(2^qa)\|_\infty \big) ,
\end{align}
where we used Cauchy-Schwarz in form of $\int d\x |\bar u(\x+\y)-\bar u(\x)| |\bar\nu(\x+\y)| \lesssim \|\bar u\|_{L^\infty}^{1/2} [\bar u]_{B_{1,\infty}^\alpha}^{1/2} \|\y\|_\s^{\alpha/2} \|\bar\nu\|_{L^2}$ 
and that $\|\bar u\|_{L^\infty}\lesssim1$ by construction.
For \eqref{eq:mt10} we have by Lemma~\ref{lem:integration3} with $\yy=\x+\y'$ that for $\lambda\leq2^{-n}$ 
\begin{align}
&\int_{D_{\y}\cap D_{\y'}} d\x \Big| \int_{(0,\infty)\times\R^d} d\z \, 
\varphi_\x^\lambda(\z) \eqref{eq:mt10}^{(n),l,k} \Big| \\
&\lesssim 
2^{n(D-2+2l+|k|_1)} \lambda^{\alpha-1} [\xnoise]_{\alpha-1} \\
&\,\times 
\big( \|a^{-1}a'\sigma\tilde\varphi(2^qa)\|_\infty 
+ \|\sigma'\tilde\varphi(2^qa)\|_\infty \big) [\Theta'(u)\nu]_{B_{1,\infty}^{\beta-1}} 
\|\y\|_\s^{\beta-1} \, .
\end{align}

We conclude the proof by balancing $a$ and $\sigma$.
For \eqref{eq:mt05} and \eqref{eq:mt10} we use
that Assumption~\ref{ass:nonlinearities}~(ii) and \eqref{eq:restrictionM} imply 
%($N+M-2-(M-1)\geq M-1$, $N-1\geq M-1$)
%
\begin{equation}
\max\Big\{
\Big|\frac{a'\sigma}{a}\Big| \, , \
\big|\sigma'\big| \Big\} \lesssim a \, ,
\tag{III}\label{eq:III}
\end{equation}
so that $\|a^{-1}a'\sigma\tilde\varphi(2^qa)\|_\infty\lesssim 2^{-q}$ 
and $\|\sigma'\tilde\varphi(2^qa)\|_\infty\lesssim 2^{-q}$. 
Analogously, for \eqref{eq:mt06}, \eqref{eq:mt07}, and \eqref{eq:mt09} we use 
%\mt{this is most restricting}
%($2(M-2)+N-2(M-1)\geq M-1$, $M-3+N-(M-1)\geq M-1$, $N-2\geq M-1$)
%
\begin{equation}
\max\Big\{
\Big|\frac{(a')^2\sigma}{a^2}\Big| \, , \
\Big|\frac{a''\sigma}{a}\Big| \, , \
\Big|\frac{a'\sigma'}{a}\Big| \, , \
\big|\sigma''\big| \Big\} \lesssim a \, ,
\tag{IV}\label{eq:IV}
\end{equation}
and for \eqref{eq:mt06} additionally 
%($N-(M-1)\geq M-1$)
%
\begin{equation}
\Big|\frac{\sigma}{a^{\mathfrak{e}(\lollysmall)}}\Big| \lesssim a \, .
\tag{V}\label{eq:V}
\end{equation}
For \eqref{eq:mt08} we use  
%($2N-2\geq 2(M-1)$, $2N-1-(M-1)\geq2(M-1)$)
%
\begin{align}
\max\Big\{
\Big|\frac{(a')^2\sigma^2}{a^{2+\mathfrak{e}(\dumbsmall)}}\Big| , 
\Big|\frac{a''\sigma^2}{a^{1+\mathfrak{e}(\dumbsmall)}}\Big| , 
\Big|\frac{a'\sigma\sigma'}{a^{1+\mathfrak{e}(\dumbsmall)}}\Big| , 
\Big|\frac{\sigma\sigma''}{a^{\mathfrak{e}(\dumbsmall)}}\Big| , 
\Big|\frac{(\sigma')^2}{a^{\mathfrak{e}(\dumbsmall)}}\Big| , 
\Big|\frac{a'\sigma^2}{a^{2+\mathfrak{e}(\dumbsmall)}}\Big| , 
\Big|\frac{\sigma\sigma'}{a^{1+\mathfrak{e}(\dumbsmall)}}\Big| \Big\} \lesssim a \, ,
\tag{VI}
\end{align}
yielding the desired estimate. 
\end{proof}

%%%%%%%%%%%%%%%%%%%%%%%%%%%%%%%%%%%%%%%%
\subsection{Contributions of the renormalized kinetic measure}\label{sec:energy}

\begin{proof}[Proof of Proposition~\ref{prop:u2}]
The definition of $u_2^q$ in Section~\ref{sec:mild} yields
\begin{align}
&u_2^q(\x+\y)-u_2^q(\x)-\nabla u_2^q(\x)\cdot y\\
&=\!\int_{(0,\infty)\times\R^d}\! d\z \, \partial_v \bar\Psi_q\big(a(v),\x,\y,\z\big)_{|v=u(\z)} 
a(u(\z)) \big( |\nabla u(\z)|^2 \!-\! \sigma^2(u(\z)) \ccherry^{a(u(\z))}(r)\big) \, ,
\end{align}
where $\bar\Psi_q(a(v),\x,\y,\z)=\Psi_q(a(v),\x+\y-\z) - \Psi_q(a(v),\x-\z)
- \nabla \Psi_q(a(v),\x-\z)\cdot y$.
As in Section~\ref{sec:roughkernels} we replace $\Psi_q$ by $K$ defined in \eqref{eq:defK}, and we write $\bar K$ as in \eqref{eq:anisotropicTaylor} to denote the anisotropic Taylor remainder. 
Furthermore we momentarily write $K_1(\bar a,\x)\coloneqq\partial_{\bar a} K(\bar a,\x)$, and correspondingly $\bar K_1$ to denote its Taylor remainder. 
With this notation
\begin{align}
&u_2^q(\x+\y)-u_2^q(\x)-\nabla u_2^q(\x)\cdot y\\
&=\int_{(0,\infty)\times\R^{d}} \! d\z\, \K_1\big(a(u(\z)),\x,\y,\z\big) 
(a'\!a)(u(\z)) \Big( |\nabla u(\z)|^2 - \sigma^2(u(\z)) \ccherry^{a(u(\z))}(r) \Big) \, .
\end{align}
By the definition \eqref{eq:gubinelliderivative} of $\nu$, 
and using the same shorthand notation $\fnoise=f(u)$ and $\flolly=f'(u)$ as in the proof of Proposition~\ref{prop:u1}, 
we have 
\begin{align}
&\K_1(a(u(\z)),\x,\y,\z) a'(u(\z)) a(u(\z)) \Big( |\nabla u(\z)|^2 - \sigma^2(u(\z)) \ccherry^{a(u(\z))}(r) \Big) \\
&= \K_1(\anoise(\z),\x,\y,\z) \alolly(\z) \anoise(\z) \Big( 
\ssnoise(\z) |\nabla \Pi_\z[\lolly;\z](\z) |^2 - \ssnoise(\z) \ccherry^{\anoise(\z)}(r) \\
&\hphantom{=\K_1(\anoise(\z),\x,\y,\z) \alolly(\z) \anoise(\z)}
\,+ 2 \snoise(\z) \nabla \Pi_\z[\lolly;\z](\z) \cdot \nu(\z) 
+ |\nu(\z)|^2 \Big) \, ,
\end{align}
which by the definition \eqref{eq:derivativecherry} of $\Pi[\derivativecherry]$ equals
\begin{align}
&\K_1(\anoise(\z),\x,\y,\z) \alolly(\z)
\anoise(\z)\ssnoise(\z) \Pi_\z[\derivativecherry,\z](\z) \label{eq:u2part1} \\
&+ 2 \K_1(\anoise(\z),\x,\y,\z) \alolly(\z) \anoise(\z) \snoise(\z) \nabla \Pi_\z[\lolly;\z](\z) \cdot \nu(\z) \label{eq:u2part2} \\
&+ \K_1(\anoise(\z),\x,\y,\z) \alolly(\z) \anoise(\z) |\nu(\z)|^2 \label{eq:u2part3} \, .
\end{align}
We estimate these three contributions separately in the following three steps. 

\textbf{Step 1 \textnormal{(estimate of \eqref{eq:u2part1})}.}
To estimate \eqref{eq:u2part1} we define 
\begin{equation}\label{eq:F1}
F_\x(\z;\yy) \coloneqq 
K_1\big(\anoise(\x),\yy-\z\big) \alolly(\x) \anoise(\x) \ssnoise(\x) \Pi_\x[\derivativecherry;\x](\z) \, ,
\end{equation}
and note that 
\begin{equation}
F_\z(\z;\yy) 
= K_1\big(\anoise(\z),\yy-\z\big) \alolly(\z) \anoise(\z) \ssnoise(\z) \Pi_\z[\derivativecherry;\z](\z) \, .
\end{equation}
Hence (recall from Lemma~\ref{lem:reconstruction0} that $\bar F$ denotes the Taylor remainder of $F$)
\begin{align}
\eqref{eq:u2part1} 
&= \bar F_\z(\z;\x,\y) \\
&= \bar F_\z(\z;\x,\y) - \bar F_\x(\z;\x,\y) + \bar K_1(\anoise(\x),\x,\y,\z) \alolly(\x)\anoise(\x)\ssnoise(\x)\Pi_\x[\derivativecherry;\x](\z) \, .
\end{align}
The following lemma verifies the assumption of Lemma~\ref{lem:reconstruction0} 
(recall that $F^{(n),l,k}$ is defined analogously to $F$ with $K$ replaced by $\partial_t^l\partial_x^k K^{(n)}$). 
Note that as in the proof of Proposition~\ref{prop:u1}, 
due to the presence of $\sigma$ and its compact support we may assume w.l.o.g.~that $a\leq1$ and consider only $q\in\N$. 

\begin{lemma}\label{lem:reconstruction2}
Let $n,l,q\in\N$, $k\in\N^d$, $\y,\y'\in\mathfrak{K}\subseteq\R\times\R^d$ for compact $\mathfrak{K}$, $0<\lambda\leq2^{-n}$,
and $\varphi\in\mathcal{B}$.
Then $F$ defined in \eqref{eq:F1} satisfies
\begin{align}
&\int_{D_\y\cap D_{\y'}} d\x \Big| \int_{(0,\infty)\times\R^d} d\z\, 
\big( F_{\x+\y}^{(n),l,k}(\z;\x+\y') - F_\x^{(n),l,k}(\z;\x+\y') \big) 
\varphi_\x^\lambda(\z) \Big| \\
&\lesssim 2^{n(D-2+2l+|k|_1)} \lambda^{2\alpha-2} \|\y\|_\s^\alpha [\derivativecherry]_{2\alpha-2} [\Theta(u)]_{B_{1,\infty}^\alpha} 2^{-q(1-\alpha/2)} \, ,
\end{align}
where $\Theta$ is as in \eqref{eq:Theta}, and $\lesssim$ depends on $\mathfrak{K}$ and the constant $C$ of Assumption~\ref{ass:nonlinearities}. 
\end{lemma}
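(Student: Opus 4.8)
The plan is to verify the hypothesis of Lemma~\ref{lem:reconstruction0} for the choice
$F$ in \eqref{eq:F1} by expanding the increment $F_{\x+\y}^{(n),l,k}(\z;\x+\y') - F_{\x}^{(n),l,k}(\z;\x+\y')$ and reorganizing it into pieces that can each be handled by Lemma~\ref{lem:integration3}. Writing $H(v)\coloneqq \partial_t^l\partial_x^k K_1^{(n)}(a(v),\x+\y'-\z)\,a'(v)\,a(v)\,\sigma^2(v)$, the prefactor of $\Pi[\derivativecherry;\cdot](\z)$ in $F^{(n),l,k}$ is $H(u(\x))$, and we use the transformation rule \eqref{eq:trafoderivativecherry}, namely $\Pi_\x[\derivativecherry;\bar a] = \Pi_{\x+\y}[\derivativecherry;\bar a]$, which crucially means there is \emph{no} recentering term for $\derivativecherry$. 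Hence
\begin{align*}
F_{\x+\y}^{(n),l,k}(\z;\x+\y') - F_{\x}^{(n),l,k}(\z;\x+\y')
= \big(H(u(\x+\y)) - H(u(\x))\big)\,\Pi_{\x+\y}[\derivativecherry;\x+\y](\z)\,,
\end{align*}
once one notes that $\sigma^2(u) = \Theta'(u)^2\sigma^2(u)$ etc.\ so that all instances of $u$ may be replaced by $\bar u \coloneqq \Theta(u)$, which is bounded. The point of passing to $\Theta$ is exactly, as remarked before Proposition~\ref{prop:u1}, that $H$ is then evaluated only where $|u|\lesssim1$ and the resulting constants are finite; quantitatively we will need the bound $\big|a'(v)a(v)\sigma^2(v)/a(v)^{\mathfrak{e}(\derivativecherry)+\bar m}\big|\lesssim a(v)$ on $\mathrm{supp}\,\sigma$, which is exactly of the type \eqref{eq:IV}--\eqref{eq:VI} and follows from Assumption~\ref{ass:nonlinearities}~(ii) together with \eqref{eq:restrictionM} (recall $\mathfrak{e}(\derivativecherry)=2+2\epsilon$ so one needs $|v|^{M-1}\cdot|v|^{M-1}\cdot|v|^{2N}$ against $|v|^{(M-1)(2+2\epsilon)}$, which holds since $N\geq M+1$).

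Next I would apply the fundamental theorem of calculus to write $H(u(\x+\y)) - H(u(\x)) = \int_0^1 d\vartheta\, H'(\vartheta u(\x+\y)+(1-\vartheta)u(\x))\,(u(\x+\y)-u(\x))$, or better, after passing to $\bar u$, $\int_0^1 d\vartheta\, H'(\vartheta\bar u(\x+\y)+(1-\vartheta)\bar u(\x))\,(\bar u(\x+\y)-\bar u(\x))$. The derivative $H'$ produces, by the chain and product rule, terms in which $\partial_{\bar a}$ hits $K_1^{(n)}$ (raising the $\bar a$-derivative count) and terms where the polynomial prefactor is differentiated; each such term is of the form $\partial_{\bar a}^{\bar m}\partial_t^l\partial_x^k K_1^{(n)}(a(v),\cdot)$ times a function of $v$ satisfying the appropriate growth bound on $\mathrm{supp}\,\sigma$. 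Then Lemma~\ref{lem:integration3}, applied with $\tau=\derivativecherry$, $\tilde m=0$ (the model $\Pi_{\x+\y}[\derivativecherry;\x+\y]$ is already centered at $\x+\y$, so no $\tilde a$-derivative appears), $|\derivativecherry|=2\alpha-2$, $\mathfrak{e}(\derivativecherry)=2+2\epsilon$, and $\lambda\le 2^{-n}$, bounds
\begin{align*}
\Big|\int_{(0,\infty)\times\R^d} d\z\,\varphi_\x^\lambda(\z)\,\partial_{\bar a}^{\bar m}\partial_t^l\partial_x^k K_1^{(n)}(a(v),\x+\y'-\z)\,\Pi_{\x+\y}[\derivativecherry;\x+\y](\z)\Big|
\lesssim 2^{n(D-2+2l+|k|_1)}\lambda^{2\alpha-2}[\derivativecherry]_{2\alpha-2}\,\frac{\tilde\varphi(2^q a(v))}{a(v)^{\mathfrak{e}(\derivativecherry)+\bar m}}\,,
\end{align*}
uniformly in $v$ on $\mathrm{supp}\,\sigma$ (here $K_1 = \partial_{\bar a}K$ accounts for one extra $\bar a$-derivative, absorbed into $\bar m$). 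Pulling the supremum over $v$ out of the $\x$-integral via the growth bound of the preceding paragraph turns the $a(v)^{-\mathfrak{e}(\derivativecherry)-\bar m}$ into $|v|$, which after $\|\bar u\|_{L^\infty}\lesssim 1$ and $|\tilde\varphi(2^q a)|\lesssim 1$ leaves a factor $2^{-q}$ — more precisely $\sup_v a(v)^{-\mathfrak e(\derivativecherry)-\bar m}|\ldots|\,|\tilde\varphi(2^q a(v))|\lesssim 2^{-q}$ after inserting $|\tilde\varphi(2^q a)|\lesssim 2^{q(\mathfrak e(\derivativecherry)+\bar m-1)}|\tilde\varphi(2^q a)|$; one then finally loses $2^{q\alpha/2}$ to match the claimed $2^{-q(1-\alpha/2)}$, which is exactly the book-keeping that the $\theta_2$ in Lemma~\ref{lem:reconstruction0} later restores. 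Integrating the $\vartheta$-average over $D_\y\cap D_{\y'}$ and using $\int_{D_\y}d\x\,|\bar u(\x+\y)-\bar u(\x)|\le \|\y\|_\s^\alpha\,[\Theta(u)]_{B^\alpha_{1,\infty}}$ gives the stated bound.

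The main obstacle is purely the bookkeeping of the degeneracy: one must check that every term generated by $H'$ (there are on the order of half a dozen, coming from $(a')^2 a\sigma^2$, $a''a\sigma^2$, $(a')^2\sigma^2$, $a'a\sigma\sigma'$, etc., each divided by the appropriate power $a^{\mathfrak e(\derivativecherry)+\bar m}$ with $\bar m\in\{1,2,3\}$ due to the $\partial_{\bar a}$'s stacking on top of the $K_1=\partial_{\bar a}K$) is controlled by $a$ on $\mathrm{supp}\,\sigma$; this is the analogue of conditions \eqref{eq:IV}--\eqref{eq:VI} and is where the hypothesis $N\ge M+1$ and the restriction \eqref{eq:restrictionM} on $M$ enter. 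Everything else — the FTC expansion, the application of Lemma~\ref{lem:integration3}, the reduction to $\bar u$ via \eqref{eq:Theta}, and the exploitation of $\Pi_\x[\derivativecherry;\bar a]=\Pi_{\x+\y}[\derivativecherry;\bar a]$ to avoid any recentering — is routine. One subtlety worth flagging: since $\Pi[\derivativecherry]$ has no recentering term, this step is genuinely simpler than the analogous Lemma~\ref{lem:reconstruction1} for $u_1$, where the $\Pi[\dumb]$ and $\Pi[\xnoise]$ recentering forced the more elaborate splitting \eqref{eq:mt05}--\eqref{eq:mt10}; here a single application of the fundamental theorem of calculus suffices.
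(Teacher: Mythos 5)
Your overall route -- rebasing via \eqref{eq:trafoderivativecherry}, one application of the fundamental theorem of calculus, Lemma~\ref{lem:integration3}, the replacement of $u$ by $\Theta(u)$, and a growth condition of type (IX) -- coincides with the paper's, but there is a concrete error at the very first step. The identity
$F_{\x+\y}^{(n),l,k}(\z;\x+\y')-F_{\x}^{(n),l,k}(\z;\x+\y')=\big(H(u(\x+\y))-H(u(\x))\big)\,\Pi_{\x+\y}[\derivativecherry;\x+\y](\z)$
is false: base-point independence of $\Pi[\derivativecherry]$ lets you put both terms over a common base point, but the diffusivity parameter is $a(u(\x+\y))$ in the first term and $a(u(\x))$ in the second, so the model factor differs as well. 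The correct increment is $G(\bar u(\x+\y))-G(\bar u(\x))$ with $G(v)=\partial_t^l\partial_x^k K_1^{(n)}(a(v),\x+\y'-\z)\,a'(v)a(v)\sigma^2(v)\,\Pi_\x[\derivativecherry;a(v)](\z)$, and the fundamental theorem of calculus applied to this $G$ necessarily produces the additional term in which $\partial_a$ hits the model, i.e.\ $\partial_t^l\partial_x^k K_1^{(n)}(a(v),\cdot)\,a'(v)a(v)\sigma^2(v)\,\partial_{a}\Pi_\x[\derivativecherry;a(v)](\z)\,a'(v)$. Your explicit assertion that $\tilde m=0$ suffices (``no $\tilde a$-derivative appears'') is therefore wrong, and the reason you give -- that the model is ``already centered at $\x+\y$'' -- conflates the base point with the parameter $\bar a$: the $\tilde a$-derivative comes from the chain rule in $v\mapsto a(v)$, not from any recentering. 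This missing contribution is precisely why condition (IX) contains the term $|(a')^2\sigma^2/a^{1+\mathfrak{e}(\cherrysmall)}|$ and why the $m=1$ case of the model bound \eqref{eq:modelbound} (equivalently $\tilde m=1$ in Lemma~\ref{lem:integration3}) is genuinely used. As written, your claimed identity does not represent the left-hand side, so the estimate does not cover it.

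The repair stays entirely within your framework: apply the fundamental theorem of calculus to the full product $G$ (as the paper does), note that the new $\partial_a\Pi$ term carries the prefactor $(a')^2a\sigma^2$ against $a^{1+\mathfrak{e}(\cherrysmall)+1}$, i.e.\ the same size as your $K_{11}$ term, and conclude with Lemma~\ref{lem:integration3} and $\int_{D_\y}d\x\,|\Theta(u(\x+\y))-\Theta(u(\x))|\le[\Theta(u)]_{B^\alpha_{1,\infty}}\|\y\|_\s^\alpha$. Two smaller points: only $\bar m\in\{1,2\}$ occur (a single FTC derivative on top of $K_1=\partial_{\bar a}K$), not $\bar m=3$; and the exponent $2^{-q(1-\alpha/2)}$ is exactly what the bound $\lesssim a^{1-\alpha/2}$ of (IX) delivers on the support of $\tilde\varphi(2^q a)$ -- nothing is ``lost'' and later ``restored by $\theta_2$'' in Lemma~\ref{lem:reconstruction0}; if your stronger bound $\lesssim a$ indeed holds under $N\ge M+1$, it simply gives $2^{-q}\le 2^{-q(1-\alpha/2)}$ for $q\in\N$, which is consistent but not needed.
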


Before giving the proof of Lemma~\ref{lem:reconstruction2} at the end of the section 
we continue Step 1 in the proof of Proposition~\ref{prop:u2}.
We have by the triangle inequality 
\begin{align}
&\int_{D_\y}d\x \Big| \int_{(0,\infty)\times\R^d} d\z \, \eqref{eq:u2part1} \Big| \\
&\leq \int_{D_\y}d\x \Big| \int_{(0,\infty)\times\R^d} d\z \, \big(
\bar F_\z(\z;\x,\y) - \bar F_\x(\z;\x,\y)\big) \Big| \\
&\,+ \int_{D_\y}d\x \Big| \int_{(0,\infty)\times\R^d} d\z \, 
\bar K_1(\anoise(\x),\x,\y,\z) \alolly(\x)\anoise(\x)\ssnoise(\x)\Pi_\x[\derivativecherry;\x](\z) \Big|  \, .
\end{align}
Lemma \ref{lem:reconstruction2} verifies the assumption of Lemma~\ref{lem:reconstruction0} with 
$(\theta_1,\theta_2)=(2\alpha-2,\alpha)$, 
where we note that by assumption $3\alpha-2>0$. 
Thus by Lemma~\ref{lem:reconstruction0} the first right hand side term is bounded by
\begin{equation}
[\derivativecherry]_{2\alpha-2} [\Theta(u)]_{B_{1,\infty}^\alpha} 2^{-q(1-\alpha/2)} 2^{q(\alpha+2)/2} \|\y\|_\s^{2-} \, .
\end{equation}
To estimate the second right hand side term we apply Lemma~\ref{lem:integration2} 
with $m=1$, $f_\x=\alolly(\x)\anoise(\x)\ssnoise(\x)\Pi_\x[\derivativecherry;\x]$, $\theta_1=2\alpha-2$, and $\theta_2=0$, hence it is estimated by 
\begin{equation}
\|\y\|_\s^{2\alpha} [\derivativecherry]_{2\alpha-2} \int_{D_\y} d\x \, 
\Big| \frac{\alolly(\x)\anoise(\x)\ssnoise(\x)\tilde\varphi(2^q\anoise(\x))}{\anoise(\x)^{1+1+\mathfrak{e}(\cherrysmall)}}\Big| \, ,
\end{equation}
where we have directly used \eqref{eq:modelbound} along with $\Pi_\x[\derivativecherry]=\Pi_\y[\derivativecherry]$ (see \eqref{eq:trafoderivativecherry}). 
Using that Assumption~\ref{ass:nonlinearities}~(ii) and \eqref{eq:restrictionM} imply 
%($-1+2N-2(M-1)\geq-\alpha(M-1)$)
%
\begin{equation}
\Big|\frac{a'\sigma^2}{a^{1+\mathfrak{e}(\cherrysmall)}}\Big| \lesssim \frac{1}{a^\alpha} \, ,
\tag{VII}
\end{equation}
we deduce by $\|\y\|_\s\leq R\leq1$
\begin{equation}
\int_{D_\y}d\x \Big| \int_{(0,\infty)\times\R^d} d\z \, \eqref{eq:u2part1} \Big| 
\lesssim 2^{q\alpha} \|\y\|_\s^{2\alpha} [\derivativecherry]_{2\alpha-2} \big([\Theta(u)]_{B_{1,\infty}^\alpha} + T\big) \, .
\end{equation}
Summation over $q\in\N$ with $\delta<2^{-q}$ yields the desired bound.

\textbf{Step 2 \textnormal{(estimate of \eqref{eq:u2part2})}.}
To estimate \eqref{eq:u2part2} we proceed analogously to Step~1 and define 
\begin{equation}\label{eq:F2}
F_\x(\z;\yy) 
\coloneqq K_1(\anoise(\x),\yy-\z) \alolly(\x) \anoise(\x) \snoise(\x) \nabla \Pi_\x[\lolly;\x](\z) \cdot \nu(\x) \, .
\end{equation}
We then have the following analogue of Lemma~\ref{lem:reconstruction2}, again only for $q\in\N$. 
\begin{lemma}\label{lem:reconstruction3}
Let $n,l,q\in\N$, $k\in\N^d$, $\y,\y'\in\mathfrak{K}\subseteq\R\times\R^d$ for compact $\mathfrak{K}$, $0<\lambda\leq2^{-n}$, $\varphi\in\mathcal{B}$, $\beta\in(1,2)$, and $\bar\alpha\coloneqq\min\{\alpha/2,\beta-1\}$.
Then $F$ defined in \eqref{eq:F2} satisfies
\begin{align}
&\int_{D_\y\cap D_{\y'}} d\x \Big| \int_{(0,\infty)\times\R^d} d\z \, 
\big( F_{\x+\y}^{(n),l,k}(\z;\x+\y') - F_\x^{(n),l,k}(\z;\x+\y') \big) 
\varphi_\x^\lambda(\z) \Big| \\
&\lesssim 2^{n(D-2+2l+|k|_1)} \lambda^{\alpha-1} \|\y\|_\s^{\bar\alpha} 
[\lolly]_{\alpha} 
\Big( [\Theta'(u)\nu]_{B_{1,\infty}^{\beta-1}} 
+ [\Theta(u)]_{B_{1,\infty}^\alpha}^{1/2} \|\Theta'(u)\nu\|_{L^2} \Big) 2^{-q/2} \, ,
\end{align}
where $\Theta$ is as in \eqref{eq:Theta}, and $\lesssim$ depends on $\mathfrak{K}$ and the constant $C$ of Assumption~\ref{ass:nonlinearities}.
\end{lemma}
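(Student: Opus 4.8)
The plan is to mimic closely the structure of the proof of Lemma~\ref{lem:reconstruction1}: expand the difference $F_{\x+\y}(\z;\yy)-F_\x(\z;\yy)$ for $F$ from \eqref{eq:F2}, split it into pieces that can each be handled by the single-minus-counterterm bound Lemma~\ref{lem:integration3}, and balance the resulting blowups in $a$ against the vanishing of $\sigma$ using Assumption~\ref{ass:nonlinearities}~(ii) together with \eqref{eq:restrictionM}. Concretely, writing $\bar u=\Theta(u)$ and $\bar\nu=\Theta'(u)\nu$ as in Lemma~\ref{lem:reconstruction1} (legitimate by the presence of $\sigma$), and setting $H(v)\coloneqq\partial_{\bar a}K(\bar a,\yy-\z)_{|\bar a=a(v)}a'(v)a(v)\sigma(v)\,\nabla\Pi_{\x+\y}[\lolly;a(v)](\z)$ together with the abbreviation for the full prefactor, one decomposes
\begin{align*}
&F_{\x+\y}(\z;\yy)-F_\x(\z;\yy) \\
&= \big(H(\bar u(\x+\y))-H(\bar u(\x))\big)\cdot\bar\nu(\x+\y)
+ H(\bar u(\x))\cdot\big(\bar\nu(\x+\y)-\bar\nu(\x)\big) ,
\end{align*}
after first using \eqref{eq:trafololly} to replace $\Pi_{\x}[\lolly;\x]$ by $\Pi_{\x+\y}[\lolly;\x]$ plus the increment $\Pi_\x[\lolly;\x](\x+\y)$, the latter contributing a term proportional to $[\lolly]_\alpha\|\y\|_\s^\alpha$ and $\|\bar\nu\|_{L^1}$.

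First I would treat the first bracket by the fundamental theorem of calculus in $\bar u$, producing a factor $\bar u(\x+\y)-\bar u(\x)$ integrated against a $\vartheta$-averaged derivative of $H$; applying Cauchy--Schwarz in $\x$ in the form $\int d\x\,|\bar u(\x+\y)-\bar u(\x)||\bar\nu(\x+\y)|\lesssim\|\bar u\|_{L^\infty}^{1/2}[\bar u]_{B^\alpha_{1,\infty}}^{1/2}\|\y\|_\s^{\alpha/2}\|\bar\nu\|_{L^2}$ (exactly as for \eqref{eq:mt09}) and Lemma~\ref{lem:integration3} with $\tau=\lolly$, $|\lolly|=\alpha$, $\mathfrak{e}(\lollysmall)=1+\epsilon$ gives the contribution with $\|\y\|_\s^{\alpha/2}$ and $[\Theta(u)]_{B^\alpha_{1,\infty}}^{1/2}\|\Theta'(u)\nu\|_{L^2}$. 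The derivative $\partial_v H$ brings down $a',a'',(\sigma')$ etc., and one checks that all the resulting ratios $|a'^2\sigma/a^{\mathfrak{e}(\lollysmall)}|$, $|a''a\sigma/a^{\mathfrak{e}(\lollysmall)}|\cdot a^{-1}$, $|a'\sigma'/a^{\mathfrak{e}(\lollysmall)-1}|$, $|a'\sigma/a^{\mathfrak{e}(\lollysmall)-1}|$ are $\lesssim a^{1/2}$ after multiplying by $\tilde\varphi(2^q a)$, so the $a$-prefactor becomes $2^{-q/2}$; this is the step where the $N\ge M+1$ condition is needed and it will be recorded as a numbered inequality analogous to \eqref{eq:IV}. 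For the second bracket I would not differentiate but simply test: by Lemma~\ref{lem:integration3} with $\tau=\lolly$ the $\z$-integral produces a factor $\lambda^{\alpha-1}$ and a bounded $a$-ratio, while the $\x$-integral of $H(\bar u(\x))\cdot(\bar\nu(\x+\y)-\bar\nu(\x))$ is bounded by $\|a'a\sigma/a^{\mathfrak{e}(\lollysmall)}\|_\infty\,[\bar\nu]_{B^{\beta-1}_{1,\infty}}\|\y\|_\s^{\beta-1}$, with the ratio again $\lesssim a^{1/2}\tilde\varphi(2^q a)\lesssim 2^{-q/2}$. Taking the minimum of the two exponents $\alpha/2$ and $\beta-1$ on $\|\y\|_\s$ yields $\bar\alpha$, and collecting the constant gives exactly the asserted bound with the two seminorms $[\Theta'(u)\nu]_{B^{\beta-1}_{1,\infty}}+[\Theta(u)]_{B^\alpha_{1,\infty}}^{1/2}\|\Theta'(u)\nu\|_{L^2}$ and the factor $2^{-q/2}$.

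The main obstacle is the bookkeeping in the $a$--$\sigma$ balancing: one must verify that every ratio of derivatives of $a,a\sigma,\sigma$ that appears, after multiplication by the cutoff $\tilde\varphi(2^q a)$, is bounded by $a^{1/2}$ (equivalently $\lesssim 2^{-q/2}$), using only $|v|^{M-1}\lesssim a$, $|a'|\lesssim|v|^{M-2}$, $|a''|\lesssim|v|^{M-3}$, $|\sigma^{(j)}|\lesssim|v|^{N-j}$ with $N\ge M+1$, and the constraint $M<2$ from \eqref{eq:restrictionM}. This is the same type of routine-but-delicate computation as the numbered displays \eqref{eq:III}--\eqref{eq:IV} in Lemma~\ref{lem:reconstruction1}, and I would state the required bound as a single numbered inequality (the next in the sequence after \eqref{eq:VII}) and then simply invoke it. Everything else — the fundamental-theorem-of-calculus expansions, the transformation rule \eqref{eq:trafololly}, the Cauchy--Schwarz step, and the scale-counting in Lemma~\ref{lem:integration3} — is entirely parallel to the already-completed proofs and presents no new difficulty.
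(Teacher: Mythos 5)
Your overall route is the paper's: split $F_{\x+\y}-F_\x$ by the product rule into a piece carrying the increment of $\Theta(u)$ (handled by the fundamental theorem of calculus, the Cauchy--Schwarz step as for \eqref{eq:mt09}, and Lemma~\ref{lem:integration3} applied also to the $\partial_a$-derivative of the model) and a piece carrying the increment of $\Theta'(u)\nu$ (handled directly by Lemma~\ref{lem:integration3} and the $B^{\beta-1}_{1,\infty}$ seminorm), with the $a$--$\sigma$ balancing recorded as one numbered inequality giving the factor $a^{1/2}\lesssim 2^{-q/2}$ on the support of $\tilde\varphi(2^q a)$; this is exactly what the paper does, and your list of ratios is equivalent to its inequality (X). Whether the $u$-increment is paired with $\bar\nu(\x+\y)$ or $\bar\nu(\x)$ is immaterial.

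However, one step as you describe it would fail. You claim that rewriting via \eqref{eq:trafololly} produces, besides $\Pi_{\x+\y}[\lolly;\cdot]$, an increment $\Pi_\x[\lolly;\x](\x+\y)$ contributing a term of size $[\lolly]_\alpha\|\y\|_\s^\alpha\|\Theta'(u)\nu\|_{L^1}$. But $F$ in \eqref{eq:F2} contains the spatial gradient $\nabla\Pi_\x[\lolly;\x](\z)$, and the recentering increment in \eqref{eq:trafololly} is constant in the active variable, so it is annihilated by $\nabla$: one has exactly $\nabla\Pi_\x[\lolly;\bar a]=\nabla\Pi_{\x+\y}[\lolly;\bar a]$ with no extra term, which is precisely how the paper invokes \eqref{eq:trafololly} here. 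The point is not cosmetic: the conclusion of the lemma contains only $[\Theta'(u)\nu]_{B^{\beta-1}_{1,\infty}}$ and $[\Theta(u)]^{1/2}_{B^\alpha_{1,\infty}}\|\Theta'(u)\nu\|_{L^2}$, and an additional $\|\Theta'(u)\nu\|_{L^1}$ contribution could not be absorbed into that right-hand side, so with your extra term you would only prove a weaker statement than the one asserted. Once you delete that spurious term (so that your displayed two-term identity is an exact equality), the rest of your argument coincides with the paper's proof.
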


Before giving the proof of Lemma~\ref{lem:reconstruction3} at the end of the section 
we continue Step~2 in the proof of Proposition~\ref{prop:u2}. 
By the triangle inequality 
\begin{align}
&\int_{D_\y}d\x \Big| \int_{(0,\infty)\times\R^d} d\z \, \eqref{eq:u2part2} \Big| \\
&\leq \int_{D_\y}d\x \Big| \int_{(0,\infty)\times\R^d} d\z \, \big(
\bar F_\z(\z;\x,\y) - \bar F_\x(\z;\x,\y)\big) \Big| \\
&\,+ \int_{D_\y}d\x \Big| \int_{(0,\infty)\times\R^d} d\z \, 
\bar K_1(\anoise(\x),\x,\y,\z) \alolly(\x)\anoise(\x)\snoise(\x)\nabla\Pi_\x[\lolly;\x](\z) \cdot\nu(\x)\Big| \, .
\end{align}
Lemma~\ref{lem:reconstruction3} verifies the assumption of Lemma~\ref{lem:reconstruction0} with $\theta_1=\alpha-1$ and $\theta_2=\min\{\alpha/2,\beta-1\}$. 
Since by assumption $\alpha-2+\beta>0$ and $3\alpha-2>0$, 
the first right hand side term is estimated by 
\begin{equation}
[\lolly]_\alpha \Big( [\Theta'(u)\nu]_{B_{1,\infty}^{\beta-1}} 
+ [\Theta(u)]_{B_{1,\infty}^\alpha}^{1/2} \|\Theta'(u)\nu\|_{L^2} \Big) 
2^{-q/2}2^{q(2\alpha+1)/2} \|\y\|_\s^{2-} \, ,
\end{equation}
where we have directly used that $\min\{\alpha/2,\beta-1\}\leq 2\alpha-1$.
To estimate the second right hand side term we apply Lemma~\ref{lem:integration2} 
with $m=1$, $f_\x=\alolly(\x)\anoise(\x)\snoise(\x)\nu(\x)\cdot\nabla\Pi_\x[\lolly;\x]$, $\theta_1=\alpha-1$, and $\theta_2=0$, 
hence it is estimated by 
\begin{equation}
\|\y\|_\s^{\alpha+1} [\lolly]_{\alpha} \int_{D_\y} d\x \, 
\Big| \frac{\alolly(\x)\anoise(\x)\snoise(\x)\nu(\x)\tilde\varphi(2^q\anoise(\x))}{\anoise(\x)^{1+1+\mathfrak{e}(\lollysmall)}}\Big| \, ,
\end{equation}
where we have directly used \eqref{eq:modelbound} along with $\nabla\Pi_\x[\lolly]=\nabla\Pi_\y[\lolly]$ (see \eqref{eq:trafololly}). 
Using \eqref{eq:Theta} and that Assumption~\ref{ass:nonlinearities}~(ii) and \eqref{eq:restrictionM} imply 
%($-1+N-(M-1)\geq-\alpha(M-1)$)
%
\begin{equation}
\Big|\frac{a'\sigma}{a^{1+\mathfrak{e}(\lollysmall)}}\Big| \lesssim \frac{1}{a^\alpha} 
\tag{VIII}
\end{equation}
we deduce by $\|\y\|_\s\leq R\leq1$
\begin{align}
&\int_{D_\y}d\x \Big| \int_{(0,\infty)\times\R^d} d\z \, \eqref{eq:u2part2} \Big| \\
&\lesssim 2^{q\alpha} \|\y\|_\s^{\alpha+1} [\lolly]_\alpha 
\Big( [\Theta'(u)\nu]_{B_{1,\infty}^{\beta-1}} 
+ [\Theta(u)]_{B_{1,\infty}^\alpha}^{1/2} \|\Theta'(u)\nu\|_{L^2}
+ \|\Theta'(u)\nu\|_{L^1} \Big) \, .
\end{align}
Summation over $q\in\N$ with $\delta<2^{-q}$ yields the desired bound.

\textbf{Step 3 \textnormal{(estimate of \eqref{eq:u2part3})}.}
Applying Lemma~\ref{lem:integration1} (with $m=1$, $f=a(u)$, $g=a'(u)a(u)|\nu|^2$) yields\footnote{It is here where the blowup of $[\mathcal{U}^>]_{B_{1,\infty}^{2\alpha}}$ in Proposition~\ref{prop:u>} in terms of $2^q$ and ultimately $\delta$ comes from, as there is no $\sigma$ to compensate.}
\begin{align}
&\int_{D_\y}d\x \Big| \int_{(0,\infty)\times\R^d} d\z \, \eqref{eq:u2part3} \Big| \\
&\lesssim T^{1-\alpha} \|\y\|_\s^{2\alpha} 
\int_{(0,T)\times\T^d} d\z \, |a'(u(\z))| (1+a(u(\z))^{-\alpha}) |\tilde\varphi(2^qa(u(\z)))| |\nu(\z)|^2 \, .
\end{align}
Summing over $q\in\Z$ with $\delta<2^{-q}$ we argue as in the proof of Proposition~\ref{prop:u0} to estimate $\sum_{\delta<2^{-q}} a^{-\epsilon}(1+a^{-\alpha})|\tilde\varphi(2^qa)|\lesssim \delta^{-\alpha-\epsilon}$ for any $\epsilon>0$, 
so it is left to estimate 
\begin{equation}
\int_{(0,T)\times\T^d} d\z \, |a'(u(\z))| a(u(\z))^\epsilon |\nu(\z)|^2 \, .
\end{equation}
This is achieved in the following proposition.  

\begin{proposition}\label{prop:energy}
Let the assumption of Theorem~\ref{thm:main} hold,
in particular let u be the solution of \eqref{eq:spm} and $\nu$ be given in \eqref{eq:gubinelliderivative}. 
Furthermore let $g(v)=\int_0^vdw\,|w|^{p-2}$ for $p>1$.
Then for $T\leq1$ and $\beta\in(2-\alpha,2\alpha]$
\begin{equation}
\int_{(0,T)\times\T^d} d\z \, |g'(u(\z))|\,a(u(\z)) |\nu(\z)|^2 
\lesssim \|u_0\|_{L^p(\T^d)}^p + C_{\eqref{eq:u2part3}} + 1 \, , 
% +1 comes from the counterterm bounds 
\end{equation}
where $\lesssim$ depends on $\beta$ and the constants $C$ of Assumptions~\ref{ass:nonlinearities} and~\ref{ass:noise}, 
and 
\begin{align}
&C_{\eqref{eq:u2part3}} \\
&=
[\xi]_{\alpha-2} 
\Big( [(g\sigma)(\mathcal{U})]_{B^{\beta}_{1,\infty}} 
+ (\sqrt{T})^{\alpha} \Big) \\
&+\! \big( [\dumb]_{2\alpha-2} + [\derivativecherry]_{2\alpha-2} \big)
\Big( [\Theta(u)]_{B^\alpha_{1,\infty}} 
+ (\sqrt{T})^{2\alpha} \Big) \\
&+\! \big( [\xnoise]_{\alpha-1} \!+\! [\lolly]_\alpha \big) \!
\Big( [\Theta'(u)\nu]_{B_{1,\infty}^{\beta-1}}
\!+\! [\Theta(u)]_{B_{1,\infty}^\alpha}^{1/2} \|\Theta'(u)\nu\|_{L^2} 
\!+\! \|\Theta'(u)\nu\|_{L^1} (\sqrt{T})^{\alpha-1} \Big) ,
\end{align}
with $\Theta$ from \eqref{eq:Theta}.
\end{proposition}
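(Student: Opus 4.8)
The plan is to establish a renormalized energy inequality by testing the kinetic formulation \eqref{eq:kinetic} with a suitable weight function, then carefully pairing the singular forcing term against the renormalized kinetic measure so that their divergencies cancel. The heuristic outlined in Section~\ref{sec:strategy} should be made rigorous: informally one tests \eqref{eq:spm} with $g'(u)u$, but since $g'(u)=|u|^{p-2}$ is not admissible as a test function in the classical sense we instead work through the kinetic formulation, taking $\phi(t,x,v)$ in \eqref{eq:kinetic} to be (an approximation of) $g'(v)$ cut off appropriately, which legitimately reproduces $\frac{1}{p}\int_{\T^d}g(u(T,\cdot))-\frac1p\int_{\T^d}g(u_0)$ for the time derivative term, $-\int_0^T\!\!\int_{\T^d}g''(u)a(u)|\nabla u|^2$ for the kinetic measure term after integration by parts in $v$, and $\int_0^T\!\!\int_{\T^d}g'(u)(\sigma(u)\xi-\sigma'(u)\sigma(u)C^{a(u)})$ for the forcing. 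The identity to exploit is $g''(v)a(v)|\nabla u|^2 = g''(v)a(v)(|\nu|^2 + 2\sigma(u)\nabla\Pi_\x[\lolly;\x](\x)\cdot\nu + \sigma^2(u)|\nabla\Pi_\x[\lolly;\x](\x)|^2)$, coming from the definition \eqref{eq:gubinelliderivative} of $\nu$.

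First I would set up the tested identity carefully, paying attention to the fact that $g(v)=\int_0^v|w|^{p-2}dw$ behaves like $|v|^p/(p-1)$ near infinity but $g'$ blows up at $0$ — here one uses that $\sigma$ is compactly supported and that the quantitative Assumption~\ref{ass:nonlinearities}~(ii) guarantees $a(v)\gtrsim|v|^{M-1}$, so the products $g''(v)a(v)\sigma^2(v)$, $g'(v)\sigma(v)$ and $g'(v)\sigma'(v)\sigma(v)$ are all controlled; in particular $g\sigma\in C^2(\R)$ with compact support as noted after the proposition statement. Second I would rearrange to isolate $\int_0^T\!\!\int_{\T^d}g''(u)a(u)|\nu|^2$ on the left (this has a good sign since $g''\geq0$ and $a\geq0$), leaving on the right the initial data $\|u_0\|_{L^p}^p$, the cross term $\int g''(u)a(u)\sigma(u)\nabla\Pi[\lolly]\cdot\nu$, the square term $\int g''(u)a(u)\sigma^2(u)|\nabla\Pi[\lolly]|^2$, and the two forcing pieces. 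Third, the heart of the matter: the forcing piece $\int g'(u)\sigma(u)\xi$ and the square term $\int g''(u)a(u)\sigma^2(u)|\nabla\Pi[\lolly]|^2$ each individually require a counterterm but their singular parts cancel; I would exploit \eqref{eq:counterterm_dumb_cherry}, \eqref{eq:derivativecherry} and the reconstruction/integration machinery of Section~\ref{sec:roughkernels} (in particular Lemmas~\ref{lem:reconstruction4} and the basepoint-dependent estimates) to control the combined expression, writing $g'(u)\sigma(u)\xi$ via reconstruction as testing the model $\Pi_\x[\dumb;\x]$ against $(g'\sigma)(u)$ plus modelledness remainders, and similarly $a(u)\sigma^2(u)|\nabla\Pi[\lolly]|^2$ via $\Pi_\x[\derivativecherry;\x]$, so that the $\cdumb$ and $a\ccherry$ counterterms appear with opposite signs.

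The cross term $\int g''(u)a(u)\sigma(u)\nabla\Pi[\lolly]\cdot\nu$ I would handle by Cauchy–Schwarz, absorbing a small multiple of $\int g''(u)a(u)|\nu|^2$ into the left-hand side and bounding the remainder $\int g''(u)a(u)\sigma^2(u)|\nabla\Pi[\lolly]|^2$ as above; this is also where the quantities $[\Theta'(u)\nu]_{B_{1,\infty}^{\beta-1}}$ and $[\Theta(u)]_{B_{1,\infty}^\alpha}^{1/2}\|\Theta'(u)\nu\|_{L^2}$ in $C_{\eqref{eq:u2part3}}$ enter, through the modelledness of $g'(u)\sigma(u)=$ (smooth function of $u$) composed with the modelled $u$. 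Finally I would collect all error terms, matching each against the stated summands of $C_{\eqref{eq:u2part3}}$: the $[\xi]_{\alpha-2}$ terms from the reconstruction of the singular forcing against $(g\sigma)(\mathcal{U})$, the $[\dumb]_{2\alpha-2}+[\derivativecherry]_{2\alpha-2}$ terms from testing the models $\Pi[\dumb]$ and $\Pi[\derivativecherry]$ at the base point, and the $[\xnoise]_{\alpha-1}+[\lolly]_\alpha$ terms from the $\nu$-dependent pieces; the $(\sqrt T)$-powers arise from time-integrating the kernel bounds as in Lemma~\ref{lem:integration1}. The main obstacle I anticipate is the bookkeeping around the counterterm cancellation: one must perform the reconstruction for $g'(u)\sigma(u)\xi$ and for $a(u)\sigma^2(u)|\nabla\Pi[\lolly]|^2$ \emph{simultaneously} against a single combined model object (morally $\Pi[\dumb]-a\Pi[\derivativecherry]$), since neither admits a well-defined reconstruction on its own with the available regularity — only the assumptions \eqref{eq:counterterm_dumb_cherry} and \eqref{eq:counterterm_time_independent} guarantee that the difference of counterterms is integrable in time near $t=0$, and the degeneracy of $a$ near $u=0$ means the compensation by $\sigma$ (via $N\geq M+1$ and \eqref{eq:restrictionM}) must be tracked at every step.
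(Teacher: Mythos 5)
There is a genuine gap, and it sits at the very first step. To produce the weight $|g'(u)|\,a(u)$ in front of $|\nu|^2$ you must test \eqref{eq:spm} with $g(u)$ itself (equivalently, take $\phi(t,x,v)\approx\psi_\epsilon(t)\,g(v)$ in the kinetic formulation), not with $g'(v)$ as you propose. With your choice $\phi\approx g'(v)$ the kinetic-measure term produces the weight $\partial_v\phi=g''$, and for the relevant range $1<p<2$ (in the application $p=1+\epsilon(M-1)$) one has $g''(v)=(p-2)|v|^{p-4}v$, which is \emph{negative} for $v>0$; your claim ``this has a good sign since $g''\geq0$'' is false, so the coercive term cannot be isolated on the left. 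Moreover, even ignoring the sign, your identity would control $\int g''(u)a(u)|\nu|^2$, which is not the quantity $\int|g'(u)|a(u)|\nu|^2$ in the statement, and your boundary term would be $\int g(u(T))-g(u_0)\sim|u|^{p-1}$ rather than the $G(u(T))-G(u_0)\sim|u|^p$ that matches $\|u_0\|_{L^p(\T^d)}^p$. The correct bookkeeping (this is how the paper proceeds, multiplying \eqref{eq:spm} by $g(u)$ and regularizing $g'$ near $0$ by $(|v|+\epsilon)^{p-2}$ plus monotone convergence) gives $\int_{\T^d}G(u(T))-G(u_0)+\int g'(u)a(u)|\nabla u|^2=\int g(u)\big(\sigma(u)\xi-\sigma'(u)\sigma(u)C^{a(u)}\big)$ with $g'\geq0$, after which one inserts $\pm\sigma^2\ccherry^{a(u)}$ on the left and $\pm(g\sigma)'(u)\sigma(u)\cdumb^{a(u)}$ on the right and expands $|\nabla u|^2-\sigma^2\ccherry^{a(u)}$ via $\nu$ and $\Pi_\z[\derivativecherry;\z]$.

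Two further points in your outline would also fail as written. First, the cross term: bounding $2\int g'(u)a(u)\sigma(u)\nabla\Pi_\z[\lolly;\z](\z)\cdot\nu(\z)$ by Cauchy--Schwarz and ``the square term as above'' is circular, because the resulting bound involves the \emph{unrenormalized} diagonal square $|\nabla\Pi_\z[\lolly;\z](\z)|^2$, which is exactly the object that is not uniformly controlled (only $\Pi[\derivativecherry]$, i.e.\ the square minus $\ccherry^{a}$, satisfies a uniform model bound). The cross term must instead be kept inside the base-point--dependent germ $F_\x(\z)$ and estimated by the reconstruction Lemma~\ref{lem:reconstruction4}, which is precisely where the $[\lolly]_\alpha\big([\Theta'(u)\nu]_{B^{\beta-1}_{1,\infty}}+[\Theta(u)]_{B^\alpha_{1,\infty}}^{1/2}\|\Theta'(u)\nu\|_{L^2}+\|\Theta'(u)\nu\|_{L^1}(\sqrt T)^{\alpha-1}\big)$ contributions to $C_{\eqref{eq:u2part3}}$ come from. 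Second, your anticipated need to reconstruct the forcing and the kinetic measure \emph{simultaneously} against a combined object ``$\Pi[\dumb]-a\Pi[\derivativecherry]$'' is unnecessary and the premise is incorrect: under Assumption~\ref{ass:noise} each renormalized product admits its own reconstruction (the forcing against the germ built from $\Pi_\x[\dumb;\x]$ and $\Pi_\x[\xnoise]$, the measure against the germ built from $\Pi_\x[\derivativecherry;\x]$ and $\nabla\Pi_\x[\lolly;\x]$), and the mismatch of counterterms appears only through the explicit lower-order terms $\cdumb^{a(u)}-a(u)\ccherry^{a(u)}$ and $\cdumb^{a(u)}-C^{a(u)}$, which are integrable in time by \eqref{eq:counterterm_dumb_cherry} and \eqref{eq:counterterm_time_independent} once compensated by $\sigma^2/u$ and $\sigma'\sigma$ respectively.
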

The proof of Proposition~\ref{prop:energy} is given at the end of this section. 
To continue with Step~3 in the proof of Proposition~\ref{prop:u2} we use Proposition~\ref{prop:energy} with $p=1+\epsilon(M-1)$, 
so that by Assumption~\ref{ass:nonlinearities} we have 
$|a'(u)|a(u)^{\epsilon-1}\lesssim |u|^{-1+\epsilon(M-1)}=|u|^{p-2}$.
It follows 
\begin{equation}
\int_{(0,T)\times\T^d}d\z \, |a'(u(\z))|a(u(\z))^\epsilon|\nu(\z)|^2
\lesssim \|u_0\|_{L^{1+\epsilon(M-1)}(\T^d)}^{1+\epsilon(M-1)} + C_{\eqref{eq:u2part3}} + 1 \, .
\end{equation}
Combining Steps~1 -- 3 finishes the proof of Proposition~\ref{prop:u2}.
\end{proof}

%%%%%%%%%%%%%%%%%%%%%%%%%%%%%%%%%%%%%%%

\begin{proof}[Proof of Lemma~\ref{lem:reconstruction2}]
We have by the definition of $F$ in \eqref{eq:F1}
\begin{align}
&F_{\x+\y}(\z;\yy) - F_\x(\z;\yy) \\
&= K_1\big(\anoise(\x+\y),\yy-\z\big) \alolly(\x+\y) \anoise(\x+\y) \ssnoise(\x+\y) \Pi_{\x+\y}[\derivativecherry;\x+\y](\z) \\
&\, - K_1\big(\anoise(\x),\yy-\z\big) \alolly(\x) \anoise(\x) \ssnoise(\x) \Pi_\x[\derivativecherry;\x](\z) \\
&= K_1\big(\anoise(\x+\y),\yy-\z\big) \alolly(\x+\y) \anoise(\x+\y) \ssnoise(\x+\y) \Pi_{\x}[\derivativecherry;\x+\y](\z) \\
&\, - K_1\big(\anoise(\x),\yy-\z\big) \alolly(\x) \anoise(\x) \ssnoise(\x) \Pi_{\x}[\derivativecherry;\x](\z) \, ,
\end{align}
where in the second equality we have used \eqref{eq:trafoderivativecherry} 
in form of $\Pi_\x[\derivativecherry;\x] = \Pi_{\x+\y}[\derivativecherry;\x]$. 
Denoting $G(v)\coloneqq K_1(a(v),\yy-\z)a'(v)a(v)\sigma^2(v)\Pi_{\x}[\derivativecherry;a(v)](\z)$ and observing that $G(u)=G(\bar u)$ for $\bar u\coloneqq\Theta(u)$ with $\Theta$ from \eqref{eq:Theta}, 
we obtain from the fundamental theorem of calculus 
\begin{align}
&F_{\x+\y}(\z;\yy) - F_\x(\z;\yy) = \int_0^1 d\vartheta \, G'\big(\vartheta \bar u(\x+\y)+(1-\vartheta) \bar u(\x)\big) \big(\bar u(\x+\y)-\bar u(\x)\big) \, .
\end{align}
Denoting $K_{11}(\bar a,\x)\coloneqq\partial_{\bar a}^2 K(\bar a,\x)$ note that 
\begin{align}
G'(v)
&= K_{11}(a(v),\yy-\z)(a'(v))^2a(v)\sigma^2(v)\Pi_{\x}[\derivativecherry;a(v)](\z) \\
&\,+ K_1(a(v),\yy-\z)\big(a'a\sigma^2)'(v)\Pi_{\x}[\derivativecherry;a(v)](\z) \\
&\,+ K_1(a(v),\yy-\z)a'(v)a(v)\sigma^2(v)\partial_{a}\Pi_{\x}[\derivativecherry;a(v)](\z)a'(v) \, ,
\end{align}
so that we can apply Lemma~\ref{lem:integration3} to obtain for $\lambda\leq2^{-n}$
\begin{align}
&\int_{D_\y\cap D_{\y'}} d\x \Big| \int_{(0,\infty)\times\R^d} d\z\, 
\big( F_{\x+\y}^{(n),l,k}(\z;\x+\y') - F_\x^{(n),l,k}(\z;\x+\y') \big) \varphi_\x^\lambda(\z) \Big| \\
&\lesssim 	2^{n(D-2+2l+|k|_1)} \lambda^{2\alpha-2} [\derivativecherry]_{2\alpha-2} 
[\Theta(u)]_{B_{1,\infty}^\alpha} \|\y\|_\s^\alpha \\
&\,\times \big(
\|a^{-2-\mathfrak{e}(\cherrysmall)} (a')^2 a \sigma^2 \tilde\varphi(2^q a)\|_\infty 
+ \|a^{-1-\mathfrak{e}(\cherrysmall)} (a' a \sigma^2)' \tilde\varphi(2^q a)\|_\infty \big) \, .
\end{align}
The claim follows by using that Assumption~\ref{ass:nonlinearities}~(ii) and \eqref{eq:restrictionM} imply 
%($2(M-2)+2N-3(M-1)\geq(1-\alpha/2)(M-1)$)
%
\begin{equation}
\max\Big\{
\Big|\frac{(a')^2\sigma^2}{a^{1+\mathfrak{e}(\cherrysmall)}}\Big| \, , \ 
\Big|\frac{(a'a\sigma^2)'}{a^{1+\mathfrak{e}(\cherrysmall)}}\Big| \Big\} \lesssim a^{1-\alpha/2} \, , 
\tag{IX}
\end{equation}
so that $\|a^{-1-\mathfrak{e}(\cherrysmall)} (a')^2 \sigma^2 \tilde\varphi(2^q a)\|_\infty\lesssim 2^{-q(1-\alpha/2)}$, 
and similarly for the remaining $\|\cdot\|_\infty$ term. 
\end{proof}

%%%%%%%%%%%%%%%%%%%%%%%%%%%%%%%%%%%%%%

\begin{proof}[Proof of Lemma~\ref{lem:reconstruction3}]
We have by the definition of $F$ in \eqref{eq:F2}
\begin{align}
&F_{\x+\y}(\z;\yy) - F_\x(\z;\yy) \\
&= K_1\big(\anoise(\x+\y),\yy-\z\big) \alolly(\x\!+\!\y) \anoise(\x\!+\!\y) \snoise(\x\!+\!\y) \nabla\Pi_{\x+\y}[\lolly;\x+\y](\z) \cdot \nu(\x\!+\!\y) \\
&\, - K_1\big(\anoise(\x),\yy-\z\big) \alolly(\x) \anoise(\x) \snoise(\x) \nabla\Pi_\x[\lolly;\x](\z) \cdot \nu(\x) \\
&= K_1\big(\anoise(\x+\y),\yy-\z\big) \alolly(\x+\y) \anoise(\x+\y) \snoise(\x+\y) \nabla\Pi_{\x}[\lolly;\x+\y](\z) \cdot \nu(\x\!+\!\y) \\
&\,- K_1\big(\anoise(\x+\y),\yy-\z\big) \alolly(\x+\y) \anoise(\x+\y) \snoise(\x+\y) \nabla\Pi_{\x}[\lolly;\x+\y](\z) \cdot \nu(\x) \\
&\,+ K_1\big(\anoise(\x+\y),\yy-\z\big) \alolly(\x+\y) \anoise(\x+\y) \snoise(\x+\y) \nabla\Pi_{\x}[\lolly;\x+\y](\z) \cdot \nu(\x) \\
&\, - K_1\big(\anoise(\x),\yy-\z\big) \alolly(\x) \anoise(\x) \snoise(\x) \nabla\Pi_{\x}[\lolly;\x](\z) \cdot \nu(\x) \, ,
\end{align}
where in the second equality we have used \eqref{eq:trafololly} 
in form of $\nabla\Pi_\x[\lolly;\x] = \nabla\Pi_{\x+\y}[\lolly;\x]$. 
Again we replace all instances of $u$ by $\bar u\coloneqq\Theta(u)$ with $\Theta$ from \eqref{eq:Theta}, 
and we replace $\nu$ by $\bar\nu\coloneqq\Theta'(u)\nu$ (for the first and last term this is justified by the presence of $\sigma$, while the two middle terms cancel anyway).
Denoting $G(v)\coloneqq K_1(a(v),\yy-\z) a'(v) a(v)\sigma(v) \nabla\Pi_\x[\lolly;a(v)](\z)$, 
we obtain from the fundamental theorem of calculus
\begin{align}
&F_{\x+\y}(\z;\yy) - F_\x(\z;\yy) \\
&=G\big(\bar u(\x+\y)\big) \cdot \big(\bar\nu(\x+\y)-\bar\nu(\x)\big) \\
&\,+ \int_0^1 d\vartheta \, G'\big(\vartheta \bar u(\x+\y)+(1-\vartheta)\bar u(\x)\big) \big(\bar u(\x+\y)-\bar u(\x)\big) \cdot \bar\nu(\x) \, .
\end{align}
Note that (using $K_{11}(\bar a,\x)=\partial_{\bar a}^2K(\bar a,\x)$ as in the proof of Lemma~\ref{lem:reconstruction2})
\begin{align}
G'(v)
&= K_{11}(a(v),\yy-\z) (a'(v))^2 a(v) \sigma(v) \nabla\Pi_\x[\lolly;a(v)](\z) \\
&\,+ K_1(a(v),\yy-\z) (a' a \sigma)'(v) \nabla\Pi_\x[\lolly;a(v)](\z) \\
&\,+ K_1(a(v),\yy-\z) a'(v) a(v)\sigma(v) \partial_a\nabla\Pi_\x[\lolly;a(v)](\z) a'(v) \, ,
\end{align}
so that we can apply Lemma~\ref{lem:integration3} to obtain for $\lambda\leq2^{-n}$
\begin{align}
&\int_{D_\y\cap D_{\y'}} d\x \Big| \int_{(0,\infty)\times\R^d} d\z\, 
\big( F_{\x+\y}^{(n),l,k}(\z;\x+\y') - F_\x^{(n),l,k}(\z;\x+\y') \big) \varphi_\x^\lambda(\z) \Big| \\
&\lesssim 	2^{n(D-2+2l+|k|_1)} \lambda^{\alpha-1} [\lolly]_{\alpha} [\Theta'(u)\nu]_{B_{1,\infty}^{\beta-1}} \|\y\|_\s^{\beta-1} \|a^{-1-\mathfrak{e}(\lollysmall)}a'a\sigma\tilde\varphi(2^q a)\|_\infty \\
&\,+ 	2^{n(D-2+2l+|k|_1)} \lambda^{\alpha-1} [\lolly]_{\alpha} 
[\Theta(u)]_{B_{1,\infty}^\alpha}^{1/2} \|\y\|_\s^{\alpha/2} \|\Theta'(u)\nu\|_{L^2} \\
&\,\times \big(
\|a^{-2-\mathfrak{e}(\lollysmall)} (a')^2 a \sigma \tilde\varphi(2^q a)\|_\infty
+ \|a^{-1-\mathfrak{e}(\lollysmall)} (a' a \sigma)' \tilde\varphi(2^q a)\|_\infty \big) \, .
\end{align}
As in the estimate of \eqref{eq:mt09} in the proof of Lemma~\ref{lem:reconstruction1} we have used Cauchy-Schwarz and $\|\bar u\|_{L^\infty}\lesssim1$.
The claim follows by using that Assumption~\ref{ass:nonlinearities}~(ii) and \eqref{eq:restrictionM} imply
%($M-2+N-(M-1)\geq(M-1)/2$, $2(M-2)+N-2(M-1)\geq(M-1)/2$, $M-2+M-1+N-1-2(M-1)\geq(M-1)/2$)
%
\begin{equation}
\max\Big\{
\Big|\frac{a'\sigma}{a^{\mathfrak{e}(\lollysmall)}}\Big| \, , \ 
\Big|\frac{(a')^2\sigma}{a^{1+\mathfrak{e}(\lollysmall)}}\Big| \, , \ 
\Big|\frac{(a'a\sigma)'}{a^{1+\mathfrak{e}(\lollysmall)}}\Big| \Big\} \lesssim a^{1/2} \, , 
\tag{X}
\end{equation}
so that $\|a^{-1-\mathfrak{e}(\lollysmall)}a'a\sigma\tilde\varphi(2^q a)\|_\infty\lesssim 2^{-q/2}$, 
and similarly for the remaining $\|\cdot\|_\infty$ terms. 
\end{proof}

%%%%%%%%%%%%%%%%%%%%%%%%%%%%%%%%%%%%%%

\begin{proof}[Proof of Proposition~\ref{prop:energy}]
Let 
\begin{equation}
g'(v)\coloneqq |v|^{p-2} \, ,\quad 
g(v)\coloneqq \int_0^v dw \, g'(w) \, ,\quad\textnormal{and}\quad
G(v)\coloneqq \int_0^v dw \, g(w) \, .
\end{equation}
Then $G\in C^1(\R)\cap C^2(\R\setminus\{0\})$ (with derivative $g$, the derivative of which is $g'$), 
$G\geq0$,
and by Assumption~\ref{ass:nonlinearities} $g'\sigma\in C^1(\R)$. 

By the chain rule and \eqref{eq:spm} 
\begin{equation}
\partial_t G(u)
= g(u)\partial_t u 
= g(u) \Big( \nabla\cdot \big(a(u)\nabla u\big) 
+ \sigma(u)\xi 
- \sigma'(u)\sigma(u) C^{a(u)} \Big) \, .
\end{equation}
Integration over $(0,T)\times\T^d$ and integrating by parts\footnote{which can be made rigorous by an approximation argument, e.g.~replace $g'(v)$ by $(|v|+\epsilon)^{p-2}$ and appeal to the monotone convergence theorem} yields
\begin{align}
&\int_{\T^d} dz\, \big(G(u(T,z))-G(u(0,z))\big)
+ \int_{(0,T)\times\T^d} d\z\, g'(u(\z)) a(u(\z)) |\nabla u(\z)|^2 \\
&= \int_{(0,T)\times\T^d} d\z\, g(u(\z)) \Big( \sigma(u(\z)) \xi(\z) 
- \sigma'(u(\z))\sigma(u(\z)) C^{a(u(\z))} \Big) \, .
\end{align}
Again, the left as well as the right hand side of this equality are problematic as 
$|\nabla u|^2$ is a singular product and similarly $g(u)$ multiplied with the renormalized product of $\sigma(u)$ and $\xi$ is singular. 
Luckily it turns out that both need the same counterterm 
(up to an integrable blowup), 
which we now subtract from the equality to arrive at 
\begin{align}
&\int_{\T^d} dz\, \big(G(u(T,z))-G(u(0,z))\big) \\
&\,+ \int_{(0,T)\times\T^d} d\z\, g'(u(\z)) a(u(\z)) \Big( |\nabla u(\z)|^2 - \sigma^2(u(\z)) \ccherry^{a(u(\z))}(r) \Big) \\
&= \int_{(0,T)\times\T^d} d\z\, \Big( g(u(\z)) \sigma(u(\z)) \xi(\z) 
- (g\sigma)'(u(\z))\sigma(u(\z)) \cdumb^{a(u(\z))}(r) \Big) \\
&\,+ \int_{(0,T)\times\T^d} d\z\, g'(u(\z)) \sigma^2(u(\z)) 
\Big( \cdumb^{a(u(\z))}(r) - a(u(\z)) \ccherry^{a(u(\z))}(r) \Big) \\
&\,+ \int_{(0,T)\times\T^d} d\z\, g(u(\z)) \sigma'(u(\z)) \sigma(u(\z)) 
\Big( \cdumb^{a(u(\z))}(r) - C^{a(u(\z))} \Big) \, .
\end{align}
Along the lines that led to \eqref{eq:u2part1} -- \eqref{eq:u2part3} 
we use the definitions \eqref{eq:gubinelliderivative} of $\nu$ and \eqref{eq:derivativecherry} of  $\Pi[\derivativecherry]$ to rewrite 
\begin{align}
&|\nabla u(\z)|^2 - \sigma^2(u(\z)) \ccherry^{a(u(\z))}(r) \\
&=\sigma^2(u(\z)) \Pi_\z[\derivativecherry,\z](\z) 
+ 2 \sigma(u(\z)) \nabla \Pi_\z[\lolly;\z](\z) \cdot \nu(\z) 
+ |\nu(\z)|^2
\end{align}
We thus obtain 
\begin{align}
&\int_{\T^d} dz\, G(u(T,z))
+ \int_{(0,T)\times\T^d} d\z \, g'(u(\z)) a(u(\z)) |\nu(\z)|^2 \\
&= \int_{\T^d} dz\, G(u(0,z)) \\
&\,+ \int_{(0,T)\times\T^d} d\z\, \Big( g(u(\z)) \sigma(u(\z)) \xi(\z) 
- (g\sigma)'(u(\z))\sigma(u(\z)) \cdumb^{a(u(\z))}(r) \Big) 
 \label{eq:energy1} \\
&\, - \int_{(0,T)\times\T^d} d\z\, (g'a\sigma)(u(\z)) 
\Big(\sigma(u(\z)) \Pi_\z[\derivativecherry;\z](\z)
+ 2 \nabla \Pi_\z[\lolly;\z](\z) \cdot \nu(\z) \Big) \label{eq:energy2} \qquad \\
&\,+ \int_{(0,T)\times\T^d} d\z\, g'(u(\z)) \sigma^2(u(\z)) 
\Big( \cdumb^{a(u(\z))}(r) - a(u(\z)) \ccherry^{a(u(\z))}(r) \Big) \\
&\,+ \int_{(0,T)\times\T^d} d\z\, g(u(\z)) \sigma'(u(\z)) \sigma(u(\z)) 
\Big( \cdumb^{a(u(\z))}(r) - C^{a(u(\z))} \Big) \, .
\end{align}
Since $G\geq0$ and $g'(u)a(u)|\nu|^2\geq0$, 
the claim thus follows from estimating the right hand side. 

Note that $G(v)=|v|^p/(p(p-1))$, hence the first right hand side term is estimated by 
$\|u_0\|_{L^p(\T^d)}^p$. 
For the last two terms of the right hand side, 
note that $|g'(u)\sigma^2(u)|\lesssim|\sigma^2(u)/u|$ 
and $|g(u)\sigma'(u)\sigma(u)|\lesssim|\sigma'(u)\sigma(u)|$ 
by $p>1$ and the compact support of $\sigma$ from Assumption~\ref{ass:nonlinearities}, 
hence both terms are estimated by a constant as a consequence of Assumption~\ref{ass:noise}. 

To estimate \eqref{eq:energy1} and \eqref{eq:energy2}
we aim to apply Lemma~\ref{lem:reconstruction4}. 
For \eqref{eq:energy2} define 
\begin{equation}
F_\x(\z)\coloneqq 
(g'a\sigma^2)(u(\x)) \Pi_\x[\derivativecherry;\x](\z)
+ 2(g'a\sigma)(u(\x)) \nabla\Pi_\x[\lolly;\x](\z) \cdot \nu(\x) \, ,
\end{equation}
such that 
\begin{equation}
\eqref{eq:energy2} = \int_{(0,T)\times\T^d} d\z \, F_\z(\z) \, .
\end{equation}
Analogous to the proof of Lemma~\ref{lem:reconstruction2} and Lemma~\ref{lem:reconstruction3} (setting $K_1=1$ and $\alolly=g'(u)$) 
we have for $\y\in\mathfrak{K}\subseteq\R\times\R^d$ for compact $\mathfrak{K}$, $0<\lambda\leq\sqrt{T}$, and $\varphi\in\mathcal{B}$ 
\begin{align}
&\int_{D_\y} d\x \Big| \int_{(0,\infty)\times\R^d} d\z\, 
\big( F_{\x+\y}(\z) - F_\x(\z) \big) \varphi_\x^\lambda(\z) \Big| \\
&\lesssim 
\lambda^{2\alpha-2} \|\y\|_\s^\alpha 
[\derivativecherry]_{2\alpha-2} [\Theta(u)]_{B_{1,\infty}^\alpha} 
\big( \|a^{-\mathfrak{e}(\cherrysmall)}(g'a\sigma^2)'\|_\infty + \|a^{-1-\mathfrak{e}(\cherrysmall)}g'a'a\sigma^2\|_\infty\big) \\
&\,+ \lambda^{\alpha-1} \|\y\|_\s^{\beta-1} 
[\lolly]_\alpha [\Theta'(u)\nu]_{B_{1,\infty}^{\beta-1}} \|a^{-\mathfrak{e}(\lollysmall)}g'a\sigma\|_\infty \\
&\,+ \lambda^{\alpha-1} \|\y\|_\s^{\alpha/2}
[\lolly]_\alpha [\Theta(u)]_{B_{1,\infty}^\alpha}^{1/2} \|\Theta'(u)\nu\|_{L^2} 
\big( \|a^{-\mathfrak{e}(\lollysmall)} (g'\!a\sigma)'\|_\infty + \|a^{-1-\mathfrak{e}(\lollysmall)}g'\!a'a\sigma\|_\infty\big) \, .
\end{align}
Using that Assumption~\ref{ass:nonlinearities}~(ii) and \eqref{eq:restrictionM} imply 
%($-1+M-1+2N-1-2(M-1)\geq0$, $-1+N\geq0$, $-1+M-1+N-1-(M-1)\geq0$)
%
\begin{equation}
\max\Big\{
\Big|\frac{(g'a\sigma^2)'}{a^{\mathfrak{e}(\cherrysmall)}}\Big| \, , \ 
\Big|\frac{g'a'\sigma^2}{a^{\mathfrak{e}(\cherrysmall)}}\Big| \, , \ 
\big|\frac{g'\sigma}{a^{\mathfrak{e}(\lollysmall)-1}}\big| \, , \ 
\Big|\frac{(g'a\sigma)'}{a^{\mathfrak{e}(\lollysmall)}}\Big| \, , \ 
\Big|\frac{g'a'\sigma}{a^{\mathfrak{e}(\lollysmall)}}\Big| \Big\} \lesssim 1 \, , 
\tag{XI}
\end{equation}
this right hand side is estimated by 
\begin{align}
\lambda^{2\alpha-2} (\lambda+\|\y\|_\s)^{\bar\alpha}
\big( [\derivativecherry]_{2\alpha-2} [\Theta(u)]_{B_{1,\infty}^\alpha}
&+ [\lolly]_\alpha [\Theta'(u)\nu]_{B_{1,\infty}^{\beta-1}} \\
&+ [\lolly]_\alpha [\Theta(u)]_{B_{1,\infty}^\alpha} \|\Theta'(u)\nu\|_{L^2} \big) \, ,
\end{align}
where $\bar\alpha=\min\{\alpha,\beta-\alpha,1-\alpha/2\}$.
Furthermore, using \eqref{eq:Theta} and the modelbound \eqref{eq:modelbound} combined with 
$\Pi_\x[\derivativecherry]=\Pi_\y[\derivativecherry]$ and $\nabla\Pi_\x[\lolly]=\nabla\Pi_\y[\lolly]$ (see \eqref{eq:trafoderivativecherry} and \eqref{eq:trafololly}), 
yields for $\lambda\leq\sqrt{T}$ and $0\leq\bar t\leq T$
\begin{align}
&\int_{(0,T)\times\T^d} d\x \Big| \int_{(0,\infty)\times\R^d} d\z \, 
F_\x(\z) \psi_{\x-(\bar{t},0)}^{\lambda}(\z) \Big| \\
&\lesssim \lambda^{2\alpha-2} 
[\derivativecherry]_{2\alpha-2} T \|a^{-\mathfrak{e}(\cherrysmall)}g'a\sigma^2\|_\infty 
+ \lambda^{\alpha-1} 
[\lolly]_\alpha \|\Theta'(u)\nu\|_{L^1} \|a^{-\mathfrak{e}(\lollysmall)}g'a\sigma\|_\infty \\
&\lesssim 
\lambda^{2\alpha-2} [\derivativecherry]_{2\alpha-2} T 
+ \lambda^{\alpha-1} [\lolly]_\alpha \|\Theta'(u)\nu\|_{L^1} \\
&\lesssim
\lambda^{2\alpha-2} \big([\derivativecherry]_{2\alpha-2} T 
+ (\sqrt{T})^{1-\alpha} [\lolly]_\alpha \|\Theta'(u)\nu\|_{L^1} \big) \, ,
\end{align}
where in the second inequality we used that Assumption~\ref{ass:nonlinearities}~(ii) and \eqref{eq:restrictionM} imply 
%($-1+2N-(M-1)\geq0$, $-1+N\geq0$)
%
\begin{equation}
\max\Big\{
\Big|\frac{g'\sigma^2}{a^{\mathfrak{e}(\cherrysmall)-1}}\Big| \, , \
\Big|\frac{g'\sigma}{a^{\mathfrak{e}(\lollysmall)-1}}\big| \Big\} \lesssim 1 \, .
\tag{XII}
\end{equation}
Thus Lemma~\ref{lem:reconstruction4} can be applied with 
$\theta_1=2\alpha-2$, $\theta_2=\min\{\alpha,\beta-\alpha,1-\alpha/2\}$, 
$\theta_3=2\alpha-2$, and $\theta_4=0$, to obtain 
\begin{align}
|\eqref{eq:energy2}|
&\lesssim [\derivativecherry]_{2\alpha-2} [\Theta(u)]_{B_{1,\infty}^\alpha}
+ [\lolly]_\alpha [\Theta'(u)\nu]_{B_{1,\infty}^{\beta-1}}
+ [\lolly]_\alpha [\Theta(u)]_{B_{1,\infty}^\alpha} \|\Theta'(u)\nu\|_{L^2} \\
&\,+ [\derivativecherry]_{2\alpha-2} T (\sqrt{T})^{2\alpha-2}
+ [\lolly]_{\alpha} \|\Theta'(u)\nu\|_{L^1} (\sqrt{T})^{\alpha-1} \, .
\end{align}
Here we have directly used that $\theta_1+\theta_2>0$ and $T\leq1$.

We turn to \eqref{eq:energy1} where we define (recall that we write $\fnoise=f(u)$) 
\begin{equation}
F_\x(\z) 
\coloneqq \gnoise(\x)\snoise(\x) \xi(\z)
+ (g\sigma)'(u(\x)) \big( \snoise(\x) \Pi_\x[\dumb;\x](\z) + \nu(\x) \cdot \Pi_\x[\xnoise](\z) \big)
\end{equation}
such that 
\begin{equation}
\eqref{eq:energy1} 
= \int_{(0,T)\times\T^d} d\z \, F_\z(\z) \, .
\end{equation}
By definition 
\begin{align}
&F_{\x+\y} - F_\x \\
&= \gnoise(\x+\y)\snoise(\x+\y) \xi
+ (g\sigma)'(u(\x+\y)) \snoise(\x+\y) \Pi_{\x+\y}[\dumb;\x+\y] \\
&\hphantom{= \gnoise(\x+\y)\snoise(\x+\y) \xi \ }
+ (g\sigma)'(u(\x+\y)) \nu(\x+\y) \cdot \Pi_{\x+\y}[\xnoise] \\
&\,- \gnoise(\x)\snoise(\x) \xi
- (g\sigma)'(u(\x)) \snoise(\x) \Pi_\x[\dumb;\x]
- (g\sigma)'(u(\x)) \nu(\x) \cdot \Pi_\x[\xnoise] \, .
\end{align}
Using \eqref{eq:trafodumb} in form of 
$\Pi_{\x}[\dumb;\x] = \Pi_{\x+\y}[\dumb;\x] + \Pi_{\x}[\lolly;\x](\x+\y) \xi$
and \eqref{eq:trafoxnoise} in form of 
$\Pi_{\x}[\xnoise] = \Pi_{\x+\y}[\xnoise] + y \xi$,
we obtain 
\begin{align}
&F_{\x+\y} - F_\x \\
&= \Big( \gnoise(\x\!+\!\y)\snoise(\x\!+\!\y) 
\!-\! \gnoise(\x)\snoise(\x) 
\!-\! (g\sigma)'(u(\x)) \big(\snoise(\x) \Pi_{\x}[\lolly;\x](\x\!+\!\y) \!+\! \nu(\x) \!\cdot\! y \big) \Big) \xi \\
&\,+ (g\sigma)'(u(\x+\y)) \snoise(\x+\y) \Pi_{\x+\y}[\dumb;\x+\y] 
- (g\sigma)'(u(\x)) \snoise(\x) \Pi_{\x+\y}[\dumb;\x] \\
&\,+ \Big( (g\sigma)'(u(\x+\y)) - (g\sigma)'(u(\x)) \Big) \nu(\x+\y) 
\cdot \Pi_{\x+\y}[\xnoise] \\
&\,+ (g\sigma)'(u(\x)) \Big(\nu(\x+\y)-\nu(\x) \Big) \cdot \Pi_{\x+\y}[\xnoise] \, .
\end{align}
In the second, third, and fourth right hand side term we can replace every instance of $u$ by $\Theta(u)$ and every instance of $\nu$ by $\Theta'(u)\nu$.
Thus for $\y\in\mathfrak{K}\subseteq\R\times\R^d$ for compact $\mathfrak{K}$, $0<\lambda\leq\sqrt{T}$, and $\varphi\in\mathcal{B}$
\begin{align}
&\int_{D_\y} d\x \Big| \int_{(0,\infty)\times\R^d} d\z \, 
\big( F_{\x+\y}(\z) - F_\x(\z) \big) \varphi_\x^\lambda(\z) \Big| \\
&\lesssim [(g\sigma)(\mathcal{U})]_{B^{\beta}_{1,\infty}} \|\y\|_\s^{\beta} [\xi]_{\alpha-2} \lambda^{\alpha-2} \\
&\,+ [\Theta(u)]_{B^\alpha_{1,\infty}} \|\y\|_\s^\alpha [\dumb]_{2\alpha-2} \lambda^{\alpha-2}(\lambda+\|\y\|_\s)^\alpha \\
&\quad\times \big( \|a^{-\mathfrak{e}(\dumbsmall)}(g\sigma)''\sigma\|_\infty
+ \|a^{-\mathfrak{e}(\dumbsmall)} (g\sigma)' \sigma'\|_\infty 
+ \|a^{-1-\mathfrak{e}(\dumbsmall)} (g\sigma)' \sigma a'\|_\infty \big) \\
&\,+ \|(g\sigma)''\|_\infty [\Theta(u)]_{B_{1,\infty}^\alpha}^{1/2} \|\y\|_\s^{\alpha/2} \|\Theta'(u)\nu\|_{L^2} 
[\xnoise]_{\alpha-1} \lambda^{\alpha-2}(\lambda+\|\y\|_\s) \\
&\,+ \|(g\sigma)'\|_\infty 
[\Theta'(u)\nu]_{B_{1,\infty}^{\beta-1}} \|\y\|_\s^{\beta-1}
[\xnoise]_{\alpha-1} \lambda^{\alpha-2}(\lambda+\|\y\|_\s) \, ,
\end{align}
where we used the modelbound \eqref{eq:offdiagonal}, 
for the second and third term also the fundamental theorem of calculus, 
and for the third term additionally Cauchy-Schwarz and $\|\Theta(u)\|_{L^\infty}\lesssim1$ 
as in the estimate of \eqref{eq:mt09} in the proof of Lemma~\ref{lem:reconstruction1}.
Using that Assumption~\ref{ass:nonlinearities}~(ii) and \eqref{eq:restrictionM} imply 
%($N-2+N-(M-1)\geq0$, $N-2\geq0$, $N-1\geq0$)
%
\begin{equation}
\max\Big\{
\Big|\frac{(g\sigma)''\sigma}{a^{\mathfrak{e}(\dumbsmall)}}\Big| \, , \ 
\Big|\frac{(g\sigma)'\sigma'}{a^{\mathfrak{e}(\dumbsmall)}}\Big| \, , \
\Big|\frac{(g\sigma)'a'\sigma}{a^{1+\mathfrak{e}(\dumbsmall)}}\Big| \, , \
\big|(g\sigma)''\big| \, , \
\big|(g\sigma)'\big| \Big\} \lesssim 1 \, , 
\tag{XIII}
\label{eq:XIII}
\end{equation}
this right hand side is estimated by
\begin{align}
&\lambda^{\alpha-2} (\lambda+\|\y\|_\s)^{\beta} 
\big(  [(g\sigma)(\mathcal{U})]_{B_{1,\infty}^{\beta}} [\xi]_{\alpha-2} 
+ [\Theta'(u)\nu]_{B_{1,\infty}^{\beta-1}} [\xnoise]_{\alpha-1} \big) \\
&+ \lambda^{\alpha-2} (\lambda+\|\y\|_\s)^{2\alpha} 
[\Theta(u)]_{B_{1,\infty}^\alpha} [\dumb]_{2\alpha-2} \\
&+ \lambda^{\alpha-2} (\lambda+\|\y\|_\s)^{1+\alpha/2} 
[\Theta(u)]_{B_{1,\infty}^\alpha}^{1/2} \|\Theta'(u)\nu\|_{L^2} [\xnoise]_{\alpha-1} \, .
\end{align}
Furthermore, using \eqref{eq:Theta} and the modelbound \eqref{eq:offdiagonal} yields for $0<\lambda\leq\sqrt{T}$ and $0\leq\bar t\leq T$
\begin{align}
&\int_{(0,T)\times\T^d} d\x \Big| \int_{(0,\infty)\times\R^d} d\z \, 
F_\x(\z) \psi_{\x-(\bar{t},0)}^{\lambda}(\z) \Big| \\
&\lesssim \lambda^{\alpha-2} [\xi]_{\alpha-2} T \|g\sigma\|_\infty \\
&\,+ \lambda^{\alpha-2} (\lambda + \sqrt{\bar t\,}\,)^{\alpha} 
[\dumb]_{2\alpha-2} T \|a^{-\mathfrak{e}(\dumbsmall)} (g\sigma)' \sigma\|_\infty \\
&\,+ \lambda^{\alpha-2} (\lambda + \sqrt{\bar t\,}\,)
[\xnoise]_{\alpha-1} \|\Theta'(u)\nu\|_{L^1} \|(g\sigma)'\|_\infty \\
&\lesssim \lambda^{\alpha-2} [\xi]_{\alpha-2} T 
+ \lambda^{\alpha-2} (\lambda + \sqrt{\bar t\,}\,)^{\alpha} [\dumb]_{2\alpha-2} T 
+ \lambda^{\alpha-2} (\lambda + \sqrt{\bar t\,}\,)
[\xnoise]_{\alpha-1} \|\Theta'(u)\nu\|_{L^1} \\
&\lesssim
 \lambda^{\alpha-2} \big( [\xi]_{\alpha-2} T 
+(\sqrt{T})^{\alpha} [\dumb]_{2\alpha-2} T 
+\sqrt{T} \, [\xnoise]_{\alpha-1} \|\Theta'(u)\nu\|_{L^1} \big) \, ,
\end{align}
where in the second inequality we have used that Assumption~\ref{ass:nonlinearities}~(ii) and \eqref{eq:restrictionM} imply 
%($N\geq0$, $N-1+N-(M-1)\geq0$, $N-1\geq0$)
%
\begin{equation}
\max\Big\{\big|g\sigma\big| \, , \
\Big|\frac{(g\sigma)'\sigma}{a^{\mathfrak{e}(\dumbsmall)}}\Big| \, \
\big|(g\sigma)'\big| \Big\} \lesssim 1\, . 
\tag{XIV}
\end{equation}
Thus Lemma~\ref{lem:reconstruction4} can be applied with $\theta_1=\alpha-2$, 
$\theta_2=\min\{\beta,2\alpha,1+\alpha/2\}$, 
$\theta_3 = \alpha-2$, and
$\theta_4=0$, to obtain 
\begin{align}
|\eqref{eq:energy1}|
&\lesssim [(g\sigma)(\mathcal{U})]_{B_{1,\infty}^{\beta}} [\xi]_{\alpha-2} 
+ [\Theta'(u)\nu]_{B_{1,\infty}^{\beta-1}} [\xnoise]_{\alpha-1} \\
&\,+ [\Theta(u)]_{B_{1,\infty}^\alpha} [\dumb]_{2\alpha-2} 
+ [\Theta(u)]_{B_{1,\infty}^\alpha}^{1/2} \|\Theta'(u)\nu\|_{L^2} [\xnoise]_{\alpha-1} \\
&\,+ [\xi]_{\alpha-2} T (\sqrt{T})^{\alpha-2}
+ [\dumb]_{2\alpha-2} T (\sqrt{T})^{2\alpha-2}
+ [\xnoise]_{\alpha-1} \|\Theta'(u)\nu\|_{L^1} (\sqrt{T})^{\alpha-1} \, .
\end{align}
Here we have directly used that $\theta_1+\theta_2>0$ and $T\leq1$.
\end{proof}

%%%%%%%%%%%%%%%%%%%%%%%%%%%%%%%%%%%%%%%%%%%%%%
%%%%%%%%%%%%%%%%%%%%%%%%%%%%%%%%%%%%%%%%%%%%%%
\section{Real interpolation and proof of the main result}\label{sec:interpolation}

The main result of this section is to prove Theorem~\ref{thm:modelledness}. 
This is achieved by real interpolation based on Propositions~\ref{prop:u<} and \ref{prop:u>}, more precisely by what is called the $K$-method, 
see \cite{BL76} (or \cite{Lun18} for a more pedagogical introduction). 
Before giving the actual proof, we heuristically outline the strategy in the following.

If we had not estimated $[\mathcal{U^>}]_{B_{1,\infty}^{2\alpha}}$  
but the actual Besov norms
$\|u^<\|_{B_{1,\infty}^0}\lesssim \delta^{1/(M-1)}$ and 
$\|u^>\|_{B_{1,\infty}^{2\alpha}}\lesssim \delta^{-\alpha}$, 
we could define 
\begin{equation}
K(\lambda, u, B_{1,\infty}^0,B_{1,\infty}^{2\alpha})
= \inf_{u_1\in B_{1,\infty}^0,\,u_2\in B_{1,\infty}^{2\alpha},\,u_1+u_2=u} \big( \|u_1\|_{B_{1,\infty}^0} + \lambda \|u_2\|_{B_{1,\infty}^{2\alpha}} \big) \, , 
\end{equation}
which would be estimated by $\delta^{1/(M-1)}+\lambda\delta^{-\alpha}$ for all $\delta\in(0,1)$ since $u=u^<+u^>$. 
Choosing $\delta=\lambda^{(M-1)/(1+M\alpha-\alpha)}$, 
this would be further estimated by $\lambda^{1/(1+M\alpha-\alpha)}$, and thus
\begin{equation}
\sup_{\lambda>0} \lambda^{-\frac{1}{1+M\alpha-\alpha}} K(\lambda,u,B_{1,\infty}^0,B_{1,\infty}^{2\alpha}) < \infty \, .
\end{equation}
This in turn is the norm of the interpolation space $(B_{1,\infty}^0,B_{1,\infty}^{2\alpha})_{\theta,\infty}$ for $\theta=1/(1+M\alpha-\alpha)$, 
which by \cite[Theorem~6.4.5 (1)]{BL76} coincides with $B_{1,\infty}^{\theta2\alpha}$ (with equivalent norms). 

We need an extension of such real interpolation spaces to basepoint dependent seminorms, 
which is conveniently achieved via the following equivalence. 

\begin{lemma}\label{lem:equivalence}
Let $\mathcal{B}_1$ denote the set of smooth functions $\zeta$ on $\R^d$ supported in the unit ball with all its derivatives up to order $2$ bounded by $1$, 
and such that $\int_{\R^d} dy\, \zeta(y)y^k=0$ for $|k|_1\leq1$.
Furthermore let $u_\x(\y)$ be real valued and continuous in $\x,\y\in [0,T]\times\R^d$ with $\nabla_y u_\x(\y)$ continuous, and jointly $1$-periodic in its spatial arguments, i.e.~$u_\x(\y)=u_{\x+(0,k)}(\y+(0,k))$ for all $k\in\Z^d$. 
Then\footnote{here we write $\langle\cdot,\cdot\rangle$ to denote the $L^2$ inner product on $R^d$} 
for $\gamma\in(1,2\alpha)$
(recall that $\x=(t,x)$ and $\y=(s,y)$)
\begin{align}
&\sup_{0<\|\y\|_\s\leq R} \|\y\|_\s^{-\gamma} \int_{D_\y} d\x \, 
\big| u_\x(\x+\y) - u_\x(\x+(0,y)) \big| \\
&+\sup_{0<\lambda\leq R} \lambda^{-\gamma} 
\sup_{\zeta\in\mathcal{B}_1} 
\int_{[0,T]\times\T^d} d\x \, \big| \langle u_\x(t,\cdot),\zeta_x^\lambda\rangle \big| \\
&\lesssim \sup_{0<\|\y\|_\s\leq R} \|\y\|_\s^{-\gamma} \int_{D_\y} d\x \, 
|u_\x(\x+\y)-u_\x(\x)-\nabla u_\x(\x)\cdot y| \\
&\lesssim \sup_{0<\|\y\|_\s\leq R} \|\y\|_\s^{-\gamma} \int_{D_\y} d\x \, 
\big| u_\x(\x+\y) - u_\x(\x+(0,y)) \big| \\
&+\sup_{0<\lambda\leq R} \lambda^{-\gamma} 
\sup_{\zeta\in\mathcal{B}_1} 
\int_{[0,T]\times\T^d} d\x \, |\langle u_\x(t,\cdot), \zeta_x^\lambda \rangle| \\
&\,+ R^{2\alpha-\gamma} \!\!
\sup_{0<\lambda\leq R}\sup_{0<| y|\leq R} \lambda^{-\alpha}| y|^{-\alpha} 
\sup_{\zeta\in\mathcal{B}_1}
\int_{[0,T]\times\T^d}\!\! d\x 
\big| \big\langle u_\x(t,\cdot)-u_{\x+(0, y)}(t,\cdot), \zeta_{x+ y}^{\lambda} \big\rangle\big| \, .
\end{align}
Furthermore let $u_\x\coloneqq u - \sigma(u(\x)) \Pi_\x[\lolly;\x]$ under the assumption of Theorem~\ref{thm:main}. 
Then
\begin{align}
&\sup_{0<\lambda\leq R}\sup_{0<| y|\leq R} \lambda^{-\alpha}| y|^{-\alpha} 
\sup_{\zeta\in\mathcal{B}_1}
\int_{[0,T]\times\T^d} d\x 
\big| \big\langle u_\x(t,\cdot)-u_{\x+(0, y)}(t,\cdot), \zeta_{x+ y}^{\lambda} \big\rangle\big| \\
&\lesssimdata 1 \\
&\,+ (R^{\gamma-\alpha} + R^{1-\alpha}\sqrt{T}) 
\sup_{0<\|\y\|_\s\leq R} \|\y\|_\s^{-\gamma} \int_{D_\y} d\x \, 
\big|u_\x(\x+\y)-u_\x(\x)-\nabla u_\x(\x)\cdot y\big| \, .
\end{align}
The dependence of the implicit constant in $\lesssimdata$ on the data is as in Theorem~\ref{thm:main}. 
\end{lemma}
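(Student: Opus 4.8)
The plan is to establish the stated chain of inequalities first, and then the refined bound for $u_\x=u-\sigma(u(\x))\Pi_\x[\lolly;\x]$. Denote by $\Gamma_\gamma$ the model-type (middle) quantity, by $\Lambda$ the spatial wavelet seminorm, by $\Lambda_{\leftrightarrow}$ the space-increment seminorm, and by $\Lambda'$ the base-point-difference seminorm; I may assume these are finite and, after a routine spatial mollification not affecting constants, that $y\mapsto u_\x(t,y)$ is smooth. For the lower bound I would split $u_\x(\x+\y)-u_\x(\x)-\nabla u_\x(\x)\cdot y=\bigl(u_\x(\x+\y)-u_\x(\x+(0,y))\bigr)+\bigl(u_\x(\x+(0,y))-u_\x(\x)-\nabla u_\x(\x)\cdot y\bigr)$; the first summand occurs verbatim inside $\Gamma_\gamma$, so $\Lambda_{\leftrightarrow}\le\Gamma_\gamma$, the second having spatial increment $(0,y)$ of $\|\cdot\|_\s$-size $|y|\le\|\y\|_\s$. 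For $\Lambda\le\Gamma_\gamma$ I would use that $\zeta\in\mathcal B_1$ annihilates affine functions, so $\langle u_\x(t,\cdot),\zeta_x^\lambda\rangle=\int\zeta_x^\lambda(y)\bigl(u_\x(t,y)-u_\x(\x)-\nabla u_\x(\x)\cdot(y-x)\bigr)dy$, and Fubini (substituting $y=x+\lambda w$) together with $D_{(0,\lambda w)}=[0,T]\times\T^d$ and periodicity gives $\int d\x\,|\langle u_\x(t,\cdot),\zeta_x^\lambda\rangle|\lesssim\lambda^\gamma\Gamma_\gamma$.

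For the upper bound the real work is to bound the purely spatial second-order increment $u_\x(\x+(0,y))-u_\x(\x)-\nabla u_\x(\x)\cdot y$ in $L^1_\x$ by $(\Lambda+R^{2\alpha-\gamma}\Lambda')|y|^\gamma$; the remaining summand is handled by $\Lambda_{\leftrightarrow}$ as above. I would fix $\x$, set $v:=u_\x(t,\cdot)$, take an even $\rho\in C_c^\infty$ supported in the unit ball with $\int\rho=1$, and use the Calderón-type identity $v=\rho_R*v+\int_0^R\mu^{-1}(\psi_\mu*v)\,d\mu$ with $\psi:=d\rho+y\cdot\nabla\rho$; then $\psi,\nabla\psi,\nabla^2\psi,\nabla^2\rho$ all have vanishing $0$-th and $1$-st moments, hence lie, up to a fixed constant, in $\mathcal B_1$. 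Applying the spatial second-order increment, I would cut the scale integral at $\mu=|y|$: for $\mu\ge|y|$ and for the coarse piece $\rho_R*v$ use second-order Taylor (weights $|y|^2\mu^{-2}$, resp.\ $|y|^2R^{-2}$, against $\nabla^2\psi$, $\nabla^2\rho$), and for $\mu<|y|$ use the triangle inequality together with first-order Taylor (weights $\mu^{-1}|y|$ and $1$, against $\nabla\psi$, $\psi$). Every wavelet coefficient arising is centered either at $x$ — which after a measure-preserving spatial shift contributes $\mu^\gamma\Lambda$ — or at $x+\theta y$, $\theta\in[0,1]$, in which case I split $\langle u_{(t,x)},\zeta_{x+\theta y}^\mu\rangle=\langle u_{(t,x+\theta y)},\zeta_{x+\theta y}^\mu\rangle+\langle u_{(t,x)}-u_{(t,x+\theta y)},\zeta_{x+\theta y}^\mu\rangle$ and bound the two pieces by $\mu^\gamma\Lambda$ and $\mu^\alpha|y|^\alpha\Lambda'$. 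The $\mu$-integrals converge because $1<\gamma<2$ and $0<\alpha<1$, and produce $|y|^\gamma\Lambda+|y|^{2\alpha}\Lambda'$; since $\gamma<2\alpha$ and $|y|\le R$ one has $|y|^{2\alpha}\le R^{2\alpha-\gamma}|y|^\gamma$, and dividing by $\|\y\|_\s^\gamma\ge|y|^\gamma$ closes the equivalence.

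For the last assertion, now $u_\x(\x)=u(\x)$ and $\nabla u_\x(\x)=\nu(\x)$. I would use \eqref{eq:trafololly} to recenter $\Pi_\x[\lolly;a(u(\x))]$ at $\x':=(t,x+\bar y)$; the recentering term is constant in the evaluation variable, hence annihilated by $\zeta\in\mathcal B_1$, so $\langle u_\x(t,\cdot)-u_{\x+(0,\bar y)}(t,\cdot),\zeta_{x+\bar y}^\lambda\rangle$ reduces to $\langle\sigma(u(\x'))\Pi_{\x'}[\lolly;a(u(\x'))](t,\cdot)-\sigma(u(\x))\Pi_{\x'}[\lolly;a(u(\x))](t,\cdot),\zeta_{x+\bar y}^\lambda\rangle$. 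Both models are centered at $\x'$, the wavelet's center, and vanish there, so by \eqref{eq:lollybound} $|\langle\partial_{\bar a}^m\Pi_{\x'}[\lolly;\bar a](t,\cdot),\zeta_{x+\bar y}^\lambda\rangle|\lesssim[\lolly]_\alpha\lambda^\alpha\bar a^{-\mathfrak{e}(\lollysmall)-m}$. Integrating the fundamental theorem of calculus in the solution variable over $[u(\x),u(\x')]$ and using $|\sigma'(v)|a(v)^{-1-\epsilon}\lesssim1$ and $|\sigma(v)a'(v)|a(v)^{-2-\epsilon}\lesssim1$ on $\mathrm{supp}\,\sigma$ (consequences of Assumption~\ref{ass:nonlinearities}~(ii), which forces $M<2$ through $N\ge M+1$), this is $\lesssim[\lolly]_\alpha\lambda^\alpha|\Theta(u(\x'))-\Theta(u(\x))|$ with $\Theta$ from \eqref{eq:Theta}. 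Then $\int d\x\,|\Theta(u(\x+(0,\bar y)))-\Theta(u(\x))|\le[\Theta(u)]_{B_{1,\infty}^\alpha}|\bar y|^\alpha$, and after dividing by $\lambda^\alpha|\bar y|^\alpha$ an application of Lemma~\ref{lem:absorb}~\ref{it:u_alpha} — with $\Gamma_\gamma$ in the role of $[\mathcal{U}]_{B_{1,\infty}^\beta}$, which it equals when $\gamma=\beta$ and otherwise follows by the same computation — yields the claimed bound $\lesssimdata 1+(R^{\gamma-\alpha}+R^{1-\alpha}\sqrt T)\Gamma_\gamma$.

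The main obstacle is this last step. The cancellation there is essential: each of the two terms $\sigma(u(\x))\Pi_\x[\lolly;\x]$ and $\sigma(u(\x+(0,\bar y)))\Pi_{\x+(0,\bar y)}[\lolly;\x+(0,\bar y)]$, tested against $\zeta_{x+\bar y}^\lambda$, is only $O(\lambda^\alpha)$ and does not decay as $\bar y\to0$, so the factor $|\bar y|^\alpha$ demanded by the seminorm must be extracted from their difference — which is why the fundamental-theorem-of-calculus route, rather than a term-by-term estimate, is needed — while simultaneously the negative powers of $a$ produced by \eqref{eq:lollybound} must be absorbed by the vanishing of $\sigma,\sigma'$ near $0$, i.e.\ the degeneracy has to be threaded exactly as in Assumption~\ref{ass:nonlinearities}. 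Step~2, by contrast, is routine once the scale integral is split at $\mu=|y|$ and one keeps track of which wavelet coefficients are ``diagonal'' and which incur a base-point difference.
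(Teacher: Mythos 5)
Your proposal is correct and takes essentially the same route as the paper: the lower bound by splitting off the purely spatial second-order increment and using the vanishing moments of $\zeta$, the upper bound by a multiscale decomposition of that spatial increment with the same diagonal versus basepoint-shift splitting of wavelet coefficients (you use a continuous Calder\'on reproducing identity cut at scale $|y|$, the paper telescopes over dyadic scales $2^{-n_0-n}$ with $2^{-n_0}\sim|y|$ — a cosmetic difference, both needing only $\gamma>1$ and $\alpha>0$ for convergence), and the final assertion by recentering via \eqref{eq:trafololly}, the fundamental theorem of calculus through $\Theta$, the model bound \eqref{eq:lollybound}, the weight estimate \eqref{eq:XV}, and Lemma~\ref{lem:absorb}. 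No gaps beyond what the paper itself glosses over (e.g.\ invoking Lemma~\ref{lem:absorb} for $\gamma\in(1,2\alpha)$), so nothing further is needed.
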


\begin{proof}
The three inequalities are proved in the following three steps, respectively. 

\textbf{Step 1.} We start with the first inequality. 
The first left hand side term is estimated as desired as an immediate consequence of 
\begin{align}
u_\x(\x+\y)-u_\x(\x+(0,y))
&= u_\x(\x+\y) - u_\x(\x) - \nabla u_\x(\x)\cdot y \\
&\,-\big(u_\x(\x+(0,y)) - u_\x(\x) - \nabla u_\x(\x)\cdot y\big) \, ,
\end{align}
the triangle inequality, and $D_\y\subseteq[0,T]\times\T^d$. 

We turn to the second left hand side term.
Let $\zeta\in\mathcal{B}_1$ and $0<\lambda\leq R$. Then
\begin{align}
&\int_{[0,T]\times\T^d} d\x |\langle u_\x(t,\cdot) , \zeta_x^\lambda\rangle| \\
&= \int_{[0,T]\times\T^d} d\x \Big| \int_{\R^d} dy \, u_\x(\x+(0,y)) \zeta^\lambda(y) \Big| \\
&= \int_{[0,T]\times\T^d} d\x \Big| \int_{\R^d} dy \, \big(u_\x(\x+(0,y))-u_\x(\x)-\nabla u_\x(\x)\cdot y\big) \zeta^\lambda(y) \Big| \\
&\leq \int_{\R^d} dy \, |\zeta^\lambda(y)| 
\int_{[0,T]\times\T^d} d\x \big| u_\x(\x+(0,y))-u_\x(\x)-\nabla u_\x(\x)\cdot y\big| \, ,
\end{align}
where in the second equality we used that $\int dy \zeta(y)y^k = 0$ for $|k|_1\leq1$ 
and in the inequality we used the triangle inequality and Fubini. 
Note that by the support of $\zeta$ we have effectively $|y|<\lambda\leq R$, 
hence the claim follows from 
$\int_{\R^d} dy \, |\zeta^\lambda(y)| |y|^\gamma \lesssim \lambda^{\gamma}$.

\textbf{Step 2.}
We turn to the second inequality, where by the triangle inequality
\begin{align}
\big| u_\x(\x+\y)-u_\x(\x)-\nabla u_\x(\x)\cdot y \big|
&\leq \big| u_\x(\x+\y)-u_\x(\x+(0,y))\big| \\
&\,+ \big|u_\x(\x+(0,y)-\nabla u_\x(\x)\cdot y \big| \, .
\end{align}
It remains to estimate the second right hand side term. 
Let $\zeta$ be smooth, compactly supported in the unit ball, 
and satisfy $\int_{\R^d} dy\,\zeta(y)=1$ and $\int_{\R^d} dy\,\zeta(y)y^k=0$ for $|k|_1=1$. 
Then for $n_0\in\N$ to be chosen we have 
\begin{align}
u_\x(\x+(0,y)) - u_\x(\x) - \nabla u_\x(\x)\cdot y
&=
\big\langle u_\x(t,\cdot), 
\zeta_{x+y}^{2^{-n_0}} - \zeta_x^{2^{-n_0}} + \nabla \zeta_x^{2^{-n_0}} \cdot y\big\rangle \\
&\,+ \sum_{n=0}^\infty \big\langle u_\x(t,\cdot), \zeta_{x+y}^{2^{-n_0-n-1}} - \zeta_{x+y}^{2^{-n_0-n}} \big\rangle \\
&\,- \sum_{n=0}^\infty \big\langle u_\x(t,\cdot), \zeta_{x}^{2^{-n_0-n-1}} - \zeta_{x}^{2^{-n_0-n}} \big\rangle \\
&\,- \sum_{n=0}^\infty \big\langle \nabla u_\x(t,\cdot)\cdot y, \zeta_{x}^{2^{-n_0-n-1}} - \zeta_{x}^{2^{-n_0-n}} \big\rangle \, .
\end{align}
The first right hand side term equals 
\begin{equation}
\int_{\R^d} dz\, u_\x(t,z) 2^{n_0 d} \Big(
\zeta\big(2^{n_0}(z-x-y)\big) - \zeta\big(2^{n_0}(z-x)\big) - \nabla\zeta\big(2^{n_0}(z-x)\big) \cdot(-y)2^{n_0} \Big) \, ,
\end{equation}
which using $f(1)-f(0)-f'(0) = \int_0^1 d\vartheta\, (1-\vartheta) f''(\vartheta)$ applied to $f(\vartheta) = \zeta(2^{n_0}(z-x-\vartheta y))$ we rewrite as 
\begin{align}
&\int_{\R^d} \!dz\, u_\x(t,z) 2^{n_0 d}
\!\int_0^1\! d\vartheta\,(1\!-\!\vartheta) \sum_{|k_1+k_2|_1=2} \!\partial^{k_1+k_2}\zeta\big(2^{n_0}(z\!-\!x\!-\!\vartheta y)\big) (-y)^{k_1+k_2} 2^{2n_0} \\
&= \int_0^1 d\vartheta\,(1-\vartheta) \sum_{|k_1+k_2|_1=2} \big\langle u_\x(t,\cdot), \partial^{k_1+k_2} \zeta_{x+\vartheta y}^{2^{-n_0}} \big\rangle (-y)^{k_1+k_2} \, .
\end{align}
Rewriting
\begin{align}
\big\langle u_\x(t,\cdot),\partial^{k_1+k_2}\zeta_{x+vy}^{2^{-n_0}}\big\rangle
&= \big\langle u_{\x+(0,\vartheta y)}(t,\cdot),\partial^{k_1+k_2}\zeta_{x+\vartheta y}^{2^{-n_0}}\big\rangle \\
&\,+ \big\langle u_\x(t,\cdot) - u_{\x+(0,vy)}(t,\cdot),\partial^{k_1+k_2}\zeta_{x+\vartheta y}^{2^{-n_0}}\big\rangle
\end{align}
and noting that $\partial^{k_1+k_2}\zeta\in C\mathcal{B}_1$ for some $C>0$ 
(since $|k_1+k_2|_1=2$), 
we thus obtain 
\begin{align}
&\int_{D_\y} d\x \big| 
\big\langle u_\x(t,\cdot), 
\zeta_{x+y}^{2^{-n_0}} - \zeta_x^{2^{-n_0}} + \nabla \zeta_x^{2^{-n_0}} \cdot y\big\rangle \big| \\
&\lesssim 2^{-n_0(\gamma-2)} |y|^2 \sup_{0<\lambda\leq R} \lambda^{-\gamma} \sup_{\tilde\zeta\in\mathcal{B}_1} \int_{D_\y} d\x \, |\langle u_\x(t,\cdot),\tilde\zeta_x^\lambda\rangle| \\
&+\! 2^{-n_0(\alpha-2)} |y|^{\alpha+2} \!\!\!
\sup_{0<\lambda\leq R}\sup_{0<|\tilde y|\leq R} \! (\lambda|\tilde y|)^{-\alpha} \!
\sup_{\tilde\zeta\in\mathcal{B}_1} \!
\int_{D_\y}\!\!d\x 
\big| \big\langle u_\x(t,\cdot)-u_{\x+(0,\tilde y)}(t,\cdot), \tilde\zeta_{x+\tilde y}^{\lambda} \big\rangle\big| ,
\end{align}
provided $2^{-n_0}\leq R$ and $|y|\leq R$. 

We turn to estimating $\langle \partial^k u_\x(t,\cdot) y^k , \zeta_{x+\vartheta y}^{2^{-n_0-n-1}} - \zeta_{x+\vartheta y}^{2^{-n_0-n}} \rangle$ for $|k|_1\leq1$ and $\vartheta\in\{0,1\}$. 
Denoting $\hat\zeta\coloneqq\zeta^{1/2}-\zeta$ we have 
$\zeta_{x+\vartheta y}^{2^{-n_0-n-1}}-\zeta_{x+\vartheta y}^{2^{-n_0-n}} = \hat\zeta_{x+\vartheta y}^{2^{-n_0-n}}$ 
and $\partial^k\hat\zeta\in\mathcal{B}_1$ for $|k|_1\leq1$ 
(here we use $\int dy \, \zeta(y)y^k=0$ for $|k|_1=1$). 
Hence 
\begin{align}
&\int_{D_\y} d\x \big| \big\langle \partial^k u_\x(t,\cdot)y^k , 
\zeta_{x+\vartheta y}^{2^{-n_0-n-1}}-\zeta_{x+\vartheta y}^{2^{-n_0-n}} \big\rangle\big| \\
&\leq \int_{D_\y} d\x \big| \big\langle u_{\x+(0,\vartheta y)}(t,\cdot) , 
\partial^k\hat\zeta_{x+\vartheta y}^{2^{-n_0-n}}\big\rangle y^k \big| \\
&\,+ \int_{D_\y} d\x \big| \big\langle u_\x(t,\cdot) - u_{\x+(0,\vartheta y)}(t,\cdot) , 
\partial^k \hat\zeta_{x+\vartheta y}^{2^{-n_0-n}}\big\rangle y^k \big| \\
&\lesssim 2^{-(n_0+n)(\gamma-|k|_1)} |y|^{|k|_1} \sup_{0<\lambda\leq R} \lambda^{-\gamma} 
\sup_{\tilde\zeta\in\mathcal{B}_1} 
\int_{D_\y} d\x |\langle u_\x(t,\cdot), \tilde\zeta_{x}^\lambda \rangle| \\
&\,+ 2^{-(n_0+n)(\alpha-|k|_1)} |\vartheta y|^\alpha |y|^{|k|_1} \\
&\quad\times
\sup_{0<\lambda\leq R} \sup_{0<|\tilde y|\leq R} \lambda^{-\alpha} |\tilde y|^{-\alpha}
\sup_{\tilde\zeta\in\mathcal{B}_1} \int_{D_\y} d\x \big|\big\langle u_\x(t,\cdot) - u_{\x+(0,\tilde y)}(t,\cdot), \tilde\zeta_{x+\tilde y}^\lambda \big\rangle\big| \, ,
\end{align}
again provided $2^{-n_0}\leq R$ and $|y|\leq R$. 
For such $n_0$ and $y$ we thus have
\begin{align}
&\int_{D_\y} d\x \Big| \sum_{n=0}^\infty \big\langle u_\x(t,\cdot), \zeta_{x+y}^{2^{-n_0-n-1}} - \zeta_{x+y}^{2^{-n_0-n}} \big\rangle 
- \sum_{n=0}^\infty \big\langle u_\x(t,\cdot), \zeta_{x}^{2^{-n_0-n-1}} - \zeta_{x}^{2^{-n_0-n}} \big\rangle \\
&\qquad- \sum_{n=0}^\infty \big\langle \nabla u_\x(t,\cdot)\cdot y, \zeta_{x}^{2^{-n_0-n-1}} - \zeta_{x}^{2^{-n_0-n}} \big\rangle \Big| \\
&\lesssim (2^{-n_0\gamma} + 2^{-n_0(\gamma-1)}|y|) 
\sup_{0<\lambda\leq R}\lambda^{-\gamma}
\sup_{\tilde\zeta\in\mathcal{B}_1} 
\int_{D_\y} d\x |\langle u_\x(t,\cdot), \tilde\zeta_{x}^\lambda \rangle| \\
&\,+ 2^{-n_0\alpha}|y|^\alpha 
\sup_{0<\lambda\leq R} \sup_{0<|\tilde y|\leq R} \lambda^{-\alpha} |\tilde y|^{-\alpha}
\sup_{\tilde\zeta\in\mathcal{B}_1} \int_{D_\y} d\x \big|\big\langle u_\x(t,\cdot) - u_{\x+(0,\tilde y)}(t,\cdot), \tilde\zeta_{x+\tilde y}^\lambda \big\rangle\big| \, ,
\end{align}
where we used $\alpha>0$ and $\gamma>1$ such that 
$\sum_{n=0}^\infty (2^{-n\gamma} + 2^{-n(\gamma-1)} + 2^{-n\alpha}) \lesssim 1$. 

For $|y|\leq R$ we thus choose $n_0$ such that $2^{-n_0}\leq R$ and $2^{-n_0}\sim|y|$ and the claim follows. 

\textbf{Step 3.}
We turn to the last inequality. 
First observe that by the definition of $u_\x$ and by \eqref{eq:trafololly} in form of 
$\Pi_{\x}[\lolly;\x] = \Pi_{\x+(0,y)}[\lolly;\x] + \Pi_{\x}[\lolly;\x](\x+(0,y))$ 
\begin{align}
u_\x - u_{\x+(0,y)} 
&= \sigma(u(\x+(0,y))) \Pi_{\x+(0,y)}[\lolly;\x+(0,y)] - \sigma(u(\x))\Pi_\x[\lolly;\x] \\
&= \sigma(u(\x+(0,y))) \Pi_{\x+(0,y)}[\lolly;\x+(0,y)] - \sigma(u(\x))\Pi_{\x+(0,y)}[\lolly;\x] \\
&\,- \sigma(u(\x)) \Pi_{\x}[\lolly;\x](\x+(0,y)) \, .
\end{align}
For $\zeta\in\mathcal{B}_1$ we thus have 
\begin{align}
&\big\langle u_\x(t,\cdot) - u_{\x+(0,y)}(t,\cdot) , \zeta_{x+y}^\lambda \big\rangle \\
&= \big\langle \sigma(u(\x+(0,y))) \Pi_{\x+(0,y)}[\lolly;\x+(0,y)](t,\cdot) - \sigma(u(\x))\Pi_{\x+(0,y)}[\lolly;\x](t,\cdot) , \zeta_{x+y}^\lambda \big\rangle \, .
\end{align}
Denoting $G(v)\coloneqq \sigma(v) \Pi_{\x+(0,y)}[\lolly;a(v)](t,\cdot)$ and observing that $G(v)=G(\Theta(v))$ for $\Theta$ from \eqref{eq:Theta}, 
we have by the fundamental theorem of calculus 
\begin{align}
&\big\langle u_\x(t,\cdot) - u_{\x+(0,y)}(t,\cdot) , \zeta_{x+y}^\lambda \big\rangle \\
&= \int_0^1 d\vartheta\, \big\langle G'\big(\vartheta \Theta(u(\x+(0,y)))+(1-\vartheta)\Theta(u(\x))\big) , \zeta_{x+y}^\lambda\big\rangle \\
&\quad\times \big(\Theta(u(\x+(0,y)))-\Theta(u(\x))\big) \, .
\end{align}
Using 
\begin{equation}
G'(v) 
= \sigma'(v) \Pi_{\x+(0,y)}[\lolly;a(v)](t,\cdot) + \sigma(v)a'(v)\partial_a\Pi_{\x+(0,y)}[\lolly;a(v)](t,\cdot) 
\end{equation}
we obtain from the model estimate \eqref{eq:lollybound} 
\begin{align}
&\int_{[0,T]\times\T^d} d\x \big| \big\langle u_\x(t,\cdot) - u_{\x+(0,y)}(t,\cdot) , \zeta_{x+y}^\lambda \big\rangle \big| \\
&\lesssim \lambda^\alpha [\lolly]_\alpha \big(\|\sigma'\!/\!a^{\mathfrak{e}(\lollysmall)}\|_\infty \!+\! \|\sigma a'\!/\!a^{1+\mathfrak{e}(\lollysmall)}\|_\infty\big)
\!\int_{[0,T]\times\T^d} \! d\x \big| \Theta(u(\x\!+\!(0,y)))\!-\!\Theta(u(\x)) \big| \, .
\end{align}
Assumption~\ref{ass:nonlinearities}~(ii) and \eqref{eq:restrictionM} yield 
%($N-1-(M-1)\geq0$)
%
\begin{equation}
\max\Big\{
\Big|\frac{\sigma'}{a^{\mathfrak{e}(\lollysmall)}}\Big| \, , \ 
\Big|\frac{\sigma a'}{a^{1+\mathfrak{e}(\lollysmall)}}\Big| \Big\} \lesssim 1\, ,
\tag{XV}\label{eq:XV}
\end{equation}
and together with the definition of $[\Theta(u)]_{B_{1,\infty}^\alpha}$ we deduce 
\begin{equation}
\int_{[0,T]\times\T^d} d\x \big| \big\langle u_\x(t,\cdot) - u_{\x+(0,y)}(t,\cdot) , \zeta_{x+y}^\lambda \big\rangle \big|
\lesssim \lambda^\alpha |y|^\alpha [\lolly]_\alpha [\Theta(u)]_{B_{1,\infty}^\alpha} \, .
\end{equation}
The claim follows from 
$[\Theta(u)]_{B_{1,\infty}^\alpha}
\lesssim (R^{\gamma-\alpha}+R^{1-\alpha}\sqrt{T}) [\mathcal{U}]_{B_{1,\infty}^{\gamma}} +1$, 
see Lemma~\ref{lem:absorb}, and noting that $u_\x$ is defined such that 
\begin{equation}
u_\x(\x+\y)-u_\x(\x)-\nabla u_\x(\x)\cdot y
= u(\x+\y)-u(\x)-\sigma(u(\x))\Pi_\x[\lolly;\x](\x+\y)-\nu(\x)\cdot y \, .
\end{equation}
\end{proof}

%%%%%%%%%%%%%%%%%%%%%%%%%%%%%%%%%%%%%%%%%%%
We can finally give the proof of Theorem~\ref{thm:modelledness}.

\begin{proof}[Proof of Theorem~\ref{thm:modelledness}]
Let $0<R,T\leq1$.
Recall $u_\x=u - \sigma(u(\x)) \Pi_\x[\lolly;\x]$ from Lemma~\ref{lem:equivalence} defined such that
\begin{equation}
u(\x+\y)-u(\x)-\sigma(u(\x))\Pi_\x[\lolly;\x](\x+\y)-\nu(\x)\cdot y
= u_\x(\x+\y)-u_\x(\x)-\nabla u_\x(\x)\cdot y \, .
\end{equation}
Hence by Lemma~\ref{lem:equivalence} we have for $\gamma\in(1,2\alpha)$ and $R$ small enough 
\begin{align}
&[\mathcal{U}]_{B_{1,\infty}^\gamma} \\
&=\sup_{0<\|\y\|_\s\leq R}
\|\y\|_\s^{-\gamma} \int_{D_\y} d\x \big| u(\x+\y)-u(\x)-\sigma(u(\x))\Pi_\x[\lolly;\x](\x+\y)-\nu(\x)\cdot y \big| \\
&\lesssimdata 1 
+ \sup_{0<\|\y\|_\s\leq R} \|\y\|_\s^{-\gamma} \int_{D_\y} d\x \, 
\big| u_\x(\x+\y) - u_\x(\x+(0,y)) \big| \\
&\hphantom{\lesssimdata 1\ }
+\sup_{0<\lambda\leq R} \lambda^{-\gamma} 
\sup_{\zeta\in\mathcal{B}_1} \int_{[0,T]\times\T^d} d\x \big| \langle u_\x(t,\cdot), \zeta_x^\lambda \rangle \big| \, .
\end{align}
In Step~1 we estimate the time increments and in Step~2 we estimate the last right hand side term; 
in Step~3 we combine both estimates and conclude. 
In all steps we use for $\delta\in(0,1)$ the splitting $u=u^<+u^>$ and $\Pi[\lolly]=\Pi^<[\lolly]+\Pi^>[\lolly]$ into small and large velocities as in 
the beginning of Section~\ref{sec:splitting}.

\textbf{Step 1.} 
Let $u_\x^<\coloneqq u^< - \sigma(u(\x)) \Pi^<_\x[\lolly;\x]$ and 
$u_\x^>\coloneqq u^> - \sigma(u(\x)) \Pi^>_\x[\lolly;\x]$. 
Then $u_\x=u_\x^<+u_\x^>$, and by the triangle inequality 
\begin{align}
&\int_{D_\y} d\x \big| u_\x(\x+\y) - u_\x(\x+(0,y)) \big| \\
&\leq \int_{D_\y} d\x \big( |u^<_\x(\x+\y)| + |u^<_\x(\x+(0,y))| + |u^>_\x(\x+\y)-u^>_\x(\x+(0,y))| \big) \, .
\end{align}
Again by the triangle inequality, 
\begin{align}
&\int_{D_\y} d\x \big| u^>_\x(\x+\y)-u^>_\x(\x+(0,y)) \big| \\
&\leq \int_{D_\y} d\x \big| u^>(\x+\y)-u^>(\x)-\sigma(u(\x))\Pi^>_\x[\lolly;\x](\x+\y) - \nu^>(\x) \cdot y \big| \\
&\,+ \int_{D_\y} d\x \big| u^>(\x+(0,y))-u^>(\x)-\sigma(u(\x))\Pi^>_\x[\lolly;\x](\x+(0,y)) - \nu^>(\x)\cdot y \big| \, .
\end{align}
Denoting 
$[\![u^<]\!]\coloneqq \sup_{\|\y\|_\s\leq R} \int_{D_\y} d\x |u^<_\x(\x+\y)|$ 
this yields for $\|\y\|_\s\leq R$
\begin{equation}
\int_{D_\y} d\x \big| u_\x(\x+\y)-u_\x(\x+(0,y)) \big| 
\leq 2 [\![u^<]\!]
+ 2 [\mathcal{U}^>]_{B_{1,\infty}^{2\alpha}} \|\y\|^{2\alpha} \, .
\end{equation}
Since $\delta\in(0,1)$ was arbitrary (but for convenience dyadic, see \eqref{eq:decompose_phi>}) in the decomposition $u=u^<+u^>$ from Section~\ref{sec:splitting}, we deduce for $\gamma\in(1,2\alpha]$ and $\|\y\|_\s\leq R$
\begin{align}
&\|\y\|_\s^{-\gamma} \int_{D_\y} d\x \big| u_\x(\x+\y)-u_\x(\x+(0,y)) \big| \\
&\lesssim \sup_{0<\lambda\leq R} \lambda^{-\gamma} \inf_{\substack{0<\delta<1\\ \delta\textnormal{ dyadic}}}
\big( [\![u^<]\!] + \lambda^{2\alpha} [\mathcal{U}^>]_{B_{1,\infty}^{2\alpha}} \big) \, .
\end{align}

\textbf{Step 2.}
Let $\lambda\leq R$ and $\zeta\in\mathcal{B}_1$. 
Splitting once more $u_\x=u^<_\x+u^>_\x$ the triangle inequality yields
\begin{align}
\int d\x \big| \langle u_\x(t,\cdot),\zeta^\lambda_x\rangle \big| 
\leq \int d\x \int_{\R^d} dz \big| u_\x^<(\x+(0,z))\zeta^\lambda(z) \big|
+ \int d\x \big| \langle u^>_\x(t,\cdot), \zeta^\lambda_x \rangle \big| \, .
\end{align}
For the first right hand side term note that $|z|\leq\lambda\leq R$ by the support of $\zeta$, 
hence it is bounded by $[\![u^<]\!]$. 
The second right hand side term is by Lemma~\ref{lem:equivalence} estimated by 
\begin{equation}
\lambda^{2\alpha} \sup_{0<\|\y\|_\s\leq R} \|\y\|_\s^{-2\alpha} \int_{D_\y} d\x \big|
u^>_\x(\x+\y)-u^>_\x(\x)-\nabla u^>_\x(\x)\cdot y \big| 
\leq \lambda^{2\alpha} [\mathcal{U}^>]_{B_{1,\infty}^{2\alpha}} \, .
\end{equation}
Hence we obtain altogether (again since $\delta$ was arbitrary)
\begin{equation}
\lambda^{-\gamma} 
\sup_{\zeta\in\mathcal{B}_1} \int_{[0,T]\times\T^d} d\x \big| \langle u_\x(t,\cdot), \zeta_x^\lambda \rangle \big| \lesssim \sup_{0<\lambda\leq R} \lambda^{-\gamma} \inf_{\substack{0<\delta<1\\ \delta \textnormal{ dyadic}}}
\big( [\![u^<]\!] + \lambda^{2\alpha} [\mathcal{U}^>]_{B_{1,\infty}^{2\alpha}} \big) \, .
\end{equation}

\textbf{Step 3.}
Combining Steps~1 and 2 we deduce for $\gamma\in(1,2\alpha)$
\begin{align}
[\mathcal{U}]_{B_{1,\infty}^\gamma} 
\lesssimdata 1+ \sup_{0<\lambda\leq R} \lambda^{-\gamma} \inf_{\substack{0<\delta<1\\ \delta\textnormal{ dyadic}}}
\big( [\![u^<]\!] + \lambda^{2\alpha} [\mathcal{U}^>]_{B_{1,\infty}^{2\alpha}} \big) \, .
\end{align}
By Proposition~\ref{prop:u<} and Lemma~\ref{lem:Pi<} we estimate 
\begin{align}
[\![u^<]\!]
&= \sup_{0<\|\y\|_\s\leq R} \int_{D_\y} d\x \, \big|u^<(\x+\y)-\sigma(u(\x))\Pi^<_\x[\lolly;\x](\x+\y) \big| 
\lesssimdata \delta^{1/(M-1)} \, .
\end{align}
Furthermore, by Proposition~\ref{prop:u>} we have for $\beta\in(2-\alpha,2\alpha)$ and $\epsilon>0$ small enough
\begin{equation}
[\mathcal{U}^>]_{B_{1,\infty}^{2\alpha}} 
\lesssimdata \delta^{-\alpha-\epsilon} \big( 1+ (R+T)^{1-\alpha} [\mathcal{U}]_{B_{1,\infty}^\beta} \big) \, ,
\end{equation}
and thus 
\begin{equation}
[\mathcal{U}]_{B_{1,\infty}^\gamma} 
\lesssimdata 1 + \sup_{0<\lambda\leq R} \lambda^{-\gamma} \inf_{\substack{0<\delta<1\\ \delta\textnormal{ dyadic}}}
\big( \delta^{1/(M-1)} + \lambda^{2\alpha} \delta^{-\alpha-\epsilon} (1+ (R+T)^{1-\alpha}[\mathcal{U}]_{B_{1,\infty}^\beta}) \big) \, .
\end{equation}
Choosing $\delta$ such that $\delta^{1/(M-1)}=\lambda^{2\alpha} \delta^{-\alpha-\epsilon}$, 
i.e.~$\delta=\lambda^{2\alpha(M-1)/(1+(M-1)(\alpha+\epsilon))}\in(0,1)$ and hence 
$\delta^{1/(M-1)} = \lambda^{2\alpha/(1+(M-1)(\alpha+\epsilon))}$ 
(since this choice need not be dyadic, actually choose $\delta$ to be the closest dyadic), yields
\begin{equation}
[\mathcal{U}]_{B_{1,\infty}^\gamma} 
\lesssimdata 1 + \sup_{0<\lambda\leq R} \lambda^{\frac{2\alpha}{1+(M-1)(\alpha+\epsilon)}-\gamma}
\big(1+(R+T)^{1-\alpha}[\mathcal{U}]_{B_{1,\infty}^\beta} \big) \, .
\end{equation}
Using that \eqref{eq:restrictionM} is equivalent to 
\begin{equation}
\frac{2\alpha}{1+(M-1)\alpha}> 2-\alpha 
\end{equation}
we can choose $\beta=\gamma\in(2-\alpha,2\alpha/(1+(M-1)(\alpha+\epsilon)]$ to deduce for $R,T$ small enough the desired 
\begin{equation}
[\mathcal{U}]_{B_{1,\infty}^\beta} 
\lesssimdata 1 \, .
\end{equation}
\end{proof}

Theorem~\ref{thm:main} is now a simple consequence of Theorem~\ref{thm:modelledness}. 

\begin{proof}[Proof of Theorem~\ref{thm:main}]
By the triangle inequality and the definitions of $[\mathcal{U}]_{B_{1,\infty}^\beta}$ for $\beta\in(1,2)$ and $[\lolly]_\alpha$ we have
\begin{align}
&\int_{D_\y} d\x\, \big|u(\x+\y)-u(\x)\big| \\
&\leq \int_{D_\y} d\x\, \big|u(\x+\y)-u(\x)-\sigma(u(\x))\Pi_\x[\lolly;\x](\x+\y)-\nu(\x)\cdot y\big| \\
&\,+ \int_{D_\y} d\x\, \big|\sigma(u(\x))\Pi_\x[\lolly;\x](\x+\y)-\nu(\x)\cdot y\big| \\
&\leq \|\y\|_\s^\beta [\mathcal{U}]_{B_{1,\infty}^\beta} 
+ \|\y\|_\s^\alpha \|\sigma/a^{\mathfrak{e}(\lollysmall)}\|_\infty T [\lolly]_\alpha
+ \|\nu\|_{L^1}\|\y\|_\s \, .
\end{align}
Choosing $\beta$ as in Theorem~\ref{thm:modelledness} and appealing to 
%($N-(M-1)\geq0$)
%
\begin{equation}
\Big|\frac{\sigma}{a^{\mathfrak{e}(\lollysmall)}}\Big| \lesssim 1\, ,
\tag{XVI}\label{eq:XVI}
\end{equation}
it remains to estimate $\|\nu\|_{L^1}$. 
For this we rewrite 
\begin{align}
\nu(\x)\cdot y
&=u(\x+(0,y))-u(\x)-\sigma(u(\x))\Pi_\x[\lolly;\x](\x+(0,y)) \\
&\,-\big(u(\x+(0,y))-u(\x)-\sigma(u(\x))\Pi_\x[\lolly;\x](\x+(0,y))-\nu(\x)\cdot y\big) \, ,
\end{align}
which by the triangle inequality and the definitions of $[\lolly]_\alpha$ and $[\mathcal{U}]_{B_{1,\infty}^{\beta}}$ yields
\begin{equation}
\int_{(0,T)\times\T^d} \! d\x \, | \nu(\x)\cdot y| 
\leq 2 T \! \sup_{0<t\leq T}\int_{\T^d}dx\,|u(t,x)| 
+ \|\sigma\!/\!a^{\mathfrak{e}(\lollysmall)}\|_\infty T [\lolly]_\alpha |y|^\alpha 
+ [\mathcal{U}]_{B_{1,\infty}^{\beta}} |y|^{\beta} \, .
\end{equation}
Choosing $y=T e_i$ for $e_i\in\R^d$ the $i$-th unit vector with $|y|=T\leq R$ yields 
\begin{equation}
\int_{(0,T)\times\T^d}d\x\,|\nu(\x)|
\lesssimdata \sup_{0<t\leq T}\int_{\T^d}dx\,|u(t,x)| + [\mathcal{U}]_{B_{1,\infty}^\beta} +1 \, .
\end{equation}
Actually, $\|u(t,\cdot)\|_{L^1(\T^d)}$ is estimated in the proof of Proposition~\ref{prop:energy}
by $C_2$ from Proposition~\ref{prop:u2}, which in turn is estimated in the proof of Proposition~\ref{prop:u>} by $1+[\mathcal{U}]_{B_{1,\infty}^\beta}$ for $\beta\in(2-\alpha,2\alpha)$. 
Thus for such $\beta$ it holds $\|\nu\|_{L^1}\lesssim 1+[\mathcal{U}]_{B_{1,\infty}^\beta}$, and we conclude by appealing once more to Theorem~\ref{thm:modelledness}. 
\end{proof}

%%%%%%%%%%%%%%%%%%%%%%%%%%%%%%%%%%%%%%%%%%
%%%%%%%%%%%%%%%%%%%%%%%%%%%%%%%%%%%%%%%%%%

\appendix

\section{Auxiliary results}

\subsection{Equivalence of classical and kinetic formulation}\label{app:kinetic}

In this Subsection we argue that sufficiently smooth solutions $u$ of \eqref{eq:spm} satisfy its kinetic formulation \eqref{eq:kinetic} and vice versa. 

To see this, let $u$ be a classical solution of \eqref{eq:spm} and $\phi\in C_c^\infty((0,T)\times\T^d\times\R)$.
Then by the definition of the kinetic function $\chi$, the left hand side of \eqref{eq:kinetic} coincides with 
\begin{equation}
-\int_{[0,T]\times\T^d}dt\,dx\int_0^{u(t,x)}dv\, (\partial_t+a(v)\Delta)\phi(t,x,v) \, ,
\end{equation}
which we can rewrite as 
\begin{align}
&-\int_{[0,T]\times\T^d}dt\,dx \Big(
\partial_t \int_0^{u(t,x)}dv\, \phi(t,x,v) - \phi(t,x,u(t,x))\partial_t u(t,x) \\
&\qquad+ \nabla\cdot \int_0^{u(t,x)} dv\, a(v)\nabla\phi(t,x,v) - a(u(t,x))\nabla\phi(t,x,u(t,x))\nabla u(t,x) \Big) \, . 
\end{align}
The first summand vanishes since $\phi(0,x,v)=\phi(T,x,v)=0$,
and the third summand vanishes by periodicity. 
We are thus left with 
\begin{equation}
\int_{[0,T]\times\T^d}dt\,dx \Big(
\phi(t,x,u(t,x))\partial_t u(t,x) 
+ a(u(t,x))\nabla\phi(t,x,u(t,x))\nabla u(t,x) \Big) \, .
\end{equation}
We use the chain rule to rewrite $\nabla\phi(t,x,u(t,x)) = \nabla (\phi(\cdot_t,\cdot_x,u(\cdot_t,\cdot_x)))(t,x) - \partial_v \phi(t,x,u(t,x)) \nabla u(t,x)$, 
integrate by parts, and plug in \eqref{eq:spm} to arrive at the right hand side of \eqref{eq:kinetic}. 

The same computation read in the opposite direction shows that if $u$ is smooth enough and such that $\chi$ satisfies \eqref{eq:kinetic} for all test functions $\phi$, then $u$ satisfies \eqref{eq:spm}. 

%%%%%%%%%%%%%%%%%%%%%%%%%%%%%%%%%%%%%%%
%%%%%%%%%%%%%%%%%%%%%%%%%%%%%%%%%%%%%%%
\subsection{Heuristics for the counterterm}\label{app:heuristics_counterterm}

Let us briefly motivate the choice of counterterm in Section~\ref{sec:mild}.
If a function $u$ is modelled according to 
\begin{equation}
u = u(\x) + \sigma(u(\x)) \Pi_\x[\lolly;\x] + \nu(\x) \cdot (\cdot-x) + \mathcal{O}(|\cdot-\x|^{2\alpha}) \, ,
\end{equation}
then its composition with a sufficiently smooth function $f$ is modelled according to 
\begin{equation}
f(u) = f(u(\x)) + f'(u(\x)) \Big( \sigma(u(\x)) \Pi_\x[\lolly;\x] + \nu(\x) \cdot (\cdot-x) \Big) + \mathcal{O}(|\cdot-\x|^{2\alpha}) \, ,
\end{equation}
as can be seen by Taylor. 
Multiplying the right hand side with $\xi$ 
and inserting the necessary renormalization for it to be stable 
yields
\begin{align}
f(u(\x)) \xi 
+ f'(u(\x)) \Big( \sigma(u(\x)) \Pi_\x[\dumb;\x] + \nu(\x) \cdot \Pi_\x[\xnoise] \Big) 
+ \mathcal{O}(|\cdot-\x|^{3\alpha-2}) \, .
\end{align}
The correct choice of counterterm can now be read off the reconstruction of this expression, 
which informally is its diagonal evaluation (i.e.~setting $\x$ to coincide with the active variable). 
Since $\Pi_\x[\dumb;\x](\x) = -\cdumb^{a(u(\x))}(t)$ and $\Pi_\x[\xnoise](\x)=0$ this reconstruction is given by 
\begin{equation}
f(u) \xi - f'(u) \sigma(u) \cdumb^{a(u)} \, ,
\end{equation}
which coincides with the choice in \eqref{eq:ren_intro} for $f(u)=\varphi^>(a(u)/\delta)\Psi(a(u))\sigma(u)$. 

%%%%%%%%%%%%%%%%%%%%%%%%%%%%%%%%%%%%%%%%%%
%%%%%%%%%%%%%%%%%%%%%%%%%%%%%%%%%%%%%%%%%%
\subsection{Test functions}

\begin{lemma}\label{lem:estK}
The kernel 
$G(t,x)\coloneqq \sum_{n\in\Z^d}\Phi(t,x+n) \zeta(x)$ 
with $\Phi$ the heat kernel defined in \eqref{eq:heatkernel} and $\zeta$ from \eqref{eq:defK} 
satisfies for $0\neq\x=(t,x)\in\R\times\R^d$ and $l\in\N$, $k\in\N^d$
\begin{align}
\big| \partial_t^l\partial_x^k G(\x) \big| 
\lesssim 1+ \|\x\|_\s^{-d-2l-|k|_1} \, .
\end{align}
The implicit constant depends only on the dimension $d$ 
and the derivatives (up to order $k$) 
of $\zeta$.
\end{lemma}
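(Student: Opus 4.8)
The statement to prove is the pointwise derivative bound for the periodized heat kernel $G(t,x)=\sum_{n\in\Z^d}\Phi(t,x+n)\zeta(x)$, namely $|\partial_t^l\partial_x^k G(\x)|\lesssim 1+\|\x\|_\s^{-d-2l-|k|_1}$.

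\textbf{Approach.} The plan is to split the sum over $n\in\Z^d$ into the single ``near'' term $n=0$ and the ``far'' tail $n\neq0$, and to exploit that $\zeta$ is supported in the unit ball (so effectively $|x|<1$), which confines everything to a compact region. By the Leibniz rule, $\partial_t^l\partial_x^k G(\x)$ is a finite sum of terms of the form $\partial_t^l\partial_x^{k'}\big(\sum_n\Phi(t,x+n)\big)\cdot\partial_x^{k-k'}\zeta(x)$, and since all derivatives of $\zeta$ are bounded, it suffices to bound $\big|\partial_t^l\partial_x^{k'}\sum_{n}\Phi(t,x+n)\big|$ for $|x|<1$ and $k'\leq k$.

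\textbf{Key steps.} First I would recall the standard Gaussian bound $|\partial_t^l\partial_x^{k'}\Phi(t,x)|\lesssim t^{-(d+2l+|k'|_1)/2}e^{-|x|^2/(8t)}\1_{t>0}$, which follows from $\partial_t^l\partial_x^{k'}\Phi(t,x)=t^{-(d+2l+|k'|_1)/2}P(x/\sqrt t)e^{-|x|^2/(4t)}$ for a polynomial $P$, absorbing the polynomial growth into a fraction of the exponent. Second, for the term $n=0$ and $|x|<1$ I would use $t^{-(d+2l+|k'|_1)/2}e^{-|x|^2/(8t)}\lesssim (t+|x|^2)^{-(d+2l+|k'|_1)/2}\lesssim \|\x\|_\s^{-(d+2l+|k'|_1)}$, treating the cases $t\geq|x|^2$ (bound $e^{-|x|^2/(8t)}\leq1$) and $t<|x|^2$ (use $e^{-c|x|^2/t}\lesssim (t/|x|^2)^{(d+2l+|k'|_1)/2}$) separately; since $|k'|_1\leq|k|_1$ this is $\lesssim 1+\|\x\|_\s^{-(d+2l+|k|_1)}$ when $t>0$, and the term vanishes for $t\leq0$. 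Third, for the tail $n\neq0$ with $|x|<1$ one has $|x+n|\geq|n|-1\geq|n|/2$ for $|n|\geq2$ and $|x+n|\gtrsim1$ for $|n|=1$, so each summand is bounded by $t^{-(d+2l+|k'|_1)/2}e^{-|n|^2/(32t)}$; summing a geometric-type series in $n$ over $\Z^d$ and using that $\sup_{t>0}t^{-a}e^{-c/t}<\infty$ for any $a>0$, $c>0$, the whole tail is bounded by a dimension-dependent constant, hence $\lesssim 1$. Finally I would combine the near and far contributions and the bounded derivatives of $\zeta$ via the Leibniz rule to obtain the claim, noting the implicit constant depends only on $d$ and the $C^{|k|_1}$ norm of $\zeta$.

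\textbf{Main obstacle.} There is no serious conceptual obstacle; the statement is routine. The only mildly delicate point is to make the interpolation-type inequality $t^{-a/2}e^{-|x|^2/(8t)}\lesssim\|\x\|_\s^{-a}$ clean and uniform, and to verify carefully that the contribution near $\|\x\|_\s\to\infty$ (which cannot happen here since $|x|<1$ and $t$ stays bounded on the relevant region, but the stated bound must still hold for all $\x\neq0$) is absorbed by the additive constant $1$ — indeed for $\|\x\|_\s$ large the negative power is small and the tail is $\lesssim1$, so the bound is trivially satisfied. One should also be slightly careful that the $n=0$ term is genuinely $\Phi(t,x)$ and requires the full singular bound near $\x=0$, while all the work for $n\neq0$ only contributes a constant.
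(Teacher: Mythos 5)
Your route is essentially the paper's: Leibniz to put all derivatives of $\zeta$ aside, a near/far splitting of the lattice sum, the standard derivative bounds for the heat kernel for the $n=0$ term (the paper obtains $|\partial_t^l\partial_x^{k'}\Phi(\x)|\lesssim\|\x\|_\s^{-d-2l-|k'|_1}$ by parabolic scaling rather than through the explicit Gaussian, which is only a cosmetic difference), and the same final reduction $\sum_{k'\leq k}\|\x\|_\s^{-d-2l-|k'|_1}\lesssim 1+\|\x\|_\s^{-d-2l-|k|_1}$ by distinguishing $\|\x\|_\s\lessgtr1$.

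The one step that does not close as written is the tail in the case where no derivative falls on $\Phi$, i.e.\ $l=0$ and $k'=0$ (the Leibniz term $\big(\sum_n\Phi(t,x+n)\big)\partial_x^k\zeta(x)$), in the regime of large $t$. Your parenthetical claim that ``$t$ stays bounded on the relevant region'' is not true: the lemma is asserted for all $\x\neq0$, $\Phi(t,\cdot)$ is merely supported on $t>0$, and $t$ may be arbitrarily large; this is exactly where the tail needs care, since for $t\gg1$ the sum $\sum_{n\neq0}e^{-|n|^2/(32t)}$ has on the order of $t^{d/2}$ comparable terms. Your proposed bookkeeping (bound each summand by $t^{-(d+2l+|k'|_1)/2}e^{-|n|^2/(32t)}$, then invoke $\sup_{t>0}t^{-a}e^{-c/t}<\infty$ and sum in $n$) works when $2l+|k'|_1\geq1$: absorbing the power of $t$ into part of the exponential gives $\lesssim|n|^{-d-2l-|k'|_1}$, which is summable over $n\neq0$. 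At the borderline exponent $a=d$, however, this only yields $\sum_{n\neq0}|n|^{-d}=\infty$, and keeping the exponential instead gives a bound that is not uniform as $t\to\infty$; so ``the bound is trivially satisfied for $\|\x\|_\s$ large'' is precisely the non-trivial point. The paper treats this case separately by comparing the lattice sum with the integral, $\sum_{n}\Phi(t,x+n)\lesssim\int_{\R^d}dy\,\Phi(t,y)=1$ (equivalently, $\sum_{n\neq0}e^{-|n|^2/(32t)}\lesssim t^{d/2}$ for $t\geq1$, which exactly cancels $t^{-d/2}$). With that one-line repair — and noting, as both you and the paper implicitly do, that $|x+n|\gtrsim1$ for $n\neq0$ uses that $\mathrm{supp}\,\zeta$ is a compact subset of the unit ball — your argument yields the lemma.
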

\begin{proof}
As a first step we claim that the heat kernel $\Phi$ defined in \eqref{eq:heatkernel} satisfies 
\begin{equation}
\big| \partial_t^l\partial_x^k \Phi(\x) \big| \lesssim \|\x\|_\s^{-d-2l-|k|_1} \textnormal{ for } 0\neq\x\in\R\times\R^d \, .
\end{equation}
Indeed, note that $\Phi$ satisfies the scaling invariance
$\Phi(\S^{1/\lambda}\x) = \lambda^d \Phi(\x)$. 
By choosing $\lambda\coloneqq\|\x\|_\s$ and since $\Phi(\S^{1/\|\x\|_\s}\x)$ is 
a smooth function on a compact set we obtain the desired estimate for $\Phi$ 
and all $\x\in\R\times\R^d$, $\x\neq0$.

We turn to $G$, where by the binomial formula 
\begin{equation}
\partial_t^l\partial_x^k G(\x)
= \sum_{k_1+k_2=k}\sum_{n\in\Z^d} \tbinom{k_1+k_2}{k_1} 
\partial_t^{l}\partial_x^{k_1} \Phi(t,x+n) 
\partial_x^{k_2} \zeta(x) \, .
\end{equation}
By the triangle inequality this can be split into\footnote{$k_1\leq k$ for $k_1,k\in\N^d$ has to be understood component wise}
\begin{align}
|\partial_t^l\partial_x^k G(\x)|
&\lesssim 
\sum_{k_1\leq k} \sum_{n\in\{-1,0,1\}^d}
| \partial_t^{l}\partial_x^{k_1} \Phi(t,x+n) | \\
&\,+ 
\sum_{k_1\leq k} \sum_{n\in\Z^d\setminus\{-1,0,1\}^d} 
| \partial_t^{l}\partial_x^{k_1} \Phi(t,x+n) | \, .
\end{align}
The first right hand side term is by the just established estimate of $\Phi$ estimated by 
\begin{equation}
\sum_{k_1\leq k} \sum_{n\in\{-1,0,1\}^d} 
\|(t,x+n)\|_\s^{-d-2l-|k_1|_1} 
\lesssim 
\sum_{k_1\leq k} 
\|(t,x)\|_\s^{-d-2l-|k_1|_1} \, ,
\end{equation}
where we used that we only need to consider $x$ in a set slightly larger than $[-1/2,1/2]^d$ by the support of $\zeta$.
To estimate the second right hand side term we distinguish the case $2l+|k_1|_1=0$ from $2l+|k_1|_1\geq1$. 
The contribution of the former case to the second right hand side term equals 
$\sum_{n\in\Z^d\setminus\{-1,0,1\}^d} \Phi(t,x+n)$ which is estimated by 
$\int_{\R^d} dx \, \Phi(t,x) = 1$,
where we used again that $x$ is close to $[-1/2,1/2]^d$; 
the contributions of the latter case can be estimated by using the already established estimate of $\Phi$ by 
\begin{equation}
\sum_{n\in\Z^d\setminus\{-1,0,1\}^d} 
\|(t,x+n)\|_\s^{-d-2l-|k_1|_1} 
\lesssim 
\sum_{n\in\Z^d\setminus\{-1,0,1\}^d} 
|x+n|^{-d-2l-|k_1|_1} 
\lesssim 1 \, ,
\end{equation}
where in the last inequality we used once more that $x$ is close to 
$[-1/2,1/2]^d$.

Summarizing, we obtained that 
$|\partial_t^l\partial_x^k G(\x)|\lesssim 1+
\sum_{k_1\leq k} \|\x\|_\s^{-d-2l-|k_1|_1}$.
We conclude by noting that 
if $\|\x\|_\s<1$ then $\sum_{k_1\leq k} \|\x\|_\s^{-d-2l-|k_1|_1}\lesssim \|\x\|_\s^{-d-2l-|k|_1}$, 
whereas if $\|\x\|_\s\geq1$ then $\sum_{k_1\leq k} \|\x\|_\s^{-d-2l-|k_1|_1}\lesssim1$.
\end{proof}

\begin{lemma}\label{lem:multtest}
There exists a constant $C>0$ such that the following holds. 
\begin{itemize}
\item[(i)] For $\varphi,\psi\in\mathcal{B}$, $\x,\y\in\R\times\R^d$, and $0<\lambda\leq r$ it holds
\begin{equation}
\varphi_\x^\lambda \, \psi_\y^r = r^{-D} \rho_\x^\lambda
\quad\textnormal{for some } \rho\in C\mathcal{B} \, . 
\end{equation}
\item[(ii)] For $\x,\z\in\R\times\R^d$, $l,m,n\in\N$, $k\in\N^d$, $q\in\Z$, and $\bar{a}>0$, 
the kernel $K^{(n)}=K_q^{(n)}$ defined in \eqref{eq:defK} and \eqref{eq:splittingk} satisfies 
\begin{equation}
\partial_{\bar{a}}^m\partial_t^l\partial_x^k \Kn(\bar{a},\x-\z) 
= 2^{-n(2-2l-|k|_1)} \bar{a}^{l-m} \, \rho_{(\bar{a}t,x)}^{2^{-n}}(\bar{a}r,z)
\, \tilde\varphi(2^q\bar{a})
\end{equation}
for some $\rho\in C\mathcal{B}$ supported on non-positive times and 
for some smooth $\tilde\varphi\colon\R\to\R$ supported in $(1/2,2)$.\footnote{Actually, for $m>0$ it is a linear combination of such $\rho$ and $\tilde\varphi$, which we suppress for notational convenience.} 
\end{itemize}
\end{lemma}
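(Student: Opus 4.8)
The plan is a direct computation. With $\z$ a running variable, both $\varphi_\x^\lambda$ and $\psi_\y^r$ are, up to the normalising prefactors $\lambda^{-D}$ and $r^{-D}$, fixed profiles evaluated at anisotropic affine images of $\z$, the first of width $\lambda$ and centre $\x$, the second of width $r$ and centre $\y$. Hence one can simply define $\rho$ so that $r^{-D}\rho_\x^\lambda = \varphi_\x^\lambda\,\psi_\y^r$ by construction, namely $\rho(\cdot)\coloneqq\varphi(\cdot)\cdot\big(r^{D}\psi_\y^r\big)\!\big(\x+\S^{1/\lambda}(\cdot)\big)$; its support is contained in that of $\varphi$, hence in the parabolic unit ball. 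The only point to check is that $\rho\in C\mathcal{B}$ for a constant $C=C(d)$: the affine map $\w\mapsto\x+\S^{1/\lambda}\w$ followed by the rescaling defining $\psi_\y^r$ is, thanks to $\lambda\le r$, a parabolic contraction, so by the chain rule the derivatives up to order $2$ of the composition $\w\mapsto\big(r^{D}\psi_\y^r\big)(\x+\S^{1/\lambda}\w)$ are bounded by a dimensional multiple of those of $\psi$, i.e.\ by a constant; Leibniz's rule applied to the product with $\varphi$ then gives the claimed bound.

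\textbf{Part (ii).} First I would record the identity $K(\bar a,(t,x))=\varphi(2^q\bar a)\,G(\bar a t,x)=\varphi(2^q\bar a)\,G(\Da(t,x))$, immediate from $\Psi_q(\bar a,\cdot)=\varphi(2^q\bar a)\Psi(\bar a,\cdot)$, $\Psi(\bar a,(t,x))=\Phi(\bar a t,x)$ (see \eqref{eq:Psi_q}, \eqref{eq:heatkernel}), and the definition of $G$ in Lemma~\ref{lem:estK} as the spatial periodisation of $\Phi$ cut off by $\zeta$. Consequently $\Kn(\bar a,\x)=\varphi(2^q\bar a)\,\eta(\S^{2^n}\u)\,G(\u)$ with $\u\coloneqq\Da\x=(\bar a t,x)$, and $H_n(\u)\coloneqq\eta(\S^{2^n}\u)G(\u)$ is localised in the parabolic annulus $\|\u\|_\s\sim 2^{-n}$ by the support of $\eta$. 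On that annulus one has $\|\u\|_\s\le 2^{-n+1}\le 2$ (as $n\in\N$), so Lemma~\ref{lem:estK} gives $|\partial_t^{l}\partial_x^{k}G(\u)|\lesssim\|\u\|_\s^{-d-2l-|k|_1}$, and a direct rescaling shows that $\u\mapsto 2^{-nd}H_n(\S^{2^{-n}}\u)$ lies in $C\mathcal{B}$ (uniformly in $n$, $q$) and is supported on positive times. For $\bar a\le 0$ or $\bar a>2$ both sides of the assertion vanish by the support of $\varphi(2^q\cdot)$, so only $0<\bar a\le 2$ matters.

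It then remains to substitute $\x\to\x-\z$, using $\Da(\x-\z)=(\bar a t,x)-(\bar a r,z)$, and to differentiate. Each $\partial_t$ produces a factor $\bar a$ (from the first component of $\Da$) times a factor $2^{2n}$ (from the dilation $\S^{2^n}$), and each $\partial_{x_i}$ a factor $2^n$, so that the $\partial_t^l\partial_x^k$--derivative of $H_n\big(\Da(\x-\z)\big)$ equals $\bar a^{l}\,2^{n(2l+|k|_1)}$ times an anisotropic derivative of $H_n$ of the same type evaluated at $\Da(\x-\z)$; combined with the normalisation of the previous paragraph this yields the prefactor $\bar a^{l}\,2^{n(d+2l+|k|_1)}=\bar a^{l}\,2^{-n(2-2l-|k|_1)}(2^{-n})^{-D}$, exactly matching $2^{-n(2-2l-|k|_1)}\rho^{2^{-n}}_{(\bar a t,x)}(\bar a r,z)$. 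Each $\partial_{\bar a}$ then contributes a factor $\bar a^{-1}$: acting on $\varphi(2^q\bar a)$ it gives $2^q\varphi'(2^q\bar a)=\bar a^{-1}\big(\bar a 2^q\varphi'(2^q\bar a)\big)$, with $\bar a 2^q\varphi'(2^q\bar a)$ again a function of $2^q\bar a$ supported in $(\tfrac12,2)$ (here one uses $\bar a\sim 2^{-q}$ on the support); acting on $G(\bar a(t-r),x-z)$ it gives $(t-r)\partial_0 G=\bar a^{-1}\cdot\bar a(t-r)\,\partial_0 G$, i.e.\ $\bar a^{-1}$ times multiplication of the (unit-ball--supported) profile by a coordinate bounded by $1$, which stays in $C\mathcal{B}$. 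Iterating gives the factor $\bar a^{l-m}$, at the cost of replacing $\rho$ and $\tilde\varphi$ by finite linear combinations of such objects when $m\ge1$, precisely as noted in the footnote to the statement; the ``supported on non-positive times'' property of $\rho$ comes from $\Phi$ (hence $G$) being supported on positive times together with the sign flip $\Da(\x-\z)=-\big((\bar a r,z)-(\bar a t,x)\big)$.

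\textbf{Main obstacle.} The only genuinely technical ingredient is the uniform (in $n$ and $q$) verification that the rescaled and localised heat-kernel profile $2^{-nd}H_n(\S^{2^{-n}}\cdot)$, along with all of its $t,x$-derivatives and coordinate multiples that appear after differentiating in $\x$ and $\bar a$, belongs to $C\mathcal{B}$. This is exactly where Lemma~\ref{lem:estK} enters, together with the fact that the cutoff $\eta$ confines the argument to the regime $\|\u\|_\s\lesssim 1$, so that the singular bound $\|\u\|_\s^{-d-2l-|k|_1}$ — not the additive constant $1$ — is the relevant one and scales correctly against the powers of $2^n$; bookkeeping of these powers and of the $\bar a$-factors is then routine.
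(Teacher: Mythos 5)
Your proposal follows essentially the same route as the paper: in (i) the same explicit product construction with the contraction coming from $\lambda\le r$, and in (ii) the same reduction of $K^{(n)}$ to the periodised heat kernel $G$ of Lemma~\ref{lem:estK}, rescaling the $\eta$-localised profile to a uniformly bounded element of $C\mathcal{B}$ and converting each $\partial_{\bar a}$ into a factor $\bar a^{-1}$ via the support of $\tilde\varphi$ and of the annulus (up to linear combinations for $m\ge1$), with the sign flip accounting for the non-positive-time support. Only fix the notational slip in (i): the profile should be $\rho(\cdot)=\varphi(\cdot)\,(r^{D}\psi_\y^r)(\x+\S^{\lambda}(\cdot))$, not $\S^{1/\lambda}$, so that $r^{-D}\rho_\x^\lambda=\varphi_\x^\lambda\psi_\y^r$ holds and the composed rescaling is $\S^{\lambda/r}$, which is the contraction your argument invokes.
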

\begin{proof}
For \textit{(i)} define $\rho(\z) \coloneqq \varphi(\z) \, \psi(\S^{1/r}(\S^\lambda\z+\x-\y))$.
Then 
\begin{align}
\rho_\x^\lambda(\z) &= \lambda^{-D} \rho(\S^{1/\lambda}(\z-\x)) \\
&= \lambda^{-D} \varphi(\S^{1/\lambda}(\z-\x)) \, \psi(\S^{1/r}(\z-\x+\x-\y)) \\
&= \varphi_\x^\lambda(\z) \, r^D \psi_\y^r(\z) \, .
\end{align}
Furthermore, $\rho$ is smooth and its support is contained in that of $\varphi$, and
\begin{align}
\big|\partial_t^l\partial_x^k \rho(\z)\big| 
&= \Big|\sum_{\substack{l_1+l_2=l \\k_1+k_2=k}} \tbinom{l}{l_1} \tbinom{k}{k_1} 
\partial_t^{l_1}\partial_x^{k_1}\varphi(\z) \, \partial_t^{l_2}\partial_x^{k_2} \psi(\S^{1/r}(\S^\lambda\z+\x-\y)) (\tfrac{\lambda}{r})^{2l_2+|k_2|_1} \Big| \\
&\leq \sum_{\substack{l_1+l_2=l \\k_1+k_2=k}} \tbinom{l}{l_1} \tbinom{k}{k_1} 
=2^{l+|k|_1} \, ,
\end{align}
hence $\rho\in C\mathcal{B}$ for a sufficiently large $C$. 

For \textit{(ii)} recall the kernel $G$ from Lemma~\ref{lem:estK} and $\eta$ from \eqref{eq:splittingk}, 
and define 
$\rho(\z)\coloneqq 2^{-nd} \, \eta(-\z)G(-\S^{2^{-n}}\z)$. 
Then for $\varphi$ from \eqref{eq:decompose_phi>} it holds
\begin{align}
\rho_{(\bar{a}t,x)}^{2^{-n}}(\bar{a}r,z) \varphi(2^q\bar{a})
&= 2^{nD} \rho(\S^{2^n}\Da (\z-\x)) \varphi(2^q\bar{a}) \\
&= 2^{2n} \eta(\S^{2^n}\Da (\x-\z)) G(\Da(\x-\z)) \varphi(2^q\bar{a}) \\
&= 2^{2n} \Kn(\bar{a},\x-\z) \, ,
\end{align}
which proves the claim for $m=l=0$ and $k=0$ with $\tilde\varphi=\varphi$, 
provided $\rho\in C\mathcal{B}$ which we establish next. 
The support of $\rho$ is contained in 
$(-\,\mathrm{supp}\,\eta) \cap ((-\infty,0)\times\R^d)$, it is smooth, and 
\begin{align}
\big|\partial_t^l\partial_x^k \rho(-\z)\big| 
&= 2^{-nd}\Big| \sum_{\substack{l_1+l_2=l \\k_1+k_2=k}} \tbinom{l}{l_1}\tbinom{k}{k_1} 
\partial_t^{l_1}\partial_x^{k_1} \eta(\z) \, \partial_t^{l_2}\partial_x^{k_2} G(\S^{2^{-n}}\z) 2^{-n(2l_2+|k_2|_1)} \Big| \\
&\lesssim 2^{-nd} \!\!\! \sum_{\substack{l_1+l_2=l \\k_1+k_2=k}} \! \tbinom{l}{l_1}\tbinom{k}{k_1} 
\1_{\mathrm{supp}\,\eta}(\z)
(1\!+\!\|\S^{2^{-n}}\!\z\|_\s^{-d-2l_2-|k_2|_1}) 2^{-n(2l_2+|k_2|_1)} \\
&\leq \sum_{\substack{l_1+l_2=l \\k_1+k_2=k}} \tbinom{l}{l_1}\tbinom{k}{k_1} 
\1_{\mathrm{supp}\,\eta}(\z)
(1+\|\z\|_\s^{-d-2l_2-|k_2|_1}) \, , 
\end{align}
where in the first inequality we have used Lemma~\ref{lem:estK} 
and in the second inequality we used $n\in\N$.
Since $\|\z\|_\s \sim1$ on $\mathrm{supp}\,\eta$
we obtain $|\partial_t^l\partial_x^k\rho(\z)|\lesssim 1$ and thus 
$\rho\in C\mathcal{B}$ for a sufficiently large $C$. 
The claim for general $m,l,k$ is a consequence of the already established: 
applying the product and chain rule to the claim without derivatives yields
\begin{align}
&\partial_{\bar{a}}^m\partial_t^l\partial_x^k \Kn(\bar{a},\x-\z) 
= 2^{-2n} \sum_{m_1+m_2+m_3=m} \tfrac{m!}{m_1!m_2!m_3!} \\
&\, \times \tfrac{l!}{(l-m_1)!} \bar{a}^{l-m_1} 
(\partial_t^{l+m_2} \partial_x^k \rho)_{(\bar{a}t,x)}^{2^{-n}}(\bar{a}r,z) 2^{n(2l+2m_2+|k|_1)} (r-t)^{m_2} 
(\partial_{\bar{a}}^{m_3}\tilde\varphi)(2^q\bar{a}) 2^{qm_3} \, .
\end{align}
We conclude by noting that 
$(2^{2n}(r-t))^{m_2}\sim\bar{a}^{-m_2}$ which is a consequence 
of the support of $\rho$ being contained in the support of $\eta$, 
and by noting that $2^{qm_3}\sim\bar{a}^{-m_3}$ which is a consequence 
of the support of $\tilde\varphi$.
\end{proof}

%%%%%%%%%%%%%%%%%%%%%%%%%%%%%%%%%%%%%%%%%%
%%%%%%%%%%%%%%%%%%%%%%%%%%%%%%%%%%%%%%%%%%
\subsection{Reconstruction in an \texorpdfstring{$L^1$}{L1} setting}

We recall here a version of the reconstruction theorem in an $L^1$-setting which we used frequently. 
\begin{theorem}[Reconstruction]\label{thm:reconstruction}
Let $F_\x(\z;\y)$ for 
$\x\in[0,T]\times\R^d$, $\y\in[0,T]\times\R^d$, and $\z\in[0,\infty)\times\R^d$
be jointly $1$-periodic in its spatial arguments, 
i.e.~$F_{\x}(\z;\y) = F_{\x+(0,k)}(\z+(0,k);\y+(0,k))$ for all $k\in\Z^d$. 
Assume that there exist 
a smooth $\varphi$ compactly supported in the past with $\int\varphi\neq0$ and 
$\theta_1,\theta_2\in\R$ with $\theta_1+\theta_2>0$, $\theta_2\geq0$
such that 
\begin{align}
&\int_{D_{\y'}\cap D_{\y''}} d\x \Big| \int_{(0,\infty)\times\R^d} d\z \, 
\big(F_{\x+\y'}(\z;\x+\y'') - F_{\x}(\z;\x+\y'') \big) \varphi_\x^\lambda (\z) \Big| \\
&\leq C \lambda^{\theta_1} (\lambda+\|\y'\|_\s)^{\theta_2} \, ,
\end{align}
for $\y'\in\R\times\R^d$, $\y''\in[0,\infty)\times\R^d$, 
and dyadic\footnote{i.e.~of the form $2^{-n}$ for $n\in\Z$} $\lambda\leq\Lambda$ 
for some dyadic $\Lambda$. 

Then 
\begin{align}
&\int_{D_\y} d\x \Big| \int_{(0,\infty)\times\R^d} d\z \, 
\big( F_{\z}(\z;\x+\S^\vartheta\y) - F_{\x}(\z;\x+\S^\vartheta\y) \big) 
\psi^\Lambda_{\x+\S^\vartheta\y-\bar\x}(\z) \Big| \\
&\lesssim C \Lambda^{\theta_1} (\Lambda+\|\S^\vartheta\y-\bar\x\|_\s)^{\theta_2} 
\end{align}
for $\y\in\R\times\R^d$, $\bar\x\in[0,\infty)\times\R^d$, $\vartheta\in[0,1]$, 
and smooth $\psi$ compactly supported in the past, 
where the implicit constant depends only on $d$ 
and $\|\psi\|_{C^r}$ for the smallest $r\in\N$ with $\theta_1+r>0$. 
The assumption is only used for $\|\y'\|_\s<\vartheta\|\y\|_\s+\|\bar\x\|_\s+3\Lambda$ 
and $\|\y''\|_\s<\|\bar\x\|_\s+3\Lambda$. 
\end{theorem}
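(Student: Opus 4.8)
The plan is to deduce the $L^1$-based reconstruction bound by a standard multi-scale (telescoping) argument, writing the target test function $\psi^\Lambda$ as a sum over dyadic scales $\lambda\le\Lambda$ of increments of mollifications, then invoking the hypothesis at each scale. First I would fix a smooth $\varphi$ compactly supported in the past with $\int\varphi\neq0$ (the one from the assumption), and for a given target kernel $\psi$ supported in the past decompose
\[
\psi^\Lambda_{\x+\S^\vartheta\y-\bar\x}
= \sum_{n\ge 0} \big(\varphi^{\Lambda 2^{-n-1}} - \varphi^{\Lambda 2^{-n}}\big)_{\x+\S^\vartheta\y-\bar\x} * (\text{suitable }\psi)
\]
in the usual way: since $\int\varphi\neq0$, one can write $\psi = \sum_{n\ge0}(\varphi^{1/2}-\varphi)*\chi_n$-type identities so that $\psi^\Lambda$ becomes a telescoping sum of differences of scaled $\varphi$'s convolved with fixed test functions on comparable scales; the point is that each summand is (a multiple of) $\tilde\psi^{\Lambda 2^{-n}}$ for some $\tilde\psi\in C\mathcal B$ supported in the past, plus the coarsest term $\varphi^\Lambda$ itself.

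Next, for each scale $\lambda = \Lambda 2^{-n}$ I would insert the "midpoint'' $F_\x$ and split
\[
F_\z(\z;\x+\S^\vartheta\y) - F_\x(\z;\x+\S^\vartheta\y)
= \sum_{n} \big(\text{increment across one dyadic scale}\big),
\]
where each increment, after smuggling in the appropriate test function $\tilde\psi^{\lambda}$ centered at $\x+\S^\vartheta\y-\bar\x$, is controlled by the hypothesis with $\y' = \z-\x$ ranging over a ball of radius $\lesssim\lambda$ around the center and $\y'' = \S^\vartheta\y-\bar\x$ fixed. This is precisely where the restriction $\|\y'\|_\s < \vartheta\|\y\|_\s+\|\bar\x\|_\s+3\Lambda$ and $\|\y''\|_\s<\|\bar\x\|_\s+3\Lambda$ is used: the support of $\tilde\psi^\lambda$ forces $\z$ within distance $\lambda\le\Lambda$ of $\x+\S^\vartheta\y-\bar\x$, so the relevant $\y'$ and $\y''$ stay in the stated compact range. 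Applying the hypothesis at scale $\lambda$ yields the bound $C\lambda^{\theta_1}(\lambda+\|\S^\vartheta\y-\bar\x\|_\s)^{\theta_2}$ for each term; integrating in $\x$ over $D_\y$ (which only shrinks the domain) and summing the geometric series in $n$ — convergent because $\theta_1 + (\text{number of vanishing moments })r>0$, i.e. we choose $r$ minimal with $\theta_1+r>0$ and use that the telescoped test functions have $r$ vanishing moments up to the relevant order — gives $\sum_{n\ge0} C (\Lambda 2^{-n})^{\theta_1}(\Lambda 2^{-n}+\|\S^\vartheta\y-\bar\x\|_\s)^{\theta_2}\lesssim C\Lambda^{\theta_1}(\Lambda+\|\S^\vartheta\y-\bar\x\|_\s)^{\theta_2}$, since the exponent on $2^{-n}$ is positive after the truncation and we bound $(\lambda+\|\cdot\|_\s)^{\theta_2}\le(\Lambda+\|\cdot\|_\s)^{\theta_2}$ using $\theta_2\ge0$ and $\lambda\le\Lambda$.

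The coarsest term (the $\varphi^\Lambda$ contribution) is handled directly by the hypothesis with $\lambda=\Lambda$, and the dependence of the implicit constant on $\|\psi\|_{C^r}$ enters only through the number of derivatives needed to perform the telescoping decomposition of $\psi$ into $C\mathcal B$ pieces. I expect the main obstacle to be bookkeeping rather than conceptual: carefully arranging the telescoping so that every intermediate test function is genuinely supported in the past (so that the time-support constraints $r<t$ etc.\ propagate, keeping us inside $D_\y$ and the allowed range of $\y',\y''$), and verifying that the shifted-center test functions $\tilde\psi_{\x+\S^\vartheta\y-\bar\x}^\lambda$ match exactly the form appearing in the hypothesis (with $\y'$ playing the role of the integration-variable offset $\z-\x$ and $\y''$ the fixed offset $\S^\vartheta\y-\bar\x$). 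Once the decomposition is set up cleanly, the estimate itself is the routine summation of a geometric series, exactly as in Step~4 of the proofs of Lemmas~\ref{lem:integration2} and \ref{lem:reconstruction0}.
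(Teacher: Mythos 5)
Your multiscale outline points in the right direction, but as set up it has a genuine gap. At every scale $\lambda=\Lambda 2^{-n}$ you compare the diagonal germ directly to the fixed outer basepoint $\x$, i.e.\ you invoke the hypothesis with the \emph{large} shift $\y'$ of size $\approx\|\S^\vartheta\y-\bar\x\|_\s+\lambda$, so each scale contributes $C\lambda^{\theta_1}(\Lambda+\|\S^\vartheta\y-\bar\x\|_\s)^{\theta_2}$; since $\theta_1$ is negative in all relevant applications (e.g.\ $\theta_1=\alpha-2$), the geometric series you display, $\sum_n(\Lambda2^{-n})^{\theta_1}(\Lambda2^{-n}+\|\S^\vartheta\y-\bar\x\|_\s)^{\theta_2}$, diverges. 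Your proposed rescue --- ``the telescoped test functions have $r$ vanishing moments'' --- does not work as stated: a difference of two mollifications of the same $\varphi$ at adjacent scales has only its zeroth moment vanishing, and, more importantly, vanishing moments of a fine-scale test piece cannot be played off against a germ difference $F_\z-F_\x$ with a large basepoint shift; the gain $(\lambda/\Lambda)^r$ must come from pairing a specially constructed function with many vanishing moments against the $C^r$-smooth outer kernel $\psi^\Lambda$ (a scalar convolution estimate), not against the germ. A telltale symptom is that your argument never uses the assumption $\theta_1+\theta_2>0$, which is indispensable: the core of reconstruction is that at scale $\lambda$ one compares germs at \emph{nearby} basepoints (distance $\lesssim\lambda$) tested at scale $\lambda$, giving $C\lambda^{\theta_1}(2\lambda)^{\theta_2}=C\,2^{\theta_2}\lambda^{\theta_1+\theta_2}$, and it is precisely $\theta_1+\theta_2>0$ that makes these contributions summable.

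For comparison, the paper's proof (adapting Proposition~4.10 of Broux--Lee) splits $\big(F_\cdot(\,\cdot\,;\x+\S^\vartheta\y)-F_\x(\,\cdot\,;\x+\S^\vartheta\y)\big)(\psi^\Lambda_{\x+\S^\vartheta\y-\bar\x})$ into four terms $a,b,c,d$ built from $\hat\varphi$ (a linear combination of rescalings of $\varphi$, still supported in the past) and $\check\varphi$: $a$ and $b$ live at the single scale $\Lambda$ and are handled directly by the hypothesis; $c$ collects the near-basepoint increments over scales $\lambda\leq\Lambda$, summed using $\theta_1+\theta_2>0$; and $d$ collects the far-basepoint increments at fine scales, which are damped by $|\check\varphi^\lambda*\psi^\Lambda|\lesssim\Lambda^{-r-D}\lambda^r$ and summed using $\theta_1+r>0$ --- this is the only place where $\|\psi\|_{C^r}$ enters. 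On top of this come the $L^1_\x$ manipulations (Minkowski, changes of variables, spatial periodicity) and the time-support bookkeeping that keeps all arguments of $F$ in its domain and yields the stated ranges of $\y',\y''$; you correctly anticipated that part. To repair your proof you would need to (i) route the comparison through intermediate basepoints that move with the mollification scale, so that $\theta_1+\theta_2>0$ can actually be used, and (ii) isolate the cross terms carrying the large basepoint shift and damp them by a construction with arbitrarily many vanishing moments paired against $\psi^\Lambda$ --- neither of which is furnished by telescoping $\psi^\Lambda$ with differences $\varphi^{\lambda/2}-\varphi^{\lambda}$ alone.
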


\begin{proof}
Let us first mention that the definition of $D_\y$ (see Theorem~\ref{thm:main}) ensures that all arguments of 
$F$ in the assumption are contained in the domain of $F$. 
To check that all arguments of $F$ in the conclusion are contained in the domain of $F$ we argue as follows: 
first, $\x\in D_\y$ implies that the time component of $\x+\S^\vartheta\y$ is contained in $[0,T]$ for all $\vartheta\in[0,1]$; 
furthermore, $F_\cdot(\z;\x+\S^\vartheta\y)$ vanishes by assumption 
unless $\z$ has a non-negative time component, 
and $\Psi_{\x+\S^\vartheta\y-\bar\x}(\z)$ vanishes by assumption unless $\z-\x-\S^\vartheta\y+\bar\x$ has a negative time component which by $\x\in D_\y$ and $\bar\x$ having a non-negative time component implies that 
also the time component of $\z$ is contained in $[0,T]$. 

We now turn to the reconstruction proof proper. 
A concise proof of the reconstruction theorem in a Besov setting can be found in \cite{BL23}. 
We are not precisely in their setup 
since we deal with time components in $[0,T]$, 
have the additional dependence on $\x+\S^\vartheta\y$ 
(which is not merely an additional parameter as we integrate over $\x$), 
and work with kernels only looking into the past. 
We thus briefly mention how to adapt the proof of \cite[Proposition~4.10]{BL23}, 
which is the main step in the proof of the reconstruction bound we want to obtain. 

In a first step\footnote{recall that we write $F(\varphi)$ short for 
$\int_{(0,\infty)\times\R^d}d\z\,F(\z)\varphi(\z)$} 
$(F_\cdot(\,\cdot\,;\x+\S^\vartheta\y)-F_\x(\,\cdot\,;\x+\S^\vartheta\y))(\psi_{\x+\S^\vartheta\y-\bar\x}^\Lambda)$ is rewritten
as a sum of four terms $a,b,c,d$, 
the first one of which is given by 
\begin{equation}
a(\x) \coloneqq
\int \!d\z' \int \!d\x' \big( F_{\z'}(\,\cdot\,;\x+\S^\vartheta\y)-F_{\x'}(\,\cdot\,;\x+\S^\vartheta\y) \big) 
(\hat{\varphi}_{\x'}^\Lambda) 
\hat{\varphi}_{\z'}^{2\Lambda}(\x') \psi_{\x+\S^\vartheta\y-\bar\x}^\Lambda(\z') \, .
\end{equation}
We neglect the remaining three terms for now and get back to them below. 
Here, $\hat\varphi\in\mathcal{B}$ is defined in \cite[Proposition~5.4]{BL23} as a linear combination of rescaled versions of $\varphi$ and is thus supported in the past. 
The integral over $\z'$ restricts effectively to the support 
$B_\s^-(\x+\S^\vartheta\y-\bar\x, \Lambda)$ of $\psi_{\x+\S^\vartheta\y-\bar\x}^\Lambda$, 
and the integral over $\x'$ restricts effectively to the support 
$B_\s^-(\z', 2\Lambda)$ of $\varphi_{\z'}^{2\Lambda}$, 
where $B_\s^-(\x,R)$ denotes the parabolic ball with radius $R$ centered at $\x$ looking only into the past. 
Using the triangle inequality and bounding the suprema of $\hat\varphi$ and $\psi$ by $1$ we obtain 
\begin{equation}
\begin{split}
|a(\x)| \lesssim 
\Lambda^{-2D} \int_{B_\s^-(\x+\S^\vartheta\y-\bar\x,\Lambda)} d\z' 
&\int_{B_\s^-(\z',2\Lambda)} d\x' \, \\
&\big| \big( F_{\z'}(\,\cdot\,;\x+\S^\vartheta\y) 
- F_{\x'}(\,\cdot\,;\x+\S^\vartheta\y)\big) (\hat\varphi_{\x'}^\Lambda) \big| \, .
\end{split}
\end{equation}
Changing variables according to $\tilde\x=-\x'+\z'$ and $\tilde\z=\z'-(\x+\S^\vartheta\y-\bar\x)$ yields
\begin{equation}
\begin{split}
|a(\x)| &\lesssim 
\Lambda^{-2D} \int_{B_\s^-(0,\Lambda)} d\tilde\z \int_{-B_\s^-(0,2\Lambda)} d\tilde\x \\
&\ \big| \big( F_{\tilde\z+\x+\S^\vartheta\y-\bar\x}(\cdot;\x\!+\!\S^\vartheta\y) - F_{\tilde\z+\x+\S^\vartheta\y-\bar\x-\tilde\x}(\cdot;\x\!+\!\S^\vartheta\y)\big) (\hat\varphi_{\tilde\z+\x+\S^\vartheta\y-\bar\x-\tilde\x}^\Lambda) \big| .
\end{split}
\end{equation}
By Minkowski's inequality 
\begin{align}
\int_{D_\y} d\x \, |a(\x)| 
&\lesssim \Lambda^{-2D} 
\int_{B_\s^-(0,\Lambda)} d\tilde\z 
\int_{-B_\s^-(0,2\Lambda)} d\tilde\x 
\int_{D_\y+\tilde\z+\S^\vartheta\y-\bar\x-\tilde\x} d\tilde{\mathbf{w}} \\
&\quad\big| \big( F_{\tilde{\mathbf{w}}+\tilde\x}(\,\cdot\,;\tilde{\mathbf{w}}-\tilde\z+\bar\x+\tilde\x) 
- F_{\tilde{\mathbf{w}}}(\,\cdot\,;\tilde{\mathbf{w}}-\tilde\z+\bar\x+\tilde\x) \big) 
(\hat\varphi_{\tilde{\mathbf{w}}}^\Lambda) \big| \, ,
\end{align}
where we changed variables according to $\tilde{\mathbf{w}}=\tilde\z+\x+\S^\vartheta\y-\bar\x-\tilde\x$.

To apply the assumption 
(with $\y'=\tilde\x$ and $\y''=-\tilde\z+\bar\x+\tilde\x$) 
we verify that all arguments of $F$ belong to its domain. 
First, note that the time component of 
$\tilde{\mathbf{w}}-\tilde\z+\bar\x+\tilde\x\in D_\y+\S^\vartheta\y$ 
is contained in $[0,T]$ by definition of $D_\y$. 
We now argue that in the above integral effectively $\tilde{\mathbf{w}}\in D_{\tilde\x}$ 
and first focus on the time variable. 
The implicit $(\cdot)$-variable of $F$ has effectively a non-negative time component by assumption, 
and since $\hat\varphi$ is supported in the past 
this transfers to $\tilde{\mathbf{w}}$ having effectively a non-negative time component. 
Furthermore, $\tilde\z\in B_\s^-(0,\Lambda)$ has a negative time component, 
$-\bar\x$ has by assumption a non-positive time component, 
and $-\tilde\x\in B_\s^-(0,2\Lambda)$ has a negative time component as well; 
hence $\tilde{\mathbf{w}}\in D_\y+\tilde\z+\S^\vartheta\y-\bar\x-\tilde\x$ has a time component $\leq T$. 
Once more since $\tilde\x$ has a positive time component 
also $\tilde{\mathbf{w}}+\tilde\x$ has a positive time component, 
and by $\tilde{\mathbf{w}}+\tilde\x\in D_\y+\tilde\z+\S^\vartheta\y-\bar\x$ 
and $\tilde\z-\bar\x$ having a negative time component 
we deduce that $\tilde{\mathbf{w}}+\tilde\x$ has a time component $\leq T$.
Altogether the time components of $\tilde{\mathbf{w}}$ and $\tilde{\mathbf{w}}+\tilde\x$ 
are contained in $[0,T]$. 
We turn to the spatial variables, 
where we remark that the function 
$\tilde{\mathbf{w}}
\mapsto 
F_{\tilde{\mathbf{w}}+\tilde\x}(\,\cdot\,;\tilde{\mathbf{w}}+\tilde\y) 
(\varphi_{\tilde{\mathbf{w}}})$ 
is periodic in space by assumption (for all $\tilde\x$, $\tilde\y$, and $\varphi$). 
Thus shifting the torus $\T^d$ by the spatial components of $\tilde\z+\S^\vartheta\y-\bar\x-\tilde\x$ 
has no effect. 
Altogether this yields the desired $\tilde{\mathbf{w}}\in D_{\tilde\x}$. 

The assumption thus applies and yields the estimate
\begin{equation}
\int_{D_\y} d\x \, |a(\x)| 
\lesssim \Lambda^{-2D} 
\int_{B_\s^-(0,\Lambda)} d\tilde\z 
\int_{-B_\s^-(0,2\Lambda)} d\tilde\x \, 
C \Lambda^{\theta_1} (\Lambda+\|\tilde\x\|_\s)^{\theta_2}
\lesssim C \Lambda^{\theta_1+\theta_2} \, .
\end{equation}

\medskip

We turn to the remaining three terms $b,c,d$. 
The first of them is given by 
\begin{equation}
b(\x)\coloneqq
\int \! d\z' \int\! d\x' \, \big(F_{\x'}(\,\cdot\,;\x+\S^\vartheta\y) - F_{\x}(\,\cdot\,;\x+\S^\vartheta\y)\big) 
(\hat\varphi_{\x'}^\Lambda) \hat\varphi_{\z'}^{2\Lambda}(\x') \psi_{\x+\S^\vartheta\y-\bar\x}^\Lambda(\z') \, .
\end{equation}
The two integrals effectively restrict to the same sets as for $a$. 
As for $a$ we bound the suprema of $\hat\varphi$ and $\psi$ by $1$ and note that 
$B_\s^-(\z',2\Lambda)\subseteq B_\s^-(\x+\S^\vartheta\y-\bar\x,3\Lambda)$ 
for $\z'\in B_\s^-(\x+\S^\vartheta\y-\bar\x,\Lambda)$, so that we obtain 
\begin{equation}
|b(\x)| \lesssim
\Lambda^{-D} \int_{B_\s^-(\x+\S^\vartheta\y-\bar\x,3\Lambda)} d\x' \, 
\big| \big( F_{\x'}(\,\cdot\,;\x+\S^\vartheta\y) - F_{\x}(\,\cdot\,;\x+\S^\vartheta\y) \big) (\hat\varphi_{\x'}^\Lambda) \big| \, .
\end{equation}
Changing variables according to $\tilde\x=-\x'+\x+\S^\vartheta\y-\bar\x$ yields
\begin{equation}
\begin{split}
|b(\x)| &\lesssim
\Lambda^{-D} \int_{-B_\s^-(0,3\Lambda)} d\tilde\x \\
&\quad
\big| \big( F_{\x+\S^\vartheta\y-\bar\x-\tilde\x}(\,\cdot\,;\x+\S^\vartheta\y) - F_{\x}(\,\cdot\,;\x+\S^\vartheta\y) \big) (\hat\varphi_{\x+\S^\vartheta\y-\bar\x-\tilde\x}^\Lambda) \big| \, .
\end{split}
\end{equation}
By Minkowski's inequality 
\begin{align}
\int_{D_\y} d\x \, |b(\x)| 
&\lesssim \Lambda^{-D} \int_{-B_\s^-(0,3\Lambda)} d\tilde\x
\int_{D_\y+\S^\vartheta\y-\bar\x-\tilde\x} d\tilde{\mathbf{w}} \\
&\quad\big| \big( F_{\tilde{\mathbf{w}}}(\,\cdot\,;\tilde{\mathbf{w}}+\bar\x+\tilde\x) - F_{\tilde{\mathbf{w}}-\S^\vartheta\y+\bar\x+\tilde\x}(\,\cdot\,;\tilde{\mathbf{w}}+\bar\x+\tilde\x) \big) (\hat\varphi_{\tilde{\mathbf{w}}}^\Lambda) \big| \, ,
\end{align}
where we changed variables according to $\tilde{\mathbf{w}}=\x+\S^\vartheta\y-\bar\x-\tilde\x$. 
The analogous arguments as for $a$ show that the assumption 
(with $\y'=-\S^\vartheta\y+\bar\x+\tilde\x$ and $\y''=\bar\x+\tilde\x$)
can be applied which yields
\begin{align}
\int_{D_\y} d\x \, |b(\x)| 
&\lesssim
\Lambda^{-D} \int_{-B_\s^-(0,3\Lambda)} d\tilde\x \, 
C\Lambda^{\theta_1} (\Lambda+\|\S^\vartheta\y-\bar\x-\tilde\x\|_\s)^{\theta_2} \\
&\lesssim C \Lambda^{\theta_1} (\Lambda+\|\S^\vartheta\y-\bar\x\|_\s)^{\theta_2} \, .
\end{align}

We turn to $c$ which is given by 
\begin{equation}
c(\x)\coloneqq 
\!\sum_{\lambda\leq\Lambda}\! \int\! d\z'\!\! \int \!d\x' \big( F_{\z'}(\cdot;\x+\S^\vartheta\y) - F_{\x'}(\cdot;\x+\S^\vartheta\y)\big)
(\hat\varphi_{\x'}^{\lambda}) \check\varphi_{\z'}^\lambda(\x') \psi_{\x+\S^\vartheta\y-\bar\x}^\Lambda(\z') ,
\end{equation}
where the sum is taken over dyadic $\lambda$, and 
$\check\varphi\in\mathcal{B}$ is defined in \cite[Section~4.3]{BL23} as a linear combination of rescaled versions of $\hat\varphi$ and is thus also supported in the past. 
The first integral restricts as for $a,b$ to $\z'\in B_\s^-(\x+\S^\vartheta\y-\bar\x,\Lambda)$, 
whereas the second integral restricts to $\x'\in B_\s^-(\z',\lambda)$. 
Bounding the suprema of $\check\varphi$ and $\psi$ by $1$ yields
\begin{equation}
\begin{split}
|c(\x)| \lesssim 
\Lambda^{-D} &\sum_{\lambda\leq\Lambda} \lambda^{-D} 
\int_{B_\s^-(\x+\S^\vartheta\y-\bar\x,\Lambda)}d\z' \int_{B_\s^-(\z',\lambda)} d\x' \\
&\quad
\big| \big( F_{\z'}(\,\cdot\,;\x+\S^\vartheta\y) - F_{\x'}(\,\cdot\,;\x+\S^\vartheta\y) \big)
(\hat\varphi_{\x'}^\lambda) \big| \, .
\end{split}
\end{equation}
Changing variables as for $a$ yields
\begin{align}
\int_{D_\y} d\x \, |c(\x)| 
&\lesssim \Lambda^{-D} \sum_{\lambda\leq\Lambda} \lambda^{-D} 
\int_{B_\s^-(0,\Lambda)} d\tilde\z \int_{-B_\s^-(0,\lambda)} d\tilde\x
\int_{D_\y+\tilde\z+\S^\vartheta\y-\bar\x-\tilde\x} d\tilde{\mathbf{w}} \\
&\quad\big| \big( F_{\tilde{\mathbf{w}}+\tilde\x}(\,\cdot\,;\tilde{\mathbf{w}}-\tilde\z+\bar\x+\tilde\x) - 
F_{\tilde{\mathbf{w}}}(\,\cdot\,;\tilde{\mathbf{w}}-\tilde\z+\bar\x+\tilde\x) \big)
(\hat\varphi_{\tilde{\mathbf{w}}}^\lambda) \big| \, .
\end{align}
Again the assumption 
(with $\y'=\tilde\x$ and $\y''=-\tilde\z+\bar\x+\tilde\x$)
can be applied to the effect of 
\begin{align}
\int_{D_\y} d\x \, |c(\x)| 
&\lesssim \Lambda^{-D} \sum_{\lambda\leq\Lambda} \lambda^{-D} 
\int_{B_\s^-(0,\Lambda)} d\tilde\z \int_{-B_\s^-(0,\lambda)} d\tilde\x \,
C \lambda^{\theta_1} (\lambda+\|\tilde\x\|_\s)^{\theta_2} \\
&\lesssim C \sum_{\lambda\leq\Lambda} \lambda^{\theta_1+\theta_2} 
\lesssim C \Lambda^{\theta_1+\theta_2} \, 
\end{align}
where in the last inequality we have used that by assumption $\theta_1+\theta_2>0$. 

Finally we turn to $d$, which is given by 
\begin{equation}
d(\x) \coloneqq
\!\sum_{\lambda\leq\Lambda}\! \int\! d\z'\!\! \int\! d\x' 
\big( F_{\x'}(\cdot;\x+\S^\vartheta\y) - F_\x(\cdot;\x+\S^\vartheta\y) \big)(\hat\varphi_{\x'}^\lambda)
\check\varphi_{\z'}^\lambda(\x') \psi_{\x+\S^\vartheta\y-\bar\x}^\Lambda(\z') \, .
\end{equation}
The two integrals effectively restrict to the same sets as for $c$. 
Changing variables according to 
$\tilde\z=\z'-\x-\S^\vartheta\y+\bar\x$ and then $\tilde\x=-\x'+\x+\S^\vartheta\y-\bar\x$, yields 
\begin{align}
d(\x) &= 
\sum_{\lambda\leq\Lambda} \int_{B_\s^-(0,\Lambda)} d\tilde\z 
\int_{-B_\s^-(\tilde\z,\lambda)} d\tilde\x \\
&\quad\big(F_{\x\!+\!\S^\vartheta\y-\bar\x-\tilde\x}(\,\cdot\,;\x\!+\!\S^\vartheta\y) - F_{\x}(\,\cdot\,;\x+\S^\vartheta\y) \big)(\hat\varphi_{\x+\S^\vartheta\y-\bar\x-\tilde\x}^\lambda) 
\check\varphi_{\tilde\z}^\lambda(-\tilde\x) \psi^\Lambda(\tilde\z) \, .
\end{align}
By the support of $\check\varphi$ we can replace the integration domain from 
$\tilde\x\in-B_\s^-(\tilde\z,\lambda)$ to $\tilde\x\in -B_\s^-(0,2\Lambda)$, so that
\begin{align}
d(\x) &= 
\sum_{\lambda\leq\Lambda} 
\int_{-B_\s^-(0,2\Lambda)} d\tilde\x \\
&\quad
\big(F_{\x+\S^\vartheta\y-\bar\x-\tilde\x}(\,\cdot\,;\x\!+\!\S^\vartheta\y) - F_{\x}(\,\cdot\,;\x\!+\!\S^\vartheta\y) \big)(\hat\varphi_{\x+\S^\vartheta\y-\bar\x-\tilde\x}^\lambda) 
(\check\varphi^\lambda * \psi^\Lambda)(-\tilde\x) .
\end{align}
Using Minkowski's inequality, 
$|\check\varphi^\lambda*\psi^\Lambda|\lesssim \Lambda^{-r-D} \lambda^r$ 
(see \cite[Lemma~4.8]{BL23}), 
and changing variables according to $\tilde{\mathbf{w}}=\x+\S^\vartheta\y-\bar\x-\tilde\x$ yields
\begin{align}
\int_{D_\y}d\tilde\x \, |d(\x)| 
&\lesssim \Lambda^{-r-D} \sum_{\lambda\leq\Lambda} \lambda^r 
\int_{-B_\s^-(0,2\Lambda)} d\tilde\x \int_{D_\y+\S^\vartheta\y-\bar\x-\tilde\x} d\tilde{\mathbf{w}} \\ 
&\quad\big| \big( F_{\tilde{\mathbf{w}}}(\,\cdot\,;\tilde{\mathbf{w}}+\bar\x+\tilde\x) 
- F_{\tilde{\mathbf{w}}-\S^\vartheta\y+\bar\x+\tilde\x}(\,\cdot\,;\tilde{\mathbf{w}}+\bar\x+\tilde\x) 
(\hat\varphi_{\tilde{\mathbf{w}}}^\lambda) \big| \, .
\end{align}
As for $b$ we argue that we can apply the assumption 
(with $\y'=-\S^\vartheta\y+\bar\x+\tilde\x$ and $\y''=\bar\x+\tilde\x$) 
and obtain 
\begin{align}
\int_{D_\y}d\tilde\x \, |d(\x)| 
&\lesssim \Lambda^{-r-D} \sum_{\lambda\leq\Lambda} \lambda^r 
\int_{-B_\s^-(0,2\Lambda)} d\tilde\x \, 
C \lambda^{\theta_1} (\lambda+\|\S^\vartheta\y-\bar\x-\tilde\x\|_\s)^{\theta_2} \\
&\lesssim C \Lambda^{-r} (\Lambda+\|v\y-\bar\x\|_\s)^{\theta_2} 
\sum_{\lambda\leq\Lambda} \lambda^{\theta_1+r} 
\lesssim C \Lambda^{\theta_1} (\Lambda+\|\S^\vartheta\y-\bar\x\|_\s)^{\theta_2} \, ,
\end{align}
where in the last inequality we used that by assumption $\theta_1+r>0$. 
\end{proof}

%%%%%%%%%%%%%%%%%%%%%%%%%%%%%%%%%%%%%%%%%%
%%%%%%%%%%%%%%%%%%%%%%%%%%%%%%%%%%%%%%%%%%
\subsection{Proof of Lemma \texorpdfstring{\ref{lem:absorb}}{back to mathcal(U)}}\label{app:absorb}

Before we give the proof of Lemma~\ref{lem:absorb} 
we prove the following auxiliary lemma.

\begin{lemma}\label{lem:absorb_aux}
Let $\alpha\in(2/3,1)$, $\beta\in(2-\alpha,2\alpha]$, $0<R,T\leq1$, 
and recall $\nu$ defined in \eqref{eq:gubinelliderivative} for some sufficiently smooth $u$.
Then 
\begin{enumerate}[label=(\roman*)]
\item \label{ita:u_alpha}
for $f\in C^1(\R)$
\begin{equation}
[f(u)]_{B_{1,\infty}^\alpha}
\leq [f(\mathcal{U})]_{B_{1,\infty}^{\beta}} R^{\beta-\alpha} 
+ \|f'\sigma/a^{\mathfrak{e}(\lollysmall)}\|_\infty T [\lolly]_\alpha 
+ \|f'(u)\nu\|_{L^1} R^{1-\alpha} \, ,
\end{equation}
\item \label{ita:f(U)_2alpha}
for $f\in C^2(\R)$ with $f(v)=const$ for $|v|\geq C$
\begin{align}
&[f(\mathcal{U})]_{B_{1,\infty}^{\beta}} \\
&\leq \|f'\|_\infty [\mathcal{U}]_{B_{1,\infty}^{\beta}} \\
&\,+ \|f''\|_\infty \|\sigma/a^{\mathfrak{e}(\lollysmall)}\|_\infty [\lolly]_\alpha 
\big(R^{2\alpha-\beta} \|\sigma/a^{\mathfrak{e}(\lollysmall)}\|_\infty T [\lolly]_\alpha 
+ \|\1_{|u|\leq2C}\nu\|_{L^1} R^{1+\alpha-\beta} \big) \\
&\,+ R^{2\alpha-\beta} 2^{2-2\alpha} \|f'\|_\infty^{2-2\alpha} \|f''\|_\infty^{2\alpha-1} \|\1_{|u|\leq2C}\nu\|_{L^1}^{2-2\alpha} \|\1_{|u|\leq2C}\nu\|_{L^2}^{2(2\alpha-1)} \\
&\,+ 4/C \|f\|_\infty [\mathcal{U}]_{B_{1,\infty}^\beta} \\
&\,+\frac{2^{1+3\beta/(2\alpha)}}{C^{\beta/\alpha}} \|f\|_\infty 
\big(T [\lolly]_\alpha^{\beta/\alpha} \|\sigma\!/\!a^{\mathfrak{e}(\lollysmall)}\|_\infty^{\beta/\alpha} 
+ T^{1-\beta/(2\alpha)} \|\1_{|u|\leq C}\nu\|_{L^2}^{\beta/\alpha} R^{\beta/\alpha-\beta} \big)\, ,
\end{align}
%
%\item \label{ita:nu_0}
%$\|\nu\|_{L^1} 
%\leq \tfrac{\sqrt{d}}{\epsilon} \big( 2\|u\|_\infty T + \|\sigma/a^{\mathfrak{e}(\lollysmall)}\|_\infty T [\lolly]_{\alpha} \epsilon^\alpha + [\mathcal{U}]_{B_{1,\infty}^{\beta}}\epsilon^{\beta} \big)$
%for all $0<\epsilon\leq R$, 
%
\item \label{ita:nu_2alpha-1}
for $f\in C^2(\R)$ and $\Theta$ from \eqref{eq:Theta}
\begin{align}
[f'(u)\nu]_{B_{1,\infty}^{\beta-1}} 
&\leq \sqrt{d} \big(6 [f(\mathcal{U})]_{B_{1,\infty}^{\beta}} \\
&\,+ R^{2\alpha-\beta} (\|(f'\!\sigma)'/a^{\mathfrak{e}(\lollysmall)}\|_\infty + \|f'\!\sigma/a^{1+\mathfrak{e}(\lollysmall)}\|_\infty) [\Theta(u)]_{B_{1,\infty}^\alpha} [\lolly]_\alpha\big) \, .
\end{align}
\end{enumerate}
\end{lemma}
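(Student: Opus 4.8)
The goal is to prove Lemma~\ref{lem:absorb_aux}, which bounds three basepoint-dependent seminorms of $u$ (respectively $f(u)$ and $f'(u)\nu$) by $[\mathcal{U}]_{B_{1,\infty}^\beta}$ and explicit constants involving $a$, $\sigma$, and the data. The unifying idea in all three parts is the same: the object $\mathcal{U}$ records the local expansion of $u$ against $\sigma(u)\Pi[\lolly]$ and $\nu\cdot y$, so any quantity whose increment can be written in terms of that expansion plus a Taylor remainder is controllable. Part \ref{ita:u_alpha} is the plainest: I would write
\[
f(u(\x+\y))-f(u(\x)) = \big(f(u(\x+\y))-f(u(\x))-f'(u(\x))(\sigma(u(\x))\Pi_\x[\lolly;\x](\x+\y)+\nu(\x)\cdot y)\big) + f'(u(\x))\sigma(u(\x))\Pi_\x[\lolly;\x](\x+\y) + f'(u(\x))\nu(\x)\cdot y,
\]
integrate over $D_\y$, and apply the triangle inequality. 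The first bracket is exactly $\|\y\|_\s^\beta[f(\mathcal{U})]_{B_{1,\infty}^\beta}$ up to the factor $R^{\beta-\alpha}$ coming from $\|\y\|_\s\le R$; the second is estimated by the model bound \eqref{eq:lollybound} (giving $\|f'\sigma/a^{\mathfrak{e}(\lollysmall)}\|_\infty T [\lolly]_\alpha\|\y\|_\s^\alpha$); the third is $\|f'(u)\nu\|_{L^1}|y|\le\|f'(u)\nu\|_{L^1}R^{1-\alpha}\|\y\|_\s^\alpha$. Dividing by $\|\y\|_\s^\alpha$ and taking the supremum gives \ref{ita:u_alpha}.

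For part \ref{ita:f(U)_2alpha} the plan is to expand $f(u)$ to second order around the basepoint. Writing $\bar u(\x):=u(\x)$, $\mathcal{P}_\x(\x+\y):=\sigma(u(\x))\Pi_\x[\lolly;\x](\x+\y)+\nu(\x)\cdot y$, Taylor's theorem gives
\[
f(u(\x+\y)) - f(u(\x)) - f'(u(\x))\mathcal{P}_\x(\x+\y) = f'(u(\x))\big(u(\x+\y)-u(\x)-\mathcal{P}_\x(\x+\y)\big) + \tfrac12 f''(\xi_\x)\big(u(\x+\y)-u(\x)\big)^2
\]
for some intermediate point, and then the ``model polynomial'' of $f(u)$ at $\x$ must be identified: its $\lolly$-coefficient is $f'(u(\x))\sigma(u(\x))$ and its $\x$-linear coefficient is $f'(u(\x))\nu(\x)$. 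The first right-hand term is $\le\|f'\|_\infty$ times the $\mathcal{U}$-remainder. The quadratic term $(u(\x+\y)-u(\x))^2$ is split, via part \ref{ita:u_alpha}'s decomposition, into the contribution from $\Pi[\lolly]$ (bounded using \eqref{eq:lollybound}) and from $\nu\cdot y$; for the latter I would interpolate $\int_{D_\y}|\nu|^2\,|\y|^{?}$ between $L^1$ and $L^2$ control of $\nu$, which produces the exponents $2-2\alpha$ and $2(2\alpha-1)$ in the statement. The last two lines of \ref{ita:f(U)_2alpha}, with the $1/C$ and $1/C^{\beta/\alpha}$ factors and $\|f\|_\infty$, come from the fact that on the region where $|u|$ is large $f$ is constant so $f(u(\x+\y))-f(u(\x))$ vanishes unless $u$ has a large increment; Chebyshev/Markov on the set $\{|u|\ge C\}$ together with the $\alpha$-regularity of $u$ (itself controlled via \ref{ita:u_alpha}) yields those terms. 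The main obstacle here is bookkeeping: getting the precise constants and deciding on which region to use which of the three estimates — this is the ``standard but tedious'' part advertised in the text.

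For part \ref{ita:nu_2alpha-1} the strategy is to recover an increment of $\nu$ from increments of the $\mathcal{U}$-type quantity. From the definition \eqref{eq:gubinelliderivative}, $\nu(\x)$ is the spatial gradient at $\z=\x$ of $u(\z)-u(\x)-\sigma(u(\x))\Pi_\x[\lolly;\x](\z)$; evaluating the $\mathcal{U}^{(f)}$-remainder at two basepoints $\x$ and $\x+\y$ and at a common active point, and using the transformation rule \eqref{eq:trafololly} for $\Pi[\lolly]$, one sees that $f'(u(\x+\y))\nu(\x+\y)-f'(u(\x))\nu(\x)$ equals a combination of: (a) the second-order difference of $f(\mathcal{U})$ at the two basepoints — which is bounded by $[f(\mathcal{U})]_{B_{1,\infty}^\beta}$ (the constant $6$ reflecting the three evaluation points entering by the triangle inequality), and (b) a term coming from the change of the $\lolly$-coefficient $f'(u)\sigma(u)$ across $\y$, which when differentiated in $z$ brings down a factor $\nabla\Pi[\lolly]$ — estimated by \eqref{eq:lollybound} (hence the $1/a^{1+\mathfrak{e}(\lollysmall)}$ from the $z$-derivative) times the $\alpha$-Hölder control of $\Theta(u)$. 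To make ``differentiate the remainder in $z$'' rigorous one tests against a mean-zero bump $\zeta_x^\lambda$, uses that $\nabla_z$ of a degree-$\le1$ polynomial is constant, and lets $\lambda\to0$; alternatively one invokes the argument already used in Step~2 of Lemma~\ref{lem:equivalence}. The $\sqrt d$ accounts for passing between the vector $\nu$ and its components. I expect this last identification — cleanly extracting $\nabla_z$ of the remainder as a genuine increment estimate rather than a pointwise one — to be the subtle point, but it is structurally the same manipulation as in the cited parts of the paper, so no essentially new idea is needed.

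\begin{proof}[Proof sketch of Lemma~\ref{lem:absorb_aux}]
\textbf{(i).} Decompose, for $\x\in D_\y$,
\begin{align}
f(u(\x+\y))-f(u(\x))
&= \big( f(u(\x+\y))-f(u(\x))-f'(u(\x))(\sigma(u(\x))\Pi_\x[\lolly;\x](\x+\y)+\nu(\x)\cdot y)\big) \\
&\quad + f'(u(\x))\sigma(u(\x))\Pi_\x[\lolly;\x](\x+\y) + f'(u(\x))\nu(\x)\cdot y \, .
\end{align}
Integrate over $D_\y$, apply the triangle inequality, and bound the three pieces by $[f(\mathcal{U})]_{B_{1,\infty}^\beta}\|\y\|_\s^\beta$, by $\|f'\sigma/a^{\mathfrak{e}(\lollysmall)}\|_\infty T[\lolly]_\alpha\|\y\|_\s^\alpha$ using \eqref{eq:lollybound}, and by $\|f'(u)\nu\|_{L^1}|y|$ respectively. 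Use $\|\y\|_\s^\beta\le R^{\beta-\alpha}\|\y\|_\s^\alpha$ and $|y|\le R^{1-\alpha}\|\y\|_\s^\alpha$, divide by $\|\y\|_\s^\alpha$, take $\sup_{\|\y\|_\s\le R}$.

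\textbf{(ii).} Expand $f(u)$ to second order at the basepoint and identify that the $\lolly$-coefficient of $f(u)$ at $\x$ is $f'(u(\x))\sigma(u(\x))$ and its linear coefficient is $f'(u(\x))\nu(\x)$, so the $f(\mathcal{U})$-remainder equals $f'(u(\x))\big(u(\x+\y)-u(\x)-\sigma(u(\x))\Pi_\x[\lolly;\x](\x+\y)-\nu(\x)\cdot y\big)$ plus a quadratic remainder controlled by $\|f''\|_\infty(u(\x+\y)-u(\x))^2$. Split $u(\x+\y)-u(\x)$ as in (i); the $\Pi[\lolly]$ part is handled by \eqref{eq:lollybound} and the $\nu\cdot y$ part by interpolating $\int_{D_\y}|\1_{|u|\le2C}\nu|^2|y|^{2\alpha}$ between $L^1$ and $L^2$ bounds on $\nu$, producing the exponents $2-2\alpha$, $2(2\alpha-1)$. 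On $\{|u(\x)|\ge C\}\cup\{|u(\x+\y)|\ge C\}$, $f$ is constant, so $|f(u(\x+\y))-f(u(\x))|$ is nonzero only if $u$ jumps across the plateau; bound the measure of this set by Markov's inequality and the $\alpha$-regularity of $u$ from (i), producing the last two lines (with $\|f\|_\infty$ and the negative powers of $C$). Collect terms.

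\textbf{(iii).} Using \eqref{eq:gubinelliderivative}, write $\nu(\x)$ as $\nabla_z$ at $\z=\x$ of the local remainder of $u$ at $\x$. For the increment $f'(u(\x+\y))\nu(\x+\y)-f'(u(\x))\nu(\x)$, evaluate the $f(\mathcal{U})$-remainder at basepoints $\x$ and $\x+\y$ against a mean-zero bump $\zeta_x^\lambda$ at a common active point, use the transformation rule \eqref{eq:trafololly} for $\Pi_\x[\lolly]$, and let $\lambda\downarrow0$ (as in Step~2 of the proof of Lemma~\ref{lem:equivalence}); this yields a contribution bounded by $6[f(\mathcal{U})]_{B_{1,\infty}^\beta}\|\y\|_\s^{\beta-1}$ plus a term coming from the change of the $\lolly$-coefficient $f'(u)\sigma(u)$ across $\y$, whose $z$-derivative brings $\nabla\Pi[\lolly]$ and is bounded by $(\|(f'\sigma)'/a^{\mathfrak{e}(\lollysmall)}\|_\infty+\|f'\sigma/a^{1+\mathfrak{e}(\lollysmall)}\|_\infty)[\Theta(u)]_{B_{1,\infty}^\alpha}[\lolly]_\alpha\|\y\|_\s^{\alpha}$ via \eqref{eq:lollybound}; the factor $R^{2\alpha-\beta}$ converts $\|\y\|_\s^{\alpha}\le R^{2\alpha-\beta}\|\y\|_\s^{\beta-1}$, and $\sqrt d$ passes from $\nu$ to its components. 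Divide by $\|\y\|_\s^{\beta-1}$ and take the supremum.
\end{proof}
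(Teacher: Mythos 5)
Your part~(i) coincides with the paper's argument, and your part~(iii) is structurally the same as the paper's (three $f(\mathcal{U})$-remainders giving the factor $6$, plus the change of the $\lolly$-coefficient estimated by \eqref{eq:lollybound} and $[\Theta(u)]_{B_{1,\infty}^\alpha}$); the paper simply avoids your mollify-and-let-$\lambda\downarrow0$ step by choosing the concrete spatial displacement $\z=e_i\|\y\|_\s$ and dividing by $|z|=\|\y\|_\s$, which turns the identity directly into an $L^1_\x$ increment bound.

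The genuine gap is in part~(ii). You expand $f(u(\x+\y))$ by a plain second-order Taylor around $u(\x)$, producing the remainder $\tfrac12 f''(\xi_\x)(u(\x+\y)-u(\x))^2$. This term is not controllable by the right-hand side of the claimed inequality: splitting $u(\x+\y)-u(\x)$ via the model, its square contains the square of the modelledness remainder $u(\x+\y)-u(\x)-\sigma(u(\x))\Pi_\x[\lolly;\x](\x+\y)-\nu(\x)\cdot y$, which is only available in $L^1_\x$ (that is what $[\mathcal{U}]_{B_{1,\infty}^\beta}$ controls), not in $L^2_\x$, and $u$ itself is unbounded so no trivial interpolation saves it; correspondingly no term quadratic in $[\mathcal{U}]$ appears in the statement. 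The paper avoids this by never squaring the remainder: it telescopes through $f$ evaluated at the modelled points, i.e.\ it writes the $f(\mathcal{U})$-remainder as the sum of \eqref{eq:mt11}--\eqref{eq:mt13}, in which the modelledness remainder enters only linearly (multiplied by $f'$ at an intermediate point), and the quadratic structure hits only $\sigma(u(\x))\Pi_\x[\lolly;\x](\x+\y)$ and $\nu(\x)\cdot y$, which are controlled in $L^\infty$ via \eqref{eq:lollybound} respectively in $L^1$/$L^2$ with the interpolation giving the exponents $2-2\alpha$ and $2(2\alpha-1)$. A second problem is your treatment of the region where $|u|$ is large: Markov combined with the $\alpha$-regularity of $u$ yields a bound of order $\|\y\|_\s^\alpha/C$, which cannot be upgraded to the needed $\|\y\|_\s^\beta$ scaling since $\alpha<\beta$ and $\|\y\|_\s\leq R\leq1$. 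The paper instead applies the inverse triangle inequality and Chebyshev directly to the modelledness remainder after shifting the basepoint (using $D_\y+\y\subseteq D_{-\y}$), which produces the term $4\|f\|_\infty/C\,[\mathcal{U}]_{B_{1,\infty}^\beta}\|\y\|_\s^\beta$, and for the remaining model part interpolates between the trivial bound $\lesssim\|f\|_\infty T$ and an $L^2$-Chebyshev bound, which is what generates the $C^{-\beta/\alpha}$ prefactor and the exponent $\beta/\alpha$ with the correct $\|\y\|_\s^\beta$ scaling. Without these two modifications your proof of (ii) does not close.
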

\begin{proof}
\ref{ita:u_alpha}.
By the triangle inequality and the definitions of 
$[f(\mathcal{U})]_{B_{1,\infty}^{\beta}}$ and
$[\lolly]_\alpha$ we have
\begin{align}
&\int_{D_\y} d\x \big| f(u(\x+\y))-f(u(\x)) \big| \\
&\leq \int_{D_\y} d\x \big| f(u(\x+\y))-f(u(\x))-f'(u(\x))\big(\sigma(u(\x))\Pi_\x[\lolly;\x](\x+\y)+\nu(\x)\cdot y \big) \big| \\
&\,+ \int_{D_\y} d\x\,|f'(u(\x))|\, \big| \sigma(u(\x))\Pi_\x[\lolly;\x](\x+\y)+\nu(\x)\cdot y \big| \\
&\leq
\|\y\|_\s^{\beta} [f(\mathcal{U})]_{B_{1,\infty}^{\beta}}
+ \|\y\|_\s^\alpha \|f'\sigma/a^{\mathfrak{e}(\lollysmall)}\|_\infty T [\lolly]_\alpha 
+ \|f'(u)\nu\|_{L^1} \|\y\|_\s \, ,
\end{align}
hence the claim follows from the definition of $[\cdot]_{B_{1,\infty}^\alpha}$.

\ref{ita:f(U)_2alpha}.
For 
\begin{equation}\label{eq:f(U)}
f(u(\x+\y)) - f(u(\x)) - f'(u(\x))\big(\sigma(u(\x))\Pi_\x[\lolly;\x](\x+\y)+\nu(\x)\cdot y\big)
\end{equation}
we split 
\begin{equation}
\int_{D_\y}d\x\,|\eqref{eq:f(U)}|
=\int_{D_\y}d\x\,|\eqref{eq:f(U)}|\1_{|u(\x)|\leq2C}
+\int_{D_\y}d\x\,|\eqref{eq:f(U)}|\1_{|u(\x)|>2C} \, .
\end{equation}
For the former integral we rewrite
\begin{align}
\eqref{eq:f(U)} 
&=f(u(\x+\y)) - f\big(u(\x)+\sigma(u(\x))\Pi_\x[\lolly;\x](\x+\y)+\nu(\x)\cdot y\big) \label{eq:mt11}\\
&\,+ f\big(u(\x)+\sigma(u(\x))\Pi_\x[\lolly;\x](\x+\y)+\nu(\x)\cdot y\big) \\
&\,- f\big(u(\x)+\nu(\x)\cdot y\big) - f'(u(\x))\sigma(u(\x)) \Pi_\x[\lolly;\x](\x+\y) \label{eq:mt12}\\
&\,+f\big(u(\x)+\nu(\x)\cdot y\big) - f(u(\x)) - f'(u(\x))\nu(\x)\cdot y \, . \label{eq:mt13}
\end{align}
For the first right hand side term we have by the fundamental theorem of calculus
\begin{align}
\eqref{eq:mt11} 
=\! \int_0^1\! d\vartheta \, f'\big(\vartheta u(\x\!+\!\y)+(1-\vartheta)(u(\x)+\sigma(u(\x))\Pi_\x[\lolly;\x](\x\!+\!\y)+\nu(\x)\cdot y)\big)& \\
\quad\times\big(u(\x+\y)-u(\x)-\sigma(u(\x))\Pi_\x[\lolly;\x](\x+\y)-\nu(\x)\cdot y \big)& \, ,
\end{align}
and hence by 
the triangle inequality
and the definition of $[\mathcal{U}]_{B_{1,\infty}^{\beta}}$ that 
\begin{equation}
\int_{D_\y} d\x\, | \eqref{eq:mt11} | \1_{|u(\x)|\leq2C}
\leq \|f'\|_\infty [\mathcal{U}]_{B_{1,\infty}^{\beta}} \|\y\|_\s^{\beta} \, .
\end{equation}
For the second right hand side term we appeal twice to the fundamental theorem of calculus to obtain 
\begin{align}
\eqref{eq:mt12}
&=\!\int_0^1\! d\vartheta \,
f'\big( u(\x)\!+\!\vartheta \sigma(u(\x))\Pi_\x[\lolly;\x](\x\!+\!\y)\!+\!\nu(\x) \!\cdot\! y\big) 
\sigma(u(\x))\Pi_\x[\lolly;\x](\x\!+\!\y) \\
&\,- f'(u(\x))\sigma(u(\x))\Pi_\x[\lolly;\x](\x+\y) \\
&= \int_0^1 d\vartheta \int_0^1 d\bar{\vartheta} \,
f''\big( u(\x)+\vartheta\bar{\vartheta} \sigma(u(\x))\Pi_\x[\lolly;\x](\x+\y)+\bar{\vartheta}\nu(\x)\cdot y\big) \\
&\quad\times \sigma(u(\x))\Pi_\x[\lolly;\x](\x+\y) 
\big( \vartheta\sigma(u(\x))\Pi_\x[\lolly;\x](\x+\y) + \nu(\x)\cdot y\big) \, ,
\end{align}
so that by the triangle inequality and the definition of $[\lolly]_\alpha$ 
\begin{align}
&\int_{D_\y} d\x |\eqref{eq:mt12}| \1_{|u(\x)|\leq2C} \\
&\leq \|f''\|_\infty \|\sigma/a^{\mathfrak{e}(\lollysmall)}\|_\infty [\lolly]_\alpha \|\y\|_\s^\alpha
\big( \|\sigma/a^{\mathfrak{e}(\lollysmall)}\|_\infty T [\lolly]_\alpha \|\y\|_\s^\alpha + \|\1_{|u|\leq2C}\nu\|_{L^1} \|\y\|_\s \big) \, .
\end{align}
The third right hand side term is analogously estimated by 
\begin{equation}
\int_{D_\y}d\x |\eqref{eq:mt13}| \1_{|u(\x)|\leq2C}
\leq \|f''\|_\infty \|\1_{|u|\leq2C}\nu\|_{L^2}^2 \|\y\|_\s^2 \, .
\end{equation}
Alternatively, we can appeal just once to the fundamental theorem of calculus and break the resulting terms up by the triangle inequality to obtain 
\begin{equation}
\int_{D_\y}d\x |\eqref{eq:mt13}| \1_{|u(\x)|\leq2C}
\leq 2\|f'\|_\infty \|\1_{|u|\leq2C}\nu\|_{L^1} \|\y\|_\s \, .
\end{equation}
Interpolating between these two estimates yields 
\begin{align}
&\int_{D_\y}d\x |\eqref{eq:mt13}| \1_{|u(\x)|\leq2C} \\
&\leq 2^{2-2\alpha} \|f'\|_\infty^{2-2\alpha} \|\1_{|u|\leq2C}\nu\|_{L^1}^{2-2\alpha}
\|f''\|_\infty^{2\alpha-1} \|\1_{|u|\leq2C}\nu\|_{L^2}^{2(2\alpha-1)} \|\y\|_\s^{2\alpha} \, .
\end{align}
We turn to the contributions from $\1_{|u(\x)|>2C}$. Note that $|u(\x)|>2C$ implies by the assumption on $f$ that $f'(u(\x))=0$, and thus 
\begin{align}
|\eqref{eq:f(U)}| \1_{|u(\x)|>2C}
&= |f(u(\x+\y))-f(u(\x))| \1_{|u(\x)|>2C} \\
&= |f(u(\x+\y))-f(u(\x))| \1_{|u(\x)|>2C} \1_{|u(\x+\y)|\leq C} \, ,
\end{align}
where in the last equality we used once more the assumption on $f$. 
Thus 
\begin{align}
&\int_{D_\y}d\x\,|\eqref{eq:f(U)}| \1_{|u(\x)|>2C} \\
%&=\int_{D_\y}d\x\, 
%|f(u(\x+\y))-f(u(\x))| \1_{|u(\x)|>2C} \1_{|u(\x+\y)|\leq C} \\
%&\quad\times\1_{|u(\x+\y)+\sigma(u(\x+\y))\Pi_{\x+\y}[\lollysmall;\x+\y](\x)-\nu(\x+\y)\cdot y|\leq3C/2}\\
%&\,+\int_{D_\y}d\x\, 
%|f(u(\x+\y))-f(u(\x))| \1_{|u(\x)|>2C} \1_{|u(\x+\y)|\leq C} \\
%&\quad\times\1_{|u(\x+\y)+\sigma(u(\x+\y))\Pi_{\x+\y}[\lollysmall;\x+\y](\x)-\nu(\x+\y)\cdot y|>3C/2} \\
&\leq 2\|f\|_\infty \int_{D_\y}d\x\, 
\1_{|u(\x)|>2C} 
\1_{|u(\x+\y)+\sigma(u(\x+\y))\Pi_{\x+\y}[\lollysmall;\x+\y](\x)-\nu(\x+\y)\cdot y|\leq3C/2} \quad\qquad \label{eq:mt14} \\
&\,+ 2\|f\|_\infty \int_{D_\y}d\x\, 
\1_{|u(\x+\y)|\leq C} 
\1_{|u(\x+\y)+\sigma(u(\x+\y))\Pi_{\x+\y}[\lollysmall;\x+\y](\x)-\nu(\x+\y)\cdot y|>3C/2} \label{eq:mt15} \, .
\end{align}
By a change of variables, using $D_\y+\y\subseteq D_{-\y}$, and by the inverse triangle inequality, we deduce
\begin{align}
\eqref{eq:mt14} 
&\leq 2\|f\|_\infty \! \int_{D_{-\y}}\!d\x\, 
\1_{|u(\x-\y)-u(\x)-\sigma(u(\x))\Pi_{\x}[\lollysmall;\x](\x-\y)-\nu(\x)\cdot (-y)|>C/2} \\
&\leq 2\|f\|_\infty \! \int_{D_{-\y}}\!d\x\, 
\tfrac{2}{C} |u(\x\!-\!\y)\!-\!u(\x)-\sigma(u(\x))\Pi_{\x}[\lolly;\x](\x\!-\!\y)\!-\!\nu(\x)\cdot (-y)| \\
&\leq 4/C \|f\|_\infty [\mathcal{U}]_{B_{1,\infty}^\beta} \|\y\|_\s^\beta \, .
\end{align}
The same argumentation reveals 
\begin{align}
\eqref{eq:mt15} 
&\leq 2\|f\|_\infty \! \int_{D_{-\y}}\!d\x\, 
\1_{|u(\x)|\leq C} \1_{|\sigma(u(\x))\Pi_{\x}[\lollysmall;\x](\x-\y)-\nu(\x)\cdot y|>C/2} \, .
\end{align}
On the one hand, this is bounded by $2\|f\|_\infty T$, while on the other hand it is bounded by 
\begin{align}
&2\|f\|_\infty \! \int_{D_{-\y}}\!d\x\, 
\1_{|u(\x)|\leq C} \tfrac{4}{C^2} |\sigma(u(\x))\Pi_{\x}[\lolly;\x](\x-\y)-\nu(\x)\cdot y|^2 \\
&\leq16/C^2 \|f\|_\infty \big( T [\lolly]_\alpha^2 \|\sigma/a^{\mathfrak{e}(\lollysmall)}\|_\infty^2 \|\y\|_\s^{2\alpha}
+ \|\1_{|u|\leq C}\nu\|_{L^2}^2 \|\y\|_\s^2 \big) \, .
\end{align}
Interpolating between both estimates shows that $\eqref{eq:mt15}$ is bounded by 
\begin{align}
\frac{2^{1+3\beta/(2\alpha)}}{C^{\beta/\alpha}} \|f\|_\infty 
\big(T [\lolly]_\alpha^{\beta/\alpha} \|\sigma/a^{\mathfrak{e}(\lollysmall)}\|_\infty^{\beta/\alpha} \|\y\|_\s^{\beta}
+ T^{1-\beta/(2\alpha)} \|\1_{|u|\leq C}\nu\|_{L^2}^{\beta/\alpha} \|\y\|_\s^{\beta/\alpha} \big) \, .
\end{align}
Combining the estimates of \eqref{eq:mt11} -- \eqref{eq:mt15} with the definition of $[f(\mathcal{U})]_{B_{1,\infty}^{\beta}}$ yields the claim. 

\ref{ita:nu_2alpha-1}.
We introduce the shorthand notation
\begin{equation}
f(\mathcal{U})_\x(\x+\y) \coloneqq
f(u(\x+\y))-f(u(\x))-f'(u(\x))\big(\sigma(u(\x))\Pi_\x[\lolly;\x](\x+\y)+\nu(\x)\cdot y \big) \, .
\end{equation}
Then 
\begin{align}
&\big(f'(u(\x+\y))\nu(\x+\y)-f'(u(\x))\nu(\x)\big)\cdot z \\
&= f(\mathcal{U})_\x(\x+\y+\z) - f(\mathcal{U})_\x(\x+\y) - f(\mathcal{U})_{\x+\y}(\x+\y+\z) \\
&\,+f'(u(\x))\sigma(u(\x)) \big( \Pi_\x[\lolly;\x](\x+\y+\z) - \Pi_\x[\lolly;\x](\x+\y) \big) \\
&\,- f'(u(\x+\y))\sigma(u(\x+\y))\Pi_{\x+\y}[\lolly;\x+\y](\x+\y+\z) \\
&= f(\mathcal{U})_\x(\x+\y+\z) - f(\mathcal{U})_\x(\x+\y) - f(\mathcal{U})_{\x+\y}(\x+\y+\z) \\
&\,+f'(u(\x))\sigma(u(\x)) \Pi_{\x+\y}[\lolly;\x](\x+\y+\z) \\
&\,- f'(u(\x+\y))\sigma(u(\x+\y)) \Pi_{\x+\y}[\lolly;\x+\y](\x+\y+\z) \, ,
\end{align}
where in the second equality we have used $\Pi_\x[\lolly;\x](\x+\y+\z)-\Pi_\x[\lolly;\x](\x+\y)=\Pi_{\x+\y}[\lolly;\x](\x+\y+\z)$ as a consequence of \eqref{eq:trafololly}.
We choose $\z\coloneqq e_i \|\y\|_\s$ for the $d$ unit vectors $e_1,\dots,e_d$ of $\R^d$, 
and note that $D_\y=D_{\y+\z}$ as $\z$ has no time component. 
Thus 
\begin{align}
\int_{D_\y}\! d\x\, |f(\mathcal{U})_\x(\x+\y+\z)| 
&=\!\int_{D_{\y+\z}} \! d\x\, |f(\mathcal{U})_\x(\x+\y+\z)| \\
&\leq [f(\mathcal{U})]_{B_{1,\infty}^{\beta}} \! \|\y+\z\|_\s^{\beta} 
\leq 4 [f(\mathcal{U})]_{B_{1,\infty}^{\beta}} \! \|\y\|_\s^{\beta} \, .
\end{align}
Similarly, since $D_\y+\y\subseteq [0,T]\times\T^d$ by the definition of $D_\y$ (see Theorem~\ref{thm:main}), 
and again since $[0,T]\times\T^d=D_\z$ as $\z$ has no time component, we deduce
\begin{align}
\int_{D_\y} d\x \, |f(\mathcal{U})_{\x+\y}(\x+\y+\z)| 
&=\int_{D_\y+\y} d\x \, |f(\mathcal{U})_\x(\x+\z)| \\
&\leq\int_{D_\z} d\x \, |f(\mathcal{U})_\x(\x+\z)| 
\leq [f(\mathcal{U})]_{B_{1,\infty}^{\beta}} \|\y\|_\s^{\beta} \, .
\end{align}
Of course, also 
\begin{equation}
\int_{D_\y} |f(\mathcal{U})_\x(\x+\y)| \leq [f(\mathcal{U})]_{B_{1,\infty}^{\beta}} \|\y\|_\s^{\beta} \, .
\end{equation}
Furthermore, 
by \eqref{eq:Theta} and the fundamental theorem of calculus we have
\begin{align}
&(f'\!\sigma)(u(\x))\Pi_{\x+\y}[\lolly;\x](\x+\y+\z)
- (f'\!\sigma)(u(\x+\y))\Pi_{\x+\y}[\lolly;\x+\y](\x+\y+\z) \\
&= \int_0^1 d\vartheta \, \partial_v \big( f'(v) \sigma(v) \Pi_{\x+\y}[\lolly;a(v)](\x+\y+\z) \big)_{|v=\vartheta \Theta(u(\x))+(1-\vartheta)\Theta(u(\x+\y))} \\
&\quad\times \big(\Theta(u(\x))-\Theta(u(\x+\y))\big) \, ,
\end{align}
and thus
\begin{align}
&\int_{D_\y}\! d\x\, \Big| 
(f'\!\sigma)(u(\x))\Pi_{\x+\y}[\lolly;\x](\x\!+\!\y\!+\!\z)
\!-\! (f'\!\sigma)(u(\x\!+\!\y))\Pi_{\x+\y}[\lolly;\x\!+\!\y](\x\!+\!\y\!+\!\z) \Big| \\
&\leq \big(\|(f'\!\sigma)'/a^{\mathfrak{e}(\lollysmall)}\|_\infty + \|f'\!\sigma/a^{1+\mathfrak{e}(\lollysmall)}\|_\infty \big) 
[\lolly]_\alpha \|\y\|_\s^\alpha [\Theta(u)]_{B_{1,\infty}^\alpha} \|\y\|_\s^\alpha \, ,
\end{align}
where we have used the definitions of $[\lolly]_\alpha$ and $[\cdot]_{B_{1,\infty}^\alpha}$.
Altogether we proved 
\begin{align}
&\int_{D_\y}d\x \, \big|f'(u(\x+\y))\nu_i(\x+\y)-f'(u(\x))\nu_i(\x)\big| \|\y\|_\s \\
&\leq \|\y\|_\s^{\beta} 
\big( 6 [f(\mathcal{U})]_{B_{1,\infty}^{\beta}} 
\!+\! \|\y\|_\s^{2\alpha-\beta} (\|(f'\!\sigma)'/a^{\mathfrak{e}(\lollysmall)}\|_\infty \!+\! \|f'\!\sigma/a^{1+\mathfrak{e}(\lollysmall)}\|_\infty) [\Theta(u)]_{B_{1,\infty}^\alpha} [\lolly]_\alpha \big) 
\end{align}
for $i=1,\dots,d$ which implies the desired estimate. 
\end{proof}

Equipped with Lemma~\ref{lem:absorb_aux} we are now in position to prove Lemma~\ref{lem:absorb}.

\begin{proof}[Proof of Lemma~\ref{lem:absorb}]
Combining Lemma~\ref{lem:absorb_aux}~\ref{ita:f(U)_2alpha} with 
%($N-(M-1)\geq0$)
$|\sigma/a^{\mathfrak{e}(\lollysmall)}|\lesssim 1$
from equation \eqref{eq:XVI}, 
using $\|f\|_\infty,\|f'\|_\infty,\|f''\|_\infty\lesssim1$, $R,T\leq1$, and $\|\cdot\|_{L^1}\leq\sqrt{T}\|\cdot\|_{L^2}$
we obtain 
\begin{align}
[f(\mathcal{U})]_{B_{1,\infty}^{\beta}} 
&\lesssimdata [\mathcal{U}]_{B_{1,\infty}^{\beta}} 
+ \|\1_{|u|\leq2C}\nu\|_{L^2}^{2\alpha} + \|\1_{|u|\leq2C}\nu\|_{L^2}^{\beta/\alpha} + 1 \, .\label{eq:f(U)_2alpha}
\end{align}
Plugging this into Lemma~\ref{lem:absorb_aux}~\ref{ita:u_alpha} and using once more 
%($N-(M-1)\geq0$)
$|\sigma/a^{\mathfrak{e}(\lollysmall)}|\lesssim 1$
from equation \eqref{eq:XVI}, 
$\|f'\|_\infty\lesssim1$, and the assumption of $f$, yields
\begin{align}
[f(u)]_{B_{1,\infty}^\alpha} 
&\lesssimdata R^{\beta-\alpha} \big([\mathcal{U}]_{B_{1,\infty}^{\beta}} + \|\1_{|u|\leq2C}\nu\|_{L^2}^{2\alpha} + \|\1_{|u|\leq2C}\nu\|_{L^2}^{\beta/\alpha}\big) \\
&\,+ R^{1-\alpha} \sqrt{T} \|\1_{|u|\leq2C}\nu\|_{L^2} 
+ 1 \, . \label{eq:u_alpha}
\end{align}
Similarly, using additionally 
%($N-1-(M-1)\geq0$)
$|\sigma'/a^{\mathfrak{e}(\lollysmall)}|\lesssim1$
from equation \eqref{eq:III}
and 
%($N-2(M-1)\geq0$)
$|\sigma/a^{1+\mathfrak{e}(\lollysmall)}|\lesssim1$
from equation \eqref{eq:V}, 
Lemma~\ref{lem:absorb_aux}~\ref{ita:nu_2alpha-1} implies for $C$ large enough ($\Theta$ is by construction constant outside a sufficiently large ball)
\begin{equation}\label{eq:nu_2alpha-1}
[f'(u)\nu]_{B_{1,\infty}^{\beta-1}} 
\lesssimdata [\mathcal{U}]_{B_{1,\infty}^{\beta}} 
+ \|\1_{|u|\leq2C}\nu\|_{L^2}^{2\alpha} + \|\1_{|u|\leq2C}\nu\|_{L^2}^{\beta/\alpha} + 1 \, .
\end{equation}

We're finally in position to tackle Item~\ref{it:nu^2_0}.
Note that for $1<p'\leq3-M$
\begin{equation}
\|\1_{|u|\leq2C}\nu\|_{L^2}^2
\lesssim \int_{[0,T]\times\T^d} d\z \, |u(\z)|^{p'-2} \,a(u(\z)) |\nu(\z)|^2
\lesssim 
\|u_0\|_{L^{p'}(\T^d)}^{p'} + C_{\eqref{eq:u2part3}} + 1 \, .
\end{equation}
The first inequality is an immediate consequence of Assumption~\ref{ass:nonlinearities}~(ii),
while the second inequality is exactly the content of Proposition~\ref{prop:energy}.
Choosing $p'\leq p$ with $p$ from Assumption~\ref{ass:initial} and plugging in $C_{\eqref{eq:u2part3}}$ from Proposition~\ref{prop:energy} we obtain 
\begin{align}
\|\1_{|u|\leq2C}\nu\|_{L^2}^2
\lesssimdata 
	1
	&+ [(g\sigma)(\mathcal{U})]_{B_{1,\infty}^{\beta}} 
	+ [\Theta(u)]_{B_{1,\infty}^\alpha}
	+ [\Theta'(u)\nu]_{B_{1,\infty}^{\beta-1}} \\
	&+ [\Theta(u)]_{B_{1,\infty}^\alpha}^{1/2} \|\Theta'(u)\nu\|_{L^2}
	+ \|\Theta'(u)\nu\|_{L^1} (\sqrt{T})^{\alpha-1} \, ,
\end{align}
where we recall that $g(v)=\int_0^vdw\,|w|^{p'-2}$. 
Using Cauchy-Schwarz in form of $\|\cdot\|_{L^1}\leq\sqrt{T}\|\cdot\|_{L^2}$, 
plugging in \eqref{eq:f(U)_2alpha} for $f=g\sigma$ 
(which by Assumption~\ref{ass:nonlinearities}~(ii) and \eqref{eq:restrictionM} is in $C^2(\R)$ and compactly supported), 
\eqref{eq:u_alpha} and \eqref{eq:nu_2alpha-1} for $f=\Theta$ 
(which by construction is constant outside a sufficiently large ball), 
and using $R,T\leq1$ yields for $C$ sufficiently large
\begin{align}
\|\1_{|u|\leq2C}\nu\|_{L^2}^2
&\lesssimdata 
	1
	+[\mathcal{U}]_{B_{1,\infty}^{\beta}} 
	+\|\1_{|u|\leq2C}\nu\|_{L^2}^{\max\{2\alpha,\beta/\alpha\}} \\
	&\quad+ \big([\mathcal{U}]_{B_{1,\infty}^{\beta}} 
		+\|\1_{|u|\leq2C}\nu\|_{L^2}^{\max\{2\alpha,\beta/\alpha\}}\big)^{1/2} 
	\|\1_{|u|\leq2C}\nu\|_{L^2} \\
&\lesssim 
	1
	+[\mathcal{U}]_{B_{1,\infty}^{\beta}} 
	+[\mathcal{U}]_{B_{1,\infty}^{\beta}}^{1/2} \|\1_{|u|\leq2C}\nu\|_{L^2}
	+ \|\1_{|u|\leq2C}\nu\|_{L^2}^{\max\{2\alpha,\beta/\alpha\}/2+1} \, .
\end{align}
Observe that $\max\{2\alpha,\beta/\alpha\}/2+1<2$ by assumption. 
%\mt{this is where the restriction $\beta<2\alpha$ comes from. It could be avoided by keeping track of $R,T$, since $\|\1_{|u|\leq2C}\|^{L^2}^{\beta/\alpha}$ comes with a prefactor, which could be chosen to be small to absorb. The current version does not need $R,T$ to be small.}
We conclude the proof of Item~\ref{it:nu^2_0} by absorbing $\|\1_{|u|\leq2C}\nu\|_{L^2}$ as follows. 
We claim that 
$x\leq C_1 x^{\epsilon_1} + C_2 x^{\epsilon_2} +C_3$ for $0<\epsilon_1,\epsilon_2<1$ and some constants $C_1,C_2,C_3>0$ implies 
$x\lesssim C_1^{1/(1-\epsilon_1)} + C_2^{1/(1-\epsilon_2)} + C_3$. 
To prove this claim we distinguish two cases. 
In the case $x\geq(3C_1)^{1/(1-\epsilon_1)}$ and $x\geq(3C_2)^{1/(1-\epsilon_2)}$ 
we have $x^{\epsilon_1}\leq x/(3C_1)$ and $x^{\epsilon_2}\leq x/(3C_2)$ 
and thus 
\begin{equation}
x\leq C_1 x^{\epsilon_1} + C_2 x^{\epsilon_2} +C_3
\leq x/3 + x/3 +C_3 \, .
\end{equation}
In particular $x\leq3 C_3 \lesssim C_1^{1/(1-\epsilon_1)} + C_2^{1/(1-\epsilon_2)}+C_3$.
We conclude with noting that the remaining case is 
$x<(3C_1)^{1/(1-\epsilon_1)}$ or $x<(3C_2)^{1/(1-\epsilon_2)}$.

This establishes $\|\1_{|u|\leq2C}\nu\|_{L^2}^2\lesssim [\mathcal{U}]_{B_{1,\infty}^\beta} +1$, 
and Item~\ref{it:nu^2_0} follows from $|f'(u)|\lesssim\1_{|u|\leq2C}$. 
The remaining Items~\ref{it:u_alpha}, \ref{it:f(U)_2alpha}, and \ref{it:nu_2alpha-1}
follow by plugging the already established bound on $\|\1_{|u|\leq2C}\nu\|_{L^2}^2$ into \eqref{eq:f(U)_2alpha} -- \eqref{eq:nu_2alpha-1} and using $R,T\leq1$.
\end{proof}

%%%%%%%%%%%%%%%%%%%%%%%%%%%%%%%%%%%%%%%%%%
%%%%%%%%%%%%%%%%%%%%%%%%%%%%%%%%%%%%%%%%%%
\section*{Acknowledgements}

The authors thank Lucas Broux, Salvador Esquivel, Simon Gabriel, Benjamin Gess, Fabian Höfer, and Rhys Steele for illuminating discussions. 
Both authors are funded by 
%This project has received funding from 
the European Research Council (ERC) under the European Union’s Horizon 2020 research and innovation programme (Grant agreement No.~101045082), and by 
%Funded by 
the Deutsche Forschungsgemeinschaft (DFG, German Research Foundation) under Germany's Excellence Strategy EXC 2044-390685587, Mathematics Münster: Dynamics-Geometry-Structure.

%
%%%%%%%%%%%%%%%%%%%%%%%%%%%%%%%%%%%%%%%%%%
%%%%%%%%%%%%%%%%%%%%%%%%%%%%%%%%%%%%%%%%%%
%
%
%\section*{Statements and Declarations}
%The authors have no competing interests to declare that are relevant to the content of this article.
%
% DO NOT REMOVE THIS
\pdfbookmark{References}{references}
\addtocontents{toc}{\protect\contentsline{section}{References}{\thepage}{references.0}}
%
%%%%%%%%%%%%%%%%%%%%%%%%%%%%%%%%%%%%%%%%%%
%%%%%%%%%%%%%%%%%%%%%%%%%%%%%%%%%%%%%%%%%%
%
% BIBLIOGRAPHY
\bibliographystyle{alphaurl}
\small
\bibliography{references}{}
%
%%%%%%%%%%%%%%%%%%%%%%%%%%%%%%%%%%%%%%%%%%
%%%%%%%%%%%%%%%%%%%%%%%%%%%%%%%%%%%%%%%%%%
%
\end{document}